\documentclass[12pt,oneside]{book}
\usepackage{amssymb,amsmath,amsfonts,enumerate,bbm,mcode,verbatim}
\usepackage{color}
\usepackage{amsthm}
\usepackage[OT2,T1]{fontenc}
\usepackage{geometry}
\usepackage[active]{srcltx}
\usepackage[nathan]{nathanchap}
\usepackage[all]{xy}
\usepackage{graphicx}
\usepackage{float}
\usepackage{makeidx}
\sloppy
\geometry{verbose,a4paper,tmargin=30mm,bmargin=30mm,lmargin=40mm,rmargin=20mm}

\usepackage{palatino}

\pagestyle{plain}

\DeclareSymbolFont{cyrletters}{OT2}{wncyr}{m}{n}
\DeclareMathSymbol{\Sha}{\mathalpha}{cyrletters}{"58} 
\newtheorem{theorem}{Theorem}[chapter]
\newtheorem{proposition}[theorem]{Proposition}
\newtheorem{lemma}[theorem]{Lemma}
\newtheorem{definition}[theorem]{Definition}
\newcommand{\longhookrightarrow}{{\lhook\joinrel\relbar\joinrel\rightarrow}}
\newcommand{\tmop}[1]{\ensuremath{\operatorname{#1}}}
\newcommand{\assign}{:=}
\newtheorem{corollary}[theorem]{Corollary}

\newcommand{\longrightarrowlim}{\mathop{\longrightarrow}\limits}
\newcommand{\ls}{L_{\sigma}}
\newcommand{\os}{\mathcal{O}}
\newcommand{\oy}{\mathcal{O}_Y}
\newcommand{\s}{\sigma}
\newcommand{\p}[1]{\mathbb{P}^{#1}}
\newcommand{\mb}[1]{\mathbb{#1}}
\newcommand{\pic}{\tmop{Pic}}
\newcommand{\lrw}{\longrightarrow}
\newcommand{\Ll}{\Lambda}
\newcommand{\la}[1]{\lambda_{#1}}

\newcommand{\ti}[1]{\tilde{#1}}

\newcommand{\hh}{H^1(G,\tmop{Pic}Y)}
\newcommand{\dra}{\dashrightarrow}
\newcommand{\od}{\mathcal{O}_D}
\newcommand{\fn}{\mb{F}_n}

\newcommand{\me}{\sim_M}
\newcommand{\br}{\tmop{Br}}

\DeclareFontFamily{OT1}{pzc}{}
\DeclareFontShape{OT1}{pzc}{m}{it}{<-> s * [1.10] pzcmi7t}{}
\DeclareMathAlphabet{\mathpzc}{OT1}{pzc}{m}{it}

\mathchardef\mhyphen="2D

\newtheorem{varexample}[theorem]{Example}
\newenvironment{example}{\begin{varexample}\em}{\em\end{varexample}}

\newtheorem{vardefex}[theorem]{Example and Definition}
\newenvironment{defex}{\begin{vardefex}\em}{\em\end{vardefex}}

\newtheorem{varprop}[theorem]{Properties}



\newtheorem{varnotation}[theorem]{Notation}
\newenvironment{notation}{\begin{varnotation}\em}{\em\end{varnotation}}

\newtheorem{construction}[theorem]{Construction}

\theoremstyle{definition}
\newtheorem{remark}[theorem]{Remark}

\bibliographystyle{alpha}
\makeindex

\begin{document}

\begin{center}
\begin{center}
\includegraphics[width=3cm,viewport= 25 106 354 654]{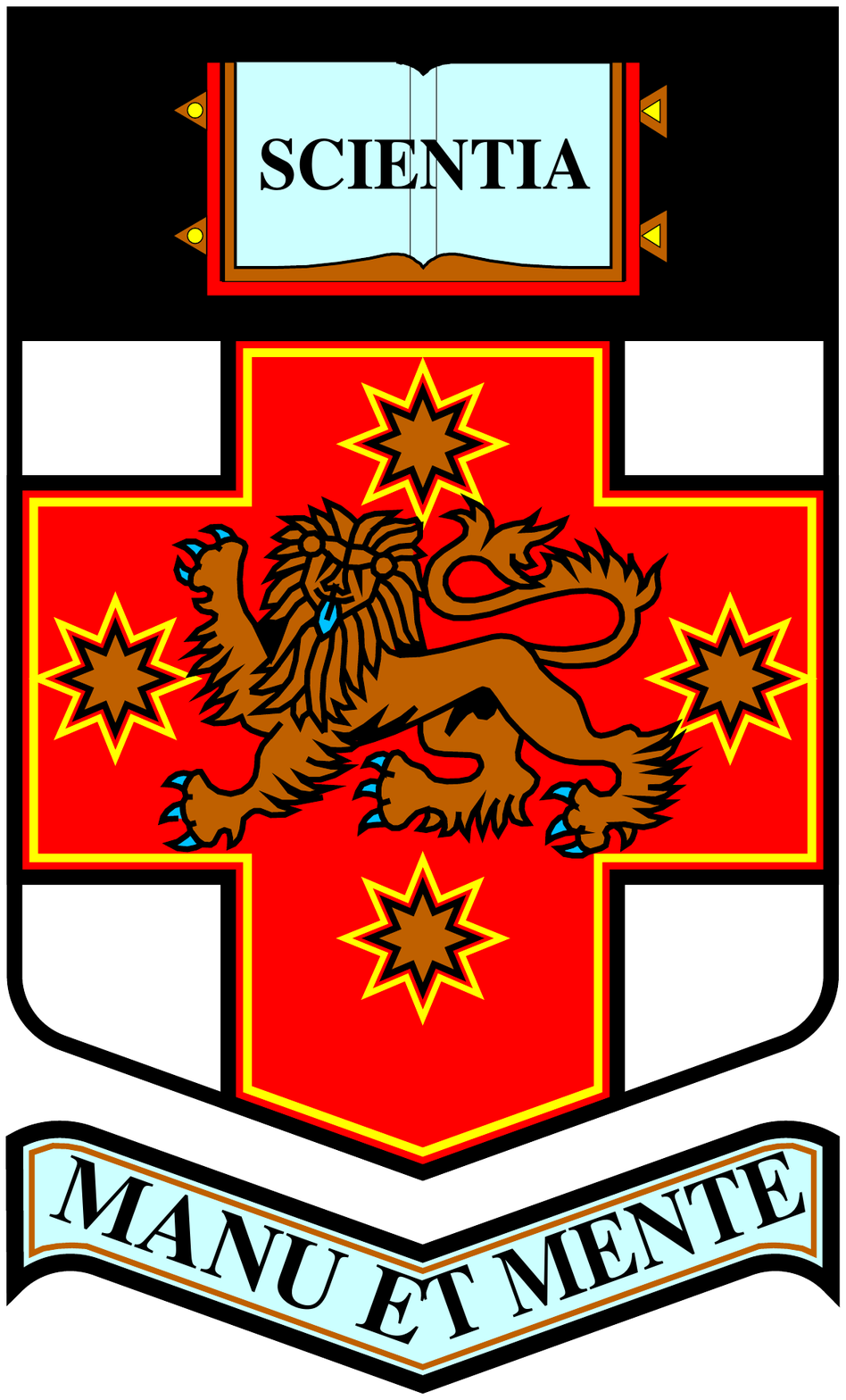} 
\end{center}
 \Huge{The Explicit Construction of Orders on Surfaces}
\thispagestyle{empty}
\vspace{-1cm}

\vspace{2cm}
\vspace{0.5cm} 
\normalsize{A thesis presented to} \\ \vspace{0.5cm} \large{The University of 
New South Wales} \\ \vspace{0.5cm} \normalsize{in fulfillment of the thesis 
requirement} \\ \normalsize{for the degree  of} \\ \vspace{0.5cm} 
\large{Doctor of Philosophy} \\ \vspace{0.5cm} \normalsize{by}

\vspace{2cm}

{\scshape \LARGE{Hugo Bowne-Anderson}}

\vspace{2cm}

\end{center}
\newpage
\renewcommand{\thepage}{\roman{page}}
\addcontentsline{toc}{chapter}{Abstract}
\setcounter{page}{1}

%

\setcounter{page}{-1}
\chapter*{Abstract}

The study of orders over surfaces is an integral aspect of noncommutative algebraic geometry. Although there is a substantial amount known about orders, relatively few concrete examples have been constructed explicitly. Of those already constructed, most are del Pezzo orders, noncommutative analogues of del Pezzo surfaces, the simplest case. We reintroduce a noncommutative analogue of the well-known commutative cyclic covering trick and implement it to explicitly construct a vast collection of numerically Calabi-Yau orders, noncommutative analogues of surfaces of Kodaira dimension 0. This trick allows us to read off immediately such interesting geometric properties of the order as ramification data and a maximal commutative quotient scheme. We construct maximal orders, noncommutative analogues of normal schemes, on rational surfaces and ruled surfaces. We also use Ogg-Shafarevich theory to construct Azumaya algebras and, more generally, maximal orders on elliptically fibred surfaces.

\author{Hugo Bowne-Anderson}
\title{}
\maketitle

\chapter*{Acknowledgements}
\addcontentsline{toc}{chapter}{Acknowledgements}


There are many people who contributed in varying forms to the existence of this thesis. First and foremost is Daniel Chan, supervisor and friend, whose patience, encouragement and love for mathematics made this project a joy to work on. Our weekly meetings for the best part of four and a half years provided a continual grounding of and inspiration for whatever was happening in my research at any given point in time.

I am indebted to everyone with whom I had the pleasure of discussing mathematics in general and algebraic geometry in particular. Most notably, Rajesh Kulkarni and Ulf Persson, the latter responsible for teaching me boundless information concerning explicit geometry on algebraic surfaces via conversations, emails and papers. A special and very warm thank you to my mathematical brothers, Kenneth Chan and Boris Lerner, friends and colleagues, proof-readers and critics, without whom this sometimes seemingly eternal process may have actually taken forever.

My friends and my family, thank you. Thank you for your understanding. Your various impacts  on this document are less traceable, more elusive and yet without your continual support and love, it just simply wouldn't exist.

\newpage
\markboth{Contents}{Contents}

\addcontentsline{toc}{chapter}{Contents}
\tableofcontents

%


%


\listoffigures
\addcontentsline{toc}{chapter}{List of Figures}

\newpage

\renewcommand{\thepage}{\arabic{page}}
\setcounter{page}{1}

\chapter{Introduction}

``Abandoning geometric intuition--the only means that so far has allowed us to find the way in this tangled territory--would mean extinguishing the feeble flame that can lead us into the dark forest.''\\ \emph{La geometria algebrica e la scuola italiana}, Guido Castelnuovo, 1928
\\
\\
The discipline of modern algebraic geometry is concerned with the deep and subtle interplay between algebraic objects, such as commutative rings and modules, and geometric objects, such as curves, surfaces and more generally schemes. Until relatively recently, the deep correspondences in existence have been confined to the commutative world. During the past two decades, however, a significant number of algebro-geometric techniques have been utilised in the study of noncommutative algebras and this is precisely the realm of noncommutative algebraic geometry. 

Our particular interests lie in studying objects known as orders on commutative schemes: an order on a scheme $X$ is a torsion-free sheaf of $\os_X$-algebras which is generically a central simple $K(X)$-algebra. Not only are orders of interest to noncommutative algebraic geometers, but they have been used with success in commutative algebraic geometry, for example in Artin and Mumford's initial construction of a unirational variety which is not rational \cite{artmum}. The relationship between the study of orders and that of commutative algebraic geometry is strong; in particular, the fact that orders are finite over their centre means that a great deal of classical commutative algebraic geometry can be utilised to study them, a prime example being classifying orders using ramification data on the central scheme. 

Although a great deal is known about orders, relatively few explicit examples have been realised and it is to this goal that we dedicate our project: the explicit construction of orders on schemes. In particular, we are interested in constructing maximal orders, which are noncommutative analogues of normal schemes. More precisely, our interests are in constructing maximal orders on surfaces since maximal orders on curves are trivial Azumaya algebras. It is our view that in order to study the geometry of orders, we require a sophisticated body of examples with which to work.
\\
\\
Throughout algebraic geometry's rich history, there has been a continual insistence placed on the importance of explicit geometry, a prime example being the work of the Italian School, working around the turn of the 20th century, which culminated in what has become known as the Enriques classification of algebraic surfaces \cite{enclass}. In the movement toward classification, it was Corrado Segre who initially suggested a birational classification of surfaces as an approach and he directed everyone to look at surfaces embedded in projective space. In noncommutative algebraic geometry, however, there seems to be no good analogue of constructing surfaces via embeddings in projective space. There is, however, another natural way of constructing surfaces, the cyclic covering trick, which does generalise to the noncommutative setting. The cyclic covering trick allows us to construct new varieties as covers of pre-existing ones and the noncommutative generalisation will allow us to construct a vast collection of noncommutative surfaces.

\begin{construction}{\bf (The cyclic covering trick)} \cite[Chap. 1, Sec. 17]{bpv}\label{cyc1}\index{cyclic covering trick}
Let $Z$ be a scheme, $D\subset Z$ an effective divisor and $L$ a line bundle on $Z$ such that
\begin{align*}
 L^{\otimes n} & \simeq \os_Z(-D).
\end{align*}
Then there exists an $n\colon 1$ cyclic cover $\pi\colon Y\to Z$ totally ramified on $D$ and unramified elsewhere.
\end{construction}
%

The cyclic covering trick works in the following fashion: given an ${L\in\tmop{Pic}Z}$ as required in Construction \ref{cyc1}, we set ${\mathcal{A}\assign\os_Z\oplus L\oplus \ldots \oplus L^{n-1}}$, a sheaf of $\os_Z$-algebras, and $Y\assign\mathbf{Spec}_Z\mathcal{A}$, the relative Spec of $\mathcal{A}$, as described in \cite[Chapter II, Exercise 5.17]{ag}. Then we have a corresponding finite scheme morphism $\pi\colon Y\to Z$ and $\pi_*\oy=\os_Z\oplus L\oplus  \ldots \oplus L^{n-1}$. The noncommutative cyclic covering trick in essence provides a recipe for constructing similar such $\mathcal{A}$, with the important difference that $\mathcal{A}$ will now be a noncommutative sheaf of algebras. This means we can no longer form $\mathbf{Spec}_Z\mathcal{A}$ in the same fashion as in the commutative case, but we can still consider the sheaf $\mathcal{A}$, which in nice cases is an order, as having an associated noncommutative scheme. The data necessary for the noncommutative cyclic cover are
\begin{enumerate}[1)]
 \item a commutative cyclic cover $\pi\colon Y\to Z$ and
 \item an invertible bimodule $\ls$ on $Y$ along with an effective divisor $D\subset Y$ such that $\ls^{\otimes n}\simeq\oy(-D)$. 
\end{enumerate}
The corresponding cyclic algebra is then $\mathcal{A}=\oy\oplus\ls\ldots\oplus\ls^{n-1}$.
Part of the beauty of the noncommutative cyclic covering trick is the ease with which one may read off interesting geometric properties of the resultant cyclic algebra $\mathcal{A}$ post-construction: the ramification is easily described, as is the canonical divisor along with a maximal commutative quotient scheme.

Chan introduced the noncommutative cyclic covering trick in \cite{cyc} and up until this point, a great deal was already known about orders: for example, Chan and Ingalls had classified orders over surfaces in the form of a minimal model program \cite{mmpord}, an order-theoretic analogue of the Mori program, which is the modern approach to birational geometry. 
Although there was already quite an evolved theory of orders over surfaces, there was a serious absence of concrete examples. There were, however, existence results in the form of the Artin-Mumford sequence (this sequence first appeared in \cite[Section 3, Theorem 1]{artmum} and \cite{tannen} provides a well-needed exposition of its various meanings). This sequences classifies the possible ramification data of all maximal orders and tells us whether, given particular ramification data, such an order exists.

Given existence results and a solid theory, the glaring missing link is a substantial collection of explicit examples and it is this empty region which both \cite{cyc} and this thesis are attempting to populate. Chan's initial constructions in \cite{cyc} already opened up new avenues: he was able to construct many examples of del Pezzo orders, some of which he then demonstrated were ruled surfaces in the sense of Van den Bergh \cite{ncp1}. These constructions in turn allowed the authors of \cite{ncy} to study the moduli of line bundles on a particular class of del Pezzo orders, a project entirely unfeasible until Chan introduced the noncommutative cyclic covering trick.

In a sense, however, Chan constructed the easy examples.  We make this specific: as we have seen, the first step in constructing a noncommutative cyclic cover $\mathcal{A}$ is to find a commutative cyclic cover $\pi\colon Y\to Z$. The Riemann-Hurwitz formula along with the formula for $K_A$, the canonical divisor of $A$, inform us that $Y$ is del Pezzo if $\mathcal{A}$ is del Pezzo and $Y$ is minimal Kodaira dimension $0$ if $\mathcal{A}$ is numerically Calabi-Yau (we say an order $A$ is del Pezzo if $-K_A$ is ample; we say $A$ is numerically Calabi-Yau if $K_A$ is numerically trivial). It is for this reason that Chan is so successful in constructing del Pezzo orders on surfaces: the fact that $Y$ is del Pezzo in his case implies that $\tmop{Br}(Y)=0$. Moreover, the noncommutative cyclic covering can be used to construct representatives of all Brauer classes ramified on an irreducible divisor $D\subset Z$ which pull back trivially to $Y$, which is all classes in this case.

The project contained herein of constructing numerically Calabi-Yau orders is far more difficult, since $\tmop{Kod}(Y)=0$ implies that $\tmop{Br}(Y)\neq 0$. At the beginning of \cite[Section 9]{cyc}, Chan states this precise difficulty and ``contents [himself] with mentioning some interesting examples''.
%
%
He does so in 3 simple cases and the project contained in this thesis is a continuation of this work, constructing a vast array of numerically Calabi-Yau orders, among others. This will not only pave the way for further study of the explicit geometry of orders over surfaces, but also demonstrates the sophisticated developments of Chan's techniques necessary to take these explicit constructions of orders to this next step, to construct more complex and more interesting orders over surfaces. Such developments include implementations of the ``surjectivity of the period map'' and the Strong Torelli theorem for K3 surfaces, along with precise correspondences between the noncommutative cyclic covering trick and Ogg-Shafarevich theory for elliptic fibrations.

As given by the Artin-Mumford sequence, the world of orders is a vast and populous one, in which cyclic algebras are only a small part of the landscape. However, the existence of this recipe for constructing cyclic algebras singles them out. This is precisely why they are special: most orders seem entirely resistant to being constructible, yet these noncommutative cyclic covers can be constructed very naturally. It is for this reason that we strive to find them and, fortunately for us, although they are indeed special, there are still many of them. 

\section{Thesis Plan}
In Chapter \ref{background}, we provide a brief introduction to the theory of orders on schemes, their geometry and the notion of the ramification locus of an order. We then introduce Chan's noncommutative cyclic covering trick \cite{cyc}, in essence a recipe for constructing orders, and describe the geometry of orders constructed in this manner. 

Chapter \ref{ncy} is devoted to constructing numerically Calabi-Yau orders, which are the noncommutative analogues of surfaces of Kodaira dimension 0 (see \cite{ncy}). The major results of this chapter revolve around an implementation of both the ``surjectivity of the period map`` and the Strong Torelli theorem for K3 surfaces. To this end, we necessarily delve into the geometry of K3 surfaces: the K3 lattice, cohomology of line bundles, linear systems and the transcendental lattice. Using the surjectivity of the period map, we reverse engineer K3 surfaces which are double planes and proceed to construct numerically  Calabi-Yau orders on $\p2$ ramified on sextics. To demonstrate the versatility of the technique,  we also construct orders on $\p1\times\p1$ and $\mathbb{F}_2$. We also include a brief section on the construction of numerically Calabi-Yau orders on surfaces ruled over elliptic curves.

In Chapter \ref{os}, our focus is on constructing orders on surfaces of Kodaira dimension $>-\infty$. We devote our attention to elliptically fibred surfaces which play a major role in the classification of such surfaces. We invoke Ogg-Shafarevich theory to first construct Azumaya algebras on elliptically fibred surfaces, then extend our results to construct orders ramified on fibres of the fibration. 

In Appendix \ref{br}, we provide a brief introduction to Brauer groups of fields and schemes while in Appendix \ref{code}, we provide MATLAB code used in Chapter \ref{ncy}.

\chapter{Preliminaries}\label{background}

%
%
%
%
%
%
%
%
%
%

In the following thesis, unless explicitly stated otherwise, all schemes are noetherian, integral, normal and of finite type over an algebraically closed field $k$, ${\tmop{char}(k)=0}$. A curve (respectively surface) is a scheme of dimension 1 (respectively 2) over $k$.

For a scheme $Z$, we shall denote the generic point by $\eta$\index{$\eta$} and the function field by $K(Z)$\index{$K(Z)$}. We denote by $Z^1$\index{$Z^1$} its codimension 1 skeleton, the set consisting of all irreducible codimension one subvarieties of Z.
For any scheme morphism ${f\colon Z\to Y}$ and any point $y\in Y$, we define the fibre $Z_y\assign Z\times_Y K(y)$\index{fibre of a morphism, $Z_y$}, where $K(y)$ is the residue field at $y\in Y$.

\section{Orders}

%
\begin{definition}\label{ordef}
 An {\bf order}\index{order} $A$ on a scheme $Z$ is a coherent sheaf of $\os_Z$-algebras such that 
\begin{enumerate}
\item[(i)] $A$ is torsion free and
\item[(ii)] $K(A)\assign A\otimes_{\os_Z} K(Z)$ is a central simple $K(Z)-$algebra.
\end{enumerate}
%
\end{definition}
\begin{remark}
 The set of orders on $Z$ contained in $K(A)$ is a partially ordered set with respect to inclusion. We call an order {\bf maximal}\index{order!maximal} if it is maximal in this poset. We deal primarily with maximal orders since they are noncommutative analogues of normal schemes.
\end{remark}
\begin{remark}\label{remark}
From property (ii) in Definition \ref{ordef} and by the Artin-Wedderburn Theorem (see Theorem \ref{aw}), an order is generically isomorphic to $M_n(D),$ for some division $K(Z)-$algebra $D.$
\end{remark}

\begin{example}
Sheaves of Azumaya algebras are maximal orders. Recall that a sheaf of Azumaya algebras  is a sheaf of $\os_Z$-algebras $A$ such that the canonical homomorphism $A\otimes_{\os_Z}A^\circ\to{\mathcal End}_{\os_Z}(A)$ is an isomorphism (see Appendix \ref{br} for further details). 
\end{example}
One way of studying the geometry of orders is by looking at ramification data, which we now define for a normal order over a surface. The definition of normality for an order can be found in \cite[Def. 2.3]{mmpord}. It suffices to say that all the orders we shall be interested in are normal. In fact, in the case of orders over surfaces, maximality implies normal (\cite[Section 2]{mmpord}). For a complete treatment of the ramification of orders, see \cite[Section 2.2]{mmpord}.\index{order!ramification data}

Let $A$ be a normal order over a surface $Z$, $C\subset Z$ a prime divisor with corresponding codimension one point $c \in Z$. Letting $A_c\assign A\otimes_{\os_Z}\os_{Z,c},$ then
\begin{eqnarray*}
A_c/J(A_c) & \simeq & \prod_{i=1}^nB
\end{eqnarray*}
where $B$ is a central simple $L-$algebra and $L$ is a cyclic field extension of $K(C)$. Thus we can associate to every prime divisor $C\subset Z$ a product of cyclic field extensions
\begin{eqnarray*}
\ti{K}(C)\assign Z(A_c/J(A_c)) & \simeq & L^n.
\end{eqnarray*}
We define
\begin{eqnarray*}
e(C) & \assign & \tmop{dim}_{K(C)}\ti{K}(C)
\end{eqnarray*}
and $e(C)$ is called the {\bf ramification index}\index{order! ramification index} of $A$ at $C$. If $e(C)>1$, we say that $A$ ramifies at $C$ and we let the {\bf discrimant divisor} $D$ be the union of all divisors $C$ where $A$ ramifies. We define the ramification data of $A$ as follows: for each component $D_i$ of $D$, we set $\ti{D}_i$ to be the curve such that $\os_{\ti{D}_i}$ is the integral closure of $\od$ in $\ti{K}(D_i)$ and set $\ti{D}$ to be the disjoint union of the curves $\ti{D}_i$.
We then define the {\bf ramification data} $R(A)$ of $A$ to be $\{ \ti{D}\twoheadrightarrow D\to Z \}$. Note that if $A$ is not ramified on a divisor $C$, then $A_c$ is Azumaya over $\os_{Z,c}$. 
%
%
The following is an instructive elementary example of ramification.
\begin{example}\label{egram}
Let $\zeta$ be a primitive $n$th root of unity. Then 
\begin{eqnarray*}
A & := \frac{k[x,y]\langle u,v\rangle }{(u^n-x,v^n-y,uv-\zeta vu)}
\end{eqnarray*}
is an order on the affine plane $\tmop{Spec}k[x,y]$. $A$ is ramified solely on the coordinate axes with ramification $\tmop{Spec}k[x,u]/(u^n-x)$ over $\tmop{Spec}k[x]$ and $\tmop{Spec}k[y,v]/(v^n-y)$ over $\tmop{Spec}k[y]$.

\end{example}



\section{The Noncommutative Cyclic Covering Trick}

Chan's noncommutative cyclic covering trick is both a noncommutative generalisation of the cyclic covering trick (Construction \ref{cyc1}) and an algebro-geometric generalisation of the construction of cyclic algebras. The latter are classical objects of noncommutative algebra, which Lam describes as providing a rich source of examples of rings which exhibit different ``left'' and ``right'' behaviour (\cite{lamlec}).
\begin{example}(Cyclic Algebras)\label{cyc}\index{cyclic algebras}
Let $R$ be a commutative noetherian ring, ${\s\colon R\to R}$ an automorphism of order $n$. We form the ring $R[x;\s]$ whose elements are left polynomials $\sum_i r_ix^i$ and in which $xr=\s(r)x$ for all $r\in R$. Letting $\alpha\in R^G$ where $G=\langle\s|\s^n=1\rangle$, we can then form the quotient $A=R[x;\s]/(x^n-\alpha)$, which is known as a {\bf cyclic $S$-algebra}, where $S=R/G$.
\end{example}

In order to perform the noncommutative cyclic covering trick, we first need a noncommutative analogue of a line bundle: recall that in \cite{ncp1} Van den Bergh constructs a monoidal category of
quasi-coherent $\oy$-bimodules, where $Y$ is a scheme. The invertible objects have the form $\ls$ where $L\in \tmop{Pic}Y$ and $\sigma\in\tmop{Aut}Y$.\index{invertible bimodule} One may think of this intuitively as the $\oy$-module $L$ where
the right module structure is skewed through by $\sigma$ so that $_{\oy}L\simeq L$ and $L_{\oy}\simeq\sigma^*L$ (this ``intuitive'' description is taken pretty much verbatim from \cite{del_mod} purely because there they said it so well). For a rigorous definition, see \cite{cyc} or go straight to the source \cite{av_twist}.  We can tensor two invertible bimodules according to the following formula which we may take as definition:
\begin{eqnarray*}
\ls\otimes M_\tau & \simeq & (L\otimes\sigma^*M)_{\tau\sigma}.
\end{eqnarray*}
In the following, we use invertible bimodules to construct orders on surfaces.

\subsection{The Trick}
\index{noncommutative cyclic covering trick}
Let $Y$ be a scheme,
 $\sigma:Y\longrightarrow Y$ an automorphism of order $n$, $G=\langle\sigma|\sigma^n=1\rangle$ and $L\in\tmop{Pic}Y$. Let $C$ be an effective Cartier divisor and suppose there exists an isomorphism of invertible bimodules
\begin{eqnarray*}
\phi: L^{ n}_{\sigma} & \longrightarrowlim^{\sim} & \mathcal{O}_Y(-C)
\end{eqnarray*}
for some integer $n$. We also write $\phi$ for the composite morphism $L^n_{\sigma} \longrightarrowlim^{\sim} \mathcal{O}_Y(-C)\hookrightarrow\mathcal{O}_Y$ and consider it a relation on the tensor algebra
\begin{eqnarray*}
T(Y;\ls) & := & \bigoplus_{i\geq 0}\ls^i.
\end{eqnarray*}
There is a technical condition we shall need: we say the relation $\phi$ satisfies the {\bf overlap condition}\index{overlap condition} if the following diagram commutes:
\begin{displaymath}
 \xymatrix{
\ls \otimes_Y \ls^{n-1} \otimes_Y \ls \ar[r]^(0.6){\phi\otimes 1} \ar[d]^{1 \otimes \phi} & \oy \otimes_Y \ls \ar[d]^\wr\\
\ls \otimes_Y \oy \ar[r]^\sim                                                            & \ls
}
\end{displaymath}
We set $A(Y;\ls,\phi) \assign T(Y;\ls)/(\phi)$ and note that, by \cite[Prop. 3.2]{cyc}, if the relation $\phi:\ls^n\longrightarrow\os_Y$ satisfies the overlap condition then
\begin{eqnarray*}
A(Y;\ls,\phi) & = & \bigoplus_{i=0}^{n-1}\ls^i.
\end{eqnarray*}
This is the noncommutative cyclic cover and $A=A(Y;\ls,\phi)$\index{$A=A(Y;\ls,\phi)$} is known as a cyclic algebra (we commonly write $A(Y;\ls)$ when the relation $\phi$ is clear). 
%
%
%
Chan's first example \cite[Example 3.3]{cyc} tells us what these noncommutative cyclic covers look like generically:

\begin{eqnarray*}
K(A) & \simeq & \frac{K(Y)[z;\sigma]}{(z^n-\alpha)}, \tmop{where} \alpha \in K(Z).\\
\end{eqnarray*}
It follows from this that $Y$ is a maximal commutative quotient scheme of $A$. One of the reasons these cyclic covers are so interesting is that one can determine their geometric properties such as the ramification with relative ease.
The following results inform us that these cyclic covers are normal orders and describe the ramification in the case where $L_\s^n\simeq\oy$ and $Y$ is a surface. For the remainder of the chapter $\pi\colon Y\to Z$ will be cyclic \footnote{That is, such that Gal$(K(Y)/K(Z))$ is cyclic.} with Galois group $G$ and $\s$ the generator of $G$.
\begin{theorem}\cite[Theorem 3.6]{cyc}\label{normet}
Let $Y,Z$ be quasi-projective surfaces, $\pi\colon Y\to Z$ $n\colon 1$. Let $A=A(Y;\ls,\phi)$ be a cyclic algebra arising from a relation of the form $\phi\colon\ls^n\simeq\oy$. Assuming $\phi$ satisfies the overlap condition, then $A$ is a normal order and for $C\in Z^1$, the ramification index of $A$ at $C$ is precisely the ramification index of $\pi$ above $C$.
\end{theorem}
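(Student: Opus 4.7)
The plan is to reduce the theorem to a local computation at each codimension-one point $c\in Z$, since both normality of an order and the ramification index at a prime divisor $C$ are detectable from the localisation $A_c\assign A\otimes_{\os_Z}\os_{Z,c}$.

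The order conditions of Definition \ref{ordef} are immediate: central simplicity of $K(A)$ follows from \cite[Ex.\ 3.3]{cyc}, which identifies $K(A)$ with the classical cyclic algebra $K(Y)[z;\sigma]/(z^n-\alpha)$ over $K(Y)^G=K(Z)$; coherence and torsion-freeness on $Z$ follow from the finite push-forward presentation $A=\pi_*\bigl(\bigoplus_i\ls^i\bigr)$ with each $\ls^i$ locally free of rank one.

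For the local analysis, fix $c\in Z^1$ corresponding to $C$ and set $R=\os_{Z,c}$, a DVR with residue field $K(C)$ and uniformiser $t$, and $S=R\otimes_{\os_Z}\pi_*\oy=\prod_j\os_{Y,y_j}$, the semi-localisation over the preimages $y_j$ of $c$. Since line bundles on $\tmop{Spec}\,S$ are free, a choice of generators presents $A_c$ in the standard cyclic form $S\langle z\rangle/(zs-\sigma(s)z,\,z^n-\alpha)$ with $\sigma$ acting on $S$ compatibly with the $G$-action on $\{y_j\}$ and $\alpha\in S^G=R$; crucially, the relation $\ls^n\simeq\oy$ (rather than $\ls^n\simeq\oy(-D)$ with $D$ meeting $C$) forces $\alpha\in R^\times$. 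Writing $efg=n$ for the Galois numerics of $\pi$ above $C$ (ramification index $e$, residue degree $f$, number of preimages $g$), I would then analyse $A_c/J(A_c)$ as follows. Let $N$ be the two-sided ideal generated by $t$ together with the local uniformisers $u_j$ of the $y_j$; since each $u_j^e$ is a unit multiple of $t$, $N$ is nilpotent. In the quotient $A_c/N=(\prod k_j)\langle z\rangle/(zs-\sigma(s)z,\,z^n-\bar\alpha)$, direct centre-chasing using the inertia/decomposition filtration of $G=\langle\sigma\rangle$ shows that $w\assign z^{gf}$ is central and that $Z(A_c/N)=K(C)[w]/(w^e-\bar\alpha)$, a cyclic \'etale extension $T/K(C)$ of degree $e$. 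The algebra $A_c/N$ is then a classical cyclic algebra over $T$ with unramified upper field $T\otimes_{K(C)}\prod k_j$ and unit structure element, hence Azumaya over $T$ and in particular semisimple. This yields $J(A_c)=N$ and $e(A,C)=\dim_{K(C)}Z(A_c/J(A_c))=e=e(\pi,C)$. Normality of $A$ at $c$ then follows either from the direct observation that maximal orders over a DVR are characterised by the radical structure just exhibited, or by appealing to the classical theory of cyclic algebras over DVRs (see Reiner, \emph{Maximal Orders}, Chs.\ 5--6).

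The main obstacle is the bookkeeping when both $g>1$ and $f>1$: the decomposition and inertia subgroups of $G$ at each $y_j$ must be tracked carefully to justify the Azumaya identification of $A_c/N$ over $T$, and to rule out any extra contribution to the Jacobson radical coming from the interplay between the permutation action of $\sigma$ on factors and the twist by $\alpha$. The cleanest route is to complete at $c$, reducing to cyclic algebras over a complete DVR, where the classical structure theory handles everything uniformly.
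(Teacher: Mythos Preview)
The paper does not prove this theorem. The statement is quoted verbatim from Chan's paper \cite[Theorem 3.6]{cyc} and is immediately followed by another cited result (Theorem \ref{ram}); no proof or even proof sketch appears in the thesis itself. So there is no ``paper's own proof'' against which to compare your proposal.

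That said, your outline is a plausible reconstruction of how such a result is established: reduce to the semi-local ring over a codimension-one point, trivialise the line bundle there to obtain a classical cyclic algebra $S[z;\sigma]/(z^n-\alpha)$ with $\alpha$ a unit (this is exactly where the hypothesis $\ls^n\simeq\oy$ rather than $\ls^n\simeq\oy(-C)$ is used), and then invoke the structure theory of cyclic algebras over complete DVRs to read off both hereditariness and the ramification index. Your honest flagging of the $g>1$, $f>1$ bookkeeping is apt; passing to the completion is indeed the clean way to handle it. If you want to verify your argument, you should consult \cite{cyc} directly rather than this thesis.
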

\begin{theorem}\label{ram}\cite[Thm 3.6 and Prop. 4.5]{cyc}
 Suppose that $Y,Z$ are smooth quasi-projective surfaces and that the $n\colon 1$ quotient
map $\pi\colon Y\to Z$ is totally ramified at $D\subset Y$. Consider the cyclic algebra $A=A(Y;L_\s)$ arising
from a relation of the form $L_\s^n\simeq\oy$. Then the ramification of $A$ along $\pi(D)$ is the cyclic cover of $D$ defined by the n-torsion line bundle $L_{|D}$.
\end{theorem}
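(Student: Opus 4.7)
The plan is to compute the quotient $A_c/J(A_c)$ at the generic point $c$ of $\pi(D)\subset Z$, and then identify its integral closure with the claimed cyclic cover. Since $\pi$ is totally ramified along $D$, the generator $\s$ fixes $D$ pointwise, and $\pi$ restricts to a homeomorphism $D\to\pi(D)$ inducing an isomorphism of function fields $K(\pi(D))\simeq K(D)$. By Theorem \ref{normet} we already know the ramification index along $\pi(D)$ is $n$, so it remains only to identify the cover $\ti{D}$.

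First I would trivialise locally. Let $R=\oy_{\eta_D}$, the DVR at the generic point of $D$, with uniformiser $y$ cutting out $D$; then $\s(y)=\zeta y$ for some primitive $n$th root of unity $\zeta$. After choosing a local trivialisation $L|_{\tmop{Spec}R}\simeq R\cdot e$, the relation $\phi$ becomes $e^n=\alpha$ for a unit $\alpha\in R^\times$, while the bimodule structure gives $er=\s(r)e$ for $r\in R$. Since $\pi$ is totally ramified at $D$, one has $\pi^{-1}(c)=\{\eta_D\}$, and hence
\[
A_c \;=\; R[e;\s]/(e^n-\alpha).
\]

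Next I would compute $A_c/J(A_c)$. Because $A_c$ is finite over the local ring $R$, Nakayama forces $yA_c\subseteq J(A_c)$. The quotient $A_c/yA_c=K(D)[e]/(e^n-\bar\alpha)$ is commutative, since $\s$ acts trivially on the residue field $R/(y)=K(D)$ (as $\s$ fixes $D$ pointwise). As $\bar\alpha\in K(D)^\times$ and $\tmop{char}k=0$, the polynomial $e^n-\bar\alpha$ is separable, so this quotient is an étale $K(D)$-algebra, hence semisimple. Therefore $J(A_c)=yA_c$ and
\[
\ti{K}(\pi(D)) \;=\; Z(A_c/J(A_c)) \;=\; K(D)[e]/(e^n-\bar\alpha).
\]

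Finally I would identify $\ti{D}\to\pi(D)$ with the cyclic cover of $D$ defined by $L|_D$. Restricting $\phi$ to $D$ and using $\s|_D=\tmop{id}_D$, one sees that $(L|_D)_\s=L|_D$ as an ordinary line bundle and that $\phi|_D$ yields an isomorphism $(L|_D)^{\otimes n}\simeq\os_D$, so $L|_D$ is indeed $n$-torsion. The associated commutative cyclic cover $\tmop{Spec}_D\bigl(\os_D\oplus L|_D\oplus\cdots\oplus (L|_D)^{n-1}\bigr)$ is locally on $D$ cut out by the same equation $e^n=\bar\alpha$. On the other hand, $\ti{D}$ is by definition the integral closure of $\pi(D)$ in $\ti{K}(\pi(D))$, which by the computation above is precisely this étale extension. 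Gluing the local identifications along an affine cover of $D$ produces the required isomorphism of $D$-schemes. The main obstacle I expect is this final patching: one must check that the local twisted polynomial presentations $R[e;\s]/(e^n-\alpha)$, which depend on a choice of trivialisation of $L$, glue coherently to reproduce the globally defined cyclic cover associated to the canonical datum $(L|_D,\phi|_D)$. This is essentially a cocycle/descent check and should follow because both sides are canonically determined by $\phi|_D$, but it is the step most susceptible to trivialisation errors.
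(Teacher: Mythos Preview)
Your proposal is correct. Note that the paper does not actually prove this theorem: it is cited from \cite{cyc} without proof, and the accompanying Remark only verifies that $L_{|D}$ is $n$-torsion. However, the paper \emph{does} prove the generalisation Lemma~\ref{untot} (the non-totally-ramified case) by exactly the method you use: compute $A\otimes K(D')$ in a local trivialisation, identify the Jacobson radical as generated by the nilpotent direction, take the centre of the semisimple quotient, and read off that the generator is a local section of the relevant power of $L$ restricted to $D$. Your argument is the $d=1$ specialisation of that computation, carried out with slightly more care (you make the uniformiser $y$ and the relation $\s(y)=\zeta y$ explicit, and you justify $J(A_c)=yA_c$ via semisimplicity rather than asserting it). The gluing concern you flag at the end is handled in the paper's Lemma~\ref{untot} simply by observing that $z$ (your $e$) is a local generator for $L_{|D}$, so the local presentation is canonically the one coming from the global datum $(L_{|D},\phi_{|D})$; no further cocycle check is needed.
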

\begin{remark}
To see that the line bundle $L_{|D}\in\tmop{Pic}D$ is $n-$torsion, notice that since $\pi$ is totally ramified at $D$, $D$ is fixed by $\s$ and hence $\s^*(L_{|D})\simeq L_{|D}$. Moreover, since $L_\s^n\simeq\oy$, $(L_{|D})_\s^n\simeq\os_{D}$, which in turn means that $L_{|D}\otimes\s^*(L_{|D})\otimes\ldots\otimes(\s^*)^{n-1}(L_{|D})\simeq\os_{D}$. We conclude that $(L_{|D})^n\simeq\os_{D}$.
\end{remark}
Chan only deals with cyclic covers $\pi\colon Y\to Z$ which are totally ramified and so Theorem \ref{ram} suffices for his purposes. We shall require an analogous result concerning ramification when $\pi$ is not totally ramified. 

\begin{lemma}\label{untot}
 Assume $Y,Z$ to be smooth and that $\pi\colon Y\to Z$ is ramified at $D\subset Y$, where $D\assign\pi^{-1}(D')$, the reduced inverse image of $D'\subset Z$. We also assume that $D=\coprod_{i=1}^dD'_i$ such that the ramification index at each $D'_i$ is $m$, where $m=\frac{n}{d}$. Consider the cyclic algebra $A=A(Y;\ls)$ arising from a relation $L_\s^n\simeq\oy$. Then the ramification of $A$ along $D'$ is the cyclic cover of $D'$ defined by the m-torsion line bundle $(L\otimes\s^*L\otimes\ldots\otimes\s^{*d}L)_{|D'_i}$, for any $i\in\{1,\ldots,d\}$.
\begin{proof}
To compute the ramification of $A$ along $D'$,
we need to compute $A\otimes K(D')$: letting $z$ be a local generator for $L_{|D}$,
\begin{eqnarray*}
 A \otimes K(D') & = & \frac{(\prod_{i=1}^dK(D'))[\varepsilon,z]}{(\varepsilon^m,z^{dm}-\alpha)}, \alpha\in K(D').
\end{eqnarray*}
Thus
\begin{eqnarray*}
 (A\otimes K(D'))/J(A\otimes K(D')) & = & \frac{(\prod_{i=1}^dK(D'))[z]}{(z^{dm}-\alpha)}
\end{eqnarray*}
and
\begin{eqnarray*}
 Z((A\otimes K(D'))/J(A\otimes K(D'))) & = & \frac{K(D')[z^d]}{(z^{dm}-\alpha)}.
\end{eqnarray*}
Since $z^d$ is a local generator for $(L\otimes\s^*L\otimes\ldots\otimes\s^{*d}L)_{|D}$, the ramification is exactly as specified.

\end{proof}

\end{lemma}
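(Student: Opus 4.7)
The plan is to adapt the local computation used in Theorem \ref{ram} to the partially ramified setting by working directly from the definition of ramification data. The strategy is to compute $A\otimes_{\os_Z}K(D')$, identify its Jacobson radical, pass to the semisimple quotient, and extract the centre, which by definition yields the ramification of $A$ at $D'$.

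First, I would pin down the commutative base of the computation. At the generic point $c$ of $D'$, the semi-local ring $(\pi_*\oy)_c$ has $d$ maximal ideals, one for each component $D'_i$ of $\pi^{-1}(D')$; each completed local ring is a DVR over $\os_{Z,c}$ of ramification index $m$ and trivial residue-field extension (forced by the degree identity $n=\sum e_if_i=d\cdot m\cdot f$ combined with $m=n/d$). Since $D'_i$ and $D'$ are smooth and $D'_i\to D'$ has degree one, this map is in fact an isomorphism. Tensoring with $K(D')$ gives
\begin{eqnarray*}
(\pi_*\oy)\otimes_{\os_Z}K(D') & \simeq & \prod_{i=1}^{d}K(D')[\varepsilon]/(\varepsilon^m),
\end{eqnarray*}
where $\varepsilon$ in the $i$-th factor is the image of a local parameter at $D'_i$. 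The group $G=\langle\sigma\rangle$ acts by cyclically permuting the $d$ factors, with $\sigma^d$ the stabiliser of each factor.

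Next, I would incorporate the skew-polynomial generator. Pick a local generator $z$ of $L$ near $c$; the overlap condition applied to $\phi\colon\ls^n\simeq\oy$ yields $zr=\sigma(r)z$ and $z^n=\alpha$, where $\alpha$ is a $\sigma$-invariant unit whose image in $K(D')$ is nonzero. Hence
\begin{eqnarray*}
A\otimes K(D') & \simeq & \frac{\left(\prod_{i=1}^{d}K(D')\right)[\varepsilon,z]}{\left(\varepsilon^{m},\ z^{n}-\alpha\right)}.
\end{eqnarray*}
The two-sided ideal generated by $\varepsilon$ is nilpotent and is the Jacobson radical, so the semisimple quotient is $\prod_{i=1}^{d}K(D')[z]/(z^{n}-\alpha)$ with the skew relation $zr=\sigma(r)z$. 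A direct check shows that an element $\vec{b}z^{j}$ is central iff $\vec{b}$ is $\sigma$-invariant (so $\vec{b}$ is diagonal) and $\sigma^{j}$ fixes the whole base pointwise (forcing $d\mid j$); this identifies the centre with $K(D')[z^{d}]/(z^{dm}-\alpha)$.

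Finally, I would identify this centre with the claimed cyclic cover. By the bimodule tensor formula, $\ls^{d}\simeq(L\otimes\sigma^{*}L\otimes\cdots\otimes\sigma^{*(d-1)}L)_{\sigma^{d}}$, and since $\sigma^{d}$ fixes $D'_{i}$, the element $z^{d}$ is a local generator of the ordinary line bundle $(L\otimes\sigma^{*}L\otimes\cdots\otimes\sigma^{*(d-1)}L)|_{D'_{i}}$. The relation $(z^{d})^{m}=\alpha$ then exhibits this line bundle as $m$-torsion and the centre as the cyclic $m$-cover of $D'$ it determines, which is exactly the claimed ramification. The step I expect to be the main obstacle is the centre computation: one must carefully disentangle how $\sigma$ interleaves the cyclic shift of the $d$ idempotents in the base with the skew relation imposed by $z$, and verify that no ``mixed'' invariants appear besides the polynomials in $z^{d}$ with diagonal coefficients.
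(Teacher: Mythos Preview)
Your proposal is correct and follows essentially the same route as the paper: compute $A\otimes K(D')$, kill the nilpotent $\varepsilon$ to pass to the semisimple quotient, and identify the centre as $K(D')[z^d]/(z^{dm}-\alpha)$ with $z^d$ a local generator of the claimed line bundle. Your write-up is in fact more thorough than the paper's --- you justify the structure of $(\pi_*\oy)\otimes K(D')$ and the centre computation where the paper simply asserts them --- and your index $\sigma^{*(d-1)}$ in the last tensor factor is the correct one (the paper's $\sigma^{*d}$ appears to be a typographical slip).
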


The following lemma will allow us to verify when the cyclic covering trick produces maximal orders.
\begin{lemma}\label{max}
A cyclic algebra $A$ on $Z$ is maximal if for all irreducible components $D_i$ of the discriminant divisor $D$, $\ti{D_i}$ is irreducible.
\begin{proof}
In \cite[Theorem 1.5]{ausgold}, we see that $A$ is maximal if it is reflexive and maximal over codimension 1 points. Cyclic algebras are reflexive by \cite[Cor. 3.4]{cyc}. Moreover, if $A$ is not ramified at $c\in Z^1$, $A\otimes\os_{Z,c}$ is Azumaya over $\os_{Z,c}$, implying maximality. Thus it suffices to check maximality over the ramification locus of $A$. From \cite[Theorem 2.3]{ausgold}, $A_c$ is maximal if it is hereditary and $A_c/J(A_c)$ is a simple $A_c$-algebra. Since $A$ is normal, it is hereditary in codimension 1 and from the discussion preceding Example \ref{egram}, we know that $A_c/J(A_c)\simeq\prod_{i=1}^n B$, where $B$ is a central simple $L$-algebra and $L$ is a cyclic field extension of $K(D_i)$. Then $\ti{K}(D_i)=L^n$ and $\ti{D_i}$ is irreducible implies that $n=1$. It follows that $A_c/J(A_c)\simeq B$ is a central simple $L$-algebra, implying $A_c/J(A_c)$ simple and hence maximal.
\end{proof}
\end{lemma}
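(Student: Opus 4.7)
The plan is to exploit the local-global nature of maximality for orders on normal surfaces: it suffices to verify that $A$ is reflexive and that each localization $A_c$ at a codimension one point $c \in Z^1$ is maximal. Reflexivity of cyclic algebras is a structural fact already proved in \cite{cyc}, so the real content is the local analysis at the codimension one points.

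First I would dispose of the unramified points: if $A$ is unramified at $c$, then $A_c$ is Azumaya over $\os_{Z,c}$, and Azumaya algebras over local rings are automatically maximal. So the whole question collapses to checking maximality at codimension one points $c$ lying on the discriminant divisor $D$.

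At such a $c$ on a component $D_i$, I would invoke the Auslander--Goldman criterion that $A_c$ is maximal provided it is hereditary and $A_c/J(A_c)$ is a simple ring. Normality of the cyclic algebra (Theorem \ref{normet}) gives hereditary behaviour in codimension one, so it remains to see that $A_c/J(A_c)$ is simple. From the setup preceding Example \ref{egram}, we know that
\begin{eqnarray*}
A_c/J(A_c) & \simeq & \prod_{i=1}^n B
\end{eqnarray*}
with $B$ central simple over a cyclic extension $L/K(D_i)$ and $\ti{K}(D_i) \simeq L^n$. Here the number of factors $n$ is exactly the number of irreducible components of the normalisation $\ti{D_i}$, since $\os_{\ti{D_i}}$ is by definition the integral closure of $\od$ in $\ti{K}(D_i)$. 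Thus the hypothesis that $\ti{D_i}$ is irreducible forces $n = 1$, so $A_c/J(A_c) \simeq B$ is simple, and maximality follows.

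The conceptually delicate step—and the place where I expect to need care—is linking the irreducibility of $\ti{D_i}$ to the number of factors in the semisimple quotient $A_c/J(A_c)$. Everything else is essentially citation: reflexivity from \cite{cyc}, the Azumaya reduction at unramified points, and the Auslander--Goldman hereditary/simple criterion. Once the dictionary between components of $\ti{D_i}$ and factors of $\ti{K}(D_i)$ is made explicit via the definition of $\ti{D_i}$ as the normalisation of $D_i$ in $\ti{K}(D_i)$, the argument closes immediately.
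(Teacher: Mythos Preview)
Your proposal is correct and follows essentially the same route as the paper's own proof: reduce maximality to reflexivity plus maximality at codimension one points, dispose of unramified points via the Azumaya property, and at ramified points apply the Auslander--Goldman hereditary/simple criterion, using normality for hereditariness and the decomposition $A_c/J(A_c)\simeq\prod B$ together with the irreducibility hypothesis to force a single factor. Your explicit remark that the number of factors equals the number of components of $\ti{D_i}$ is the only addition, and it simply makes transparent what the paper leaves implicit.
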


To construct these cyclic algebras, we are interested in finding invertible bimodules $L_\s$ such that $\ls^n\simeq\oy(-C)$. Our first objects of interests are relations of the form $L_\s^n\longrightarrowlim^\sim\oy$, to which end we define $Rel_{io}$ to be the set of all (isomorphism classes of) relations $\phi:L_\s^n\longrightarrowlim^\sim\oy$ which are isomorphisms and satisfy the overlap condition. The tensor product endows $Rel_{io}$ with an abelian group structure as follows: given two relations $\phi\colon\ls^n\to\oy$, $\psi\colon M_{\sigma}^n\to\oy$, we define their product to be the relation
\begin{eqnarray*}
\phi\otimes\psi\colon (L\otimes_{\oy}M)_\s^n\longrightarrowlim^\sim \ls^n\otimes M_{\sigma}^n\longrightarrowlim^{\phi\otimes\psi}\oy\otimes\oy=\oy.
\end{eqnarray*}
There exists a subgroup $E$ of $Rel_{io}$ which will soon play an important role. Its definition is technical and we need not state it here. It suffices to say that elements of $Rel_{io}$\index{$Rel_{io}$} in the same coset of $E$ result in Morita equivalent cyclic algebras, that is, cyclic algebras with equivalent module categories. We refer the reader to the discussion following Corollary 3.4 of \cite{cyc}. 
\begin{remark}
 From the discussion preceding \cite[Lemma 3.5]{cyc}, we see that relations of the form ${\ls^n\simeq\oy}$ can be classified using cohomology. We consider $\pic Y$ as a $G$-set and recall that since $G$ is cyclic, the group cohomology of any $G-$set $M$ can be computed as the cohomology of the periodic sequence
\begin{eqnarray*}
\ldots\longrightarrowlim^N M \longrightarrowlim^D M\longrightarrowlim^N M \longrightarrowlim^D\ldots
\end{eqnarray*}
where $N=(1+\s+\ldots\s^{n-1}), D=(1-\s).$
Thus 1-cocycles of the $G$-set $\tmop{Pic}Y$ are precisely invertible bimodules $\ls$ such that ${\ls^n\simeq\oy}$. Moreover, from \cite[Lemma 3.5]{cyc}, we have a group homomorphism $f \colon Rel_{io}/E \to H^1(G,\tmop{Pic}Y)$ which sends a relations $\phi \colon \ls^n\simeq \oy$ to the class $\lambda \in \hh$ of the $1$-cocycle $L$.
\end{remark}

The following result will prove invaluable in retrieving elements of $Rel_{io}/E$ and thus constructing cyclic algebras.

\begin{proposition}\label{prop}\cite[Lemma 3.5, Theorem 3.9]{cyc}
Suppose that $Z$ is smooth and $\pi\colon Y\to Z$ is \'etale. Then the following sequence is exact:
\begin{eqnarray*}
0\lrw(\tmop{Pic}Y)^G\lrw H^2(G,\os(Y)^*)\lrw Rel_{io}/E\lrw H^1(G,\tmop{Pic}Y) \lrw H^3(G,\os(Y)^*).
\end{eqnarray*}
Moreover, $Rel_{io}/E\simeq\tmop{Br}(Y/Z)$.
\end{proposition}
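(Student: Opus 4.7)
The plan is to derive the exact sequence from the Hochschild--Serre spectral sequence
\[
E_2^{p,q} = H^p(G, H^q(Y,\mathbb{G}_m)) \Rightarrow H^{p+q}(Z,\mathbb{G}_m)
\]
for the \'etale Galois cover $\pi\colon Y\to Z$ with cyclic group $G$. Reading off low-degree terms gives $H^0(Y,\mathbb{G}_m)=\os(Y)^*$, $H^1(Y,\mathbb{G}_m)=\tmop{Pic}Y$, and $H^2(Y,\mathbb{G}_m)$ is the cohomological Brauer group of $Y$. The classical five-term exact sequence of this spectral sequence yields
\[
0\to H^1(G,\os(Y)^*)\to\tmop{Pic}Z\to(\tmop{Pic}Y)^G\to H^2(G,\os(Y)^*)\to\ker(\br Z\to\br Y),
\]
whose rightmost term is $\br(Y/Z)$ by definition; exactness at $(\tmop{Pic}Y)^G\to H^2(G,\os(Y)^*)$ already supplies the opening segment of the stated sequence.

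To continue beyond the first five terms, I would exploit the $2$-periodicity of cyclic group cohomology. The differential $d_2\colon E_2^{1,1}\to E_2^{3,0}$ provides the map $H^1(G,\tmop{Pic}Y)\to H^3(G,\os(Y)^*)$ whose kernel is $E_\infty^{1,1}$, and the $E_\infty$-filtration of $H^2(Z,\mathbb{G}_m)$ splices $H^2(G,\os(Y)^*)\to H^2(Z,\mathbb{G}_m)$ to $H^2(Z,\mathbb{G}_m)\to\ker\bigl(H^1(G,\tmop{Pic}Y)\to H^3(G,\os(Y)^*)\bigr)$. After identifying the middle term of the resulting seven-term sequence with $Rel_{io}/E$, this is precisely the exact sequence in the statement.

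The crux is therefore the isomorphism $Rel_{io}/E\simeq\br(Y/Z)$ together with the compatibility of the two connecting maps. One half is already in place: the remark preceding the proposition supplies $f\colon Rel_{io}/E\to H^1(G,\tmop{Pic}Y)$ sending $\phi\colon\ls^n\simeq\oy$ to the class of $L$ as a $1$-cocycle. For the Brauer identification, I would note that any cyclic algebra $A=A(Y;\ls,\phi)$ is generically the classical cyclic algebra $K(Y)[z;\sigma]/(z^n-\alpha)$ split by $K(Y)$, and that the \'etale hypothesis on $\pi$ together with the overlap condition forces $A$ to be Azumaya on $Z$; hence its class lies in $\br(Y/Z)$. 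Surjectivity onto $\br(Y/Z)$ follows from the classical fact that every element of $\br(Y/Z)$ admits a crossed-product representative, which for cyclic $G$ is a cyclic algebra; injectivity follows because $E$ is defined precisely to capture Morita-trivial cyclic covers.

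The main obstacle will be the cocycle-level bookkeeping needed to show that the spectral sequence boundary $H^2(G,\os(Y)^*)\to Rel_{io}/E$ coincides, up to sign, with the construction of a cyclic algebra from a $2$-cocycle via the overlap condition, and that $f$ agrees with the spectral sequence edge map $Rel_{io}/E\to H^1(G,\tmop{Pic}Y)$. Both demand tracking how a representative $2$-cocycle $\alpha\in Z^2(G,\os(Y)^*)$ assembles the bimodule $\ls$ and relation $\phi$, and checking that modifying $\alpha$ by a coboundary corresponds exactly to the equivalence relation defining $E$.
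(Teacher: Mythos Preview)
The paper does not supply its own proof of this proposition; it is cited wholesale from Chan's paper \cite{cyc} (Lemma 3.5 and Theorem 3.9 there), so there is no argument in the present paper to compare against.

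Your approach via the Hochschild--Serre spectral sequence for the \'etale $G$-cover $\pi\colon Y\to Z$ is the standard one and almost certainly matches what is done in \cite{cyc}. The seven-term extension of the five-term sequence, with $d_2\colon H^1(G,\tmop{Pic}Y)\to H^3(G,\os(Y)^*)$ as the final differential, is exactly right, and your identification of the middle term with $\br(Y/Z)$ via cyclic crossed products is the expected mechanism. The paper itself deploys the same spectral sequence elsewhere (in the proofs of Lemma \ref{inj} and Proposition \ref{ker}), so your method is fully consistent with its toolkit.

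One minor point: the leading $0\to(\tmop{Pic}Y)^G$ in the stated sequence is a quirk of the transcription --- the map $(\tmop{Pic}Y)^G\to H^2(G,\os(Y)^*)$ is the transgression $d_2^{0,1}$, whose kernel is the image of $\tmop{Pic}Z$, not zero. Your derivation produces the honest seven-term sequence, which is what is actually used downstream (e.g.\ in Lemma \ref{exact}), so this does not affect your argument.
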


%
%
%
%
%
%
The following proposition allows us to verify easily in certain cases that relations satisfy the overlap condition.
\begin{proposition}\cite[Prop. 4.1]{cyc}\label{ol}
Suppose that $Y$ is smooth and quasi-projective, $\pi\colon Y \to Z$ $n\colon 1$, $\os(Y)^*=k^*$ (this last condition holds, for example, if $Y$ is a projective variety), and the lowest common multiple of the ramification indices of $\pi\colon Y\to Z$ is $n$. Then all relations constructed from elements of $H^1(G,\tmop{Pic}Y)$ satisfy the overlap condition.
\end{proposition}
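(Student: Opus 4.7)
The plan is to reduce the overlap condition to a single scalar equation $c=1$ in $k^{\ast}$, then extract constraints on $c$ from the local geometry at each ramified divisor, and finally combine these constraints via the lcm hypothesis.

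\emph{Step 1 (reduction to a scalar).} The two parallel arrows of the overlap diagram are isomorphisms of the invertible bimodule $\ls$, and therefore differ by a global automorphism of $\ls$, i.e.\ by multiplication by an element of $\os(Y)^{\ast}$. The hypothesis $\os(Y)^{\ast}=k^{\ast}$ collapses this to a single scalar $c\in k^{\ast}$, and the overlap condition becomes the equation $c=1$.

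\emph{Step 2 (local analysis at ramified divisors).} For each ramified divisor $D_i \subseteq Y$ with ramification index $e_i$, I would work in the completion of $Y$ at the generic point of $D_i$, where the inertia subgroup $I_i\subseteq G$ has order $e_i$ and the $G$-action on the completion factors through $I_i$. Locally $\ls$ and $\phi$ admit an explicit description coming from the ramification: the underlying line bundle $L$ is trivial on the completion, and the relation can be written in cyclic-algebra form $R[z;\s]/(z^{n}-\alpha)$ with $\alpha$ a local unit. Expanding both sides of the overlap diagram in this local model, the inertial piece of order $e_i$ assembles into an honest associative cyclic subalgebra, so the only possible obstruction is concentrated in the quotient $G/I_i$ of order $n/e_i$; this should yield a numerical constraint of the form $c^{n/e_i}=1$.

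\emph{Step 3 (combining the constraints).} The hypothesis $\mathrm{lcm}(e_i)=n$ is equivalent to $\gcd(n/e_i)=1$. Combined with the local equations $c^{n/e_i}=1$ from Step 2, a B\'ezout argument in the exponents forces $c=1$, which is exactly the overlap condition.

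\emph{Main obstacle.} The key technical step is Step 2: choosing an $I_i$-compatible local trivialization of $\ls$ near the generic point of each $D_i$, writing $\phi$ in that trivialization, and tracking with care how the global scalar $c$ appears in the two sides of the overlap diagram locally. The possibility that $L$ is only trivial after restriction to the formal neighborhood, together with the need to make the identification $\ls^{n}\simeq\oy$ compatible with the inertia, is where all the real work lies; once the local exponent $n/e_i$ is correctly identified at each $D_i$, the lcm hypothesis plays only a combinatorial role, pooling the scattered local information into the single global equality $c=1$.
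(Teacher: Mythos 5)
Your three-step skeleton is exactly that of the proof of \cite[Prop.~4.1]{cyc} (the thesis itself quotes this proposition without reproving it): reduce the overlap obstruction to a single scalar $c\in k^{*}$, extract a constraint $c^{n/e_i}=1$ from each ramified divisor, and finish with $\mathrm{lcm}(e_i)=n\Leftrightarrow\gcd(n/e_i)=1$ and B\'ezout. Steps 1 and 3 are correct as written (one even gets $c^{n}=1$ for free, which subsumes the unramified divisors). The genuine gap is Step 2, in two respects. First, its setup is wrong: the $G$-action on the completion at the generic point $\eta_i$ of $D_i$ does not ``factor through $I_i$'' --- in fact $G$ does not act on that completion at all. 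Only the decomposition group of $D_i$ stabilizes $\eta_i$; a general power of $\s$ carries $D_i$ to another component of $\pi^{-1}(\pi(D_i))$, and $I_i$ is the subgroup of the decomposition group acting trivially on $K(D_i)$. So the proposed local model $R[z;\s]/(z^{n}-\alpha)$ with all of $G$ acting is unavailable, and the claim that ``the obstruction is concentrated in $G/I_i$'' is not an argument: $G/I_i$ acts on nothing relevant here, and no mechanism actually producing the exponent $n/e_i$ is supplied --- you say as much yourself.

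What fills the gap is much lighter than the machinery you anticipate: no trivialization of $L$, no completion, no cyclic-algebra normal form. Encode the relation as an $\oy$-module isomorphism $\psi\colon N\to\oy$, where $N\assign L\otimes\s^{*}L\otimes\cdots\otimes\s^{(n-1)*}L$. Under the canonical cyclic-shift identification $\s^{*}N\simeq N$ (using $\s^{n}=\mathrm{id}_Y$), the two legs of the overlap diagram compare as $\s^{*}\psi=c\,\psi$ with $c\in\os(Y)^{*}=k^{*}$, and iterating gives $\s^{d*}\psi=c^{d}\psi$ for $d\assign n/e_i$. Now take a general point $y\in D_i$: its stabilizer is exactly the inertia group $I_i=\langle\s^{d}\rangle$ (the decomposition group modulo inertia acts faithfully on $D_i$, so it moves a general point), whence $\s^{d}(y)=y$ and both sides of $\s^{d*}\psi=c^{d}\psi$ are trivializations of the same one-dimensional fibre $N(y)$. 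The shift identification merely permutes the one-dimensional tensor factors $L(\s^{j}y)$, and a permutation of ungraded one-dimensional spaces is the identity --- no signs, no discrepancy. Evaluating therefore yields $c^{d}=1$, which is precisely your constraint $c^{n/e_i}=1$; note this is the second place the hypothesis $\os(Y)^{*}=k^{*}$ is used, since $c$ must be constant for pointwise evaluation to pin it down globally. With this substitution for Step 2, your Step 3 closes the proof along exactly the intended lines.
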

The following remark will ensure that many of the orders constructed herein are nontrivial in $\tmop{Br}(K(Z))$.
\begin{remark}\cite[Cor 4.4]{cyc}\label{inbr}
Assuming $Y$ to be smooth and projective, if $\pi\colon Y\to Z$ is totally ramified at an irreducible divisor $D\subset Y$, we have a embedding $\Psi\colon\hh\hookrightarrow\tmop{Br}(K(Y)/K(Z))$ given by the following: let $L\in\tmop{Pic}Y$ represent a $1$-cocycle, $\phi$ the corresponding relation, which is unique up to scalar multiplication; Then $\Psi(L)\assign K(A(Y;\ls,\phi))\in\tmop{Br}(K(Y)/K(Z))$.
\end{remark}
%

%
%
%


\chapter{Constructing Numerically Calabi-Yau orders}\label{ncy}

In this chapter, unless explicitly stated otherwise, all schemes are defined over $\mathbb{C}$ and all sheaf cohomology is complex analytic.
\section{Numerically Calabi-Yau Orders on $\p2$}

In \cite{ncy}, Chan and Kulkarni classify numerically Calabi-Yau orders on surfaces, which are the noncommutative analogues of surfaces of Kodaira dimension zero and which we now define here.

\begin{definition}\cite{ncy}\index{order!numerically Calabi-Yau}
Let $A$ be a maximal order on a surface $Z$ with ramification curves $D_i$ and corresponding ramification indices $e_i$. Then the {\bf canonical divisor}\index{order!canonical divisor} $K_A\in\tmop{Div}Z$ is defined by
\begin{eqnarray*}
 K_A & \assign & K_Z+\sum\left(1-\frac{1}{e_i}\right)D_i.
\end{eqnarray*}
We say an order $A$ is {\bf numerically Calabi-Yau} if $K_A$ is numerically trivial.
\end{definition}
In this chapter, we construct examples of numerically Calabi-Yau orders on rational surfaces, beginning with $\p2$. 
The first interesting numerically Calabi-Yau orders on $\p2$ are those ramified on a smooth sextic with ramification index 2. In the following, we construct {\bf quaternion orders} (that is, orders of rank $4$)\index{order!quaternion} on $\p2$ with the desired ramification.

\subsection{Constructing orders ramified on a sextic}\label{sextic}

We wish to construct quaternion orders on $\p2$ ramified on a sextic $C$. In \cite[Example 9.2]{cyc}, Chan does this using a K3 double cover of $\p2$ ramified on $C$ and we wish to take the same approach. However, our plan of attack is to reverse engineer the $K3$ surface. We first construct a $K3$ surface $Y$ with a desired Picard lattice and then construct an automorphism $\tau$ of $\tmop{Pic}Y$ which we show is induced by an automorphism $\s$ of $Y.$ We shall make our choices such that $H^1(G,\tmop{Pic}Y)$ is non-trivial, where $G=\langle\s|\s^2=1\rangle,$ and this will allow us to construct orders on $Y/G$, which by construction will be the projective plane. In contrast to Chan's approach, we shall be able to explicitly compute $H^1(G,\tmop{Pic}Y).$ To this end, we necessarily digress on the $K3$ lattice $\Ll$ and related phenomena.

\subsubsection{K3 surfaces}
The following results concerning K3 surfaces can be found in \cite{bpv}, unless stated otherwise. Recall that a {\bf K3 surface}\index{surface, K3} $Y$ is defined to be a smooth, projective surface with trivial canonical bundle such that $h^1(Y,\oy)=0.$\index{K3 surface} This is, in fact, one of many equivalent definitions. 
Triviality of the canonical bundle implies there exists a nowhere vanishing holomorphic $2-$form $\omega_Y$, unique up to multiplication by a scalar and known as the {\bf period}\index{K3 surface! period of}\index{$\omega_Y$} of $Y$.
We are interested in the K3 lattice $\Ll$, which we now define.
\begin{definition}\index{lattice}
A {\bf lattice} $(L,\langle,\rangle)$ is a finitely generated free $\mb{Z}$-module $L$ equipped with an integral, symmetric bilinear form $\langle,\rangle$. We say a sublattice $M$ of $L$ is {\bf primitive}\index{lattice!primitive} if $L/M$ is torsion free.
\end{definition}
\begin{example}\index{lattice! K3}
For any K3 surface $Y$, $H^2(Y,\mb{Z})\simeq\mb{Z}^{22}$ and, equipped with the cup product, is a lattice isomorphic to
\begin{eqnarray*}
\Lambda & := & \mb{E}\perp\mb{E}\perp\mb{H}\perp\mb{H}\perp\mb{H},
\end{eqnarray*}

where $\mb{E}\simeq\mb{Z}^8$ with bilinear form given by the matrix
$$
\left( \begin{array}{cccccccc}
-2 &  &  & 1 &\\
 & -2 & 1\\
 & 1 & -2 & 1\\
1 &  &  1 & -2 & 1\\
 &  &  & 1 & -2 & 1\\
 &  &  &  &  1 & -2 & 1\\
 &  &  &  &  &  1 & -2 & 1\\
 &  &  &  &  &  &  1 & -2\\
\end{array} \right)
$$

and $\mb{H}\simeq\mb{Z}^2$ with bilinear form given by

$$
\left( \begin{array}{cc}
0 & 1\\
1 & 0
\end{array} \right).
$$
We call $\Ll$ the K3 lattice, $\mb{H}$ the hyperbolic plane\index{lattice!hyperbolic plane}. In the following, we let $\{\lambda_1,\ldots,\lambda_8\}$ and $\{\lambda'_1,\ldots,\lambda'_8\}$ generate the first and second copies of $\mb{E}\subset\Ll$ respectively. We also let $\{\mu_1,\mu_2\}$ be generators for the first copy of $\mb{H},$ $\{\mu'_1,\mu'_2\}$ generators of the second and $\{\mu''_1,\mu''_2\}$ generators of the third.
\end{example}
Recall that corresponding to the exponential sequence 
\begin{eqnarray*}
 0\to\mb{Z}\to\oy\to\oy^*\to 0
\end{eqnarray*}
 we have a long exact sequence 
\begin{eqnarray*}
\ldots\lrw H^1(Y,\oy)\to H^1(Y,\oy^*)\to H^2(Y,\mb{Z})\lrw\ldots
\end{eqnarray*}
Since $H^1(Y,\oy)$ vanishes for K3 surfaces, we have an injection $\tmop{Pic}Y\hookrightarrow H^2(Y,\mb{Z})$ and this is a primitive embedding.
For K3 surfaces, $\tmop{Pic}Y\simeq\tmop{NS}(Y),$ the Neron-Severi lattice. In the following, many of the results we state about $\tmop{Pic}Y$ are stated about $\tmop{NS}(Y)$ in the literature.
We define the {\bf transcendental lattice}\index{lattice!transcendental}\index{$T_Y$} ${T_Y\assign(\tmop{Pic}Y)^\perp}$, which is also a primitive sublattice of $H^2(Y,\mb{Z}).$ The following lemma will prove very useful shortly.

\begin{lemma}\label{pictembed}
 $H^2(Y,\mb{Z})$ contains an isomorphic copy of $\tmop{Pic}Y\oplus T_Y$ as a sublattice and $H^2(Y,\mb{Z})/(\tmop{Pic}Y\oplus T_Y)$ is a torsion $\mb{Z}-$module.
\begin{proof}
Let $L\in\tmop{Pic}Y\cap T_Y.$ Then $L\cdot M=0$ for all $M\in\tmop{Pic}Y$. However, on a K3 surfaces, the only line bundle numerically equivalent to $\oy$ is $\oy$ itself, implying $L=0\in H^2(Y,\mb{Z}).$ Thus the lattice $\tmop{Pic}Y\oplus T_Y$ embeds in $H^2(Y,\mb{Z}).$ Since the cup product is non-degenerate, rank$(H^2(Y,\mb{Z}))=$ rank$(\tmop{Pic}Y)+$ rank$(T_Y)$ and the result follows.
\end{proof}
\end{lemma}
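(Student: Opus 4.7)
The plan is to prove the two assertions—injectivity of the natural map $\tmop{Pic} Y \oplus T_Y \to H^2(Y,\mathbb{Z})$ and finiteness of its cokernel—separately, using essentially the K3 hypotheses and non-degeneracy of the cup product.

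First I would argue that the image of $\tmop{Pic} Y \oplus T_Y$ really is a direct sum inside $H^2(Y,\mathbb{Z})$, i.e.\ that $\tmop{Pic} Y \cap T_Y = 0$. Any $L$ in this intersection would, by definition of $T_Y = (\tmop{Pic} Y)^\perp$, be a line bundle orthogonal to every class in $\tmop{Pic} Y$, and in particular orthogonal to itself and to every ample class, so it is numerically equivalent to $\oy$. The key input here is the standard K3 fact that $\tmop{Pic} Y \simeq \tmop{NS}(Y)$ is torsion free and numerical equivalence coincides with linear equivalence, so $L \simeq \oy$, i.e.\ $L = 0$ in $H^2(Y,\mathbb{Z})$. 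This gives the embedding as a sublattice.

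Next I would show that the quotient is a torsion $\mathbb{Z}$-module by checking that the sublattice has full rank $22$. Since $H^2(Y,\mathbb{Z})$ is a unimodular (hence non-degenerate) lattice, the rank of an orthogonal complement satisfies $\tmop{rank}(T_Y) = \tmop{rank}(H^2(Y,\mathbb{Z})) - \tmop{rank}(\tmop{Pic} Y) = 22 - \tmop{rank}(\tmop{Pic} Y)$. Adding, $\tmop{rank}(\tmop{Pic} Y \oplus T_Y) = 22 = \tmop{rank}(H^2(Y,\mathbb{Z}))$, and a sublattice of full rank in a free finitely generated $\mathbb{Z}$-module has torsion quotient.

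The only non-routine step is the injectivity part, and even there the real work is done by the two classical facts about K3 surfaces (triviality of the torsion of $\tmop{NS}(Y)$ and equality of numerical and linear equivalence). So I do not expect any genuine obstacle; the lemma is essentially a formal consequence of the primitive embedding $\tmop{Pic} Y \hookrightarrow H^2(Y,\mathbb{Z})$ together with non-degeneracy of the cup product.
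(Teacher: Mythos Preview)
Your proof is correct and follows essentially the same approach as the paper: you show $\tmop{Pic}Y\cap T_Y=0$ using the fact that on a K3 surface numerical and linear equivalence coincide, and then deduce full rank from non-degeneracy of the cup product. The only difference is that you spell out a few details (unimodularity, the explicit rank count) that the paper leaves implicit.
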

The fact that $K_Y$ is trivial for a K3 surface $Y$ makes studying the geometry of K3 surfaces far simpler than would otherwise be. The following results, concerning line bundles and curves on K3 surfaces, demonstrate this and will be made use of shortly.

\begin{proposition}\label{rr}
 Let $L$ be a line bundle on a K3 surface $Y$ such that $L\cdot L\geq-2$. Then either $L$ or $L^{-1}$ is an effective class, that is, either $h^0(L)>0$ or $h^0(L^{-1})>0$. Moreover, if $L\cdot L=-2$ and $h^0(L)>0$, there is a unique effective divisor $D$ such that $L\simeq \oy(D)$.
\begin{proof}
This is \cite[Chap. VIII, Prop 3.6]{bpv}.
\end{proof}
\end{proposition}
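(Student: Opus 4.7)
The proposition has two parts: existence of an effective representative of $L$ or $L^{-1}$, and uniqueness of the effective divisor when $L \cdot L = -2$. Both statements follow from Riemann--Roch on a K3 surface combined with Serre duality, exploiting the triviality of $K_Y$ and the values $h^1(\oy)=0$, $h^0(\oy)=h^2(\oy)=1$.

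\textbf{Existence.} Riemann--Roch on a surface reads $\chi(L) = \chi(\oy) + \tfrac{1}{2}L \cdot (L - K_Y)$. On a K3 surface $\chi(\oy)=2$ and $K_Y=0$, so $\chi(L) = 2 + \tfrac{1}{2} L^2 \geq 1$ under the hypothesis $L \cdot L \geq -2$. Serre duality with $K_Y = 0$ gives $H^2(Y,L) \simeq H^0(Y,L^{-1})^{*}$. Therefore $h^0(L) + h^0(L^{-1}) \geq \chi(L) \geq 1$, so at least one of $L$, $L^{-1}$ has a non-zero section.

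\textbf{Uniqueness.} Effective divisors $D$ with $\oy(D) \simeq L$ correspond bijectively to lines in $H^0(L)$, so uniqueness is equivalent to $h^0(L) = 1$. The hypothesis $L \cdot L = -2$ forces $L \not\simeq \oy$; hence if $h^0(L) > 0$ then $L^{-1}$ cannot be effective, giving $h^0(L^{-1}) = 0$. Riemann--Roch now collapses to $h^0(L) = 1 + h^1(L)$, and the task is reduced to proving the vanishing $h^1(L) = 0$.

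\textbf{The vanishing (main obstacle).} Pick any effective $D$ with $L \simeq \oy(D)$ and feed the short exact sequence $0 \to \oy \to \oy(D) \to \oy(D)|_D \to 0$ into the long exact sequence of cohomology. Using $H^1(\oy) = 0$, the vanishing $h^1(L) = 0$ is equivalent to $H^0(\oy(D)|_D) = 0$. Since $\deg \oy(D)|_D = D^2 = -2 < 0$, the vanishing is immediate when $D$ is irreducible: adjunction forces $D$ to be a smooth rational $(-2)$-curve, on which a line bundle of negative degree has no global sections. The subtle case, which I expect to be the main obstacle, is reducible $D$: here $D^2 = -2$ and effectiveness on a K3 constrain the support of $D$ to be a configuration of $(-2)$-curves on which the intersection form is negative definite (an ADE-type configuration). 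I would handle this by induction on the number of irreducible components: select a component $C$ of $D$ with $D \cdot C < 0$ (guaranteed by the negative-definiteness), apply the sequence $0 \to \oy(D - C) \to \oy(D) \to \oy(D)|_C \to 0$ together with $H^0(\oy(D)|_C) = 0$ (degree negative on $C \simeq \mathbb{P}^1$) to identify $H^0(\oy(D)) \simeq H^0(\oy(D - C))$, and then invoke the inductive hypothesis on the strictly smaller effective divisor $D - C$.
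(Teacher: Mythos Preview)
Your proof of the first assertion via Riemann--Roch and Serre duality is correct and is the standard argument the paper is invoking by citing \cite{bpv}.

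The second assertion, however, is false as stated, so no argument can close, and yours breaks at a specific point. Counterexample: let $Y$ be an elliptically fibred K3 with fibre class $F$ and a reducible fibre containing a $(-2)$-curve $C$ with $F\cdot C=0$. Then $D=F+C$ is effective and $D^2=-2$, yet $h^0(\oy(D))\geq h^0(\oy(F))=2$, since $F'+C\in|D|$ for every $F'\in|F|$. Running your induction on this $D$, you correctly find a component with negative intersection ($D\cdot C=-2<0$), peel it off, and reduce $H^0(\oy(D))$ to $H^0(\oy(D-C))=H^0(\oy(F))$; but now $(D-C)^2=0$, so the inductive hypothesis no longer applies, and indeed $h^0(\oy(F))=2$. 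The underlying error is your premise that an effective divisor with $D^2=-2$ on a K3 is supported on a negative-definite ADE configuration of $(-2)$-curves: components of non-negative self-intersection, such as the elliptic fibre $F$, can occur. What \emph{is} true, and what the paper actually needs, is that an \emph{irreducible} curve $C$ with $C^2=-2$ satisfies $h^0(\oy(C))=1$; your argument in that case is correct. In the paper's applications (Remark~\ref{nodal}, Example~\ref{orsol_mat}) irreducibility of each $S_i$ is established separately via intersection with an ample class, after which uniqueness follows from your irreducible-case argument.
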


\begin{remark}\label{k3curve}
 For an irreducible smooth curve $C$ on a K3 surface, the adjunction formula for curves on a surface yields $g(C)=C^2/2+1$. Thus $C^2\geq -2$ and if $C^2=-2$, $C$ is necessarily rational. In this case we call $C$ a {\bf nodal curve}\index{nodal curve} since we can blow it down, but only at the expense of creating a nodal singularity.
\end{remark}

The following result, which is a formulation of the "surjectivity of period map"(\cite[Chap. VIII, Sec. 14]{bpv}), will allow us to construct $K3$ surfaces with our desired Picard lattices:
\begin{proposition}\label{mor}\cite[Cor. 1.9]{mor84}
Let $S\longhookrightarrow\Ll$ be a primitive sublattice with signature $(1,\rho-1),$ where $\rho=\tmop{rank}(S).$ Then there exists a K3 surface $Y$ and an isometry ${\tmop{Pic}Y\simeq S}.$ 
\end{proposition}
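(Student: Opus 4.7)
The plan is to invoke the surjectivity of the period map and cut down the period carefully so that the resulting K3 surface has Picard lattice exactly $S$. First I would pass to the orthogonal complement $T\assign S^\perp\subset\Ll$. Since $\Ll$ has signature $(3,19)$ and $S$ has signature $(1,\rho-1)$, Sylvester's law forces $T$ to have signature $(2,20-\rho)$; the two positive directions on $T$ will be crucial. The primitivity hypothesis on $S$, together with non-degeneracy of the form on $S$ (which is automatic from its signature), ensures that $T$ is itself primitive in $\Ll$ and that $\Ll\otimes\mb{Q}=(S\otimes\mb{Q})\oplus(T\otimes\mb{Q})$; both facts will be needed to pin down the Picard lattice.

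Next I would consider the period domain
\begin{eqnarray*}
\Omega_T & \assign & \{[\omega]\in\mb{P}(T\otimes\mb{C}):\omega\cdot\omega=0,\ \omega\cdot\bar\omega>0\}.
\end{eqnarray*}
The signature $(2,20-\rho)$ of $T$ guarantees that $\Omega_T$ is a non-empty complex manifold of dimension $20-\rho$. For any $[\omega]\in\Omega_T$, the ``Picard lattice of $\omega$'', namely $\tmop{Pic}_\omega\assign\{x\in\Ll:x\cdot\omega=0\}$, automatically contains $S$ since $\omega$ lies in $T\otimes\mb{C}$. The key point is that $\tmop{Pic}_\omega$ strictly contains $S$ precisely when some $v\in\Ll\setminus S$ satisfies $v\cdot\omega=0$; decomposing $v=s+t$ with $s\in S\otimes\mb{Q}$ and $0\neq t\in T\otimes\mb{Q}$, this reduces to the single equation $t\cdot\omega=0$, which carves out a hyperplane section of $\Omega_T$. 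As $v$ ranges over the countable set $\Ll\setminus S$, we obtain a countable union of proper analytic hypersurfaces in $\Omega_T$, and I would pick $[\omega]$ in its (non-empty) complement.

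Finally, the surjectivity of the period map for K3 surfaces (\cite[Chap. VIII, Thm. 14.1]{bpv}) supplies a K3 surface $Y$ equipped with a marking $\phi\colon H^2(Y,\mb{Z})\longrightarrowlim^\sim\Ll$ sending the period $\omega_Y$ to $\omega$. The Lefschetz $(1,1)$ theorem, combined with $h^1(Y,\oy)=0$ and the fact that numerical and linear equivalence coincide on a K3, identifies $\tmop{Pic}Y$ with $\{x\in H^2(Y,\mb{Z}):x\cdot\omega_Y=0\}$, which the marking $\phi$ carries isomorphically onto $\tmop{Pic}_\omega=S$.

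The substantive obstacle in this outline is the invocation of the surjectivity of the period map itself, a deep transcendental theorem whose proof I would not reproduce but quote directly from \cite{bpv}. Once that black box is in hand, the only real subtlety is the genericity argument that trims the Picard lattice down from ``contains $S$'' to ``equals $S$'', and that is controlled precisely by the primitivity of $S$, which ensures every $v\in\Ll\setminus S$ has a genuinely non-zero $T$-component.
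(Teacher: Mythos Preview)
The paper does not actually prove this proposition; it is quoted verbatim as \cite[Cor.~1.9]{mor84} and used as a black box throughout Chapter~\ref{ncy}. Your sketch is the standard argument underlying Morrison's corollary and is correct in outline: pass to $T=S^\perp$, pick a very general $[\omega]\in\Omega_T$ avoiding the countable union of hyperplanes $\{t\cdot\omega=0\}$ coming from the $T$-components of lattice points in $\Ll\setminus S$, and invoke surjectivity of the period map. The use of primitivity to guarantee that every $v\in\Ll\setminus S$ has a non-zero $T\otimes\mb{Q}$-component is exactly the right point.

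One small gap worth closing explicitly: the surjectivity theorem in \cite[Chap.~VIII]{bpv} produces a marked \emph{K\"ahler} K3, whereas the paper's Definition of a K3 surface demands projectivity. You should remark that, since $S$ has signature $(1,\rho-1)$, the resulting $\tmop{Pic}Y\simeq S$ contains a class of positive self-intersection, and a compact K\"ahler surface with $h^1(\oy)=0$ carrying such a class is automatically projective (e.g.\ by Riemann--Roch plus the structure of the K\"ahler cone, or see \cite[Chap.~IV, Thm.~6.2]{bpv}). With that sentence added, your argument is complete and matches what one finds in Morrison's paper.
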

\begin{remark}\label{prim}
If $S$ is a direct summand of $\Ll$, then $S$ is a primitive sublattice. This observation will prove invaluable in applying the above Proposition \ref{mor} to verify the existence of certain K3 surfaces.
\end{remark}

\begin{example}\label{s_2}
Set $S=\mb{Z}^3=\langle s_1,s_2,s_3\rangle$ with bilinear form given by
$$
\left( \begin{array}{ccc}
-2 & 3 & 0\\
3 & -2 & 1\\
0 & 1  & -2
\end{array}\right).
$$
We can embed $S$ in $\Ll$ via
\begin{align*}
\gamma: S & \longhookrightarrow  \Ll\\
        s_1& \longmapsto          \la1+\mu_1\\
	s_2& \longmapsto          \la2+3\mu_2\\
	s_3& \longmapsto         \la3.
\end{align*}Since $\{\gamma(s_1),\gamma(s_2),\gamma(s_3),\la4,\ldots,\la8,\la1',\ldots,\la8',\mu_1,\mu_2,\mu_1',\mu_2',\mu_1'',\mu_2''\}$ is a basis for $\Ll$, $\gamma$ is a primitive embedding by Remark \ref{prim}. Moreover, the signature of $S$ is $(1,2)$ (Maple performed this calculation), implying by Proposition \ref{mor} that there exists a $K3$ surface $Y$ and an isometry $\tmop{Pic}Y\simeq S.$ Since $s_i^2=-2,$ Proposition \ref{rr} tells us that for each $i$, either $s_i$ is an effective class or $-s_i$ is. We assume without loss of generality that $s_1$ is effective. Then since $s_1\cdot s_2=3$ and $s_2\cdot s_3=1,$ $s_2$ and $s_3$ are necessarily effective classes. Similarly, $(s_1+s_2-s_3)^2=-2$ and one can deduce that $s_1+s_2-s_3$ is an effective class.  We shall soon see in Remark \ref{bita} that $Y$ is a double cover of $\p2$ ramified on a sextic $C$ with two tritangents.
\end{example}

\begin{notation}
 For the rest of this chapter, given a class of line bundles $s_i$ on a surface $Y$, we shall abuse notation by using $s_i$ to mean a representative line bundle of this class. For an effective class $s_i$, we shall also let $S_i$ be an effective divisor such that $s_i\simeq\oy(S_i)$.
\end{notation}

We shall need to retrieve an ample line bundle on $Y$ and the following lemma will allow us to do so.

\begin{lemma}\label{ample}
 Let $Y$ be a K3 surface and $\tmop{Pic}Y=\langle s_1,\ldots,s_n\rangle$, where the $s_i$ are effective classes. Let $s\in\tmop{Pic}Y$ be an effective class such that $h^0(s-s_i)>0$, for all $i$. Assume that $s^2>0$ and for all $i$, the following hold: $s\cdot s_i> 0$ and $s\cdot(s-s_i)> 0$. Then $s$ is ample.

\begin{proof}
By the Nakai-Moishezon criterion \cite[Chap. V, Thm 1.10]{ag}, $s$ is ample if and only if both $s^2>0$ and $s\cdot C> 0$ for all irreducible curves $C$ on $Y$. The first condition is one of our hypotheses so we need only verify the second. 
Since we have assumed that $s\cdot s_i> 0$ and $s\cdot(s-s_i)> 0$ for all $i$, we need only show that $s\cdot C>0$ for $C\nsim s_i$ and ${C\nsim(s-s_i)}$. Such a $C$ is an effective class distinct from the effective classes $s_i$ and $s-s_i$, implying $C\cdot(s-s_i)\geq 0$ and $C\cdot s_i\geq 0$ for all $i$. Moreover, since the $s_i$ generate $\tmop{Pic}Y$, there exists a $j$ such that $C\cdot s_j>0$ (otherwise $C\cdot D=0$ for all $D\in\tmop{Pic}Y$, implying $C\sim0$). Then
\begin{eqnarray*}
 C\cdot s & = & C\cdot s_j+C\cdot(s-s_j)\\
                  & > & 0.
\end{eqnarray*}
By the Nakai-Moishezon criterion, $s$ is an ample class.

\end{proof}
\end{lemma}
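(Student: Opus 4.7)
The natural tool is the Nakai--Moishezon criterion for ampleness on a smooth projective surface: $s$ is ample if and only if $s^{2} > 0$ and $s \cdot C > 0$ for every irreducible curve $C \subset Y$. The condition $s^{2} > 0$ is included in the hypotheses, so the entire plan reduces to verifying the intersection-number condition against an arbitrary irreducible $C$.

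Since the positivity hypotheses are phrased in terms of the generators $s_i$ and the differences $s - s_i$, I would split the analysis according to whether the class of $C$ coincides with one of the $s_i$ or with some $s - s_i$. In these cases the hypothesis $s \cdot s_i > 0$ or $s \cdot (s - s_i) > 0$ gives the conclusion verbatim. The remaining case is when $C$ is not linearly equivalent to any $s_i$ or $s - s_i$; here I would argue that $C \cdot s_i \geq 0$ and $C \cdot (s - s_i) \geq 0$ for every $i$, essentially because an irreducible curve meets a distinct effective divisor non-negatively.

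Given the non-negativity, strict positivity of $s \cdot C$ would come from the generation hypothesis: if $C \cdot s_i = 0$ for every $i$, then $C$ would be numerically trivial against every class in $\tmop{Pic} Y$, which is impossible for an irreducible effective curve on a surface (since, for instance, $C$ meets any ample class positively). So there is some $j$ with $C \cdot s_j > 0$, and then the decomposition $s = s_j + (s - s_j)$ gives $s \cdot C = s_j \cdot C + (s - s_j) \cdot C > 0$.

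The main delicate step is the non-negativity $C \cdot s_i \geq 0$ for $C$ not of class $s_i$. This is immediate if $C$ is not a component of some effective representative of $s_i$, but on a K3 surface the generators may involve nodal $(-2)$-curves (see Remark \ref{k3curve}), so some care is needed to ensure $C$ does not appear as a component of whatever effective representative we fix; the same caveat applies to $s - s_i$. I expect this to be the main (mild) obstacle, handled by exploiting that $C$ is irreducible and not in the relevant class, so that a suitable effective representative can be chosen whose support avoids $C$.
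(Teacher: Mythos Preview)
Your approach is essentially identical to the paper's: both invoke Nakai--Moishezon, split off the cases $C\sim s_i$ and $C\sim s-s_i$, then for the remaining irreducible $C$ use $C\cdot s_i\ge 0$ and $C\cdot(s-s_i)\ge 0$ together with the fact that the $s_i$ generate $\tmop{Pic}Y$ to find a $j$ with $C\cdot s_j>0$ and conclude via $s=s_j+(s-s_j)$. The paper asserts the non-negativity step directly (``$C$ is an effective class distinct from the effective classes $s_i$ and $s-s_i$, implying $C\cdot(s-s_i)\geq 0$ and $C\cdot s_i\geq 0$''), without addressing the component issue you flag; so your caution there is well-placed, but it is not a divergence in strategy---the paper simply does not spell out that point.
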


\begin{remark}\label{nodal}
We now return to the setting of Example \ref{s_2}: for $i\in\{1,2,3\}$, each $S_i$ is unique by Proposition \ref{rr}. We now show that each $S_i$ is nodal: since $s=s_1+s_2$ satisfies the conditions of Lemma \ref{ample}, $s$ is ample. Moreover, since $s\cdot s_i=1$, for all $i$, given the fact that an ample divisor will intersect an effective class strictly positively, each $S_i$ is irreducible and thus nodal. Similarly $S_4\simeq S_1+S_2-S_3$ is a nodal class.
\end{remark}
\begin{figure}[H]
\begin{center}
\includegraphics*[trim= 0 208 0 0]{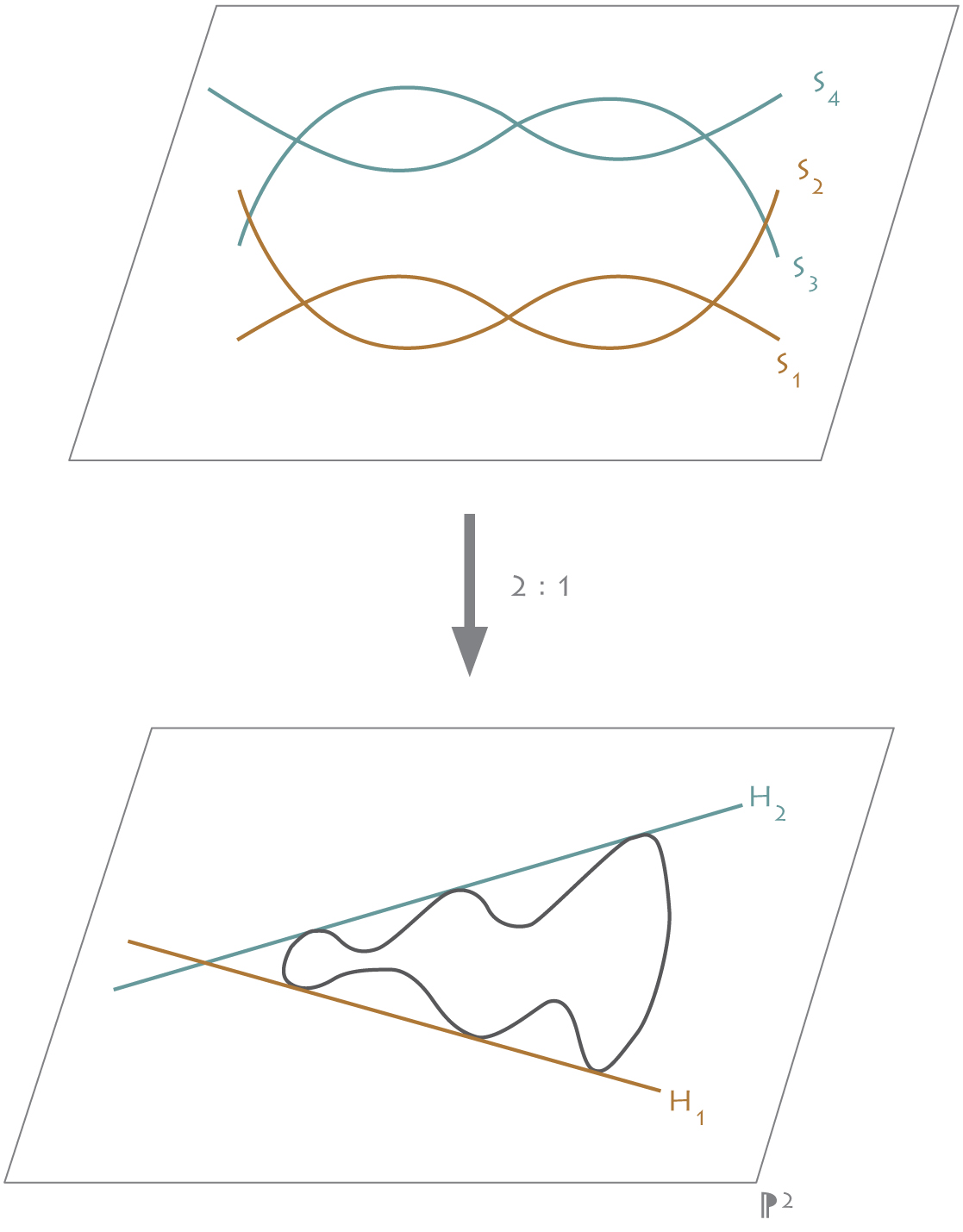}
\end{center}
\caption{Configuration of nodal curves on the K3 surface $Y$.}
\end{figure}

\begin{example}\label{orsol_mat}
Example \ref{s_2} is a specific instance of the following: for $n\in\{3,\ldots,18\}$, there exists a K3 surface $Y$ with Picard lattice isomorphic to $S=\mb{Z}^{n}=\langle s_1,\ldots,s_n\rangle$, the intersection form given by the $n\times n$ submatrix $M$ which is formed by the first $n$ rows and the first $n$ columns of

{\setlength\arraycolsep{2pt}
$$
Q = \left( \begin{array}{cccccccccccccccccc}
    -2   &  3  &   0   &  1  &   1  &   1  &   1  &   1  &   1   &  1  &   1   &  1  &   1   &  1  &   1  &   1  &   1  &   1\\
     3   & -2  &   1   &  0  &   0  &   0  &   0  &   0  &   0   &  0  &   0   &  0  &   0   &  0  &   0  &   0  &   0  &   0\\
     0   &  1  &  -2   &  1  &   0  &   0  &   0  &   0  &   0   &  0  &   0   &  0  &   0   &  0  &   0  &   0  &   0  &   0\\
     1   &  0  &   1   & -2  &   1  &   0  &   0  &   0  &   0   &  0  &   0   &  0  &   0   &  0  &   0  &   0  &   0  &   0\\
     1   &  0  &   0   &  1  &  -2  &   1  &   0  &   0  &   0   &  0  &   0   &  0  &   0   &  0  &   0  &   0  &   0  &   0\\
     1   &  0  &   0   &  0  &   1  &  -2  &   1  &   0  &   0   &  0  &   0   &  0  &   0   &  0  &   0  &   0  &   0  &   0\\
     1   &  0  &   0   &  0  &   0  &   1  &  -2  &   1  &   0   &  0  &   0   &  0  &   0   &  0  &   0  &   0  &   0  &   0\\
     1   &  0  &   0   &  0  &   0  &   0  &   1  &  -2  &   0   &  0  &   0   &  0  &   0   &  0  &   0  &   0  &   0  &   0\\
     1   &  0  &   0   &  0  &   0  &   0  &   0  &   0  &  -2   &  0  &   0   &  1  &   0   &  0  &   0  &   0  &   0  &   0\\
     1   &  0  &   0   &  0  &   0  &   0  &   0  &   0  &   0   & -2  &   1   &  0  &   0   &  0  &   0  &   0   &  0   &  0\\
     1   &  0  &   0   &  0  &   0  &   0  &   0  &   0  &   0   &  1  &  -2   &  1  &   0   &  0  &   0  &   0  &   0  &   0\\
     1   &  0  &   0   &  0  &   0  &   0  &   0  &   0  &   1   &  0  &   1   & -2  &   1   &  0  &   0  &   0  &   0  &   0\\
     1   &  0  &   0   &  0  &   0  &   0  &   0  &   0  &   0   &  0  &   0   &  1  &  -2   &  1  &   0  &   0  &   0  &   0\\
     1   &  0  &   0   &  0  &   0  &   0  &   0  &   0  &   0   &  0  &   0   &  0  &   1   & -2  &   1  &   0  &   0  &   0\\
     1   &  0  &   0   &  0  &   0  &   0  &   0  &   0  &   0   &  0  &   0   &  0  &   0   &  1  &  -2  &   1  &   0  &   0\\
     1   &  0  &   0   &  0  &   0  &   0  &   0  &   0  &   0   &  0  &   0   &  0  &   0   &  0  &   1  &  -2  &   0  &   0\\
     1   &  0  &   0   &  0  &   0  &   0  &   0  &   0  &   0   &  0  &   0   &  0  &   0   &  0  &   0  &   0  &  -2  &   0\\
     1   &  0  &   0   &  0  &   0  &   0  &   0  &   0  &   0   &  0  &   0   &  0  &   0   &  0  &   0  &   0  &   0  &  -2
\end{array}\right).
$$
}
We see this below in Theorem \ref{thawesome}. 
Now given a K3 surface with such a Picard lattice, we may assume that the all the $s_i$ are effective just as in Example \ref{s_2}. We now show that $s_1+s_2$ is ample: firstly, $(s_1+s_2)^2=2>0$ and
elementary computations yield that for all $i$, $(s_1+s_2)\cdot s_i=1$, $(s_1+s_2)\cdot (s_1+s_2-s_i)=1$ and $h^0(s-s_i)>0$. 
Thus $s_1+s_2$ satisfies the conditions of Lemma \ref{ample} and is consequently ample. The same argument as in Remark \ref{nodal} implies that the $s_i$ are all effective nodal classes, as are the $s_1+s_2-s_i$.
\end{example}

Given a surface $Y$ as in Example \ref{orsol_mat}, we would like to construct an automorphism of $Y$ by giving an isometry of $H^2(Y,\mb{Z})$ and thus the question we need to answer is: given an isometry $H^2(Y,\mb{Z})\lrw H^2(Y,\mb{Z}),$ how can we tell if it is induced by an automorphism $\s:Y\lrw Y?$ The Strong Torelli theorem aids us in this.
Before we introduce the Strong Torelli theorem, however, we need to define an effective Hodge isometry. To this end, recall that there is a Hodge decomposition of $H^2(Y,\mb{C})$:
\begin{eqnarray*}\label{hodge}
H^2(Y,\mb{C}) & \simeq & H^{0,2}(Y)\oplus H^{1,1}(Y)\oplus H^{2,0}(Y),
\end{eqnarray*}
where $H^{p,q}(Y)\simeq H^q(Y,\Omega^p)$\index{$H^{p,q}(Y)$}. Note that $\tmop{Pic}Y=H^2(Y,\mb{Z})\cap H^{1,1}(Y)$ (this is  a classical theorem of Lefschetz; see \cite[Sec. 1.3]{k3en} for details) and that $\omega_Y\in H^{2,0}(Y)\subset T_Y$.
\begin{definition} Let $Y,Y'$ be surfaces. An isometry of lattices
\begin{eqnarray*}
H^2(Y,\mb{Z}) & \lrw & H^2(Y',\mb{Z})
\end{eqnarray*}
is called an \textbf{ effective Hodge isometry}\index{effective Hodge isometry} if its $\mb{C}\mhyphen$linear extension
%
%
%
\begin{enumerate}
 \item sends $H^{2,0}(Y)$ to $H^{2,0}(Y')$ and
\item maps the class of some ample divisor on $Y$ to the class of an ample divisor on $Y'$.
\end{enumerate}

\end{definition}

This definition is not entirely standard but it is an equivalent formulation and can be found in \cite[Introduction]{torelli}. We use it because it suits our purposes nicely. We are now able to state the Strong Torelli theorem \cite[Ch. VIII, Thm 11.1]{bpv}.

\begin{theorem}[Strong Torelli theorem]\label{tor}\index{Strong Torelli theorem}
Let $Y$ and $Y'$ be two K3 surfaces and $\phi\colon H^2(Y,\mb{Z})\to H^2(Y',\mb{Z})$ an effective Hodge isometry. Then there exists a unique biholomorphic $\s:Y'\lrw Y$ such that $\phi=\s^*.$
\end{theorem}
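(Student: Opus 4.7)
The plan is to pursue the classical Piatetski-Shapiro--Shafarevich strategy, which reduces the statement to Torelli for complex tori via a density argument for Kummer surfaces, followed by an analysis of the effective cone to upgrade weak Torelli to strong Torelli. Throughout, I would work with the period domain
\begin{equation*}
\Omega_\Ll \;=\; \{\,[\omega]\in\mb{P}(\Ll\otimes\mb{C}) \,:\, \omega^2=0,\ \omega\cdot\ov{\omega}>0\,\}
\end{equation*}
and the moduli space of marked K3 surfaces $\mathcal{M}_\Ll$ equipped with the period map $\mathcal{P}\colon\mathcal{M}_\Ll\to\Omega_\Ll$.

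First I would establish \emph{Local Torelli}: the map $\mathcal{P}$ is a local biholomorphism. This rests on unobstructedness of K3 deformations (deducible from $K_Y\simeq\oy$ together with $H^1(Y,\oy)=0$) and an identification of $d\mathcal{P}$ with the contraction pairing $H^1(Y,T_Y)\otimes H^{2,0}(Y)\to H^{1,1}(Y)$, which is an isomorphism after using $T_Y\simeq\Omega^1_Y$ via $\omega_Y$. Next I would prove the theorem for Kummer surfaces $Y=\tmop{Km}(A)$. Any effective Hodge isometry $H^2(\tmop{Km}(A),\mb{Z})\to H^2(\tmop{Km}(A'),\mb{Z})$ respects the sublattice orthogonal to the $16$ exceptional $(-2)$-curves and hence lifts, after rescaling by $2$, to a Hodge isometry $H^2(A,\mb{Z})\to H^2(A',\mb{Z})$ commuting with $[-1]^*$. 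The classical Torelli theorem for complex tori then produces an isomorphism $A\to A'$, which descends to the Kummers; one checks the resulting map realises the prescribed isometry on $H^2$.

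The third step is the density of Kummer periods in $\Omega_\Ll$: the set of $[\omega]$ arising from Kummer K3s is described by rationality conditions on the transcendental part, and one shows this set accumulates at every point of $\Omega_\Ll$. Combining with Local Torelli, a limiting argument yields \emph{Weak Torelli}: if $H^2(Y,\mb{Z})\simeq H^2(Y',\mb{Z})$ as Hodge structures then $Y\simeq Y'$ as complex manifolds. Once weak Torelli is in hand, the strong form is extracted as follows. Pick any biholomorphism $f\colon Y'\to Y$; then $\phi\circ(f^*)^{-1}$ is an effective Hodge isometry of $H^2(Y,\mb{Z})$ that sends some ample class to an ample class. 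The Weyl group $W$ generated by reflections in effective $(-2)$-classes acts simply transitively on the set of chambers in the positive cone whose walls are spanned by $(-2)$-classes; the ample cone is one such chamber. Thus the effectivity hypothesis forces $\phi\circ(f^*)^{-1}$ to preserve the ample chamber, and a theorem of Burns--Rapoport (together with a lemma of Matsusaka--Mumford guaranteeing that automorphisms of polarised K3s acting trivially on $H^2$ are the identity) shows every such isometry is induced by a \emph{unique} automorphism of $Y$. Composing gives the desired $\s$.

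The main obstacle is the transition from Kummer Torelli to the general case. Even granting Local Torelli, one must control the behaviour of ample classes along deformations: a degenerating family can acquire nodal $(-2)$-classes whose reflections spoil the Hodge isometry's effectivity at the limit. Handling this requires the careful chamber analysis sketched above, and it is precisely the role of the second condition in the definition of ``effective Hodge isometry'' to select the correct element of the $W$-orbit. A secondary difficulty, in Step 2, is verifying that the lift from $\tmop{Km}(A)$ to $A$ of a Hodge isometry is well-defined: one must identify $H^2(A,\mb{Q})$ inside $H^2(\tmop{Km}(A),\mb{Q})$ canonically, which uses the characterisation of the Kummer lattice of $16$ classes as the unique primitive sublattice of $\Ll$ of a given discriminant form.
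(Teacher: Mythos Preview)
The paper does not prove this theorem at all: it merely states the Strong Torelli theorem and cites \cite[Ch.~VIII, Thm~11.1]{bpv} for the proof, then uses it as a black box in the subsequent constructions. So there is no ``paper's own proof'' to compare against; the result is imported wholesale from the literature.

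Your sketch is essentially the classical Piatetski-Shapiro--Shafarevich argument (as refined by Burns--Rapoport and exposited in \cite{bpv}), and the broad architecture you describe---Local Torelli via unobstructedness and the period map differential, Torelli for Kummer surfaces via tori, density of Kummer periods, then the Weyl-chamber analysis to pass from weak to strong---is the standard route. The obstacles you flag (controlling effectivity under degeneration, identifying the Kummer sublattice canonically) are the genuine technical points of that proof. If you intend to actually carry this out rather than cite it, each of those steps is substantial and would need considerably more detail than a sketch; but as an outline of the known proof it is accurate, and it is the same proof the paper is implicitly invoking through its citation.
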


We would also like to determine the quotients of K3 surfaces by certain involutions. To this end, we shall require the following definition and proposition.

\begin{definition}\index{involution! symplectic}\index{involution! anti-symplectic}
An involution $\psi$ on a K3 surface $Y$ is called {\bf symplectic} if ${\psi^*(\omega_Y)=\omega_Y}$. It is called {\bf anti-symplectic} if ${\psi^*(\omega_Y)=-\omega_Y}$.
\end{definition}

\begin{proposition}\cite[Prop.1.11]{k3sym}\label{k3sym}
 Let $\pi:Y\to Y/G$ be the quotient of a K3 surface by an anti-symplectic involution $\s.$ If Fix$_Y(\s)\neq\varnothing,$ then Fix$_Y(\s)$ is a disjoint union of smooth curves and $Y/G$ is a smooth, projective rational surface. Furthermore, Fix$_Y(\s)=\varnothing$ if and only if $Y/G$ is an Enriques surface.
\end{proposition}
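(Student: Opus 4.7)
The plan is to linearise $\s$ at each fixed point and extract the local structure from the anti-symplectic condition, then use the Enriques-Kodaira classification to identify $Y/G$.

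Fix a point $p \in \tmop{Fix}_Y(\s)$ and consider $d\s_p \in \tmop{GL}(T_pY)$. Since $\s^2 = \tmop{id}$, $d\s_p$ is diagonalisable with eigenvalues in $\{\pm 1\}$. The case $d\s_p = \tmop{id}$ is excluded because holomorphic linearisation (Cartan's averaging lemma) would then force $\s$ to be the identity on a neighbourhood of $p$ and hence on all of $Y$. The anti-symplectic condition $\s^*\omega_Y = -\omega_Y$ says that the induced action of $d\s_p$ on $\wedge^2 T_p^*Y$ is multiplication by $-1$, i.e.\ $\det d\s_p = -1$, which rules out $d\s_p = -\tmop{id}$. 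Hence $d\s_p$ has eigenvalues $\{1,-1\}$, and linearisation yields local holomorphic coordinates in which $\s$ reads $(x,y)\mapsto(x,-y)$. Consequently $R := \tmop{Fix}_Y(\s)$ is a disjoint union of smooth curves, and the quotient $Y/G$ is smooth since locally $\mb{C}^2/((x,y)\sim(x,-y)) \cong \tmop{Spec}\,\mb{C}[x,y^2] \cong \mb{C}^2$. Projectivity of $Y/G$ is inherited from $Y$ by descending any $G$-invariant ample divisor such as $H + \s^*H$ with $H$ ample on $Y$.

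Suppose first that $R \neq \varnothing$. Hurwitz gives $K_Y = \pi^*K_{Y/G} + R$, and since $K_Y \sim 0$ this becomes $\pi^*K_{Y/G} \sim -R$, whence $\pi^*(2K_{Y/G}) \sim -2R$. Since $2R > 0$ is a non-zero effective divisor, $h^0(Y,-2R) = 0$, and injectivity of $\pi^*$ on global sections forces $P_2(Y/G) = h^0(Y/G, 2K_{Y/G}) = 0$. Moreover $q(Y/G) = h^1(\mc{O}_{Y/G})$ injects into $h^1(\mc{O}_Y) = 0$ via $\pi^*$. Castelnuovo's rationality criterion then identifies $Y/G$ as a (smooth projective) rational surface.

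Suppose instead that $R = \varnothing$. Then $\pi$ is \'etale, so $\pi_*\mc{O}_Y = \mc{O}_{Y/G} \oplus L^{-1}$ with $L^{\otimes 2} \cong \mc{O}_{Y/G}$, and the ramification formula degenerates to $\pi^*K_{Y/G} \sim 0$. If $K_{Y/G}$ were itself trivial, then $\omega_Y = \pi^*\omega_{Y/G}$ would be $\s$-invariant, contradicting $\s^*\omega_Y = -\omega_Y$; hence $K_{Y/G}$ is a non-trivial $2$-torsion element of $\pic(Y/G)$. From $h^1(\mc{O}_Y) = h^1(\mc{O}_{Y/G}) \oplus h^1(L^{-1}) = 0$ we read off $q(Y/G) = 0$, so $Y/G$ satisfies the defining conditions of an Enriques surface. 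The reverse implication in the ``iff'' comes for free: the rational and Enriques cases are mutually exclusive in the Enriques-Kodaira classification, so $Y/G$ being Enriques forces $R = \varnothing$. The main obstacle is the local analytic step, namely extracting from $\s^*\omega_Y = -\omega_Y$ the precise eigenvalues of $d\s_p$ and invoking holomorphic linearisation to conclude simultaneously the smoothness of $\tmop{Fix}_Y(\s)$ and of the quotient; the remainder of the argument is bookkeeping against standard surface classification results.
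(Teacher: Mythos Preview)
The paper does not supply a proof of this proposition; it is quoted verbatim from \cite[Prop.~1.11]{k3sym} and used as a black box. So there is no in-paper argument to compare against.

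Your proof is correct and follows the standard line for this result. The local step is handled cleanly: the anti-symplectic hypothesis pins down $\det d\s_p=-1$, Cartan linearisation then gives the normal form $(x,y)\mapsto(x,-y)$, and both the smoothness of the fixed locus and of the quotient fall out. The global identification via Hurwitz plus Castelnuovo in the ramified case, and via the \'etale canonical-bundle computation in the unramified case, is exactly how this is usually done.

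One small point you glide over: in the Enriques case you assert that $K_{Y/G}$ is $2$-torsion, but the sentence as written only establishes that it is non-trivial and that its pullback is trivial. The missing link is that $\ker(\pi^*\colon\pic(Y/G)\to\pic Y)=\{\mc{O}_{Y/G},L\}$ for the $2$-torsion bundle $L$ defining the \'etale double cover (this follows from $\pi_*\pi^*M\cong M\oplus M\otimes L^{-1}$ and Krull--Schmidt, or from the norm map), so $K_{Y/G}\cong L$. You have all the ingredients on the page; just connect them.
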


\begin{remark}\label{irred}
 From \cite[Theorem 1.12]{k3sym}, if $Y/G$ in Proposition \ref{k3sym} is rational, then either (1) Fix$(\s)=\cup_i R_i\cup D_g$ (where the $R_i$ are smooth disjoint nodal curves and $D_g$ is a smooth curve of genus $g$) or (2) Fix$(\s)=D_1\cup D_2$ (where the $D_i$ are linearly equivalent elliptic curves). In case (2), $Y/G$ is both rational and elliptically fibred over $\p1$ (this is from the proof of \cite[Theorem 1.12]{k3sym})\footnote{See Chapter \ref{os}, Definition \ref{el} for definition of an elliptic fibration.}. This then implies that $\tmop{rk}(\tmop{Pic}(Y/G))= 10$ (see, for example, \cite[Introduction]{rat}).
\end{remark}

Finally, we would like to determine the structure of the Picard group of $Y/G$. We do so in the following lemma.

\begin{lemma}\label{inj}
 Let $\pi\colon Y\to Z$ be a double cover of a smooth, projective rational surface. Then ${\pi^*\colon\tmop{Pic}Z\to\tmop{Pic}Y}$ is an injection. Moreover, if the ramification locus $D\subset Y$ is irreducible, $\pi^*$ surjects onto $(\tmop{Pic}Y)^G$.
\begin{proof}
	Since $\pi$ is a double cover, it is cyclic and $\pi_*\os_{Y}\simeq \os_Z\oplus M$ for some ${M\in\tmop{Pic}Z}$. Now assume that $L\in\tmop{ker}(\pi^*)$. By the projection formula (see, for example, \cite[Chap. II, Ex. 5.1(d)]{ag}), ${\pi_*\os_{Y}\simeq\pi_*\os_{Y}\otimes L}$, implying $ \os_Z\oplus M \simeq L\oplus (L\otimes M)$. By Atiyah's Krull-Schmidt Theorem for vector bundles \cite{aks}, either $L\simeq \os_Z$ or $L\simeq M$. The latter would imply that $L^2\simeq\os_Z$. Since $Z$ is rational, $\tmop{Pic}Z$ has no torsion and this is impossible. Thus $L$ is necessarily trivial and $\pi^*$ is an injection.
We now demonstrate that when the ramification locus $D\subset Y$ is irreducible, $\pi^*$ surjects onto $(\tmop{Pic}Y)^G$ : we let $Y'=Y-D, Z'=Z-\pi(D)$. We then have a commutative diagram
$$
\xymatrix{
 Y' \ar[r]^{\pi'}\ar[d] & Z'\ar[d]\\
Y \ar[r]^\pi                  &  Z
}
$$
and a corresponding commutative diagram
\begin{eqnarray}\label{1}
\xymatrix{
 \tmop{Pic}Z \ar[r]^{\pi^*}\ar[d] & (\tmop{Pic}Y\ar[d])^G\\
\tmop{Pic}Z' \ar[r]^{\pi'^*}                  &  (\tmop{Pic}Y')^G
}
\end{eqnarray}
Since $\pi'\colon Y'\to Z'$ is \'etale, we may apply the Hochschild-Serre spectral sequence (as given in \cite[Chap. III, Theorem 2.20]{milnet})
\begin{eqnarray*}
 E^{pq}_2\assign H^p(G,H^q(Y',\os_{Y'}^*)) & \Longrightarrow H^{p+q}(Z',\os_{Z'}^*).
\end{eqnarray*}
The relevant part of $E^{pq}_2$ is
\begin{eqnarray*}
\begin{array}{lll}
H^0(G,\tmop{Pic}Y') & H^1(G,\tmop{Pic}Y') &\\
H^0(G,\mathcal{O}(Y')^*) & H^1(G,\mathcal{O}(Y')^*) & H^2(G,\mathcal{O}(Y')^*)
\end{array}
\end{eqnarray*}
From this, the following is exact
\begin{eqnarray}\label{2}
0\lrw H^1(G,\mathcal{O}(Y')^*)\lrw \tmop{Pic}Z'\lrw (\tmop{Pic}Y')^G\lrw H^2(G,\mathcal{O}(Y')^*)\lrw\ldots
\end{eqnarray}
Since $Y$ is projective and $D$ irreducible, $\mathcal{O}(Y')^*\simeq k^*$. Moreover, since $G$ acts trivially on the scalars $k^*$, $H^1(G,\mathcal{O}(Y')^*)\simeq\mb{Z}/2\mb{Z}$ and $H^2(G,\mathcal{O}(Y')^*)=0$. Then the exact sequence (\ref{2}) becomes
\begin{eqnarray}\label{2a}
0\lrw \mb{Z}/2\mb{Z} \lrw \tmop{Pic}Z'\lrw (\tmop{Pic}Y')^G\lrw 0.
\end{eqnarray}
We also know there are exact sequences and a commutative diagram

\begin{eqnarray}\label{3}
\xymatrix{
 0  \ar[r]  & \mb{Z}\pi(D)  \ar[r] \ar[d]^a   &   \tmop{Pic}Z  \ar[r] \ar[d]^{\pi^*}   &  \tmop{Pic}Z'  \ar[r] \ar[d]^{\pi'^*} & 0\\
0     \ar[r]            &       \mb{Z}D  \ar[r]     &   (\tmop{Pic}Y)^G    \ar[r]           &     (\tmop{Pic}Y')^G  \ar[r] &  0
}
\end{eqnarray}
where $a\pi(D)=2D$.
Collating the information contained in (\ref{1}), (\ref{2a}) and (\ref{3}), we retrieve the following commutative diagram
$$
\xymatrix{
         &                &                  0                         &             0                        &   \\
0 \ar[r] &\mb{Z}/2\mb{Z} \ar[r]   &   \tmop{Pic}Z' \ar[r]^{\pi'^*} \ar[u]      &  (\tmop{Pic}Y')^G  \ar[r] \ar[u]      &    0\\
   &   0        \ar[r]    &   \tmop{Pic}Z  \ar[r]^{\pi^*} \ar[u]      &  (\tmop{Pic}Y)^G   \ar[r]   \ar[u]    &    \tmop{coker}\pi^*\\
    &  0        \ar[r]    &    \mb{Z}\pi(D)      \ar[r]^a \ar[u]              &    \mb{Z}D          \ar[r]   \ar[u]    &     \mb{Z}/2\mb{Z}\\
     &                    &      0 \ar[u]                             &      0 \ar[u]                           & 
}
$$
Applying the snake lemma, we see that $\tmop{coker}\pi^*=0$ and this completes the proof.
%
\end{proof}
\end{lemma}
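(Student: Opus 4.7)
The plan is to handle the two claims separately: first injectivity of $\pi^*$, then surjectivity onto $(\tmop{Pic}Y)^G$ under the irreducible-ramification hypothesis. For injectivity, I would exploit the fact that any double cover is cyclic, so $\pi_*\oy \simeq \os_Z \oplus M$ for some line bundle $M$ on $Z$ (the $-1$-eigensummand of the $G$-action). If $L \in \ker \pi^*$, then $\pi^*L \simeq \oy$, and the projection formula gives
\[
\os_Z \oplus M \;\simeq\; \pi_*\oy \;\simeq\; \pi_*(\pi^*L) \;\simeq\; L \oplus (L\otimes M).
\]
Atiyah's Krull-Schmidt theorem for vector bundles forces either $L\simeq \os_Z$ or $L\simeq M$. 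In the second case $L^2\simeq \os_Z$, but $\tmop{Pic}Z$ is torsion-free for a smooth projective rational surface, so only $L\simeq \os_Z$ survives, proving injectivity.

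For surjectivity onto $(\tmop{Pic}Y)^G$, the idea is to delete the ramification locus and work with the resulting \'etale $G$-cover. Set $Y'\assign Y\setminus D$ and $Z'\assign Z\setminus\pi(D)$. Then $\pi'\colon Y'\to Z'$ is \'etale Galois, and the Hochschild-Serre spectral sequence $H^p(G,H^q(Y',\os_{Y'}^*))\Rightarrow H^{p+q}(Z',\os_{Z'}^*)$ produces a low-degree exact sequence
\[
0 \lrw H^1(G,\mathcal{O}(Y')^*) \lrw \tmop{Pic}Z' \stackrel{\pi'^*}{\lrw} (\tmop{Pic}Y')^G \lrw H^2(G,\mathcal{O}(Y')^*).
\]
Irreducibility of $D$ together with projectivity of $Y$ forces $\mathcal{O}(Y')^*=k^*$ with trivial $G$-action, so (because $|G|=2$) these flanking terms compute as $\mb{Z}/2\mb{Z}$ and $0$ respectively.

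To lift this information from the open parts back to $Y$ and $Z$, I would assemble the standard excision sequences $0\to\mb{Z}\pi(D)\to\tmop{Pic}Z\to\tmop{Pic}Z'\to 0$ and $0\to\mb{Z}D\to(\tmop{Pic}Y)^G\to(\tmop{Pic}Y')^G\to 0$ into a ladder connected by $\pi^*$; the crucial point is that the leftmost vertical map sends $\pi(D)\mapsto 2D$ because $\pi$ is totally ramified on $D$, so its cokernel is $\mb{Z}/2\mb{Z}$. A snake-lemma chase then matches this $\mb{Z}/2\mb{Z}$ against the one coming from $H^1(G,\mathcal{O}(Y')^*)$, and the two cancel to give $\tmop{coker}\bigl(\pi^*\colon\tmop{Pic}Z\to(\tmop{Pic}Y)^G\bigr)=0$. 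The main delicate point I anticipate is ensuring $\mathcal{O}(Y')^*=k^*$: this is exactly where the irreducibility of $D$ is used, since with several components one would pick up extra units (one copy of $\mb{Z}$ per ``missing'' component), and the Hochschild-Serre contribution would no longer simplify so cleanly. The rest of the argument is bookkeeping through the diagram.
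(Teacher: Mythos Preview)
Your proposal is correct and follows essentially the same route as the paper: the projection-formula plus Krull--Schmidt argument for injectivity, then removal of the ramification locus, the Hochschild--Serre low-degree sequence with $\mathcal{O}(Y')^*=k^*$, and a snake-lemma comparison of the two excision sequences for surjectivity. Even the identification of where irreducibility of $D$ is used matches the paper's reasoning.
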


\begin{corollary}\label{intform}
 Let $\pi\colon Y\to Z$ be as above and $Y$ a K3 surface. Then there is an isometry of lattices $\tmop{Pic}Z\simeq \frac{1}{2}(\tmop{Pic}Y)^G$.
\begin{proof}
 Firstly, we know that $\tmop{Pic}Z\simeq (\tmop{Pic}Y)^G$. Then from \cite[Prop. I.8(ii)]{bo}, we see that for any $L_1,L_2\in\tmop{Pic}Z$, ${\pi^*L_1\cdot\pi^*L_2=2(L_1\cdot L_2)}$ and the result follows directly.
\end{proof}

\end{corollary}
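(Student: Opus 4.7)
The plan is to combine the group-level isomorphism obtained in the previous lemma with the multiplicativity of intersection numbers under a finite pullback. Note that by Lemma \ref{inj}, irreducibility of the ramification locus $D$ gives a group isomorphism $\pi^*\colon\tmop{Pic}Z\longrightarrowlim^{\sim}(\tmop{Pic}Y)^G$, so the underlying abelian groups of the two lattices already agree. What remains is solely to compare the bilinear forms.

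For this, I would appeal to the standard fact that for a finite morphism $\pi\colon Y\to Z$ of degree $n$ between smooth projective surfaces, pullback multiplies intersection numbers by $n$; this is exactly \cite[Prop. I.8(ii)]{bo} in the form $\pi^*L_1\cdot\pi^*L_2=n(L_1\cdot L_2)$. Specialising to our degree $2$ cover, $\pi^*L_1\cdot\pi^*L_2=2(L_1\cdot L_2)$ for any $L_1,L_2\in\tmop{Pic}Z$.

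Concretely, I would interpret $\frac{1}{2}(\tmop{Pic}Y)^G$ as the $\mathbb{Z}$-module $(\tmop{Pic}Y)^G$ equipped with the rescaled form $\tfrac{1}{2}\langle-,-\rangle_Y$ (which remains integer-valued on the image of $\pi^*$, since $\pi^*L_1\cdot\pi^*L_2=2(L_1\cdot L_2)\in 2\mathbb{Z}$). Then the map $\pi^*$ is a bijection by Lemma \ref{inj} and the relation $\tfrac{1}{2}(\pi^*L_1\cdot\pi^*L_2)=L_1\cdot L_2$ makes it an isometry onto $\frac{1}{2}(\tmop{Pic}Y)^G$, which is the desired statement.

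There is no substantial obstacle here: both the bijectivity and the comparison of forms are essentially immediate from results already invoked in the excerpt, so the corollary is really a bookkeeping assembly of Lemma \ref{inj} with the pullback projection/intersection formula for double covers.
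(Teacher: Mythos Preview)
Your proposal is correct and follows essentially the same approach as the paper: use Lemma~\ref{inj} for the group isomorphism and then the degree-$2$ pullback formula \cite[Prop.~I.8(ii)]{bo} to compare the forms. You are slightly more explicit than the paper in spelling out the meaning of the rescaled lattice and in flagging that the irreducibility hypothesis on $D$ is what makes $\pi^*$ surjective, but the substance is identical.
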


%

%
We have now developed the theory necessary to construct our orders.
\begin{theorem}\label{thawesome}
Let $n\in\{3,\ldots,18\}$. Then 
\begin{enumerate}[i)]
 \item there exists a K3 surface $Y$ with Picard lattice isomorphic to $S\simeq\mb{Z}^{n}=\langle s_1,\ldots,s_n\rangle$, the intersection form given by the $n\times n$ submatrix $M$ 	which is formed by the first $n$ rows and the first $n$ columns of the matrix $Q$ in Example \ref{orsol_mat}.
	Further, there exists an involution $\s\colon Y\to Y$ such that the corresponding quotient morphism ${\pi\colon Y\to Z\assign Y/G}$ is the double cover of $\p2$ ramified on a smooth sextic $C$;
\item ${\hh\simeq (\mb{Z}/2\mb{Z})^{n-2}}$, generated by $L_i=s_1-s_i$, for $i\in\{3,\ldots,n\}$, and all relations satisfy the overlap condition.
%
Then, for $m_i\in\{0,1\}$ (not all zero), the corresponding $A(Y;(\otimes_{i=3}^nL_i^{m_i})_{\s})=\oy\oplus(\otimes_{i=3}^nL_i^{m_i})_{\s}$ are $2^{n-2}-1$ distinct maximal orders on $\p2$ ramified on $C$.
\end{enumerate}

\end{theorem}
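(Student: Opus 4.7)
The plan is to treat parts (i) and (ii) in sequence, using the surjectivity of the period map and the Strong Torelli theorem to manufacture the K3 surface and its involution, and then reducing the cohomological and ramification claims to direct calculations on the explicit lattice $S$.

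For part (i), I first exhibit a primitive embedding $\gamma\colon S\hookrightarrow\Ll$ generalising Example \ref{s_2}. The strategy is to send $s_1,s_2,s_3$ as in Example \ref{s_2}, embed the $A_6$-chain $\{s_3,\ldots,s_8\}$ into the first $\mb{E}$-block, the sublattice $\{s_9,\ldots,s_{16}\}$ into the second $\mb{E}$-block, and $s_{17},s_{18}$ into the remaining $\mb{H}$-blocks, with $s_1$ involving the hyperbolic summands to ensure the required intersections $s_1\cdot s_i=1$ for $i\geq 4$. By arranging the images together with complementary basis vectors to form a $\mb{Z}$-basis of $\Ll$, Remark \ref{prim} gives primitivity and exhibits $S$ as a direct summand. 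A (Maple-style) signature computation confirms $S$ has signature $(1,n-1)$, so Proposition \ref{mor} yields a K3 surface $Y$ with $\tmop{Pic}Y\simeq S$. Lemma \ref{ample} applied as in Example \ref{orsol_mat} shows $s_1+s_2$ is ample, so the $s_i$ and $s_1+s_2-s_i$ may all be taken to be effective nodal classes.

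Next I define $\tau\in\tmop{Aut}(\tmop{Pic}Y)$ by $\tau(s_1)=s_2$, $\tau(s_2)=s_1$ and $\tau(s_i)=s_1+s_2-s_i$ for $i\geq 3$; a direct intersection computation verifies $\tau$ is an involutive isometry of $S$. Since $\gamma(S)$ is a direct summand of $\Ll$, I extend $\tau$ by $-1$ on the complementary summand to an isometry $\tilde\tau$ of $H^2(Y,\mb Z)$; in particular $\tilde\tau$ acts as $-1$ on $T_Y\supset\mb{C}\omega_Y$, so $\tilde\tau$ preserves the Hodge decomposition, and it fixes the ample class $s_1+s_2$. The Strong Torelli theorem (Theorem \ref{tor}) then produces a unique biholomorphism $\s\colon Y\to Y$ with $\s^*=\tilde\tau$; uniqueness and $\tilde\tau^2=\text{id}$ force $\s^2=\text{id}$, and $\s$ is anti-symplectic. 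To identify $Y/G=\mb{P}^2$, I compute $(\tmop{Pic}Y)^G=\mb Z(s_1+s_2)$ and apply Corollary \ref{intform} to get $\tmop{Pic}(Y/G)\simeq\mb Z H$ with $H^2=1$. The invariant-lattice calculation rules out the Enriques case (whose invariant lattice has rank $10$, namely $U(2)\oplus E_8(2)$), so by Proposition \ref{k3sym} the quotient is a smooth projective rational surface. Case (2) of Remark \ref{irred} would force $\tmop{rk}(\tmop{Pic}(Y/G))=10$ and is excluded; any nodal component of $\tmop{Fix}(\s)$ would descend to a $(-4)$-curve on $Y/G$ and is excluded by $\tmop{Pic}(Y/G)=\mb Z H$ with $H^2=1$. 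Hence $\tmop{Fix}(\s)$ is a single smooth connected curve, $Y/G$ is a rational surface of Picard rank one with $H^2=1$, so $Y/G=\mb P^2$, and Riemann--Hurwitz computes the branch class to be $6H$, a smooth sextic.

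For part (ii), the periodic resolution computing $\hh$ gives $H^1(G,\tmop{Pic}Y)=\ker N/\tmop{im}(1-\s^*)$, where $N=1+\s^*$. A direct calculation shows $\ker N$ consists of $\sum a_i s_i$ with $\sum a_i=0$; the image of $1-\s^*$ is generated by $s_1-s_2$ and $2s_i-(s_1+s_2)\equiv-2(s_1-s_i)\pmod{s_1-s_2}$ for $i\geq 3$. The quotient is therefore $(\mb Z/2\mb Z)^{n-2}$ generated by the $L_i=s_1-s_i$. The overlap condition for every resulting relation follows from Proposition \ref{ol} applied to the smooth projective $2\colon 1$ cover $\pi$ which is totally ramified on the sextic. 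Each nonzero choice $L\assign\otimes_{i=3}^n L_i^{m_i}$ yields a cyclic algebra $A=A(Y;L_\s)$ on $\mb P^2$ ramified on $C$; by Theorem \ref{ram} its ramification on $C$ is the double cover of $C$ determined by the $2$-torsion line bundle $L|_{\tmop{Fix}(\s)}$, which is nontrivial by the injectivity of $\Psi$ in Remark \ref{inbr} (since $L$ represents a nonzero class in $\hh$). Hence the ramification curve is connected and irreducible, so Lemma \ref{max} gives that $A$ is maximal. Distinctness of the $2^{n-2}-1$ orders follows again from Remark \ref{inbr}: the classes $\Psi(L)\in\br(K(Y)/K(Z))$ are pairwise distinct.

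The genuinely delicate step is the lattice work at the start: producing, uniformly in $n\in\{3,\ldots,18\}$, a primitive embedding $S\hookrightarrow\Ll$ that realises $S$ as a direct summand so that the candidate involution $\tau$ on $S$ extends to an isometry of $\Ll$ while acting as $-1$ on $T_Y$. Everything downstream --- the Strong Torelli application, the identification of $Y/G$ with $\mb P^2$, the computation of $\hh$, and the verification of maximality --- is essentially forced once this embedding is in hand, so the combinatorial construction of $\gamma$ (and its compatibility with the prescribed $\tau$) is the principal obstacle in the proof.
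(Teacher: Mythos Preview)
Your overall strategy matches the paper's, but there is a genuine gap in the extension of the involution to $H^2(Y,\mb Z)$. You write: ``Since $\gamma(S)$ is a direct summand of $\Ll$, I extend $\tau$ by $-1$ on the complementary summand to an isometry $\tilde\tau$ of $H^2(Y,\mb Z)$; in particular $\tilde\tau$ acts as $-1$ on $T_Y$.'' This conflates two different complements. The direct-summand complement $S'$ furnished by Remark~\ref{prim} is \emph{not} orthogonal to $S$ (the intersection form on $S$ is not unimodular, so $\Ll\neq S\perp S^\perp$). Defining $\tilde\tau=\tau\oplus(-1)$ with respect to a non-orthogonal splitting does not give an isometry: for $s\in S$, $s'\in S'$ one gets $\tilde\tau(s)\cdot\tilde\tau(s')=-\tau(s)\cdot s'$, which has no reason to equal $s\cdot s'$. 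Moreover $S'\neq T_Y=S^\perp$, so even if $\tilde\tau$ were well-defined it would not act as $-1$ on $T_Y$.

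The paper proceeds differently: it defines $\phi$ on the \emph{orthogonal} finite-index sublattice $\tmop{Pic}Y\oplus T_Y\subset\Ll$ by $\phi|_{\tmop{Pic}Y}=\tau$ and $\phi|_{T_Y}=-1$ (this is automatically an isometry of the sublattice), and then verifies by an explicit computation (carried out in MATLAB, Appendix~\ref{code}) that $\phi\otimes\mb Q$ preserves the integral lattice $\Ll$, hence extends to an isometry of $H^2(Y,\mb Z)$. This integrality check is exactly the ``delicate step'' you flag at the end, and it is not bypassed by the direct-summand property; it genuinely requires computing $\phi$ on a basis of $\Ll$ via the explicit embedding $\gamma$. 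Your closing paragraph correctly identifies where the work lies, but the body of the argument claims to have already done it by a method that does not work. The remainder of your proof --- the identification of $Z$ with $\p2$, the computation of $\hh$, overlap via Proposition~\ref{ol}, and maximality/distinctness via Remark~\ref{inbr} --- is in line with the paper.
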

\begin{proof}
\begin{enumerate}[i)]
 \item

Ignoring terms $s_j$ with $j>n$, we embed $S=\langle s_1,\ldots,s_{n}\rangle$ in $\Ll$ via
 \begin{equation*}
\begin{aligned}
\gamma\colon s_1 & \mapsto\la1+\mu_1, & s_2 & \mapsto\la2+3\mu_2, & s_3& \mapsto \la3,\\ 
s_4& \mapsto \la4, &s_5& \mapsto \la5+\mu_2, & s_6& \mapsto \la6+\mu_2,\\
s_7& \mapsto \la7+\mu_2, & s_8& \mapsto \la8+\mu_2, & s_9& \mapsto \la1'+\mu_2,\\
s_{10}& \mapsto \la2'+\mu_2, & s_{11}& \mapsto \la3'+\mu_2, &s_{12}& \mapsto \la4'+\mu_2,\\
s_{13}& \mapsto \la5'+\mu_2, & s_{14}& \mapsto \la6'+\mu_2, & s_{15}& \mapsto \la7'+\mu_2,\\
s_{16}& \mapsto \la8'+\mu_2, & s_{17}& \mapsto \mu_2+\mu_1'-\mu_2', & s_{18}& \mapsto \mu_2+\mu_1''-\mu_2''.\\
\end{aligned}
\end{equation*}
Since $\{\gamma(s_1),\ldots,\gamma(s_{18}),\mu_1,\mu_2,\mu_1',\mu_1''\}$ is a basis of $\Ll$, by Remark \ref{prim}, $\gamma$ is a primitive embedding. Also, $S$ has signature $(1,n-1)$ (this calculation was performed by Maple). By Proposition \ref{mor}, there is a K3 surface $Y$ and an isometry $\tmop{Pic}Y\simeq S$.  We define an isometry $\phi$ on $T_Y\oplus\tmop{Pic}Y$ as follows: for $t\in T_Y,\phi(t)=-t$; on $\tmop{Pic}Y, \phi(s_i)=s_1+s_2-s_i,i\in\{1,\ldots,n\}$. By Lemma \ref{pictembed}, there exists an $n\in\mb{Z}$ such that $T_Y\oplus\tmop{Pic}Y\supset n\Ll$ and this implies that $\phi$ extends to an isometry on $\Ll$ if and only if it preserves the integral lattice.
 MATLAB verifies that this is so (the MATLAB code for this can be found in Appendix \ref{code}), implying that $\phi$ extends to an isometry of $H^2(Y,\mb{Z})$, which we also denote $\phi$. Now we show that $\phi$ is an effective Hodge isometry: firstly, $H^{2,0}(Y)\subset T_Y$ and $\phi(t)=-t,$ for all $t\in T_Y$, imply that $\phi$ preserves $H^{2,0}(Y)$. Since $s_1+s_2$ is both fixed by $\phi$ and ample (the latter fact demonstrated in Example \ref{orsol_mat}), $\phi$ is an effective Hodge isometry and there exists an involution $\s\colon Y\to Y$ such that $\phi=\s^*$ by the Strong Torelli theorem.
Since $\omega_Y\in T_Y$ and $\s(t)=-t$ for all $t\in T_Y$, $\s$ is antisymplectic. Thus $Z=Y/G$ (here $G=\langle\s|\s^2=1\rangle$) is a smooth rational surface or an Enriques surface by Proposition \ref{k3sym}. Recalling that $S_i$ is the divisor such that $s_i\simeq\oy(S_i)$, $\pi_{|S_1\cup S_2}$ is the double cover of $\p1$ by two copies of $\p1$ intersecting in three points. Thus Fix$_Y(\s)\neq\varnothing$ and by Proposition \ref{k3sym}, $Z$ is a smooth rational surface. 
By Lemma \ref{inj}, $\pi^*\colon \tmop{Pic}Z \to (\tmop{Pic}Y)^G\simeq\mb{Z}(s_1+s_2)$ is injective, implying $\tmop{Pic}Z\simeq\mb{Z}$, from which we conclude that $Z\simeq\p2$. The Hurwitz formula ${\omega_Y=\pi^*(\omega_Z)\otimes\oy(R)}$, where $R$ is the ramification divisor, along with the fact that $\omega_Y\simeq\oy$ imply that $\pi$ is ramified on a sextic $C$. By Proposition \ref{k3sym}, $C$ is smooth and thus irreducible.
\item
We now compute $H^1(G,\tmop{Pic}Y)$. The kernel of $1+\s$ is generated by $s_1-s_i$, for $i\in\{2,\ldots,n\}$, while the image of $1-\s$ is generated by $s_1-s_2$ and $2(s_1-s_i)$, for $i\in\{3,\ldots,n\}$. Thus $H^1(G,\tmop{Pic}Y)\simeq(\mb{Z}/2\mb{Z})^{n-2}$. Moreover, all relations satisfy the overlap condition by Proposition \ref{ol}. By Remark \ref{inbr}, for $m_i\in\{0,1\}$ not all zero, the corresponding $A\assign\oy\oplus(\otimes_{i=3}^nL_i^{m_i})_{\s}$ are distinct maximal orders on $\p2$ ramified on $C$.
\end{enumerate}
\end{proof}
\begin{figure}[H]
\begin{center}
\includegraphics*[trim= 0 5 0 0]{thesis3v3.jpg}
\end{center}
\caption{The double cover $\pi\colon Y\to\p2$ in the $n=3$ case.}
\end{figure}
\begin{remark}\label{bita}
Since $C$ is smooth and irreducible, by Lemma \ref{inj}, $\pi^*$ is an isomorphism onto $(\tmop{Pic}Y)^G\simeq\mb{Z}(s_1+s_2)$, implying $\pi^*H=s_1+s_2$, where $H$ is a line on $Z$. Then for all $i$,  $S_i+\phi(S_i)\sim S_1+S_2$ is the inverse image of a line on $\p2$. Since $S_i$ and $\phi(S_i)$ are distinct rational curves, there are $(n-1)$ lines $H_i\subset\p2$ such that $\pi^{-1}(H_i)=S_i+\phi(S_i)$.
Noting that for all $i$, $S_i\cdot\phi(S_i)=3$, this then implies that $C$ has $n-1$ tritangents $H_1,\ldots,H_{n-1}$.
\end{remark}

\section{The construction of orders on ruled surfaces}
We now  construct orders on ruled surfaces, to which end we make the following definition.
\begin{definition}\index{ruled surface}
 A surface $Z$ is {\bf (geometrically) ruled} if there exists a smooth curve $C$ and a morphism $p\colon Z\to C$ such that, for all $c\in C$, the fibre $Z_c\simeq\p1$.
\end{definition}

Given a ruled surface $\rho\colon Z\to C$, we know that $Z=\mb{P}_C(E)$\index{$\mb{P}_C(E)$} the projectivisation of a rank $2$ vector bundle $E$ on $C$ (see \cite[Prop. III.7]{bo}).

\begin{proposition}\label{ruledpic}\cite[Proposition III.18]{bo}
 The Picard group of a ruled surface ${p:Z\to C}$ is given by
\begin{eqnarray*}
\tmop{Pic}Z & = & p^*\tmop{Pic}C\oplus\mb{Z}C_0.
\end{eqnarray*}
Moreover, $C_0^2=\tmop{deg}(E),F^2=0,C_0\cdot F=1$ and $K_Z\equiv-2C_0-(\tmop{deg}(E)+2g(C)-2)F,$ where $F$ is a fibre of $p.$ 
\end{proposition}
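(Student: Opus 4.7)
The plan is to prove three things in turn: the direct sum decomposition $\tmop{Pic}Z = p^*\tmop{Pic}C \oplus \mb{Z}C_0$, the intersection numbers $C_0^2=\tmop{deg}(E)$, $F^2=0$, $C_0\cdot F=1$, and the canonical class formula. These are all standard facts about projective bundles, and I would follow the argument essentially as in Beauville.

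For the decomposition, given $L \in \tmop{Pic}Z$, I would restrict to a fibre $F \simeq \p1$. Since $\tmop{Pic}\p1 \simeq \mb{Z}$, there is an integer $n$ with $L|_F \simeq \os_{\p1}(n)$; because $C$ is connected and $\tmop{deg}(L|_{Z_c})$ is locally constant in $c \in C$, this $n$ is independent of the chosen fibre. Setting $L' \assign L \otimes \os_Z(-nC_0)$, the restriction $L'|_F$ is trivial on every fibre. By cohomology and base change (Grauert's theorem), $p_*L'$ is then a line bundle $D$ on $C$ and the natural morphism $p^*D \to L'$ is an isomorphism, yielding $L \simeq p^*D \otimes \os_Z(nC_0)$. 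To verify the sum is direct, suppose $p^*D + nC_0 \sim 0$; intersecting with $F$ using $C_0 \cdot F = 1$ and $F^2 = 0$ gives $n=0$, and injectivity of $p^*$ (which holds because $p$ admits a section $s$ with $s^*p^* = \mathrm{id}$) then forces $D=0$.

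For the intersection numbers, $F^2 = 0$ because any two distinct fibres are disjoint and algebraically equivalent; $C_0 \cdot F = 1$ because $C_0$ is a section of $p$; and $C_0^2 = \tmop{deg}(E)$ follows from Grothendieck's defining relation
\[
c_1(\os_{\mb{P}(E)}(1))^2 = p^*c_1(E) \cdot c_1(\os_{\mb{P}(E)}(1))
\]
in the Chow ring of $\mb{P}(E)$, which upon applying the projection formula and $C_0 \cdot F = 1$ gives the claimed degree.

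For the canonical class, I would invoke the relative Euler sequence on the $\p1$-bundle $p \colon \mb{P}(E) \to C$, take determinants to obtain $\omega_{Z/C}$ expressed in terms of $C_0$ and a pullback from $C$, and then combine $K_Z = p^*K_C + K_{Z/C}$ with $p^*K_C \equiv (2g(C)-2)F$ to obtain the stated numerical equivalence. The main bookkeeping hurdle here is consistency of sign conventions in the definition of $\mb{P}(E)$ and of $\tmop{deg}(E)$ (quotients versus subbundles, and normalization of $E$), rather than anything of mathematical substance; once the convention is fixed, the formula drops out immediately.
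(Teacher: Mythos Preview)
Your proposal is correct and essentially reproduces the standard argument from Beauville; the paper itself gives no proof at all, merely citing \cite[Proposition III.18]{bo}. So there is nothing to compare beyond noting that your sketch is exactly the argument behind the citation.
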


\begin{example}\index{Hirzerbruch surface}
The $n$th Hirzebruch surface, ${\mb{F}_n\assign\mb{P}_{\mb{P}^1}(\os_{\mb{P}^1}\oplus\os_{\mb{P}^1}(-n))}$\index{$\mb{F}_n$}, is ruled over $\p1$. In fact, these are all the ruled surfaces over the projective line (up to isomorphism) \cite[Prop.III.7]{bo}. 
\end{example}
\begin{remark}\label{hirz}

The Picard lattice of $\fn$ is given by $\tmop{Pic}\fn\simeq\mb{Z}^2$ with intersection form
$$
\left( \begin{array}{cc}
-n & 1 \\
1 & 0 
\end{array} \right).
$$
If $n$ is even, this Picard lattice is isometric to
$$\mb{H}=
\left( \begin{array}{cc}
0 & 1 \\
1 & 0 
\end{array} \right)
$$
with respect to the generators $C_0+\frac{n}{2}F, F$ of $\tmop{Pic}\fn$. If $n$ is odd, $\pic\fn$ is not isometric to $\mb{H}$ since $C_0^2=-n$ and $\mb{H}$ is an even lattice.
\end{remark}
\begin{figure}[H]
\begin{center}
\includegraphics[scale=0.5]{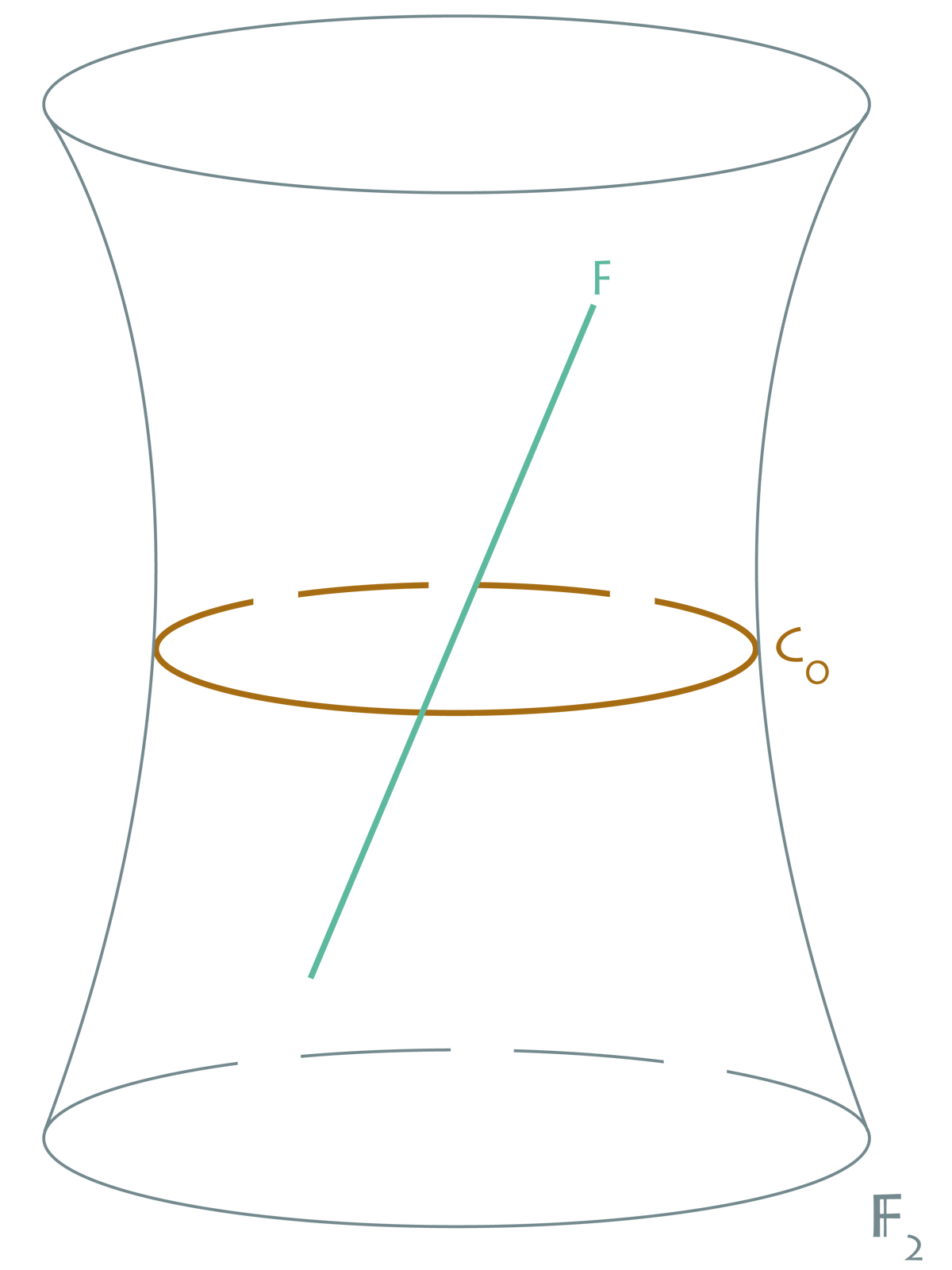}
\end{center}
\caption{The Hirzebruch surface $\mathbb{F}_2$.}
\end{figure}

\begin{lemma}\label{negdiv}
 Let $n>0$. The only Hirzebruch surface with an irreducible divisor $C_0$ such that $C_0^2=-n$ is $\fn$.
\begin{proof}
Assume there exists an irreducible curve $C\subset\mb{F}_m$ such that $C^2=-n$, where $n\neq m$. We know  that there exists a $C_0\subset\mathbb{F}_m$ such that $C_0^2=-m$. Since $C$ is linearly equivalent to neither $C_0$ nor $F$, by \cite[Chap. V, Cor. 2.18]{ag}, $C\sim aC_0+bF$,where $a,b>0$. However, since $C$ is irreducible, $C\cdot C_0\geq 0$ and $C\cdot F\geq 0$, implying $C^2=C\cdot(aC_0+bF)\geq 0$, contradicting the existence of such a curve $C$.
\end{proof}

\end{lemma}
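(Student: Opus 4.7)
The plan is a straightforward proof by contradiction: suppose there is an irreducible curve $C \subset \mathbb{F}_m$ with $C^2 = -n$ for some $n > 0$ and $m \neq n$, and derive a contradiction from the intersection theory of $\mathbb{F}_m$.

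First I would invoke Proposition \ref{ruledpic} to identify $\operatorname{Pic}\mathbb{F}_m = \mathbb{Z}C_0 \oplus \mathbb{Z}F$, where $C_0$ is the distinguished section with $C_0^2 = -m$ and $F$ is a fibre, with $C_0 \cdot F = 1$ and $F^2 = 0$. Writing $C \sim aC_0 + bF$, I would dispense with the two ``trivial'' linear equivalence classes: if $C \sim F$ then $C^2 = 0$, incompatible with $n > 0$; and if $C \sim C_0$ then $m = -C_0^2 = -C^2 = n$, contradicting $m \neq n$. Hence $C$ is linearly equivalent to neither $C_0$ nor $F$, and Hartshorne V.2.18 then forces $a > 0$ and $b > 0$.

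The final step is a one-line intersection calculation. Because $C$ is irreducible and distinct from the irreducible curves $C_0$ and $F$, Bezout-type positivity gives $C \cdot C_0 \geq 0$ and $C \cdot F \geq 0$. Expanding,
\begin{equation*}
C^2 \;=\; C \cdot (aC_0 + bF) \;=\; a(C \cdot C_0) + b(C \cdot F) \;\geq\; 0,
\end{equation*}
which contradicts $C^2 = -n < 0$. There is no real obstacle here; the only subtlety is remembering to peel off the two special linear equivalence classes before applying the positivity argument, since of course $C_0$ itself does have negative self-intersection and must be excluded by hand.
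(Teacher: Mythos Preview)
Your proof is correct and follows exactly the same route as the paper's: set up a contradiction, exclude the classes $C_0$ and $F$, invoke Hartshorne V.2.18 to get $a,b>0$, and then use $C\cdot C_0\ge 0$, $C\cdot F\ge 0$ to force $C^2\ge 0$. The only difference is that you spell out explicitly why $C\not\sim C_0$ and $C\not\sim F$, which the paper leaves implicit.
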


\subsection{Orders on $\p1\times\p1$}
To demonstrate the versatility of the construction introduced in Section \ref{sextic}, we now perform the same trick to construct orders on the quadric surface $\p1\times\p1$. As before, we begin by constructing a K3 surface $Y$ which we shall eventually show to be a double cover of the quadric.
\begin{proposition}\label{p1xp1}
 Let $S=\mb{Z}^4=\langle s_1,s_2,s_3,s_4\rangle$ with bilinear form given by 
\begin{eqnarray*}
\left( \begin{array}{cccc}
0 & 1 & 1 & 1\\
1 &-2 & 2 & 0\\
1 & 2 &-2 & 0\\
1 & 0 & 0 &-2\\
\end{array} \right).
\end{eqnarray*}
Then there exists a K3 surface $Y$ such that $\tmop{Pic}Y\simeq S.$ Moreover, the generic member of $|s_1|$ is irreducible.

\begin{proof}
We embed $S$ in $\Ll$ via
 \begin{equation*}
\begin{aligned}
\gamma\colon s_1 & \mapsto\mu_1+\mu_1', & s_2 & \mapsto\la1+\mu_2+\mu_1'',\\
s_3& \mapsto \la4+\mu_2+\mu_2'', & s_4& \mapsto	\la2+\mu_2.
\end{aligned}
\end{equation*}
Since $\{\gamma(s_1),\ldots,\gamma(s_4),\la1,\ldots,\la8,\la1',\ldots,\la8',\mu_1,\mu_2'\}$ is a basis for $\Ll$, by Remark \ref{prim} $\gamma$ is a primitive embedding. Since $S$ also has signature $(1,3)$ (this calculation was performed by Maple), there exists a K3 surface $Y$ such that $\tmop{Pic}Y\simeq S$ by Proposition \ref{mor}. We may once again assume that the $s_i$ and $s_5\assign s_2+s_3-s_4$ are effective classes. Then explicit elementary computations demonstrate that $s=s_1+s_2+s_3$ satisfies the conditions of Lemma \ref{ample}, implying that $s$ is an ample class. For $i\in\{2,3,4,5\}$, $s\cdot s_i=1$ implying that each $S_i$, for $i\in\{2,3,4,5\}$, is irreducible and thus a nodal curve. 

We now show that the generic member of $|s_1|$ is irreducible:
since $s$ is ample and $s\cdot s_1=2$, any member of $|s_1|$ has at most two components. By \cite[Prop. 2.6]{projk31}, if $|s_1|$ has no fixed components, then every member of $|s_1|$ can be written as a finite sum $E_1+\ldots +E_n$ where $E_i\sim E$ for all $i$ and $E$ an irreducible curve of arithmetic genus 1. Then $s_1\cdot s_2$ is a multiple of $n$. This, along with the fact that  $s_1\cdot s_2=1$, implies that $n=1$. From \cite[Discussion (2.7.3)]{projk31}, $|s_1|$ has fixed components if and only if the generic member of $|s_1|$ is $E\cup R$, where $E$ is an irreducible curve of arithmetic genus 1 and $R$ is both a nodal curve and a fixed component of $|s_1|$. 

We know the following: $E+R\in|s_1|$, $R^2=-2$, $E^2=0$, and $s_1^2=0$; it follows that $E\cdot R=1$ and $R\cdot S_1=-1$. 
Then the intersection theory on $Y$ yields that $R\not\sim S_i$ for $i\in\{2,3,4\}$. 
Then $R\cdot S_4\geq 0$ and $E\cdot S_4\geq 0$. From \cite[discussion preceding Prop. 2.6]{projk31}, the curve $E$ defines a base-point free pencil of genus 1 curves on $Y$ and $S_4$ is not a component of any fibres, implying that $E\cdot S_4=1$ and thus $R\cdot S_4=0$. Similarly we see that $R\cdot S_2=R\cdot S_3=0$.  Letting $R\sim\sum_{i=1}^4a_iS_i$,
\begin{eqnarray}\label{s2}
 R\cdot S_2 & = & a_1-2a_2+2a_3\\ \label{s3}
 R\cdot S_3 & = & a_1+2a_2-2a_3\\ \label{s4}
 R\cdot S_4 & = & a_1-2a_4.
\end{eqnarray}
Since $R\cdot S_i=0$, for $i\in\{2,3,4\}$, (\ref{s2})$+$(\ref{s3}) yields $a_1=0$ and it follows that $a_2=a_3$. This, in conjunction with (\ref{s4}), tells us that $a_4=0$. Then $R\sim a_2(S_2+S_3)$, implying $R^2=0$ and thus $R$ cannot possibly be a nodal curve, yielding a contradiction.
We conclude that $|s_1|$ has no fixed components and its general member is an irreducible curve of arithmetic genus 1.
\end{proof}

\end{proposition}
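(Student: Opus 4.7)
The proof naturally splits into two parts: existence of the $K3$ surface $Y$ with the specified Picard lattice, and irreducibility of the generic member of $|s_1|$.

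For the existence of $Y$, my plan is to invoke Proposition \ref{mor} by exhibiting a primitive embedding $\gamma\colon S\hookrightarrow\Ll$. Using Remark \ref{prim}, it suffices to extend $\gamma(s_1),\ldots,\gamma(s_4)$ to a $\mb{Z}$-basis of $\Ll$, which I would do by mapping each $s_i$ to a short sum of generators of the summands $\mb{E},\mb{E},\mb{H},\mb{H},\mb{H}$ of $\Ll$ chosen so that the four vectors together with all but four basis vectors of $\Ll$ still span. The intersection numbers of the $\gamma(s_i)$ must of course match the prescribed bilinear form, which dictates how many hyperbolic summands are needed (the off-diagonal ones of the form $\mu_1+\mu_1'$ etc.~supply the required intersections of $1$). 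A short computation (or Maple, as in preceding examples) verifies that $S$ has signature $(1,3)$, so Proposition \ref{mor} yields the desired K3 surface.

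For the irreducibility of the generic member of $|s_1|$, I would first argue that we may assume each $s_i$ and $s_5\assign s_2+s_3-s_4$ is an effective class: each has self-intersection $\geq -2$ so Proposition \ref{rr} applies, and we can replace classes by their negatives if needed. Then I would check that $s:=s_1+s_2+s_3$ satisfies the hypotheses of Lemma \ref{ample} and is therefore ample. Since $s\cdot s_1=2$, any element of $|s_1|$ decomposes into at most two irreducible components, and $s\cdot s_i=1$ for $i\in\{2,3,4,5\}$ shows that each $S_i$ is a nodal curve for these $i$.

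The main case analysis then invokes the results of \cite{projk31} on linear systems on K3 surfaces. Either $|s_1|$ has no fixed components, in which case its generic member is a sum $E_1+\cdots+E_n$ of genus $1$ curves $E_i\sim E$; the constraint $s_1\cdot s_2=1$ then forces $n=1$ and irreducibility follows immediately. Otherwise the generic member has the form $E\cup R$ where $R$ is a nodal fixed component and $E$ is irreducible of arithmetic genus $1$. This is the hard case, and the main obstacle of the proof: I need to derive a contradiction. From $E+R\in|s_1|$, $R^2=-2$, $E^2=0$, $s_1^2=0$, I extract $E\cdot R=1$ and $R\cdot S_1=-1$.

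The strategy for the contradiction is to compute $R\cdot S_i$ for $i\in\{2,3,4\}$. Since $R$ is not linearly equivalent to any of the $S_i$ (intersection numbers distinguish them), these products are non-negative. The pencil $|E|$ is base-point free with $S_4$ not a component of any fibre, forcing $E\cdot S_4=1$ and hence $R\cdot S_4=0$; the same argument applies to $S_2,S_3$. Writing $R\sim\sum_{i=1}^4 a_iS_i$, the three equations $R\cdot S_i=0$ for $i\in\{2,3,4\}$ become a linear system in the $a_i$ whose unique solution forces $a_1=0$, $a_2=a_3$, $a_4=0$. Thus $R\sim a_2(S_2+S_3)$, giving $R^2=a_2^2(S_2+S_3)^2=0$, contradicting $R^2=-2$. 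Hence $|s_1|$ has no fixed components and its generic member is irreducible.
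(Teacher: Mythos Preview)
Your proposal is correct and follows essentially the same approach as the paper: the same primitive embedding strategy via Remark~\ref{prim} and Proposition~\ref{mor}, the same ample class $s=s_1+s_2+s_3$ via Lemma~\ref{ample}, and the identical case split from \cite{projk31} culminating in the linear system on the $a_i$ that forces $R\sim a_2(S_2+S_3)$ and hence $R^2=0$. The only cosmetic difference is that the paper writes down an explicit $\gamma$, whereas you describe the shape such an embedding would take.
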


\begin{figure}[H]
\begin{center}
\includegraphics*[trim= 0 280 0 0]{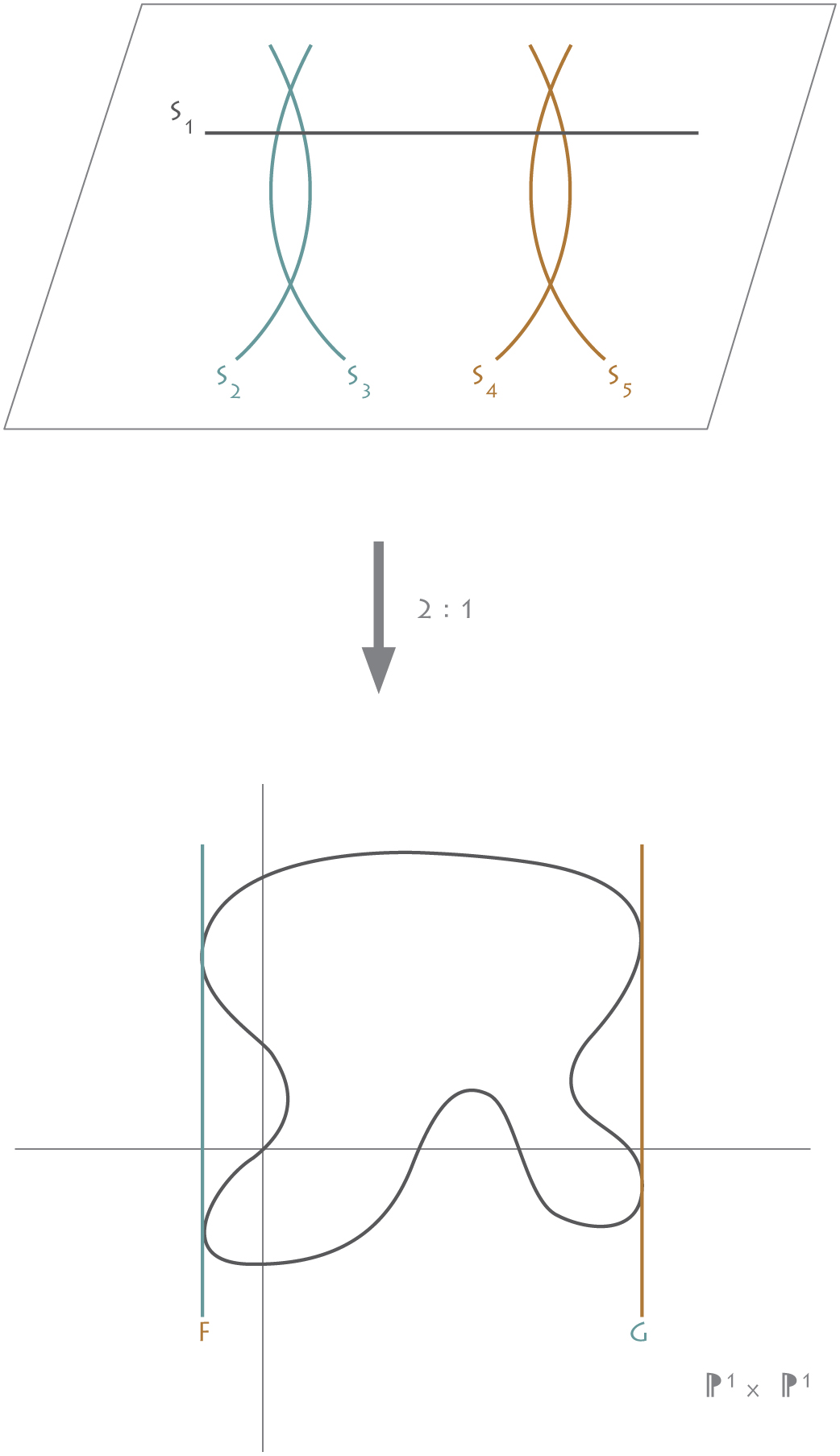}
\end{center}
\caption{Configuration of curves on $Y$.}
\end{figure}
\begin{proposition}\label{p1}
Let $Y$ be as in Proposition \ref{p1xp1}. Then
\begin{enumerate}[i)]
 \item
there exists an automorphism $\s$ of $Y$ such that $Z\assign Y/G$ is $\p1\times\p1$ (where $G=\langle\s|\s^2=1\rangle$) and $\pi:Y\to Z$ is a double cover ramified on a $(4,4)-$divisor $D$;
\item
${H^1(G,\tmop{Pic}Y)\simeq\mb{Z}/2\mb{Z}}$, generated by $L\simeq s_2-s_4$. Then A$\assign \oy\oplus\ls$ is a maximal order on $Z$ ramified on $D.$
\end{enumerate}
\begin{proof}
\begin{enumerate}[i)]
\item
As before, we first define an involution $\phi$ on $\tmop{Pic}Y\oplus T_Y.$ The action of $\phi$ on $\tmop{Pic}Y$ is given by the matrix
 \begin{eqnarray*}
\left( \begin{array}{cccc}
1 & 0 & 0 & 0\\
0 & 0 & 1 & 1\\
0 & 1 & 0 & 1\\
0 & 0 & 0 & -1\\
\end{array} \right)
\end{eqnarray*}
and $\phi(t)=-t$, for all $t\in T_Y.$ Once again, MATLAB verifies that $\phi$ extends to an isometry on $H^2(Y,\mb{Z})$, also denoted $\phi$ (see Appendix \ref{code}, Section \ref{b2}). We now show that $\phi$ is an effective Hodge isometry. Since $H^{2,0}(Y)\subset T_Y$, $\phi$ preserves $H^{2,0}(Y)$. The ample class $s=s_1+s_2+s_3$ (ampleness of $s$ was demonstrated in Propostion \ref{p1xp1}) is preserved by $\phi$ and thus $\phi$ is an effective Hodge isometry. We conclude from the Strong Torelli theorem that $\phi$ is induced by a unique involution $\s\colon Y\to Y$. We denote by $\pi\colon Y\to Z\assign Y/G$ the corresponding quotient morphism, where $G=\langle\s|\s^2=1\rangle$. 
Since $\pi^*\colon \tmop{Pic}Z \to (\tmop{Pic}Y)^G=\langle s_1,s_2+s_3\rangle$ is an injection by Lemma \ref{inj}, $\tmop{rk}\tmop{Pic}Z\leq 2$. The K3 double cover of an Enriques surface has Picard rank $\geq 10$, implying by Proposition \ref{k3sym} that $Z$ is rational and $\pi$ is ramified on the disjoint union of smooth curves. 
\\
 We now show that the ramification locus $D'\subset Y$ is irreducible: firstly, any component of $D'$ is necessarily fixed by $\s$ and thus linearly equivalent to $aS_1+b(S_2+S_3)$, implying its self-intersection is $4ab$. Thus there are no components of $D'$ which are nodal curves.  Moreover, since $\tmop{rk}\tmop{Pic}Z\leq 2$, we conclude from Remark \ref{irred} that $D'$ is irreducible.
Then by Lemma \ref{inj}, $\pi^*\colon \tmop{Pic}Z \to (\tmop{Pic}Y)^G\simeq\mb{Z}s_1\oplus\mb{Z}(s_2+s_3)$ is an isomorphism, implying $\tmop{Pic}Z\simeq \langle t_1,t_2\rangle$ (where $\pi^*(t_1)=s_1, \pi^*(t_2)=s_2+s_3$). By Corollary \ref{intform}, $\tmop{Pic}Z$ has intersection form given by
$$\mb{H} = 
\left( \begin{array}{cc}
0 & 1 \\
1 & 0 
\end{array} \right).
$$
Since the rational surfaces with Picard rank 2 are precisely the Hirzebruch surfaces $\mb{F}_n,$  by Remark \ref{hirz}, $Z\simeq\mb{F}_{2n}$, $n\geq0 $. Assuming $Z\simeq\mb{F}_{2n}, n>0$, there exists an effective divisor $C$ such that $C^2=-2n$. We now show that this is impossible: such a $C$ is necessarily linearly equivalent $at_1+bt_2$, where $ab=-2n$. Thus there exists an effective divisor $C$ linearly equivalent to $aS_1+b(S_2+S_3)$. Since $|s_1|$ defines a base-point free pencil on $Y$, $C\cdot S_1\geq 0$. However, $C\cdot S_1=2b$ and we conclude that $b\geq 0$. Moreover, since $(s_2+s_3)^2=0$, $|s_2+s_3|$ is a pencil and any fixed component is either $S_2$ or $S_3$, which isn't possible since then $S_2$ or $S_3$ would give a pencil of curves with no fixed component by \cite[discussion (2.7)]{projk31}. Thus $|s_2+s_3|$ defines a base-point free pencil and $C\cdot(S_2+S_3)\geq 0$, implying that $a\geq 0$. Thus $ab\geq 0\neq-2n$, yielding a contradiction.
Thus $n=0$ and $Z\simeq\mb{F}_0$, that is, isomorphic to $\p1\times\p1$. Use of the Hurwitz formula once again demonstrates that $\pi$ is ramified on a $(4,4)$-divisor $D$.
\item 
The kernel of $1+\s$ is generated by $s_2-s_3$ and $s_2-s_4$ while the image of $1-\s$ is generated by $s_2-s_3$ and $2(s_2-s_4)$.
Thus ${H^1(G,\tmop{Pic}Y)\simeq\mb{Z}/2\mb{Z}}$, generated by $L=s_2-s_4$. By Proposition \ref{ol}, the nontrivial relation satisfies overlap and by Remark \ref{inbr}, $A\assign\oy\oplus L_\s$ is a nontrivial order  on $Z$ ramified on $D$ and thus maximal.
\end{enumerate}
\end{proof}
\end{proposition}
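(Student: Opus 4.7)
The plan is to imitate the argument of Theorem \ref{thawesome}. For part (i), I would first define an involution $\phi$ on $\tmop{Pic}Y \oplus T_Y$ using the stated $4\times 4$ matrix on $\tmop{Pic}Y$ and $-\tmop{id}$ on $T_Y$, then check (by MATLAB, leveraging the embedding $\gamma$ from Proposition \ref{p1xp1}) that $\phi$ extends to a lattice isometry of $H^2(Y,\mb{Z}) \simeq \Lambda$. Because $H^{2,0}(Y) \subset T_Y$ and $\phi$ acts as $-1$ on $T_Y$, the $(2,0)$-part is preserved; and the ample class $s_1 + s_2 + s_3$ from Proposition \ref{p1xp1} is fixed by $\phi$ (read off the matrix), so $\phi$ is an effective Hodge isometry. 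Strong Torelli (Theorem \ref{tor}) then produces an involution $\s\colon Y \to Y$ with $\s^* = \phi$, and $\s$ is antisymplectic since $\omega_Y \in T_Y$.

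Identifying $Z = Y/G$ is the substantive part. By Lemma \ref{inj}, $\tmop{rk}(\tmop{Pic}Z) \le \tmop{rk}(\tmop{Pic}Y)^G = 2$, so $Z$ cannot be Enriques (whose K3 cover has Picard rank $\geq 10$); by Proposition \ref{k3sym}, $Z$ is smooth rational. The main obstacle is showing the ramification locus $D' \subset Y$ is irreducible: every component of $D'$ is $\s$-fixed, hence of the form $aS_1 + b(S_2 + S_3)$ with self-intersection $4ab \geq 0$, ruling out nodal components; Remark \ref{irred} then leaves only the elliptic-fibration case, but that would force $\tmop{rk}(\tmop{Pic}Z) = 10$, contradicting the bound above. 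Lemma \ref{inj} thus upgrades $\pi^*$ to an isomorphism onto $(\tmop{Pic}Y)^G = \langle s_1,\, s_2 + s_3\rangle$, and Corollary \ref{intform} computes the intersection form on $\tmop{Pic}Z$ as the hyperbolic plane $\mb{H}$. With Remark \ref{hirz} this forces $Z \simeq \mb{F}_{2n}$ for some $n \ge 0$. To pin down $n = 0$, I would argue by contradiction: an irreducible $C \subset \mb{F}_{2n}$ with $C^2 = -2n$ (Lemma \ref{negdiv}) pulls back to a class $aS_1 + b(S_2 + S_3)$ with $ab = -2n < 0$. Intersecting with a fibre of the base-point-free pencil $|s_1|$ (from Proposition \ref{p1xp1}) gives $b \geq 0$; an analogous argument for $|s_2 + s_3|$, after first ruling out $S_2$ or $S_3$ as fixed components in the manner of Proposition \ref{p1xp1}, gives $a \geq 0$, a contradiction. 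Finally, the Hurwitz formula with $\omega_Y \simeq \oy$ and $K_{\mb{F}_0} = -2C_0 - 2F$ identifies $D$ as a $(4,4)$-divisor.

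Part (ii) is largely mechanical. A direct computation with the matrix of $\s$ yields $\ker(1+\s) = \langle s_2 - s_3,\, s_2 - s_4\rangle$ and $\tmop{im}(1-\s) = \langle s_2 - s_3,\, 2(s_2 - s_4)\rangle$, so $\hh \simeq \mb{Z}/2\mb{Z}$, generated by $L = s_2 - s_4$. Proposition \ref{ol} applies since $\os(Y)^* = k^*$ and the ramification indices of $\pi$ have lcm $2$, so the associated relation satisfies the overlap condition. Remark \ref{inbr} then certifies that $A = \oy \oplus \ls$ has nontrivial Brauer class; it is ramified precisely along $D$ by Theorem \ref{ram}, and since $D$ is irreducible, Lemma \ref{max} delivers maximality.
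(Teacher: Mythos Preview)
Your proposal is correct and follows essentially the same route as the paper's proof, step for step: the construction of $\phi$, the Torelli argument, the rank bound ruling out the Enriques case, the irreducibility of the branch locus via Remark \ref{irred}, the identification of the intersection form as $\mb{H}$, the elimination of $\mb{F}_{2n}$ for $n>0$ via the two base-point-free pencils, and the explicit cocycle computation for part (ii). Your version is in places slightly more explicit than the paper's (noting that $\s$ is antisymplectic, invoking Lemma \ref{max} directly for maximality), but there is no substantive difference.
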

\begin{remark}\label{bit}
$S_2+\phi(S_2),S_4+\phi(S_4)$ are inverse images of fibres of the same projection $p_2\colon Z\to \p1$. Thus there are two fibres $F,G$ of $p_2$ such that $\pi^{-1}(F)=S_2+\phi(S_2)$ and $\pi^{-1}(G)=S_4+\phi(S_4)$ and $A\simeq\oy\oplus\oy(S_2-S_4)_\s$. Thus $F$ and $G$ are both bitangent to the ramification curve $D$.
 \end{remark}

\begin{figure}[H]
\begin{center}
\includegraphics*{thesis5v3.jpg}
\end{center}
\caption{The double cover $\pi\colon Y\to\p1\times\p1$.}
\end{figure}
Since the centre of $A$ is the quadric $\p1\times\p1$ and hence birational to $\p2$, it makes sense to ask whether $A$ is itself birational to any of the orders on $\p2$ constructed above.
One may expect $A$ to be birational to an order on $\p2$ ramified on a sextic. This, however, is not the case. In fact, it is birational to an order $A'$ ramified on a singular octic, as we discover below. First let us define what it means for two orders to be birational.

\begin{definition}\index{order!birational equivalence of orders}
 Let $A$ be an order on a scheme $Z$, $A'$ an order on a scheme $Z'$. We say that $A$ and $A'$ are {\bf birational}\footnote{This definition will suffice for our purposes. More generally, one may wish to define two such orders to be birational if $f_{|U}^*(A')\sim_M A_{|U}$.}
 if there exists a birational map $f\colon Z\dashrightarrow Z'$ and an open set $U\subset Z$ on which $f$ is regular such that there is an isomorphism $f_{|U}^*(A')\simeq A_{|U}$.
\end{definition}

Now let $A$ be the order on $Z:=\p1\times\p1$ ramified on a $(4,4)-$divisor $D$ constructed above in Proposition \ref{p1}. We now show that $A$ is birational to an order $A'$ on $Z'\simeq\p2$ ramified on an octic with 2 quadruple points. There exists a birational map $\mu\colon Z \dra Z'$ which is obtained by blowing up a point $p$ on $Z$ (where $p\not\in D, F$ or $G$ ($F,G$ as defined in Remark \ref{bit})) and then blowing down the horizontal and vertical fibres through that point. For any curve $C\subset Z$, we denote its strict transform on $Z'$ by  $C'$. 

The strict transform $D'\subset Z'$ of $D$ is an octic with two 4-fold points $q_1,q_2$ and there is a double cover $\pi'\colon Y'\to Z'$ ramified on $D'$ and a commutative diagram
\begin{displaymath}
 \xymatrix{
Y \ar@{-->}[r]^{\mu_Y} \ar[d]_{\pi} & Y' \ar[d]^{\pi'}\\
Z \ar@{-->}[r]^\mu        & Z'}
\end{displaymath}
%

%

\begin{proposition}
 There exists an order $A'$ on $Z'$ ramified on $D'$ which is birational to $A$. Moreover, $A'$ is constructible using the noncommutative cyclic covering trick.
\begin{proof}
 Recall from Remark \ref{bit} that $D$ has two bitangents $F,G$. Then $F',G'$ are both bitangent to $D'$ and thus each splits into two components on $Y'$, say $F_1',F_2'$ and $G_1',G_2'$. Then $A'=\os_{Y'}\oplus\os_{Y'}(F_1'-G_1')$ is a nontrivial order ramified on $D'$. Letting $U\subset Z$ be an open subset of $Z$ such that $\mu_{|U}$ is an isomorphism, we see that $A_{|U}\simeq\mu_{|U}^*(A')$.
\end{proof}

\end{proposition}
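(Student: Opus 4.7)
The plan is to construct $A'$ directly via the noncommutative cyclic covering trick applied to the double cover $\pi'\colon Y'\to Z'$, and then verify birational equivalence with $A$ over the open locus where $\mu$ is regular. Let $\sigma'$ denote the deck involution of $\pi'$, which is the involution on $Y'$ induced from $\sigma$ by conjugation with $\mu_Y$. Set $L'\assign\os_{Y'}(F_1'-G_1')$ and consider the bimodule $L'_{\sigma'}$.

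First I would verify the defining bimodule relation $(L'_{\sigma'})^{\otimes 2}\simeq\os_{Y'}$. By the tensor product formula for invertible bimodules, this reduces to $L'\otimes\sigma'^*L'\simeq\os_{Y'}$. Because $F'$ and $G'$ each split on $Y'$, the involution $\sigma'$ swaps $F_1'\leftrightarrow F_2'$ and $G_1'\leftrightarrow G_2'$, so
\begin{eqnarray*}
L'\otimes\sigma'^*L' & \simeq & \os_{Y'}(F_1'+F_2'-G_1'-G_2')\;\simeq\;\pi'^*\os_{Z'}(F'-G').
\end{eqnarray*}
Since $F'$ and $G'$ are both lines in $Z'\simeq\p2$, they are linearly equivalent and this bundle is trivial. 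Overlap holds by Proposition \ref{ol}: $Y'$ is projective so $\os(Y')^*=k^*$, and the least common multiple of the ramification indices of $\pi'$ is $2$. Hence $A'\assign\os_{Y'}\oplus L'_{\sigma'}$ is a well-defined cyclic algebra. To see that $A'$ is ramified on the octic $D'$ and is maximal, I would apply Theorem \ref{ram}: the ramification of $A'$ along $D'$ is the cyclic cover defined by the $2$-torsion class $L'|_{D'_{Y'}}$ on the (irreducible) reduced ramification divisor $D'_{Y'}\subset Y'$. Nontriviality of this restriction follows by transporting the corresponding nontriviality for $A$ on $Y$, and maximality then follows from Lemma \ref{max}.

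For the birational equivalence, let $U\subset Z$ be the open complement of $p$ together with the two fibres of $p_1,p_2$ through $p$, so that $\mu|_U$ is an isomorphism onto its image. Then $\mu_Y$ restricts to an isomorphism $\pi^{-1}(U)\to\pi'^{-1}(\mu(U))$ intertwining $\sigma$ with $\sigma'$, and under this identification the strict transforms of $S_2$ and $S_4$ on $Y'$ are (components of) $F_1'$ and $G_1'$ respectively. Therefore $\mu_Y^*L'$ agrees with $L\assign\os_Y(S_2-S_4)$ on $\pi^{-1}(U)$, and summing over the two graded pieces of each cyclic algebra gives $\mu|_U^*(A')\simeq A|_U$, which is precisely the definition of birational.

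The main obstacle is the singular nature of $Y'$ over the two quadruple points $q_1,q_2$ of $D'$: both Proposition \ref{ol} and Theorem \ref{ram} are stated for smooth $Y'$. I would handle this by carrying out the overlap and ramification verifications on the smooth locus $Y'\setminus\pi'^{-1}\{q_1,q_2\}$, which exhausts $Z'$ away from two closed points, and then extending the resulting order to all of $Z'$ using reflexivity of cyclic algebras (Corollary 3.4 of \cite{cyc}), since both the cyclic algebra structure and the ramification data are determined in codimension one. Alternatively one may replace $Y'$ by its minimal desingularization without altering the associated order on $Z'$.
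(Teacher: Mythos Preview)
Your proposal is correct and follows essentially the same route as the paper: both construct $A'=\os_{Y'}\oplus\os_{Y'}(F_1'-G_1')_{\sigma'}$ via the cyclic covering trick on the double cover $\pi'\colon Y'\to Z'$ and then check birationality over the open set where $\mu$ is an isomorphism. Your version is more thorough---explicitly verifying the relation $L'\otimes\sigma'^*L'\simeq\os_{Y'}$ and the overlap condition, and, unlike the paper, acknowledging and working around the singularities of $Y'$ over the quadruple points $q_1,q_2$ of $D'$.
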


\subsection{Orders on $\mb{F}_2$}

Using the same procedure, we now demonstrate ways to construct numerically Calabi-Yau orders on the 2nd Hirzebruch surface $\mathbb{F}_2\assign\mb{P}_{\mb{P}^1}(\os_{\mb{P}^1}\oplus\os_{\mb{P}^1}(-2))$.

\begin{proposition}\label{f2yo}
 Let $S=\langle s_1,s_2,s_3,s_4,s_5\rangle$ with bilinear form given by 
\begin{eqnarray*}
A=\left( \begin{array}{ccccc}
-2 & 0 & 1 & 0 & 1\\
0 & -2 & 0 & 1 & 0\\
1 & 0 & -2 & 2 & 0\\
0 & 1 & 2 & -2 & 0\\
1 & 0 & 0 & 0 & -2
\end{array} \right).
\end{eqnarray*}
Then there exists a K3 surface $Y$ such that $\tmop{Pic}Y\simeq S.$
\end{proposition}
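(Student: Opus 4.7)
The plan is to follow the three-step template of Theorem \ref{thawesome} and Proposition \ref{p1xp1}: exhibit a primitive embedding $\gamma\colon S \hookrightarrow \Lambda$, check that the signature of $S$ is $(1,4)$, and then invoke Proposition \ref{mor} to produce the K3 surface.

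The hard part is constructing $\gamma$, because the off-diagonal entry $s_3 \cdot s_4 = 2$ cannot be realised by two distinct generators within either an $\mathbb{E}$-summand or an $\mathbb{H}$-summand: the only pairings appearing among Dynkin-type basis vectors are $0$ and $1$. My plan is to engineer the $2$ by a sign cancellation. Specifically, set $\gamma(s_3) = \lambda_1$ and $\gamma(s_4) = -\lambda_1 + \mu_2$; then $\gamma(s_3)^2 = -2$, $\gamma(s_4)^2 = -\lambda_1^2 \cdot 1 + 0 = -2$, and $\gamma(s_3)\cdot\gamma(s_4) = -\lambda_1^2 = 2$. The remaining three generators can then be placed as $\gamma(s_1) = \lambda_4 + \mu_1$, $\gamma(s_2) = \mu_1 + \lambda_1'$, and $\gamma(s_5) = \lambda_5$. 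The other nine off-diagonal relations are verified by direct pairing computations; the only nontrivial one is $\gamma(s_1)\cdot\gamma(s_4) = -\lambda_4\cdot\lambda_1 + \mu_1\cdot\mu_2 = -1 + 1 = 0$, where the two contributions cancel.

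Primitivity will then follow from Remark \ref{prim}. I would exhibit an extension of $\{\gamma(s_i)\}$ to a $\mathbb{Z}$-basis of $\Lambda$ by replacing the standard basis elements $\lambda_1, \lambda_4, \lambda_5, \lambda_1', \mu_2$ by $\gamma(s_3), \gamma(s_1) - \mu_1, \gamma(s_5), \gamma(s_2) - \mu_1, \gamma(s_3) + \gamma(s_4)$ respectively; this change of basis is visibly invertible over $\mathbb{Z}$, so the resulting set is still a $\mathbb{Z}$-basis of $\Lambda$ and hence $\gamma(S)$ is a direct summand.

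It remains to check that the signature of $S$ is $(1,4)$, which I would defer to Maple as is the convention throughout this chapter. Once this is confirmed, Proposition \ref{mor} immediately yields the desired K3 surface $Y$ with $\tmop{Pic} Y \simeq S$.
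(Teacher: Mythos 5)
Your proposal is correct and follows the paper's own template exactly: exhibit a primitive embedding $\gamma\colon S\hookrightarrow\Lambda$ via Remark \ref{prim}, verify the signature $(1,4)$ by computer, and invoke Proposition \ref{mor}; all ten of your pairing computations check out, and your basis-extension argument for primitivity is valid since $\lambda_1=\gamma(s_3)$, $\lambda_4=\gamma(s_1)-\mu_1$, $\lambda_5=\gamma(s_5)$, $\lambda_1'=\gamma(s_2)-\mu_1$ and $\mu_2=\gamma(s_3)+\gamma(s_4)$ recover the removed standard basis vectors over $\mathbb{Z}$. The only difference from the paper is the choice of embedding: the paper takes $s_1\mapsto\lambda_4$, $s_2\mapsto\lambda_2+\mu_1$, $s_3\mapsto\lambda_1+2\mu_1$, $s_4\mapsto\lambda_7+\mu_2$, $s_5\mapsto\lambda_5$, realising the off-diagonal entry $2$ as $2\mu_1\cdot\mu_2$ rather than by your sign cancellation $\lambda_1\cdot(-\lambda_1)=2$ --- both devices are equally valid (note only that your intermediate expression ``$-\lambda_1^2\cdot 1$'' for $\gamma(s_4)^2$ should read $(-\lambda_1)\cdot(-\lambda_1)=\lambda_1^2=-2$; the conclusion is unaffected).
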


\begin{proof}
We embed $S$ in $\Ll$ via
 \begin{equation*}
\begin{aligned}
\gamma\colon s_1 & \mapsto\la4, & s_2 & \mapsto\la2+\mu_1, &s_3& \mapsto \la1+2\mu_1,\\
 s_4& \mapsto\la7+\mu_2, & s_5 & \mapsto \la5.
\end{aligned}
\end{equation*}
Since $\{\gamma(s_1),\ldots,\gamma(s_5),\la3,\la6,\la7,\la8,\la1',\ldots,\la8',\mu_1,\mu_1',\mu_2',\mu_1'',\mu_2''\}$ is a basis for $\Ll$, $S\hookrightarrow\Ll$ is a primitive sublattice by Remark \ref{prim}. Since $S$ also has signature $(1,4)$ (this calculation was performed by Maple), there exists a K3 surface $Y$ such that $\tmop{Pic}Y\simeq S$ by Proposition \ref{mor}. By Proposition \ref{rr}, we may assume that $s_i$ are effective classes, as is $s_6\simeq s_3+s_4-s_5$. A simple calculation verifies that $s=s_1+s_2+3s_3+3s_4$ satisfies the conditions of Lemma \ref{ample}, implying $s$ ample and since $s\cdot s_i=1$ for all $i$, each $s_i$ is an irreducible class and hence an effective nodal class.
\end{proof}
\begin{figure}[H]
\begin{center}
\includegraphics*[trim= 0 280 0 0]{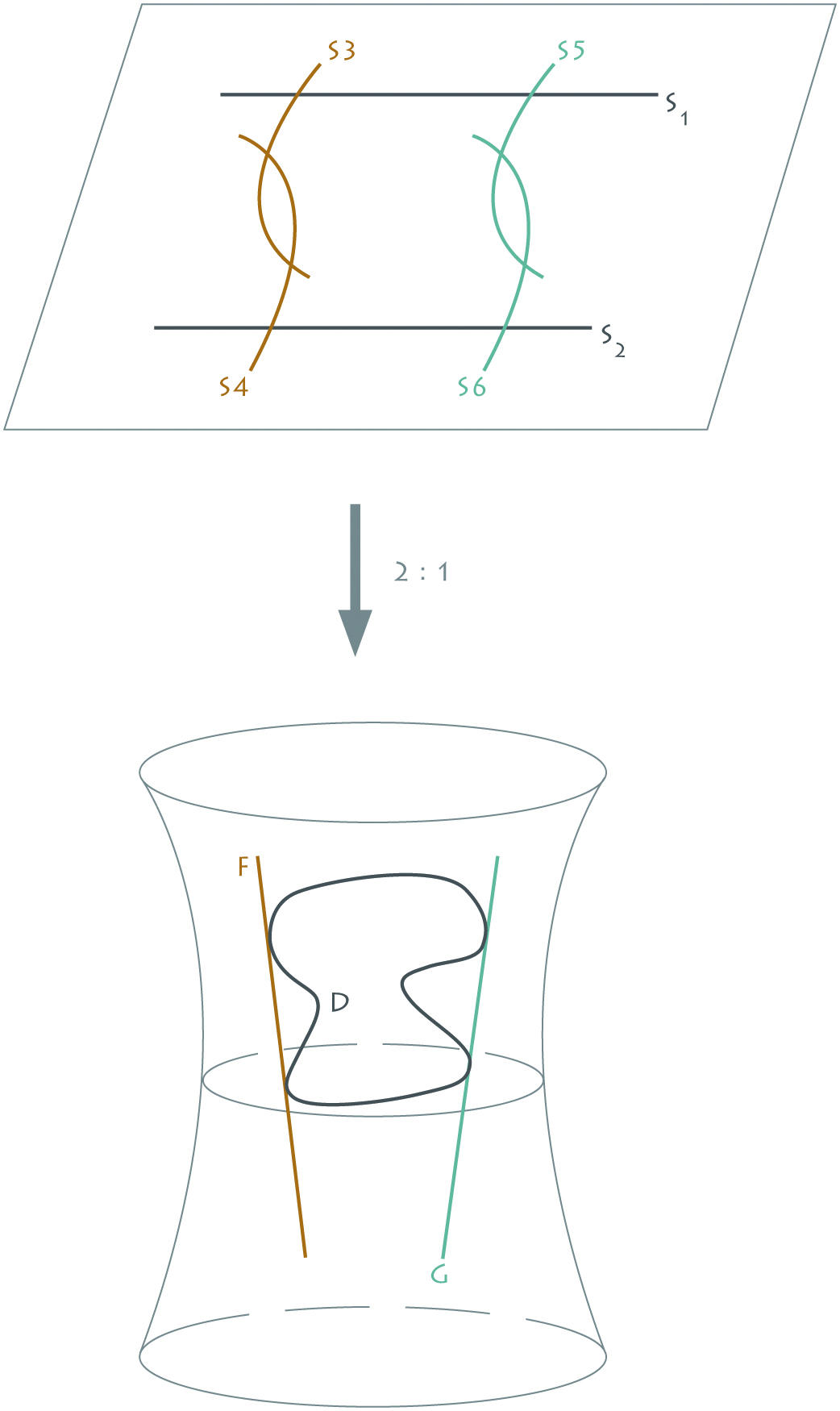}
\end{center}
\caption{Configuration of curves on $Y$.}
\end{figure}
\begin{proposition}\label{f2}
\begin{enumerate}[i)]
Let $Y$ be as in Proposition \ref{f2yo}. Then 
 \item 
there exists an automorphism $\s$ of $Y$ such that $Z\assign Y/G$ is $\mb{F}_2$ (where $G=\langle\s|\s^2=1\rangle$) and $\pi\colon Y\to Z$ is a double cover ramified on a divisor ${D\sim 4C_0+8F }$;
\item
${H^1(G,\tmop{Pic}Y)\simeq\mb{Z}/2\mb{Z}}$, generated by $L=s_3-s_5$ and $A=\oy\oplus\ls$ is a maximal order on $Z$ ramified on $D.$
\end{enumerate}

\begin{proof}
\begin{enumerate}[i)]
\item
As before, we first define an involution $\phi$ on $\tmop{Pic}Y\oplus T_Y.$ The action of $\phi$ on $\tmop{Pic}Y$ is given by the matrix
 \begin{eqnarray*}
\left( \begin{array}{ccccc}
0 & 1 & 0 & 0 & 0\\
1 & 0 & 0 & 0 & 0\\
0 & 0 & 0 & 1 & 1\\
0 & 0 & 1 & 0 & 1\\
0 & 0 & 0 & 0 & -1
\end{array} \right)
\end{eqnarray*}
and $\phi(t)=-t$, for all $t\in T_Y.$ Once again, MATLAB verifies that $\phi$ extends to an isometry on $H^2(Y,\mb{Z})$, also denoted $\phi$ (see Appendix \ref{code}, Section \ref{b3}). We now show that $\phi$ is an effective Hodge isometry. Since $H^{2,0}(Y)\subset T_Y$, $\phi$ preserves $H^{2,0}(Y)$. Moreover, since $\phi$ preserves the ample class $s=s_1+s_2+3(s_3+s_4)$ (demonstrated to be ample in Proposition \ref{f2yo}), $\phi$ is an effective Hodge isometry. \\
Since $\pi^*\colon \tmop{Pic}Z \to (\tmop{Pic}Y)^G=\langle s_1+s_2,s_3+s_4\rangle$ is an injection by Lemma \ref{inj}, $\tmop{rank}\tmop{Pic}Z\leq 2$. The K3 double cover of an Enriques surface has Picard rank $\geq 10$, implying by Proposition \ref{k3sym} that $Z$ is rational and $\pi$ is ramified on the disjoint union of smooth curves. 
We now show that the ramification $D'$ locus is irreducible: firstly, any component of $D'$ is necessarily fixed by $\s$ and thus linearly equivalent to $a(S_1+S_2)+b(S_3+S_4)$, implying its self-intersection is $4a(b-a)$. Thus there are no components of $D'$ which are nodal curves. Moreover, since $\pi^*\colon \tmop{Pic}Z \to (\tmop{Pic}Y)^G$ is an injection by Lemma \ref{inj}, $\tmop{rank}\tmop{Pic}Z\leq 2$. We conclude from Remark \ref{irred} that $D'$ is irreducible.

Thus by Corollary \ref{intform}, $\tmop{Pic}Z\simeq(\tmop{Pic}Y)^G\simeq\langle s_1+s_2,s_3+s_4\rangle$ with intersection product given by
$$
\left( \begin{array}{cc}
-2 & 1 \\
1 & 0 
\end{array} \right).
$$
Thus $Z$ is a Hirzebruch surface $\mathbb{F}_{2n}$, $n\geq 0$. Letting ${T_1=\pi(S_1)}$, $T_1$ is an irreducible divisor such that $T_1^2=-2$, implying by Lemma \ref{negdiv} that  $Z\simeq\mb{F}_2$. Since $Y$ is a K3 surface, $\pi$ is ramified on a divisor $D\sim -2K_{\mb{F}_2}$ and thus $D\sim 4C_0+8F$. 
\item
We now compute $H^1(G,\tmop{Pic}Y)$: the kernel of $1+\s$ is generated by ${s_1-s_2},{s_3-s_4}$ and $s_3-s_5$ while the image of $1-\s$ is generated by $s_1-s_2,s_3-s_4$ and $2(s_3-s_5)$. Thus $H^1(G,\tmop{Pic}Y)\simeq\mb{Z}/2\mb{Z}$, generated by $s_3-s_5$.
The construction of the maximal order follows as in previous examples: $A=\oy\oplus\ls$, where $L$ is a representative of $s_3-s_5$.
\end{enumerate}
\begin{remark}
$S_3+\phi(S_3),S_5+\phi(S_5)$ are inverse images of fibres of the projection $p\colon\mb{F}_2\to \p1$. Thus there are two fibres $F,G$ of $p_2$ such that $\pi^{-1}(F)=S_3+\phi(S_3)$ and $\pi^{-1}(G)=S_5+\phi(S_5)$ and $A\simeq\oy\oplus\oy(S_3-S_5)_\s$. Thus $F$ and $G$ are both bitangent to the ramification curve $D$.
 \end{remark}

\begin{figure}[H]
\begin{center}
\includegraphics*[trim= 0 0 0 0]{thesis06.jpg}
\end{center}
\caption{The double cover $\pi\colon Y\to\mathbb{F}_2$.}
\end{figure}

\end{proof}
\end{proposition}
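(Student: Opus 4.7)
The strategy closely mirrors the proofs of Theorem \ref{thawesome} and Proposition \ref{p1}: construct the involution $\s$ on $Y$ via the Strong Torelli theorem, identify the quotient $Z=Y/G$, and then compute $H^1(G,\tmop{Pic}Y)$ to extract the order. To begin, define an involution $\phi$ on $\tmop{Pic}Y\oplus T_Y$ by the displayed matrix on $\tmop{Pic}Y$ and by $-1$ on $T_Y$. By Lemma \ref{pictembed}, $\tmop{Pic}Y\oplus T_Y$ is a finite-index sublattice of $H^2(Y,\mb{Z})$, so extending $\phi$ to a lattice isometry on $H^2(Y,\mb{Z})$ reduces to a finite integrality check, which I would dispatch via MATLAB in parallel with the analogous checks in Theorem \ref{thawesome} and Proposition \ref{p1}. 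The resulting $\phi$ is an effective Hodge isometry: it preserves $H^{2,0}(Y)\subset T_Y$ (acting there as $-1$) and fixes the ample class $s_1+s_2+3(s_3+s_4)$ from Proposition \ref{f2yo}. By Strong Torelli, $\phi=\s^*$ for a unique involution $\s\colon Y\to Y$; set $G=\langle\s\mid\s^2=1\rangle$ and $\pi\colon Y\to Z\assign Y/G$.

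Next, identify $Z\simeq\mb{F}_2$. Since $\s|_{T_Y}=-1$, $\s$ is anti-symplectic, so Proposition \ref{k3sym} forces $Z$ to be either Enriques or smooth rational. Lemma \ref{inj} injects $\tmop{Pic}Z$ into $(\tmop{Pic}Y)^G=\langle s_1+s_2,\, s_3+s_4\rangle$, so $\tmop{rk}\tmop{Pic}Z\leq 2$, ruling out the Enriques case (whose K3 cover has Picard rank $\geq 10$). To promote this injection to an isomorphism one shows the ramification locus $D'\subset Y$ is irreducible: any component is $\s$-fixed, hence linearly equivalent to $a(S_1+S_2)+b(S_3+S_4)$ with self-intersection $4a(b-a)\neq -2$, so no component is nodal; combining with the rank bound, Remark \ref{irred} yields irreducibility. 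Corollary \ref{intform} then endows $\tmop{Pic}Z$ with intersection form $\begin{pmatrix}-2&1\\1&0\end{pmatrix}$, so $Z\simeq\mb{F}_{2n}$ for some $n\geq 0$. Pushing down $S_1$ produces an irreducible curve on $Z$ of self-intersection $-2$ (since $\s$ swaps $S_1$ with $S_2$ and $\pi^*\pi(S_1)=S_1+S_2$ has self-intersection $-4$), so Lemma \ref{negdiv} forces $n=1$. The Hurwitz formula applied to $\omega_Y\simeq\oy$ then gives $D\sim -2K_Z\sim 4C_0+8F$.

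Finally, compute $H^1(G,\tmop{Pic}Y)=\ker(1+\s)/\tmop{im}(1-\s)$ directly from the matrix for $\phi$: a brief linear-algebra calculation yields $\mb{Z}/2\mb{Z}$ generated by $L=s_3-s_5$. The overlap condition is automatic via Proposition \ref{ol} ($Y$ projective gives $\os(Y)^*=k^*$, and the relevant ramification index equals $n=2$), so by Remark \ref{inbr}, $A=\oy\oplus\ls$ is a nontrivial cyclic algebra on $Z$ ramified on $D$; irreducibility of $D$ together with Lemma \ref{max} delivers maximality.

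The main obstacles are twofold. First, verifying that $\phi$ preserves the integral lattice $H^2(Y,\mb{Z})$ beyond $\tmop{Pic}Y\oplus T_Y$ is mechanical but delicate — it depends on the particular primitive embedding of $S$ into $\Ll$ fixed in Proposition \ref{f2yo}, and is ultimately a computer check. Second, pinning $Z$ down as precisely $\mb{F}_2$ rather than an arbitrary $\mb{F}_{2n}$ requires exhibiting an explicit irreducible $(-2)$-curve downstairs to feed into Lemma \ref{negdiv}; this is exactly where the detailed intersection pattern of the chosen Picard lattice earns its keep.
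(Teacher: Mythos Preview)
Your proposal is correct and follows essentially the same route as the paper's proof: define $\phi$ by the given matrix and $-1$ on $T_Y$, MATLAB-verify integrality, invoke Strong Torelli via the fixed ample class $s_1+s_2+3(s_3+s_4)$, rule out the Enriques case by the rank bound, prove irreducibility of the ramification locus via the $4a(b-a)$ self-intersection computation and Remark \ref{irred}, identify $Z\simeq\mb{F}_2$ by pushing down $S_1$ to a $(-2)$-curve and applying Lemma \ref{negdiv}, and compute $H^1(G,\tmop{Pic}Y)$ directly. If anything, you are slightly more explicit than the paper in citing Proposition \ref{ol}, Remark \ref{inbr}, and Lemma \ref{max} for the overlap, nontriviality, and maximality of $A$.
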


\subsection{When the surface is ruled over an elliptic curve}\label{ellruled}
Up until now, our use of the Torelli theorem has required that we work over $\mb{C}$. This is no longer required and as such, for the remainder of this chapter, the base field is an arbitrary algebraically closed field $k$ of characterstic $0$.
We now include a brief section on the construction of numerically Calabi-Yau orders on surfaces ruled over elliptic curves.
These orders are of particular interest since there are such limited possibilities for ramification data. Indeed, for this reason Chan and Kulkarni remark that they "are inclined to think that these orders are somehow special"\cite{ncy}.

Let $A$ be a numerically Calabi-Yau order on a surface $Z$ which is ruled over an elliptic curve:
\begin{eqnarray*}
p: Z & \lrw C.
\end{eqnarray*}
We know that $Z=\mb{P}_C(E),$ the projectivisation of a rank $2$ vector bundle $E$ on $C$. Since $Z$ is the centre of a numerically Calabi-Yau order, the number of permissible $E$'s is finite and easy to enumerate. In \cite{ncy}, we discover the following: 
\begin{enumerate}
\item if $E$ is split, then $E=\os_C\oplus N,$ where $N$ is $a-$torsion for $a\in\{1,2,3,4\}.$ 
\item If $E$ is not split, it is indecomposable of degree one, that is, the non-split extension of a degree one line bundle $L$ by $\os_C:$
\begin{eqnarray*}
0\lrw \os_C\lrw E\lrw L \lrw 0.
\end{eqnarray*}
\end{enumerate}

\subsubsection{Case 1: $E$ splits}
We would like to describe the possible ramification of such orders and for this reason we first wish to describe the Picard group of $Z$: recall from Proposition \ref{ruledpic} that $\tmop{Pic}Z = p^*\tmop{Pic}C\oplus\mb{Z}C_0$ and $C_0^2=\tmop{deg}(E),F^2=0,C_0\cdot F=1$ and $K_Z\equiv-2C_0+\tmop{deg}(E)F$.

\begin{remark}
 In our case, $E=\os_C\oplus N,$ where $N$ is $a-$torsion, implying ${\tmop{deg}(E)=0}, C_0^2=0$ and $K_Z\sim-2C_0$.
\end{remark}

%

We see in \cite[discussion following Lemma 2.4]{ncy} that we can record ramification data using ramification vectors: $(e_1,e_2,\ldots)$ where the $e_i$ are repeated with multiplicity. In the case of an order on a ruled surface, given ramification curves $D_i$, the multiplicity is of the form $D_i\cdot F,$ where $F$ is a fibre of the ruling. In the following, by {\bf $n-$section}\index{$n-$section} we shall mean an effective divisor $D$ on $Z$ (not necessarily irreducible) which does not contain fibres of $p$ as components and such that $D\cdot F=n$ for all fibres $F$ of $p:Z\lrw C.$
From \cite[Prop. 5.4]{ncy}, the possible ramification vectors in the $E$ is split case are $(2,2,2,2),(3,3,3),(2,4,4) \ \tmop{and} \ (2,3,6).$ We shall be dealing with the first case, that of $(2,2,2,2),$ until mentioned otherwise.

We wish to construct examples of orders ramified on $4-$sections of $p,$ which are disjoint unions of elliptic curves by \cite[Prop. 4.2]{ncy}. To this end, we let $D$ be such a $4-$section. In order to use Chan's noncommutative cyclic covering trick to construct such an order, we would like to have a double cover of $Z$ ramified on $D$:
\begin{proposition}\label{cover}
Let $D$ be a $4-$section of $p.$ Then there exists a surface $Y$ and a $2:1$ cover $\pi:Y\lrw Z$ ramified on $D.$
\begin{proof}
From the proof \cite[Prop. 4.3]{ncy}, we see that $D$ is numerically equivalent to $4C_0.$ This means that $D\sim 4C_0+p^*(L'),$ where $L'\in\tmop{Pic}^0C.$ Since the $2-$multiplication map $[2]:C\lrw C$ is surjective, any element of $\tmop{Pic}^0C$ is $2-$divisible. Hence $D\in\tmop{Pic}Z$ is $2-$divisible and by the cyclic covering trick, there exists a surface $Y$ and a $2:1$ cover $\pi:Y\lrw Z$ ramified on $D$.
\end{proof}
\end{proposition}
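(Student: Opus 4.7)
The plan is to reduce the statement to the commutative cyclic covering trick (Construction \ref{cyc1}) with $n=2$, by exhibiting a line bundle $L\in\tmop{Pic}Z$ with $L^{\otimes 2}\simeq\os_Z(D)$. Once such an $L$ is produced, Construction \ref{cyc1} furnishes the desired $2\colon 1$ cover $\pi\colon Y\to Z$ ramified exactly on $D$ (and unramified elsewhere), which is the conclusion of the proposition.

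First I would invoke the result from \cite[Prop.\ 4.3]{ncy} (cited above) to obtain that $D$ is numerically equivalent to $4C_0$. Combined with the description of $\tmop{Pic}Z$ from Proposition \ref{ruledpic}, namely $\tmop{Pic}Z=p^*\tmop{Pic}C\oplus\mb{Z}C_0$, numerical equivalence with $4C_0$ means there exists a line bundle $L'\in\tmop{Pic}C$ with $L'\in\tmop{Pic}^0C$ such that
\begin{eqnarray*}
D & \sim & 4C_0+p^*L'.
\end{eqnarray*}
The reason $L'$ must lie in $\tmop{Pic}^0C$ is that $p^*$ sends the connected component of identity of $\tmop{Pic}C$ to itself, and numerical triviality of $D-4C_0$ together with the fact that fibres $F$ satisfy $F\cdot p^*L'=0$ forces $L'$ to have degree $0$.

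The key step is then to produce a square root of $L'$ in $\tmop{Pic}C$. Since $C$ is an elliptic curve, $\tmop{Pic}^0C\simeq C$ as abelian groups, and the multiplication-by-$2$ map $[2]\colon C\to C$ is an isogeny, in particular surjective. Hence there exists $M\in\tmop{Pic}^0C$ with $M^{\otimes 2}\simeq L'$. Setting $L\assign\os_Z(2C_0)\otimes p^*M$, we compute
\begin{eqnarray*}
L^{\otimes 2} & \simeq & \os_Z(4C_0)\otimes p^*(M^{\otimes 2})\;\simeq\;\os_Z(4C_0)\otimes p^*L'\;\simeq\;\os_Z(D),
\end{eqnarray*}
which verifies the 2-divisibility of $\os_Z(D)$ in $\tmop{Pic}Z$.

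The argument is essentially formal once \cite[Prop.\ 4.3]{ncy} is in hand, and there is no serious obstacle; the one substantive input is the divisibility of $\tmop{Pic}^0C$ by $2$, which fails for rational base curves and is exactly what makes the elliptic case work. With the square root $L$ constructed, the commutative cyclic covering trick of Construction \ref{cyc1} applied to the pair $(L^{-1},D)$ (so that $(L^{-1})^{\otimes 2}\simeq\os_Z(-D)$) produces the required $2\colon 1$ cover $\pi\colon Y\to Z$ totally ramified on $D$, completing the proof.
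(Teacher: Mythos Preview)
Your proof is correct and follows essentially the same approach as the paper: invoke \cite[Prop.~4.3]{ncy} to get $D\equiv 4C_0$, write $D\sim 4C_0+p^*L'$ with $L'\in\tmop{Pic}^0C$, use surjectivity of $[2]$ on the elliptic curve $C$ to $2$-divide $L'$, and apply the cyclic covering trick. Your version is slightly more explicit in constructing the square root $L=\os_Z(2C_0)\otimes p^*M$, but the argument is otherwise identical.
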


Letting $G=\langle\s|\s^2=1\rangle$ be the Galois group of $\pi,$ the following proposition tells us some of the line bundles $L\in\tmop{Pic}Y$ which satisfy $L_{\s}^{\otimes 2}\simeq\oy$ and thus give us noncommutative cyclic covers. We shall require the following lemma:

%
%


%
%
\begin{proposition}\label{tors}
Let $M\in (\tmop{Pic}C)_2$. Then $L\assign (p\pi)^*M\in\tmop{ker}(1+\s).$ Moreover, unless $D$ is irreducible such the the corresponding cover $D\to C$ has Galois group $\mb{Z}/2\mb{Z}\oplus\mb{Z}/2\mb{Z}$, there exists an $M\in (\tmop{Pic}C)_2$ such that ${A_M=\oy\oplus\ls}$ is a maximal numerically Calabi-Yau order on $Z$ with ramification vector $(2,2,2,2)$.
\begin{proof}
First we note that $(p\pi)^*\tmop{Pic}C$ is $G-$invariant since any element of $\pi^*\tmop{Pic}Z$ is $G-$invariant. Thus $L+\s(L)=2L\simeq\oy\in\tmop{Pic}Y$ and $L\in\tmop{ker}(1+\s).$ By Proposition \ref{ol}, all relations arising from elements of $H^1(G,\tmop{Pic}Y)$ satisfy the overlap condition. We now show that there exists at least one 2-torsion line bundle $M$ such that $A_M=\oy\oplus L_{\s}$ is maximal. We do so by showing that $A_M$ has nontrivial ramification. Let $D_i$ be the curves where $\pi$ (and thus $A_M$) is ramified, that is, the $D_i$ are the components of $D$. Since each $D_i$ is an $n$-section for some $n$, we have a corresponding finite map $\pi_i:D_i\to C.$ Theorem \ref{ram} states that the ramification along $\pi(D_i)$ is isomorphic to the cover given by the $2-$torsion line bundle $L_{|D_i},$ which is exactly $\pi_i^*(M).$ 
Thus $A_M$ has nontrivial ramification on all $D_i$ unless $\pi_i^*(M)\simeq\os_{D_i}$ for some ramification curve $D_i$.

If $D_i$ is a section or a trisection, then all nontrivial $2-$torsion line bundles on $C$ pull back nontrivially. If $D$ is the disjoint union of two bisections $D_1$ and $D_2$ and for each $i$, $M_i$ the unique nontrivial line bundle on $C$ such that $\pi_i^*(M_i)\simeq \os_{D_i},$ then $\pi_i^*(M_1\otimes M_2)$ is nontrivial if $D_1$ and $D_2$ are nonisomorphic. If $D_1\simeq D_2,$ then $\pi_i^*(M')$ is nontrivial for all $M'\not\simeq M_1.$ In the case that $D=D_1$ is an irreducible $4-$section, $\pi_1:D\to C$ is cyclic, corresponding to a $4$-torsion $N\in\tmop{Pic}C$.
%
Then any $M'\in(\tmop{Pic}C)_2$ such that $M'$ is not a power of $N$ pulls back nontrivially to $D$. By Lemma \ref{max}, $A_M$ is maximal and we are done.
\end{proof}
\end{proposition}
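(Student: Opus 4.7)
The plan divides naturally into two pieces. For the first claim, I would observe that $(p\pi)^*$ factors through $\pi^*$, whose image lies in $(\tmop{Pic}Y)^G$; in particular $\s^*L\simeq L$, so $L+\s L=2L=(p\pi)^*(2M)=\oy$, giving $L\in\tmop{ker}(1+\s)$.

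The second claim is the substantive one. First I would invoke Proposition \ref{ol} to see that, since $Y$ is projective and the least common multiple of the ramification indices of $\pi$ is $2$, every relation $\ls^2\simeq\oy$ coming from $H^1(G,\tmop{Pic}Y)$ automatically satisfies the overlap condition; so each choice of $M$ produces a cyclic algebra $A_M=\oy\oplus\ls$. Next, by Lemma \ref{max}, maximality of $A_M$ reduces to the irreducibility of $\ti{D}_i$ for each component $D_i$ of the discriminant. By Theorem \ref{ram}, the ramification of $A_M$ along $D_i$ is the double cover of $D_i$ determined by $L_{|D_i}=\pi_i^*M$, where $\pi_i\colon D_i\to C$ is the restriction of $p\pi$; hence $\ti{D}_i$ is irreducible precisely when $\pi_i^*M\not\simeq\os_{D_i}$. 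The problem therefore becomes combinatorial: find $M\in(\tmop{Pic}C)_2$ which is killed by none of the maps $\pi_i^*$ on $2$-torsion.

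The main obstacle is this case analysis, organised by how the $4$-section $D$ decomposes. If any $D_i$ is a section or trisection, then $\pi_i$ has odd degree, $\pi_i^*$ is injective on $2$-torsion, and no constraint is imposed by that component. The only components that can kill a $2$-torsion class are bisections and the irreducible $4$-section; each bisection $D_j$ has exactly one nontrivial $M_j\in(\tmop{Pic}C)_2$ with $\pi_j^*M_j\simeq\os_{D_j}$, so with at most two bisections we can always avoid the (at most two) bad classes among the three nontrivial elements of $(\tmop{Pic}C)_2$ --- explicitly, take $M=M_1\otimes M_2$ if $D=D_1\sqcup D_2$ with $M_1\neq M_2$, or any other nontrivial $2$-torsion if $M_1=M_2$. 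The genuine obstruction appears only when $D$ is irreducible, where $\pi_1\colon D\to C$ is an \'etale degree-$4$ isogeny of elliptic curves with Galois group either $\mb{Z}/4\mb{Z}$ or $\mb{Z}/2\mb{Z}\oplus\mb{Z}/2\mb{Z}$: by the duality between the kernel of an isogeny and the kernel of its pullback, the cyclic case leaves two nontrivial $2$-torsion classes surviving $\pi_1^*$, while the $V_4$ case annihilates all of $(\tmop{Pic}C)_2$. This last scenario is exactly the excluded hypothesis.

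Finally, that $A_M$ is numerically Calabi-Yau with ramification vector $(2,2,2,2)$ is automatic: the ramification index is $2$ on every component of $D$, and $D\cdot F=4$; since $D\equiv 4C_0$ and $K_Z\sim -2C_0$, one computes $K_{A_M}=K_Z+\tfrac12 D\equiv -2C_0+2C_0=0$.
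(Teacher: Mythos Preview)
Your proposal is correct and follows essentially the same approach as the paper's proof: the same invocation of Proposition~\ref{ol}, Theorem~\ref{ram}, and Lemma~\ref{max}, followed by the same case analysis on the decomposition of the $4$-section, with the cyclic versus $V_4$ dichotomy for the irreducible case isolating the excluded hypothesis. You supply a bit more justification than the paper does (the odd-degree injectivity remark, the isogeny-duality explanation for the irreducible case, and the explicit $K_{A_M}\equiv 0$ computation), but the strategy is identical.
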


%
The same trick can be performed for the other cases enumerated above: $(3,3,3),(2,4,4),(2,3,6)$. We give the flavour of this by doing so for $(3,3,3)$ and $(2,3,6)$, beginning with the former:

\begin{proposition}
 Let $T$ be a trisection of $p\colon Z\to C$. Then there exists a triple cover $\pi\colon Y\to Z$ totally ramified on $T$. Moreover, there exists an $M\in(\tmop{Pic}C)_3$ such that $A_M\assign\oy\oplus L_\s\oplus L_\s^2$ is a maximal numerically Calabi-Yau order on $Z$ ramified on $T$, where $L=(p\pi)^*(M)$.
\begin{proof}
From the proof of \cite[Prop. 4.3]{cyc}, $T\simeq 3C_0+p^*(L')$, for $L'\in\tmop{Pic}^0C$. Since the 3-multiplication map $[3]\colon C\to C$ is surjective, any element of $\tmop{Pic}^0C$ is 3-divisible and thus there exists a triple cover of $\pi\colon Y\to Z$ totally ramified on $T$ and unramified elsewhere. The covering group is $G=\langle\s|\s^3=1\rangle.$ Now for any $M\in(\tmop{Pic}Y)_3$, $L\assign(p\pi)^*M\in\tmop{ker}(1+\s+\s^2)$ and by Proposition \ref{ol}, all relations arising from elements of $H^1(G,\tmop{Pic}Y)$ satisfy the overlap condition. We now show that for any $T$, there exists at least one $M\in(\tmop{Pic}C)_3$ such that $A_M\assign\oy\oplus L_\s$ is a maximal numerically Calabi-Yau order. We do this by showing $A_M$ has nontrivial ramification. From Theorem \ref{ram}, the ramification of $A_M$ along each component $T_i$ of $T$ is $p_i^*(M)$, where $p_i=p_{|T_i}$. We know that $(\tmop{Pic}C)_3\simeq(\mb{Z}/3\mb{Z})^2$, generated by $N_1,N_2$. If $T=T_1$ is irreducible, then $p_1^*(N_j)\sim 0$ for only one of the $N_i$ and if $T$ is reducible, then $N_j\not\in\tmop{ker}p_i^*$, for $j\in\{1,2\}$. Thus by Lemma \ref{max} there always exists an $M\in(\tmop{Pic}C)_3$ such that $A_M$ is maximal.
\end{proof}

\end{proposition}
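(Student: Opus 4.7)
The plan is to parallel the strategy of Proposition \ref{tors}, with $3$-torsion in place of $2$-torsion. First I would verify that the class of $T$ is $3$-divisible in $\tmop{Pic}Z$: since $T$ is a trisection and $\tmop{deg}(E)=0$, we have $T\equiv 3C_0$, and using $\tmop{Pic}Z=p^*\tmop{Pic}C\oplus\mb{Z}C_0$ this forces $T\sim 3C_0+p^*L'$ for some $L'\in\tmop{Pic}^0 C$. Multiplication by $3$ is surjective on the elliptic curve $C$, so $L'$ is $3$-divisible and hence so is $T$. Construction \ref{cyc1} then yields the desired triple cover $\pi\colon Y\to Z$, totally ramified on $T$, with Galois group $G=\langle\s\,|\,\s^3=1\rangle$.

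For any $M\in(\tmop{Pic}C)_3$, setting $L\assign(p\pi)^*M$ gives a line bundle on $Y$ with $L^3\simeq\oy$; as $L$ is pulled back from $C$ it is $G$-invariant, so $L$ represents a class in $H^1(G,\tmop{Pic}Y)$ and determines an invertible-bimodule isomorphism $\ls^3\simeq\oy$. Proposition \ref{ol} guarantees the overlap condition since $Y$ is projective and the ramification index of $\pi$ along $T$ is $3$, and so $A_M=\oy\oplus\ls\oplus\ls^2$ is an order on $Z$ ramified inside $\pi(T)$. The numerically Calabi-Yau property then follows automatically from the ramification vector $(3,3,3)$ together with $K_Z\equiv-2C_0$ and $T\equiv 3C_0$. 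For maximality, by Lemma \ref{max} I need each component $\tilde T_i$ of the ramification above a component $T_i$ of $T$ to be irreducible; Theorem \ref{ram} identifies $\tilde T_i\to T_i$ with the cyclic triple cover determined by $L|_{T_i}=p_i^*M$, where $p_i\assign p|_{T_i}$, and irreducibility amounts to $p_i^*M$ being a nontrivial $3$-torsion class on $T_i$.

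The task therefore reduces to finding $M\in(\tmop{Pic}C)_3\simeq(\mb{Z}/3\mb{Z})^2$ avoiding every $\ker(p_i^*)\cap(\tmop{Pic}C)_3$. If a component $T_i$ is a section or a bisection, then $\deg p_i\in\{1,2\}$ is coprime to $3$, and the identity $p_{i*}p_i^*=[\deg p_i]$ forces any class in $\ker(p_i^*)$ to satisfy $[\deg p_i]M=0$; so $\ker(p_i^*)\cap(\tmop{Pic}C)_3=0$ and any nontrivial $M$ works. The real obstacle is the irreducible case, with $p_1\colon T\to C$ of degree $3$: the same relation only gives $\ker(p_1^*)\subseteq(\tmop{Pic}C)_3$, and I must argue the containment is strict. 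I would do so by splitting into Galois and non-Galois sub-cases; a cyclic Galois triple cover has kernel exactly the order-$3$ subgroup generated by the class defining the cover, while a non-Galois degree-$3$ cover has even smaller kernel via a Prym-variety argument on the $S_3$ Galois closure. In either event $\ker(p_1^*)$ is a proper subgroup of $(\mb{Z}/3\mb{Z})^2$, so I can choose $M$ outside it, yielding nontrivial ramification on $T$ and the desired maximality of $A_M$. The delicate point, and the main obstacle, is precisely this strict-containment assertion about $\ker(p_1^*)$ on $3$-torsion for an arbitrary separable degree-$3$ cover of an elliptic curve.
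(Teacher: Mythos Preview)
Your overall strategy matches the paper's: same construction of the triple cover via $3$-divisibility of $T\sim 3C_0+p^*L'$, same use of Proposition~\ref{ol} for overlap, same reduction of maximality (via Lemma~\ref{max} and Theorem~\ref{ram}) to finding $M\in(\tmop{Pic}C)_3$ with $p_i^*M\not\simeq\mathcal{O}_{T_i}$ for every component $T_i$. Your treatment of the reducible case via $p_{i*}p_i^*=[\deg p_i]$ is in fact more explicit than the paper's one-line dismissal.

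Where you diverge is in the irreducible case, and here you are making the problem harder than it is. You worry about an ``arbitrary separable degree-$3$ cover of an elliptic curve'' and propose a Galois/non-Galois dichotomy with a Prym argument for the latter. But in this setting $T$ is forced to be an elliptic curve: from $T\equiv 3C_0$ and $C_0^2=0$, $K_Z\equiv -2C_0$ one gets $T^2=K_Z\cdot T=0$, so adjunction gives $g(T)=1$. Hence $p_1\colon T\to C$ is a degree-$3$ morphism between elliptic curves, and $p_1^*\colon\tmop{Pic}^0C\to\tmop{Pic}^0T$ is an isogeny of degree $3$ (its composite with $p_{1*}$ is $[3]$). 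Its kernel therefore has order exactly $3$, automatically a proper (cyclic) subgroup of $(\tmop{Pic}C)_3\simeq(\mb{Z}/3\mb{Z})^2$. This is what the paper is using when it asserts, without further comment, that ``$p_1^*(N_j)\sim 0$ for only one of the $N_i$''. Your non-Galois sub-case simply does not occur, and the ``delicate point'' you flag dissolves once you observe $T$ is elliptic.
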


If $A$ is a numerically Calabi-Yau order with ramification vector $(2,3,6)$, then $Z\simeq C\times\p1$ by \cite[Prop. 4.3]{ncy}. We now construct such an order.

\begin{proposition}
 Let  $Z\simeq C\times\p1$ and $p_i\in\p1, i\in\{1,2,3\}$ be three points on the line. Then there exists a $6\colon 1$ cyclic cover $\pi\colon Y\to Z$ ramified with index 2 over $Z_{p_1}$ index $3$ over $Z_{p_2}$ and index 6 over $Z_{p_3}$. Then $H^1(G,\tmop{Pic}Y)\simeq (\mb{Z}/6\mb{Z})^2$, and for any $L\in\hh$ which is neither 2-torsion nor 3-torsion, $A\assign\oy\oplus \ls\oplus\ldots\oplus \ls^5$ is a maximal numerically Calabi-Yau order with ramification vector $(2,3,6)$.
\begin{proof}
Letting $E$ be the elliptic curve with $j(E)=0$, there exists a $6:1$ cyclic cover $\pi_E\colon E\to\p1$ ramified at $p_1$ with index 2, $p_2$ with index 3 and $p_3$ with index 6 (see the statement and proof of \cite[Chap. III, theorem 10.1]{silverman} for details). We form the fibred product
 \begin{displaymath}
\xymatrix{
Y \ar[r] \ar[d] & C\times\p1 \ar[d]\\
E \ar[r]^{\pi_E} &        \p1
}
\end{displaymath}
Then $Y=C\times E$ and $H^1(G,\tmop{Pic}Y)\simeq H^1(G,\tmop{Pic}C)\oplus H^1(G,\tmop{Pic}E)$.
Since all elements of $\tmop{Pic}C$ are fixed by $G$, $H^1(G,\tmop{Pic}C)\simeq(\tmop{Pic}C)_6\simeq(\mb{Z}/6\mb{Z})^2$. We now show that $H^1(G,\tmop{Pic}E)\simeq 0$: since $H^1(G,\tmop{Pic}E)\simeq\tmop{ker}(1+\s+\ldots+\s^5)/\tmop{im}(1-\s)$ and $\tmop{ker}(1+\s+\ldots+\s^5)\subset\tmop{Pic}^0E$, it suffices to show that $\tmop{im}(1-\s)=\tmop{Pic}^0E$. In fact, we now show that $(1-\s)\colon\tmop{Pic}^1E\to\tmop{Pic}^0E$ is surjective. The group homomorphism $1-\s\colon\tmop{Pic}^1E\to\tmop{Pic}^0E$ corresponds to a scheme morphism on the relevant components of the Picard scheme of $E$,  $\mathbf{Pic}^1E$ and $\mathbf{Pic}^0E$, which are both isomorphic to $E$. Since $\pi$ is totally ramified at $p_3\in\p1$, $q=\pi^{-1}_E(p_3)$ is fixed by $\s$ and $(1-\s)(q)=e_0$, the zero point of $E$. Letting $q'\in E$ be any point not fixed by $\s$, $(1-\s)(q')\neq e_0$. Then $(1-\s)\colon\mathbf{Pic}^1E\to\mathbf{Pic}^0E$ is non-constant implying it is surjective  by \cite[Chap. II, Prop. 6.8]{ag}.
%
%
%
Thus $H^1(G,\tmop{Pic}E)\simeq 0$ and it follows that $H^1(G,\tmop{Pic}Y)\simeq(\tmop{Pic}C)_6\simeq(\mb{Z}/6\mb{Z})^2$.
 By Proposition \ref{ol}, all relations satisfy the overlap condition. Letting $L=p_1^*(M)$, for $M\in(\tmop{Pic}C)_6$, the ramification vector of $A\assign\oy\oplus \ls\oplus\ldots\oplus \ls^5$ is $(2,3,6)$. By Lemma \ref{untot}, the ramfication is given by $M^3$ over $Z_{p_1}$, $M^2$ over $Z_{p_2}$, and $M$ over $Z_{p_3}$.
By Lemma \ref{max}, $A$ is maximal precisely when $M$ is neither $2$-torsion nor $3$-torsion and the result follows.
\end{proof}
\end{proposition}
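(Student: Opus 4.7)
The plan is to reduce everything to a classical observation about elliptic curves. The elliptic curve $E$ with $j(E)=0$ carries an automorphism of order $6$ whose quotient $\pi_E\colon E \to \mathbb{P}^1$ is cyclic of degree $6$ with ramification indices $(2,3,6)$ at three specified branch points (this is the only way a triangle group of signature $(2,3,6)$ can act on an elliptic curve). Pulling this cover back along the projection $Z = C \times \mathbb{P}^1 \to \mathbb{P}^1$ identifies the fibre product $Y$ with $C \times E$, equipped with the $G = \langle\sigma \mid \sigma^6 = 1\rangle$-action which is trivial on the first factor; the ramification divisors are exactly the fibres $Z_{p_i}$ with the indices as stated.

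Next I would compute $H^1(G, \tmop{Pic} Y)$ via the $G$-equivariant decomposition $\tmop{Pic} Y \simeq \tmop{Pic} C \oplus \tmop{Pic} E$ (modulo a correspondence piece coming from $\operatorname{Hom}(C,\hat E)$ which carries the same $\sigma_E^*$-action as $\tmop{Pic} E$ and is handled identically). Since $\sigma$ acts trivially on $\tmop{pr}_C^*$, the periodic resolution for $\mathbb{Z}/6\mathbb{Z}$ gives $H^1(G,\tmop{Pic} C) = (\tmop{Pic} C)_6 \simeq (\mathbb{Z}/6\mathbb{Z})^2$ because $C$ is elliptic. On $\tmop{Pic} E$, degree is $\sigma$-invariant so $\tmop{Pic} E = \mathbb{Z} \oplus \tmop{Pic}^0 E$ with trivial action on the integer factor (which contributes nothing to $H^1$ of the finite group $G$), and the problem reduces to showing $1-\sigma$ surjects $\tmop{Pic}^0 E$. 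This I would argue geometrically: $1-\sigma$ extends to a morphism $\mathbf{Pic}^1 E \to \mathbf{Pic}^0 E$ of smooth projective curves which sends any $\sigma$-fixed point (e.g.\ a preimage of the totally ramified branch point) to $0$ and sends non-fixed points elsewhere, so it is non-constant, hence surjective. This gives $H^1(G,\tmop{Pic} Y) \simeq (\mathbb{Z}/6\mathbb{Z})^2$.

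For the last step, fix $M \in (\tmop{Pic} C)_6$ and set $L = \tmop{pr}_C^* M \in \tmop{Pic} Y$. Since $\sigma$ fixes $L$, we have $L_\sigma^6 \simeq \oy$, the overlap condition is automatic by Proposition \ref{ol} (the l.c.m.\ of the ramification indices is $6 = n$), and $A = \oy \oplus L_\sigma \oplus \cdots \oplus L_\sigma^5$ has ramification vector $(2,3,6)$, making it numerically Calabi-Yau by the canonical-divisor formula. Lemma \ref{untot} then computes the ramification of $A$ along $Z_{p_i}$ as the cyclic cover of $C$ defined by $M^3$, $M^2$, $M$ (with $d = n/m = 3, 2, 1$ respectively), all of which are pulled back from $C$ thanks to the triviality of the $\sigma$-action on $L$. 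By Lemma \ref{max}, maximality reduces to the irreducibility of each of these cyclic covers, equivalently to $M^3$, $M^2$ and $M$ being nontrivial on $C$. This is exactly the condition that $M$ has exact order $6$, i.e.\ is neither $2$- nor $3$-torsion.

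The main obstacle I expect is the vanishing $H^1(G, \tmop{Pic}^0 E) = 0$: this is the one place where a purely formal cohomological computation does not suffice and one must genuinely use the geometry of $E$ as an abelian variety, together with properness of $\mathbf{Pic}^1 E \to \mathbf{Pic}^0 E$, to upgrade the existence of a single fixed point plus a single non-fixed point into a global surjectivity statement. Everything else, once the cover $Y \to Z$ is in hand, is a direct application of the machinery assembled in Chapter \ref{background} together with Lemmas \ref{untot} and \ref{max}.
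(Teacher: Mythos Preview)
Your proposal is correct and follows essentially the same route as the paper: construct $Y=C\times E$ via the $j=0$ elliptic curve with its order-$6$ automorphism, split $H^1(G,\tmop{Pic}Y)$ along the two factors, kill the $E$-part by arguing that $1-\sigma\colon\mathbf{Pic}^1E\to\mathbf{Pic}^0E$ is a non-constant morphism of projective curves hence surjective, and then invoke Proposition~\ref{ol}, Lemma~\ref{untot} and Lemma~\ref{max} exactly as you do. The one place you are more careful than the paper is in flagging the correspondence summand $\tmop{Hom}(C,E)\subset\tmop{Pic}(C\times E)$; the paper silently writes $\tmop{Pic}Y\simeq\tmop{Pic}C\oplus\tmop{Pic}E$, which is only literally true when $C$ and $E$ are not isogenous, so your parenthetical is a genuine (if small) improvement rather than a deviation.
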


We now look at the case when $Z$ arises from an indecomposable vector bundle $E$ on $C$.

\subsubsection{Case 2: $E$ indecomposable}
Recall that in this case $Z=\mb{P}_C(E),$ where $E$ is the non-split extension of a degree one line bundle $L$ by $\os_C:$
\begin{eqnarray*}
0\lrw \os_C\lrw E\lrw L \lrw 0.
\end{eqnarray*}
Here
\begin{eqnarray*}
\tmop{Pic}Z & = & p^*\tmop{Pic}C\oplus\mb{Z}C_0,
\end{eqnarray*}
$C_0^2=1$ and $K_Z\equiv-2C_0+F$ (from Proposition \ref{ruledpic}).

\begin{theorem}[\cite{ncy}, Theorem 5.6]\label{enum}
Let $A$ be a numerically Calabi-Yau order on $Z.$ Then the ramification indices are all $2$ and either
\begin{enumerate}
\item the ramification locus $D=D_1$ is irreducible and $D_1\equiv-2K$, or
\item the ramification locus $D=D_1\cup D_2$ splits such that $D_i\equiv-K.$
\end{enumerate}
\end{theorem}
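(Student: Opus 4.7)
The plan is to combine the numerical triviality $K_A \equiv 0$ with the intersection theory on $Z = \mb{P}_C(E)$ to reduce the classification to a short finite enumeration, and then to eliminate all but the two stated cases using constraints from the Artin-Mumford sequence.

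First, rewrite the numerical Calabi-Yau condition using $-K_Z \equiv 2C_0 - F$ as
\[
\sum_i \left(1 - \frac{1}{e_i}\right) D_i \equiv 2C_0 - F.
\]
Intersecting with a fibre $F$ (using $F^2 = 0$, $C_0 \cdot F = 1$) yields $\sum_i (1 - 1/e_i)(D_i \cdot F) = 2$, where only non-fibre components contribute; intersecting with $C_0$ (using $C_0^2 = 1$) yields $\sum_i (1 - 1/e_i)(D_i \cdot C_0) = 1$. Since each $(1 - 1/e_i) \geq 1/2$ and each non-fibre component satisfies $D_i \cdot F \geq 1$, the first equation already restricts the ramification vector to a short finite list of combinatorial possibilities.

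Next, I would rule out fibre components of the ramification locus. A fibre $F_0$ of $p$ appearing as a ramification curve of index $e$ would force, via the secondary ramification condition from the Artin-Mumford sequence, a cyclic degree-$e$ cover of $F_0 \simeq \p1$ whose ramification at each $F_0 \cap D_j$ matches the indices $e_j$; combining this with the two numerical equations above rules out fibre components. With only multisection components remaining, one applies the secondary ramification condition at each intersection $D_i \cap D_j$ together with the requirement that each $D_i$ admit an irreducible effective representative in its numerical class; this step, combined with the fact that $Z$ is ruled over an elliptic curve (so that the arithmetic genus of each $D_i$ is controlled by adjunction), should eliminate all ramification vectors in which some $e_i > 2$.

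Finally, once all $e_i = 2$ is established, the two intersection equations reduce to $\sum (D_i \cdot F) = 4$ and $\sum (D_i \cdot C_0) = 2$. Writing each irreducible component numerically as $D_i \equiv a_i C_0 + b_i F$ with $a_i = D_i \cdot F \geq 1$ and $a_i + b_i = D_i \cdot C_0$, a brief case analysis over partitions of $4$ (together with effectivity and irreducibility constraints, plus the fact that $C_0$ is the unique minimal section so any section other than $C_0$ has nonnegative intersection with $C_0$) yields exactly the two possibilities: a single component $D_1 \equiv 4C_0 - 2F = -2K_Z$, or a pair $D_1, D_2 \equiv 2C_0 - F = -K_Z$ (which, reassuringly, satisfy $D_1 \cdot D_2 = 0$ so no further secondary ramification arises). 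The main obstacle will be the step ruling out the higher-index ramification vectors such as $(3,3,3)$, $(2,4,4)$, and $(2,3,6)$ that the numerical equations alone permit; their exclusion genuinely uses the indecomposability of $E$ through the Artin-Mumford compatibility of ramification data, rather than following from the two numerical equations by themselves.
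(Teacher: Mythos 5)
You should first note that the thesis itself contains no proof of this statement: Theorem \ref{enum} is imported verbatim from Chan--Kulkarni \cite{ncy} (their Theorem 5.6), so your attempt can only be measured against that source and on its own merits. Your opening move is correct and is the natural one: writing $\sum(1-1/e_i)D_i\equiv -K_Z\equiv 2C_0-F$ and intersecting with $F$ and $C_0$ does reduce everything to a finite enumeration, and your endgame under ``all $e_i=2$'' is close to sound. But there are two genuine problems. The first is the one you admit yourself: the exclusion of the vectors $(3,3,3)$, $(2,4,4)$, $(2,3,6)$ --- the actual content of the assertion ``the ramification indices are all $2$'' --- is deferred to an unspecified appeal to Artin--Mumford secondary ramification and never carried out. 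That step is the heart of the theorem, and a proof with it missing is a plan, not a proof. Moreover your diagnosis of where indecomposability enters is off: no Artin--Mumford input is needed. It enters through the classification of numerical classes of irreducible curves on $\mb{P}_C(E)$: since $E$ is indecomposable of odd degree on an elliptic curve it is stable, so every section satisfies $S^2\geq 1$, i.e.\ $S\equiv C_0+bF$ with $b\geq 0$, and every irreducible $a$-section ($a\geq 2$) has $a+2b\geq 0$. Granting this, $(3,3,3)$ dies by pure integrality ($\frac{2}{3}D\equiv 2C_0-F$ forces $F$-coefficient $-\frac{3}{2}\notin\mb{Z}$), while $(2,4,4)$ and $(2,3,6)$ die against the $F$-coefficient equation $\sum(1-1/e_i)b_i=-1$: e.g.\ for $(2,4,4)$ realised by a section and a bisection one gets $2b_1+3b_2=-4$ against the bound $2b_1+3b_2\geq -3$, and for $(2,3,6)$ three sections give $3b_1+4b_2+5b_3=-6$ against $\geq 0$. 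Fibre components are excluded the same way, since they contribute nothing to the $F$-intersection but push the $b$-sum below the effectivity bounds.

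The second problem is that the auxiliary fact you do state is wrong, and its failure breaks even your final enumeration. On this surface $C_0$ is \emph{not} the unique minimal section --- there are infinitely many sections of minimal self-intersection $1$ --- and the condition you invoke, that any section other than $C_0$ meets $C_0$ nonnegatively, only yields $b\geq -1$ for a section $S\equiv C_0+bF$. Under that weaker constraint your case analysis over $\sum a_i=4$, $\sum b_i=-2$ does not close: for instance two sections with $b=-1$ together with a bisection $\equiv 2C_0$, or four sections with $b$-values $(-1,-1,0,0)$, satisfy all your stated conditions yet are not among the two allowed configurations. What you actually need is $b\geq 0$ for every section (equivalently $S^2\geq 1$, equivalently the absence of negative curves on $\mb{P}_C(E)$), which is exactly the stability consequence above; with it, the partitions $(1,1,2)$, $(1,3)$, $(2,1,1)$, $(1,1,1,1)$ are excluded and only $D\equiv 4C_0-2F$ or $D_1,D_2\equiv 2C_0-F$ survive, as the theorem asserts. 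So the skeleton of your reduction is viable, but as written it has a conceded gap at the main step and a false lemma at the last one.
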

\begin{remark}
In case 1, the divisor is an irreducible $4-$section and in case 2, the disjoint union of 2 irreducible bisections \cite[proof of Thm 4.5]{ncy}.
\end{remark}
\begin{proposition}
Let $D$ be a divisor of the form specified in Theorem \ref{enum}. Then we can construct a maximal order $A=\oy\oplus\ls$ on $Z$ with nontrivial ramification on each component of $D$ unless $D$ is an irreducible $4-$section such that the covering $D\to C$ is not cyclic. Here $\pi\colon Y\to Z$ is the double cover ramified on $D$ and $L$ is the pullback to $Y$ of a $2$-torsion line bundle on the base curve $C$.
\begin{proof}
Let $D$ be an effective divisor numerically equivalent to $-2K.$ Then there exists a surface $Y$ and a $2:1$ cover $\pi:Y\lrw Z$ ramified on $D$ with Galois group $\{\s|\s^2=1\}$. The proof of this is analogous to that of Proposition \ref{cover}.
As in Proposition \ref{tors} above,
\begin{eqnarray*}
(p\pi)^*(\tmop{Pic}C)_2 & \subset & \tmop{ker}(1+\s)
\end{eqnarray*}
and all relations satisfy the overlap condition by Proposition \ref{ol}. As in Proposition \ref{tors} letting $p_i:D_i\lrw C$ denote the projections of each component of $D,$ the ramification of $A=\oy\oplus((p\pi)^*M)_{\s}$ over $D_i$ is given by $p_i^*(M),$ for $M\in(\tmop{Pic}C)_2.$ Now we need to verify that there exists a $2-$torsion line bundle $M$ such that $p_i^*(M)$ is non-trivial in $\tmop{Pic}D_i$, for all $i$. We demonstrate this for each of the possible cases. If $D=D_1$ is irreducible, then  $p_1:D_1\lrw C$ is a $4:1$ cyclic cover. Thus the kernel of
\begin{eqnarray*}
p_1^*: \tmop{Pic}C & \lrw \tmop{Pic}D_1
\end{eqnarray*}
is generated by some $4-$torsion $M_1\in\tmop{Pic}C.$ Then $p_1^*$ trivialises only one of the three nontrivial $2-$torsion line bundles on $C.$ If, on the other hand, $D=D_1\cup D_2$ is the disjoint union of two bisections, then from the proof of \cite[Theorem 4.5]{ncy}, $D_1$ and $D_2$ are two non-isomorphic double covers of $C.$ Letting $L_i\in\tmop{Pic}C$ be the line bundle trivialised by $p_i^*\colon\tmop{Pic}C\to\tmop{Pic}D_i,$ $p_i^*(L_1+L_2)$ is nontrivial in $\tmop{Pic}D_i$ for each $i$, implying maximality by Lemma \ref{max}.
\end{proof}
\end{proposition}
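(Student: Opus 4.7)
The plan is to follow the same template as Proposition \ref{tors}, adapting it to the indecomposable case, and to handle the two ramification types of Theorem \ref{enum} by a case-by-case examination of pullback maps on $2$-torsion.

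First I would construct the double cover. Since $D$ is numerically equivalent to $-2K \equiv 4C_0 - 2F$, and since $D$ differs from $-2K$ only by an element of $p^*\tmop{Pic}^0 C$, which is $2$-divisible because $[2]\colon C \to C$ is surjective, we conclude $D$ is $2$-divisible in $\tmop{Pic}Z$. The cyclic covering trick (Construction \ref{cyc1}) then produces a double cover $\pi \colon Y \to Z$ ramified exactly on $D$, with Galois group $G = \langle \s \mid \s^2 = 1\rangle$. For any $M \in (\tmop{Pic}C)_2$ we set $L \assign (p\pi)^* M$. Then $L$ is $G$-invariant (being pulled back from $Z$), so $L + \s L = 2L \simeq \oy$, placing $L$ in $\ker(1+\s)$ and giving a relation $\ls^2 \simeq \oy$. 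The overlap condition is automatic by Proposition \ref{ol}, so $A_M \assign \oy \oplus \ls$ is a cyclic algebra.

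Next I would identify the ramification via Theorem \ref{ram}: along each component $D_i$ of $D$, the ramification of $A_M$ is the cyclic cover of $D_i$ corresponding to $L_{|D_i} = p_i^* M$, where $p_i \colon D_i \to C$ denotes the restriction of $p\pi$. By Lemma \ref{max}, once every $p_i^* M$ is nontrivial, $A_M$ is maximal. So the question reduces to: when does there exist $M \in (\tmop{Pic}C)_2$ with $p_i^* M$ nontrivial for all $i$?

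Now I split into cases based on Theorem \ref{enum} and Remark~(irreducibility). If $D = D_1 \cup D_2$ is the disjoint union of two bisections, the proof of \cite[Theorem 4.5]{ncy} tells us that $D_1$ and $D_2$ are non-isomorphic double covers of $C$; consequently the kernels of $p_1^*$ and $p_2^*$ on $(\tmop{Pic}C)_2$ are two distinct cyclic subgroups $\langle L_1 \rangle$ and $\langle L_2 \rangle$. Then $M \assign L_1 \otimes L_2$ avoids both kernels and gives a maximal order. If $D = D_1$ is an irreducible $4$-section and the map $p_1 \colon D_1 \to C$ is cyclic of degree $4$, its kernel on $\tmop{Pic}C$ is generated by a single $4$-torsion line bundle, whose square is the unique $2$-torsion class killed by $p_1^*$; the other two nontrivial $2$-torsion classes pull back nontrivially and yield maximal orders. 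The remaining case $D_1 \to C$ irreducible but non-cyclic Galois (so Galois group $(\mb{Z}/2)^2$) is precisely the excluded one, since all of $(\tmop{Pic}C)_2$ then lies in $\ker p_1^*$ and no choice of $M$ produces nontrivial ramification on $D_1$.

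The main obstacle will be the structural input that distinguishes the two reducible bisections in case 2 (invoked through \cite[Theorem 4.5]{ncy}) and the identification of $\ker p_1^*$ in the irreducible $4$-section case with the $4$-torsion generated by a single element; the rest is bookkeeping with $2$-torsion on elliptic curves combined with Theorem \ref{ram} and Lemma \ref{max}.
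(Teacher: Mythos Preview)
Your proposal is correct and follows essentially the same route as the paper: construct the double cover via $2$-divisibility of $D$ in $\tmop{Pic}Z$, pull back $2$-torsion from $C$, verify overlap by Proposition~\ref{ol}, compute the ramification via Theorem~\ref{ram}, and then run exactly the same case analysis (the two-bisection case handled with $M=L_1\otimes L_2$ using \cite[Theorem 4.5]{ncy}, the cyclic $4$-section case by noting only one $2$-torsion class lies in $\ker p_1^*$). Your explicit remark on why the non-cyclic irreducible case fails is a small addition the paper leaves implicit, but otherwise the arguments coincide.
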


 \chapter{Constructing orders on elliptically fibred surfaces}\label{os}

\section{Elliptic fibrations on surfaces}
All the constructions of orders so far have been on surfaces of Kodaira dimension $-\infty.$ Here we are interested in constructing examples on surfaces of higher Kodaira dimension. There is a rich theory concerning surfaces of Kodaira dimension zero, many of which are equipped with elliptic fibrations while all surfaces of Kodaira dimension $1$ are equipped with such a fibration.
%
The following definition is from  \cite[Chap. V, Section 6]{bpv}.

\begin{definition}\index{elliptic fibration}\label{el}
An {\bf elliptic fibration} on a surface $Z$ is a smooth curve $C$ and a flat, projective morphism of schemes ${\phi:Z\lrw C}$ such that the general fibre $Z_c$ is a smooth curve of genus one over $K(c)$, for $c\in C$. A {\bf section} of $\phi$ is a morphism $s_0:C\lrw Z$ such that $\phi\circ s_0=\tmop{id}_C.$ If $\phi$ has a section, we call it a {\bf Jacobian elliptic fibration}\index{elliptic fibration!Jacobian elliptic fibration}.
\end{definition}
\begin{remark}
Specifying a section is equivalent to specifying an irreducible curve $S_0$ on $Z$ such that $S_0\cdot Z_c=1,$ for all closed $c\in C.$ The correspondence is as follows: the curve corresponding to $s_0\colon C\to Z$ is $S_0=\tmop{im}(s_0)$ while given such a curve $S_0$, the corresponding morphism $s_0\colon C\to Z$ is given by $s_0(c)=S_0\cap Z_c$. Such a curve we shall also refer to as a section of $\phi:Z\to C.$ We make this explicit since when dealing with sections it will suit our purposes to think of them as morphisms in some cases and curves in others.
\end{remark}
All but a finite number of the fibres of an elliptic fibration will be smooth curves of genus 1 and Kodaira classified all possible fibres in \cite{fibres}. For any given elliptic fibration, a fibre $Z_c$ is one of the following:
\begin{enumerate}
\item Irreducible, in which case it is (a) smooth of genus 1, (b) rational with a node or (c) rational with a cusp;
\item Reducible but not multiple. In this case $Z_c=\sum n_i C_i,$ where the $C_i$ are nodal curves, that is, rational $(-2)-$curves and gcd$(n_i)=1$;
\item A multiple fibre. These are classified (see \cite[Chap. V, Section 7]{bpv}, for example).
\end{enumerate} 
Any fibre which is not a smooth curve of genus 1 is called a {\bf degenerate fibre}.
%

If $\phi$ is a Jacobian elliptic fibration the sections of $\phi$ form a group, denoted $\Phi.$ We see this by noting that specifying a section is equivalent to specifying a point on the genus one curve $Z_{\eta},$ the generic fibre of $\phi,$ which is a group. Alternatively, we can view the group law as follows: given two sections $s_1,s_2\in\tmop{Hom}_C(C,Z)$, and $c\in C$ such that $Z_c$ is nondegenerate, $(s_1+s_2)\colon c\mapsto s_1(c)\oplus s_2(c)$, where $\oplus$ is the addition on the group scheme $Z_c$ (see \cite[10.3]{algeom2} for details); this defines a morphism $(s_1+s_2)\colon C-\{c_1,\ldots,c_n\}\to Z$, where $Z_{c_i}$ are the degenerate fibres of $\phi$. Since any rational map from a smooth curve is regular \cite[Chap. II, Prop. 2.1]{silverman}, this defines a section $(s_1+s_2)\colon C\to Z$. The group $\Phi$ is known as the {\bf group of sections}\index{elliptic fibration!group of sections} of $\phi$.

%
%

{\bf From here on in, unless stated otherwise, by elliptic fibration we shall mean a smooth elliptic fibration, that is, one without degenerate fibres.} In Section \ref{ogg}, we shall deal with cases in which there are degenerate fibres.

Since the structure of the Picard group of a surface is essential to our constructing noncommutative cyclic covers, we wish to describe the Picard group of a surface possessing an elliptic fibration. To do so, we first need to define the relative Picard functor (see \cite{picard} for details):
\begin{definition}
Let $f:X\lrw S$ be a separated map of finite type. The {\bf relative Picard functor}\index{Picard functor} $\tmop{Pic}_{X/S}$ is defined by
\begin{eqnarray*}
\tmop{Pic}_{X/S}(T) & = & \tmop{Pic}(X_T)/f_T^*\tmop{Pic}(T),
\end{eqnarray*}
for any $S-$scheme $T$ where $X_T\assign X\times_ST$\index{$X_T$} and $f_T\colon X_T\to T$\index{$f_T$}.
\end{definition}
The Picard functor is representable in many cases (for further details, see \cite{picard} and \cite{neron}) and the following theorem gives us representability in the case in which we are currently interested, that of an elliptic fibration. 

\begin{theorem}\cite[Theorem 4.8]{picard}
 Assume $f\colon X\to S$ is projective Zariski locally over $S$ and is flat with integral geometric fibres. Then $\tmop{Pic}_{X/S}$ is representable by a scheme $\mathbf{Pic}_{X/S},$\index{Picard scheme} the relative Picard scheme of $X/S$.
\end{theorem}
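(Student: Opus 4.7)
The plan is to follow the classical Grothendieck--Mumford approach as developed in detail by Kleiman, via the Hilbert scheme. The first step is to observe that the presheaf $T \mapsto \tmop{Pic}(X_T)/f_T^*\tmop{Pic}(T)$ need not be a sheaf in the Zariski topology; so I would pass to its fppf sheafification $\tmop{Pic}_{(X/S)(\mathrm{fppf})}$, and show that under the given hypotheses this sheafification still records the naive quotient on sufficiently nice bases (namely when $f_T$ admits a section locally on $T$, which uses $f_*\mathcal{O}_X = \mathcal{O}_S$ universally, a consequence of having integral geometric fibres).

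Next, I would exhibit $\tmop{Pic}_{X/S}$ as a quotient of a scheme parametrising relative effective Cartier divisors. Concretely, let $\mathbf{Div}_{X/S}\subset \mathbf{Hilb}_{X/S}$ be the open subscheme of the relative Hilbert scheme parametrising closed subschemes of $X$ which are effective Cartier divisors on fibres; this is representable by Grothendieck's theorem on the Hilbert scheme, which applies precisely because $f$ is projective Zariski-locally on $S$ and flat. There is a natural Abel--Jacobi style morphism $\mathrm{AJ}\colon \mathbf{Div}_{X/S}\to \tmop{Pic}_{X/S}$ sending a divisor to the class of its associated line bundle.

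The main step, and also where I expect most of the technical work to lie, is to show that $\mathrm{AJ}$ is smooth and surjective as a morphism of fppf sheaves, and that $\tmop{Pic}_{X/S}$ is the fppf quotient of $\mathbf{Div}_{X/S}$ by the equivalence relation ``two divisors differ by the divisor of a rational function on a fibre.'' Surjectivity is local on the base in the fppf topology and rests on a relative Riemann--Roch / cohomology-and-base-change argument: after twisting any line bundle on a geometric fibre by a sufficiently ample bundle coming from $S$, it becomes effective, and by semicontinuity this spreads out to an fppf neighbourhood. One then recognises the equivalence relation as being cut out by a $\mathbb{P}$-bundle over $\tmop{Pic}_{X/S}$ (the projectivisation of the relevant pushforward), which is representable.

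Finally, with $\mathrm{AJ}$ presented as an fppf-locally trivial projective bundle over a presheaf quotient, standard descent (using that $\mathbf{Div}_{X/S}$ is a scheme and the equivalence relation is representable and flat) yields that the quotient $\tmop{Pic}_{X/S}$ is representable by a scheme $\mathbf{Pic}_{X/S}$. The hardest point in this program is the cohomology-and-base-change input: one must control when $R^1 f_*\mathcal{O}_X$ and related sheaves commute with base change, which is exactly where the integrality of geometric fibres is used (through $f_*\mathcal{O}_X=\mathcal{O}_S$ universally) to keep the sheafification manageable. Everything else is bookkeeping with the Hilbert scheme and fppf descent.
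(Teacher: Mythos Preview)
The paper does not supply its own proof of this theorem: it is quoted verbatim from Kleiman's exposition \cite[Theorem 4.8]{picard} and used as a black box, so there is no in-paper argument to compare your proposal against.

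That said, your outline is exactly the classical Grothendieck--Mumford route that Kleiman follows in the cited reference: fppf-sheafify the naive Picard functor, realise $\mathbf{Div}_{X/S}$ as an open subscheme of $\mathbf{Hilb}_{X/S}$ (which exists by Grothendieck's theorem under the projectivity hypothesis), and exhibit the Abel map $\mathbf{Div}_{X/S}\to\tmop{Pic}_{X/S}$ as, on suitable strata, a projective bundle whose fibres are the linear systems. One point to sharpen: the passage from ``quotient of a scheme by a flat representable equivalence relation'' to ``scheme'' is not automatic---in general one only gets an algebraic space. What makes this work here is precisely that the Abel map is, Zariski-locally on the target after stratifying by Hilbert polynomial, a genuine projective bundle $\mathbb{P}(f_*\mathcal{L})$; one then constructs $\mathbf{Pic}_{X/S}$ piece by piece by gluing open subschemes obtained from these bundles rather than by invoking a general descent-for-schemes statement. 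Your sketch gestures at this (``recognises the equivalence relation as being cut out by a $\mathbb{P}$-bundle''), but it is worth being explicit that this is where the scheme-versus-algebraic-space issue is resolved.
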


\begin{remark}\label{jac}
$\mathbf{Pic}_{X/S}$ parametrises pairs $(s,M_s)$, where $s\in S$ and $M_s\in\tmop{Pic}X_s$. In the case of elliptic fibrations on surfaces, where the fibres of $f\colon X\to S$ are curves, $\mathbf{Pic}_{X/S}=\coprod_{i\in\mb{Z}}\mathbf{Pic}^i_{X/S},$ where $\mathbf{Pic}^i_{X/S}$ is the connected component of $\mathbf{Pic}_{X/S}$ parametrising pairs $(s,M_s)$ such that $\tmop{deg}(M_s)=i$. Then $\mathbf{Pic}^0_{X/S}$ is the connected component of the identity.
\end{remark}
\begin{example}\label{J}
Let $\phi:Z\lrw C$ be an elliptic fibration. Then the morphism $\mathbf{Pic}^0_{Z/C}\to C$ is also elliptically fibred, possesses a section and has the same fibres as $\phi\colon Z\to C$ over closed points $c\in C$ (for details, see\cite[Prop. 4.2.2]{cald}). 
\end{example}

\begin{definition}\label{jaco}
Setting $J\assign\mathbf{Pic}^0_{Z/C}$, we call $\phi_J:J\lrw C$ the {\bf (relative) Jacobian fibration} of $\phi:Z\lrw C.$
\end{definition}

\begin{remark}\label{sect}
If $\phi$ itself possesses a section, then $Z$ and $J$ are isomorphic as elliptic fibrations over $C$. This means there exists an isomorphism $f:Z\to J$ such that
\begin{displaymath}
 \xymatrix{
Z \ar[dr]_\phi \ar[r]^f & J \ar[d]^{\phi_J}\\
                        & C}
\end{displaymath}
commutes.
\end{remark}
Now we can describe the Picard group of a surface $Z$ equipped with a Jacobian elliptic fibration:
\begin{proposition}\label{fibadd}
Let $\phi:Z\lrw C$ be a Jacobian elliptic fibration. Then
\begin{eqnarray*}
\tmop{Pic}Z & \simeq & \tmop{Pic}C\oplus\tmop{Hom}_C(C,\mathbf{Pic}_{Z/C}).
\end{eqnarray*}
\begin{proof}
We first note that $\tmop{Pic}Z/\phi^*\tmop{Pic}C\simeq \tmop{Pic}_{Z/C}(C)$. Since $\tmop{Pic}_{Z/C}$ is representable, $\tmop{Pic}Z/\phi^*\tmop{Pic}C\simeq\tmop{Hom}_C(C,\mathbf{Pic}_{Z/C})$, implying that
\begin{eqnarray}\label{split}
0\lrw\tmop{Pic}C\longrightarrowlim^{\phi^*}\tmop{Pic}Z\longrightarrowlim\tmop{Hom}_C(C,\mathbf{Pic}_{Z/C})\lrw 0
\end{eqnarray}
is exact.
Since $\phi\colon Z\lrw C$ has a section $s_0\colon C\lrw Z,$  $\phi^*\colon\tmop{Pic}C\lrw\tmop{Pic}Z$ also has a section $s_0^*,$ implying the above exact sequence splits.
\end{proof}
\end{proposition}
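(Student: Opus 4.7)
The plan is to produce a short exact sequence
\begin{eqnarray*}
0\lrw\tmop{Pic}C\longrightarrowlim^{\phi^*}\tmop{Pic}Z\lrw\tmop{Hom}_C(C,\mathbf{Pic}_{Z/C})\lrw 0
\end{eqnarray*}
and then to split it using the section $s_0\colon C\to Z$ of the Jacobian fibration $\phi$. The decomposition then follows immediately.

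First I would unwind the definition of the relative Picard functor applied to the $C$-scheme $T = C$ itself (with structure map $\tmop{id}_C$). In this case $Z_T = Z\times_C C \simeq Z$ and $\phi_T = \phi$, so by definition
\begin{eqnarray*}
\tmop{Pic}_{Z/C}(C) & = & \tmop{Pic}(Z)/\phi^*\tmop{Pic}(C).
\end{eqnarray*}
Invoking representability of $\tmop{Pic}_{Z/C}$ by the Picard scheme $\mathbf{Pic}_{Z/C}$ then identifies this quotient with $\tmop{Hom}_C(C,\mathbf{Pic}_{Z/C})$, producing the desired sequence (exactness at the right is built into the quotient description).

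To split the sequence, I would exploit the section $s_0\colon C\to Z$ of $\phi$. Pulling back line bundles yields a homomorphism $s_0^*\colon\tmop{Pic}Z\to\tmop{Pic}C$ satisfying $s_0^*\circ\phi^* = (\phi\circ s_0)^* = \tmop{id}_{\tmop{Pic}C}$. This retraction simultaneously shows that $\phi^*$ is injective (giving exactness at the left) and provides a splitting of the sequence, so $\tmop{Pic}Z \simeq \tmop{Pic}C\oplus\tmop{Hom}_C(C,\mathbf{Pic}_{Z/C})$.

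I do not expect any serious obstacle here: the heavy lifting is done by the representability theorem quoted immediately above, and the splitting is a formal consequence of the existence of a section. The only point one has to be careful about is the identification $\tmop{Pic}_{Z/C}(C) \simeq \tmop{Hom}_C(C,\mathbf{Pic}_{Z/C})$, which is simply the defining property of the representing scheme evaluated at the terminal object of the category of $C$-schemes.
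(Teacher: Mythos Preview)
Your proposal is correct and follows essentially the same approach as the paper: identify $\tmop{Pic}_{Z/C}(C)$ with $\tmop{Pic}Z/\phi^*\tmop{Pic}C$ by definition, use representability to rewrite this as $\tmop{Hom}_C(C,\mathbf{Pic}_{Z/C})$, and split the resulting short exact sequence via $s_0^*$. If anything, you are slightly more explicit than the paper in noting that the retraction $s_0^*\circ\phi^*=\tmop{id}$ also supplies the injectivity of $\phi^*$.
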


Note that implicit in the splitting $\tmop{Pic}Z \simeq \tmop{Pic}C\oplus\tmop{Hom}_C(C,\mathbf{Pic}_{Z/C})$ is the choice of a section $s_0$ of $\phi$.

%
%
\begin{remark}\label{section}
We wish to determine the embedding 
\begin{eqnarray*}
 \psi\colon\tmop{Hom}_C(C,\mathbf{Pic}_{Z/C})& \hookrightarrow & \tmop{Pic}Z
\end{eqnarray*}
which splits the exact sequence (\ref{split}) above. To this end, we let ${f\in\tmop{Hom}_C(C,\mathbf{Pic}_{Z/C})}$. Since $\mathbf{Pic}_{Z/C}$ parametrises pairs $(c,M_c)$, where $c\in C $ and $M_c\in\tmop{Pic}Z_c$, $f(c)$ will correspond to a line bundle on $Z_c$, for all $c\in C$. We shall abuse notation by writing $f(c)$ for this line bundle. Then $\psi(f)$ is the unique line bundle $L$ on $Z$ such that $L_{|Z_c}\simeq f(c)$, for all $c\in C$, and $L_{|S_0}\simeq \os_{S_0}$.
\end{remark}
%
\begin{lemma}\label{sectionf}
 There is an embedding 
\begin{eqnarray*}\label{sectiona}
 \alpha\colon\Phi & \longrightarrow & \tmop{Pic}Z
\end{eqnarray*}
given by $\alpha(S_0)=\os_Z$ and $\alpha(S)=\os_Z(S-S_0-D_S)$,  when $S\neq S_0$. Here ${D_S= \sum_{c\in C}a_cZ_c}$\index{$D_S$} such that $S\cap S_0=\sum_{c\in C}a_cZ_{c|S_0}$.

\begin{proof}
 Firstly, $\Phi$ is isomorphic to $\tmop{Hom}_C(C,J)$, the isomorphism given by
\begin{eqnarray*}
 S & \mapsto & (c\mapsto (c,\os_{Z_c}(S_{|Z_c}-S_{0|Z_c})))
\end{eqnarray*}
Composing this with $\psi$ of Remark \ref{section} yields the desired group homomorphism $\alpha\colon\Phi \lrw \tmop{Pic}Z$. It is clear that $\alpha(S_0)=\os_Z$. When $S\neq S_0$, we set $D_S= \sum_{c\in C}a_cZ_c$ such that $S\cap S_0=\sum_{c\in C}a_c Z_{c|S_0}$. Then for all $c\in C$, $\os_Z(S-S_0-D_S)_{|Z_c}=\os_{Z_c} (S_{|Z_c}-S_{0|Z_c})$ and $\os_Z(S-S_0-D_S)_{|S_0}=\os_{S_0}$, implying $\alpha(S)=\os_Z(S-S_0-D_S)$.

\end{proof}
\end{lemma}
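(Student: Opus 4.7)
I would prove this in three steps: identify $\Phi$ with $\tmop{Hom}_C(C, J)$; compose with the embedding $\psi$ of Remark \ref{section}; and verify the explicit formula.

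First, I would construct a group isomorphism $\Phi \cong \tmop{Hom}_C(C, J)$ via the fibred Abel--Jacobi map. Since $\phi$ is Jacobian, $Z \cong J$ over $C$ by Remark \ref{sect}, so $\Phi = \tmop{Hom}_C(C, Z) \cong \tmop{Hom}_C(C, J)$. Concretely, for a section $S \in \Phi$ and a closed point $c \in C$, both $S_{|Z_c}$ and $S_{0|Z_c}$ are degree-one divisors on the genus-one curve $Z_c$, so $\os_{Z_c}(S_{|Z_c} - S_{0|Z_c}) \in \mathbf{Pic}^0 Z_c = J_c$. This assembles into the $C$-morphism $c \mapsto (c, \os_{Z_c}(S_{|Z_c} - S_{0|Z_c}))$, and group compatibility is automatic because addition of sections is defined fibrewise and matches the translation group law on each fibre once $S_0$ is taken as origin.

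Next, the inclusion $J \hookrightarrow \mathbf{Pic}_{Z/C}$ induces an inclusion $\tmop{Hom}_C(C, J) \hookrightarrow \tmop{Hom}_C(C, \mathbf{Pic}_{Z/C})$, and composing with $\psi$ of Remark \ref{section} yields the group homomorphism $\alpha \colon \Phi \to \tmop{Pic} Z$. Injectivity of $\alpha$ is inherited from that of $\psi$, which in turn follows from the splitting of the exact sequence $(\ref{split})$ recorded in Proposition \ref{fibadd}.

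Finally, I would verify the explicit formula. For $S = S_0$ the associated morphism $C \to J$ lands in the zero section, so $\alpha(S_0) = \os_Z$ by definition of $\psi$. For $S \neq S_0$, set $L := \os_Z(S - S_0 - D_S)$ and check the two characterising properties of $\psi$: (i) since $D_S = \sum_c a_c Z_c$ is a pullback from $C$, $\os_Z(D_S)_{|Z_c} \cong \os_{Z_c}$, so $L_{|Z_c} \cong \os_{Z_c}(S_{|Z_c} - S_{0|Z_c})$ as required; (ii) on $S_0$, the choice of $D_S$ is designed so that $\os_Z(S)_{|S_0}$ and $\os_Z(D_S)_{|S_0}$ coincide as $\os_{S_0}(\sum_c a_c Z_{c|S_0})$, yielding the cancellation needed for $L_{|S_0} \cong \os_{S_0}$. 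The uniqueness clause of Remark \ref{section} then forces $\alpha(S) = L$.

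The main obstacle is the last bookkeeping step, specifically the verification that $L_{|S_0}$ really is trivial: one must carefully track the $\os_Z(-S_0)_{|S_0}$ contribution (the conormal bundle of $S_0$ in $Z$) together with the intersection divisors $S \cdot S_0$ and $D_S \cdot S_0$. The first two steps are essentially formal, resting on representability of the relative Picard functor and on material already packaged in Remark \ref{section} and Proposition \ref{fibadd}.
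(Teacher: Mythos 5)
Your proposal follows the paper's proof essentially verbatim: the same identification $\Phi\simeq\tmop{Hom}_C(C,J)$ via $S\mapsto(c\mapsto(c,\os_{Z_c}(S_{|Z_c}-S_{0|Z_c})))$, composition with $\psi$ of Remark \ref{section}, and verification of the two characterising restrictions (trivial on fibres after subtracting $D_S$, trivial on $S_0$), with the uniqueness clause of Remark \ref{section} then forcing $\alpha(S)=\os_Z(S-S_0-D_S)$. The one point you flag as the remaining obstacle --- that the conormal contribution $\os_Z(-S_0)_{|S_0}$ survives the cancellation between $\os_Z(S)_{|S_0}$ and $\os_Z(-D_S)_{|S_0}$ and so must itself be trivial --- is passed over silently in the paper's proof as well, so your argument matches the paper's in both route and level of detail.
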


\begin{remark}\label{section2}
 For any section $S$ of $\phi\colon Z\to C$, we denote its image in $\tmop{Pic}Z$ as $L_S$. Similarly, if we consider the section as a morphism $s\colon C\to Z$, we denote its image by $L_s\in\tmop{Pic}Z$.
\end{remark}

\begin{corollary}\index{elliptic fibration!trivial elliptic fibration}\label{corcor}
 Let $E$ be an elliptic curve with zero $e_0\in E$, $C$ an arbitrary curve and $Z=E\times C$. The fibration $p_2:Z\lrw C$ is called a {\bf trivial elliptic fibration} and $\Phi  \simeq  \tmop{Hom}(C,E)$. For any $\varphi\colon C\to E$,
\begin{eqnarray*}
L_{\varphi} & = & \os_Z(\Gamma_{\varphi}-(\{e_0\}\times C+\sum_{c_i\in\varphi^{-1}(e_0)}E\times\{c_i\})),
\end{eqnarray*}
unless $\varphi(c)=e_0$ is the constant morphism, in which case $L_{\varphi}\simeq \os_Z$.
\begin{proof}
From Lemma \ref{sectionf}, $\Phi\simeq\tmop{Hom}_C(C,J)\simeq\tmop{Hom}_C(C,C\times E)$,
implying ${\Phi\simeq\tmop{Hom}(C,E)}$. The zero section $S_0$ is $\{e_0\}\times C$ and if $\varphi$ is not the constant morphism $\varphi(c)=e_0$ for all $c\in C$, $\Gamma_\phi\cap S_0=\phi^{-1}(e_0)$. Thus $D_{\Gamma_\phi}=\sum_{c_i\in\varphi^{-1}(e_0)}E\times\{c_i\})$ and the result follows.
\end{proof}

\end{corollary}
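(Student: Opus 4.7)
The plan is to apply Lemma \ref{sectionf} directly, after first identifying the various groups and sections concretely in the trivial fibration case. The key observation is that since $Z = E \times C$ admits the obvious section $S_0 = \{e_0\}\times C$, the fibration $p_2\colon Z \to C$ is already Jacobian, so by Remark \ref{sect} we have $Z \simeq J$ as elliptic fibrations over $C$, and thus $J \simeq C \times E$. Then using $\Phi \simeq \tmop{Hom}_C(C,J)$ from Lemma \ref{sectionf} and the universal property of the product, a $C$-morphism $C \to C\times E$ is the same as a morphism $C \to E$, giving $\Phi \simeq \tmop{Hom}(C,E)$. Under this identification, $\varphi\colon C \to E$ corresponds to the section $s_\varphi\colon c \mapsto (\varphi(c),c)$, whose image is the graph $\Gamma_\varphi$.

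Next I would plug into the formula $\alpha(S) = \os_Z(S - S_0 - D_S)$ with $S = \Gamma_\varphi$. If $\varphi$ is the constant map to $e_0$, then $\Gamma_\varphi = S_0$ and $\alpha(\Gamma_\varphi) = \os_Z$, handling the trivial case. Otherwise, I need to compute $D_{\Gamma_\varphi}$, which requires identifying the set-theoretic intersection $\Gamma_\varphi \cap S_0$: a point $(x,c)$ lies in this intersection iff $x = e_0$ and $x = \varphi(c)$, i.e., iff $c \in \varphi^{-1}(e_0)$. Since $\varphi$ is non-constant with target an elliptic curve, $\varphi^{-1}(e_0)$ is a finite set of points $\{c_i\}$, and each intersection point $(e_0,c_i)$ is a transverse intersection of the two sections (both meeting $S_0$ in a single point of each fibre over $c_i$).

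The defining condition on $D_S$ is that $S \cap S_0 = \sum a_c Z_{c|S_0}$ as a divisor on $S_0$, where $D_S = \sum a_c Z_c$. Since fibres of $p_2$ over $c_i$ are $E\times\{c_i\}$ and each intersection point is transverse (multiplicity one), we obtain $a_{c_i} = 1$ for $c_i \in \varphi^{-1}(e_0)$ and $a_c = 0$ otherwise, giving $D_{\Gamma_\varphi} = \sum_{c_i \in \varphi^{-1}(e_0)} E \times \{c_i\}$. Substituting into the formula for $\alpha$ yields the claim.

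The main technical point, and where I expect to spend the most care, is verifying that the intersection of $\Gamma_\varphi$ with $S_0$ is indeed transverse with multiplicity one at each $c_i$, so that $D_S$ has coefficient exactly one on each $E\times\{c_i\}$ — this uses that both $\Gamma_\varphi$ and $S_0$ are smooth sections meeting each fibre in a single point. Everything else is routine once the identification $\Phi \simeq \tmop{Hom}(C,E)$ via graphs is made explicit.
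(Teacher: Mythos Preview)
Your proposal is correct and follows the same route as the paper: identify $\Phi \simeq \tmop{Hom}(C,E)$ via $J \simeq C\times E$ and the universal property of the product, then plug $S=\Gamma_\varphi$, $S_0=\{e_0\}\times C$ into Lemma~\ref{sectionf} and read off $D_{\Gamma_\varphi}$ from $\Gamma_\varphi\cap S_0=\varphi^{-1}(e_0)$. The paper's argument is terser and omits your transversality discussion entirely; be aware, though, that your stated justification (both curves being sections) does not by itself force multiplicity one at each $c_i$---the paper glosses over this point as well.
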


\begin{example}\label{trivfib}

Let $E,C,Z$ be as above. If there are no nonconstant morphisms $\psi:C\lrw E,$ then $\tmop{Hom}(C,\mathbf{Pic}^0E)\simeq\mathbf{Pic}^0E$ which is isomorphic, as a group, to $E.$ Thus
\begin{eqnarray*}
\Phi & \simeq & \{\{e\}\times C|e\in E\}
\end{eqnarray*}
with $(\{e_1\}\times C)\oplus(\{e_2\}\times C)=\{e_1\oplus_E e_2\}\times C.$ For any section $S\in\Phi$, ${L_S=\os_Z(S-\{e_0\}\times C)}$.

Now we wish to consider cases where there exists nonconstant morphisms $\psi:C\lrw E$.
For example, let $Z:=E\times E,$ where $E$ is an elliptic curve without complex multiplication. 
By \cite[Chap. III, Remark 4.3]{silverman}, 
\begin{eqnarray*}
 \tmop{Hom}(E,E)& \simeq & \tmop{Pic}^0E\oplus\mb{Z}\tmop{id}.
\end{eqnarray*}
%
%
Thus, by Corollary \ref{corcor},
\begin{eqnarray*}
\Phi & \simeq & \tmop{Pic}^0E\oplus\mb{Z}S,
\end{eqnarray*}
where $S=\Gamma_{id}-(\{e_0\}\times E)$ and $L_{S}=\os_Z(\Gamma_{id}-(\{e_0\}\times E)- (E\times\{e_0\}))$.
Now let $Z=E\times E$ where $j(E)=1728$, that is, $E$ possesses a complex multiplication $\tau$ such that $\tau^2=[-1]_E$. By \cite[Example 4.4 and Theorem 10.1]{silverman}, 
\begin{eqnarray*}
 \tmop{Hom}(E,E)& \simeq & \tmop{Pic}^0E\oplus\mb{Z}\tmop{id}\oplus\mb{Z}\tau.
\end{eqnarray*}
Then, by Corollary \ref{corcor},
\begin{eqnarray*}
\Phi & = & \tmop{Pic}^0E\oplus\mb{Z}S\oplus\mb{Z}T
\end{eqnarray*}
where $T=\Gamma_{id}-(\{e_0\}\times E)$ and $L_T=\os_Z(\Gamma_{\tau}-(E\times\{e_0\}))$.
\end{example}
 
\begin{defex}\label{fibre}\index{elliptic fibration!fibre bundles} {\bf Fibre bundles}:
Let $E$ be an elliptic curve, $C$ an arbitrary curve of genus $\geq 1$, $\tmop{Aut}E$ the group of automorphisms of $E$ (as an abelian variety). Let $\psi:C'\lrw C$ be an \'etale covering with Galois group $G$ isomorphic to some finite subgroup of $\tmop{Aut}E$ and $f\colon G\lrw\tmop{Aut}E$ the corresponding injection. We define an action of $G$ on $E\times C'$ via
\begin{eqnarray*}
g: E\times C' & \lrw & E\times C'\\
     (e,c')     & \longmapsto & (f(g)(e),g(c'))
\end{eqnarray*}
and form the corresponding quotient $Z=(E\times C')/G$ which is elliptically fibred over $C.$ We call $\varphi \colon Z\lrw C$ a {\bf fibre bundle}. Note that a trivial elliptic fibration is a special case of this construction.
\end{defex}
The importance of fibre bundles is demonstrated by the following proposition:
\begin{proposition}\cite{algsurf}\label{bund}
Let $\varphi\colon Z\to C$ be a Jacobian elliptic fibration without degenerate fibres. Then $\phi$ is a fibre bundle.
\end{proposition}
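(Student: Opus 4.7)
The plan is to realise $Z$ as a twisted form of a trivial elliptic fibration $E \times C$ by a finite étale torsor, and then descent gives exactly the quotient description in Example and Definition \ref{fibre}.

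First I would show that all fibres of $\varphi$ are isomorphic to a single elliptic curve $E/k$. Since $\varphi$ is smooth and proper with geometric fibres elliptic curves, the $j$-invariant defines a morphism $j_\varphi \colon C \to \mathbb{A}^1$, $c \mapsto j(Z_c)$. As $C$ is a smooth projective curve over $k$, any morphism from $C$ to the affine line is constant, so $j_\varphi \equiv j(E)$ for some elliptic curve $E$; hence every closed fibre is isomorphic to $E$. The section $s_0 \colon C \to Z$ equips each $Z_c$ with a canonical origin, so we may speak of $\varphi$ as a family of elliptic curves all abstractly isomorphic to $E$.

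Next I would consider the $C$-scheme $I \assign \underline{\tmop{Isom}}_C\bigl(E \times C,\; Z\bigr)$ parametrising isomorphisms of elliptic fibrations preserving the chosen sections. Because $\varphi$ is smooth proper with elliptic-curve fibres, $I \to C$ is representable, and rigidity of elliptic curves (automorphisms fixing the origin form the finite group $\tmop{Aut} E$) implies that $I$ is a right torsor over $C$ under the constant finite group scheme $\tmop{Aut}(E) \times C$. Since the $j$-invariants agree, this torsor is étale-locally trivial, so $I \to C$ is a finite étale covering. Let $C' \subseteq I$ be a connected component, with stabiliser $G \subseteq \tmop{Aut} E$; then $\psi \colon C' \to C$ is a Galois étale cover with group $G$, and by construction there is a tautological isomorphism of elliptic fibrations
\begin{eqnarray*}
 \Psi \colon E \times C' & \longrightarrowlim^{\sim} & Z \times_C C'
\end{eqnarray*}
over $C'$.

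Finally, I would transport the natural Galois action of $G$ on $Z \times_C C'$ (trivial on the first factor, deck transformations on the second) through $\Psi$ to an action on $E \times C'$. Equivariance of $\Psi$ forces this action to be the diagonal one $g \cdot (e, c') = (f(g)(e), g(c'))$ where $f \colon G \hookrightarrow \tmop{Aut} E$ is the inclusion. Taking quotients and using étale descent, we recover $Z \simeq (E \times C')/G$, which is precisely a fibre bundle in the sense of Example and Definition \ref{fibre}.

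The main obstacle is the representability and étale-local triviality of $I$: it is the step where one genuinely uses both smoothness of $\varphi$ (to avoid degenerate fibres complicating the Isom scheme) and the finiteness/rigidity of $\tmop{Aut} E$. Once $I$ is known to be an $\tmop{Aut}(E)$-torsor, the rest of the argument is formal descent theory, and the quotient presentation follows automatically from the chosen connected component $C' \subseteq I$.
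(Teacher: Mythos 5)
The paper itself gives no proof of Proposition \ref{bund}: it is quoted from \cite{algsurf}, where the standard argument is via periods and monodromy --- with no degenerate fibres the $j$-invariant (equivalently the period) is constant, so the monodromy of $R^1\varphi_*\mb{Z}$ lies in the finite stabiliser of the period point; passing to the finite \'etale Galois cover $C'\to C$ that kills the monodromy, the pulled-back family has constant period, trivial monodromy and a section, hence is $E\times C'$, and $Z\simeq(E\times C')/G$ follows by descent. Your argument via the $\tmop{Isom}$-torsor is a genuinely different, purely algebraic route, and its skeleton is sound: constancy of $j$ from projectivity of $C$, restriction to section-preserving isomorphisms so that the structure group is the finite group $\tmop{Aut}E$ of abelian-variety automorphisms (exactly as Definition \ref{fibre} requires --- had you forgotten the sections the group would contain all translations and be infinite), the Galois component $C'$, and transport of the deck action. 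Two small points you elide are routine: the stabiliser $G$ of the component $C'$ acts simply transitively on fibres of $C'\to C$ because $\tmop{Aut}E$ does so on fibres of $I\to C$, and the induced automorphism of the $E$-factor is independent of $c'\in C'$ because it defines a morphism from connected $C'$ to the finite discrete group $\tmop{Aut}E$.

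There is, however, a genuine gap at the step you yourself flag as the main obstacle, and your stated justification does not close it. You write that rigidity implies $I$ is a torsor and that constancy of $j$ makes the torsor \'etale-locally trivial; the logic runs the other way. Constancy of $j$ gives surjectivity of $I\to C$ (nonempty geometric fibres), and rigidity, i.e.\ $H^0(E,T_E(-e_0))=0$, gives that each scheme-theoretic fibre of $I$ is finite and reduced of cardinality $|\tmop{Aut}E|$, hence that $I\to C$ is unramified. But unramified is not \'etale: flatness must be proved, since a priori $I$ could be an \'etale cover of a punctured $C$ together with isolated isomorphisms sitting over the punctures (the obstruction to lifting an isomorphism over a square-zero extension lives in the nonzero group $H^1(E,T_E(-e_0))$, so smoothness of $I\to C$ is not formal). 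One fix: show $I\to C$ is proper by checking that an isomorphism over the generic point of $\os_{C,c}$ extends across $c$, which follows from uniqueness of the relatively minimal smooth elliptic model over a DVR; proper plus quasi-finite plus unramified with fibres of constant length then forces finite \'etale, i.e.\ the torsor property. Alternatively, argue \'etale-local triviality directly: \'etale-locally $Z$ has a Weierstrass form $y^2=x^3+ax+b$ with unit discriminant, constancy of $j$ pins down $(a^3:b^2)$, and the substitution $x\mapsto u^2x$, $y\mapsto u^3y$ trivialises the family after adjoining a root of a unit, which is an \'etale extension. With either repair your descent argument is complete and stands as a valid alternative to the transcendental proof in the cited literature.
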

\begin{remark}\label{fibgrp}
Let our notation remain as in Example/Definition \ref{fibre}. Any section of a fibre bundle $\phi\colon Z\to C$ is necessarily the image under $\pi\colon E\times C'\to Z$ of a $G$-invariant section $S'$ of $\psi\colon E\times C' \to C'$. To see this, we let $S\subset Z$ be such a section, $F$ a fibre of $\phi$. Since $S\cdot F=1$, $\pi^*S\cdot \pi^*F=n$, where $n=|G|$. However, $\pi^*F$ is the union of $n$ fibres of $\psi\colon E\times C' \to C'$, from which we see $\pi^*S$, which is $G$-invariant, is a section of $\psi$ as claimed. Conversely, given a $G$-invariant section $S'\subset E\times C'$, $ \pi(S')$ is a section of $\phi\colon Z\to C$. Thus the group of sections $\Phi_Z$ of $\varphi:Z\to C$, is isomorphic to $E^G$, the group of fixed points of $E$ under the action of $\tmop{im}G\subset\tmop{Aut}(E)$. For example, if $G$ is cyclic of order 2 and acts on $E$ by sending $e$ to $-e$, then $\Phi_Z\simeq(\mb{Z}/2\mb{Z})^2$.
\end{remark}

\section{The Tate-Shafarevich group and Ogg-Shafarevich theory}

The following is classical: given a Jacobian elliptic fibration ${\phi:Z\to C}$ on a surface $Z$, the Brauer group of $Z$, $\tmop{Br}(Z)$ (see Appendix \ref{br} for details concerning the Brauer group of a scheme) is isomorphic to the Tate-Shafarevich group $\Sha_C(Z)$ of $\phi$, described below. Since we are interested in explicit constructions of Azumaya algebras on $Z$, it is of interest to find an analogue of the noncommutative cyclic covering trick for elements of $\Sha_C(Z)$. In this section we do exactly this and provide some examples of the analogous construction.

Example \ref{J}, together with Remark \ref{sect}, tell us that given an arbitrary elliptic fibration ${\phi:Z\lrw C,}$ there exists a unique surface $J$ with an elliptic fibration $\phi_J:J\lrw C$ with the same fibres as $\phi$ over the closed points of $C$ such that  $\phi_J$ possesses a section ($\phi_J$ is the Jacobian fibration of $\phi$). Moreover, given a Jacobian elliptic fibration ${\phi:Z\lrw C}$, $\Sha_C(Z)$, which we describe here, is a classifying group for all fibrations with $\phi$ as their Jacobian. Let $\phi:Z\lrw C$ be a Jacobian elliptic fibration. We now define the Tate-Shafarevich group of $\phi$:\index{Tate-Shafarevich group, $\Sha_C(Z)$}\index{$\Sha_C(Z)$}
\begin{eqnarray*}
\Sha_C(Z) & := & H^1_{\acute{et}}(C,Z^\sharp),
\end{eqnarray*}
where $Z^\sharp$ is the sheaf of abelian groups on $C$ defined by
\begin{eqnarray*}
Z^\sharp(U) & = & \{\mbox{the group of sections of } Z_U\lrw U\}
\end{eqnarray*}
for all \'etale $U\lrw C.$ There is a $1-1$ correspondence between elements of $\Sha_C(Z)$ and isomorphism classes of elliptic fibrations $X\lrw C$ with $Z\lrw C$ as their Jacobian (see \cite{dol94} and \cite[Section 4.4]{cald}).
From here on in, $\phi:Z\lrw C$ will always denote a Jacobian elliptic fibration. The following result is well-known:
\begin{theorem}
If $C$ is a smooth curve, then
\begin{eqnarray*}
\tmop{Br}(Z) & \simeq & \Sha_C(Z).
\end{eqnarray*}
\begin{proof}
From \cite[Cor. 1.17]{dol94}, $\Sha_C(Z)\simeq\tmop{coker}(\tmop{Br}(C)\lrw\tmop{Br}(Z)).$ Since $C$ is a smooth curve, $\tmop{Br}(C)\simeq 0$ and the result follows.
\end{proof}
\end{theorem}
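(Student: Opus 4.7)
The plan is to invoke the result of Dolgachev \cite[Cor. 1.17]{dol94}, which furnishes a natural isomorphism $\Sha_C(Z)\simeq\tmop{coker}(\tmop{Br}(C)\lrw\tmop{Br}(Z))$, where the map is pullback along $\phi$. Granting this, the theorem reduces to verifying that $\tmop{Br}(C)=0$ for a smooth curve $C$ over the algebraically closed field of characteristic zero in which we are working; for then the cokernel is simply all of $\tmop{Br}(Z)$.

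For the vanishing $\tmop{Br}(C)=0$, I would appeal to Tsen's theorem. Since $k$ is algebraically closed and $K(C)$ has transcendence degree one over $k$, the function field $K(C)$ is a $C_1$-field, so every central simple algebra over $K(C)$ is split and $\tmop{Br}(K(C))=0$. Because $C$ is smooth (hence regular and integral), the natural restriction $\tmop{Br}(C)\hookrightarrow\tmop{Br}(K(C))$ is injective by Auslander--Goldman-style arguments for regular schemes. Combining these two observations gives $\tmop{Br}(C)=0$, and the theorem follows immediately.

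The genuine content of the proof sits not in what I have outlined above but in Dolgachev's cokernel description, which I am content to take as a black box. Its derivation rests on the Leray spectral sequence $H^p(C,R^q\phi_*\mb{G}_m)\Rightarrow H^{p+q}(Z,\mb{G}_m)$ together with the identification of $R^1\phi_*\mb{G}_m$ with the relative Picard sheaf and its relationship to $Z^\sharp$ through the exact sequence comparing sections of $\mathbf{Pic}_{Z/C}$ with sections of $\phi$. That analysis, rather than the purely formal deduction carried out here, is the main obstacle; once it is in hand the theorem is an immediate corollary of the triviality of the Brauer group of a smooth curve over an algebraically closed field.
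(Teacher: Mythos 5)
Your proposal is correct and follows exactly the paper's own argument: both invoke Dolgachev's identification $\Sha_C(Z)\simeq\tmop{coker}(\tmop{Br}(C)\to\tmop{Br}(Z))$ and then conclude from the vanishing of $\tmop{Br}(C)$. The only difference is that you spell out this vanishing via Tsen's theorem and the injection $\tmop{Br}(C)\hookrightarrow\tmop{Br}(K(C))$ for a regular integral scheme, which the paper simply takes as known.
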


 As stated above, due to the isomorphism $\tmop{Br}(Z)\simeq\Sha_C(Z),$ we would hope for a construction for elements of $\Sha_C(Z)$ analogous to the noncommutative cyclic covering trick and we do indeed have one which we describe here now.

\begin{construction}\label{cyctate}\cite{algsurf}
Let $\pi_C:C'\lrw C$ be an $n:1$ cyclic \'etale covering with Galois group $G=\{\s|\s^n=1\}.$ Form the fibred product:
\begin{eqnarray}
  Y & \longrightarrowlim^{\pi} & Z \nonumber\\
  \phi_Y \downarrow & \square & \downarrow \phi \nonumber\\
  C' & \longrightarrowlim^{\pi_C} & C \nonumber
\end{eqnarray}
We denote by $\Phi_Y$ the group of sections of $\phi_Y:Y\lrw C'$ and note that $G$ acts on $Y,$ sending sections to sections, and thus acts on $\Phi_Y$ via $\s^i(s)\colon c'\mapsto \s^{i}(s(\s^{-i}(c')))$. Let $s$ be a representative 1-cocycle of a class in $H^1(G,\Phi_Y)$, that is, a section $s$ such that $s+\s(s)+\ldots+\s^{n-1}(s)=0$ in $\Phi_Y$. We use such an $s$ to define a new $G-$action on $Y$, which we denote $G_s$ (and this is a bonafide action by Lemma \ref{constr} below), via
\begin{eqnarray}
  \sigma_s : Y & \longrightarrow & Y \nonumber\\
  ( c', z ) & \longmapsto & ( \sigma ( c' ), \s(z) \oplus  s  ( \s(c' ) )).
  \nonumber
\end{eqnarray}
 We define a new quotient of $Y$ as $Z_{C',s} \assign Y / G_s$\index{$Z_{C',s}$}. Then $Z_{C', s}\in\Sha_C(Z).$ This construction is drawn from \cite{algsurf}.
\end{construction}

\begin{remark}
 In the following lemma, we also demonstrate that using cohomologous 1-cocycles results in the same quotient surface, implying it is actually an element of $H^1(G,\Phi_Y)$ we are using to twist the $G$-action.
\end{remark}

\begin{lemma}\label{constr}
 This $\s_s\colon Y\to Y$ defines a fixed point free group action of $G_s$ on $Y$ and cohomologous $1\mhyphen$cocycles result in the same action.
 \begin{proof}
Since $\pi_C\colon C'\to C$ is \'etale, $\s_s$ is fixed point free and so for the first part of the lemma, we need to verify is that $\s_s^n$ acts trivially on $Y:$
\begin{eqnarray*}
\s_s^n(c',z) & = & \s_s^{n-1}(\s(c'),\s(z)\oplus s(\s(c')))\\
             & = & (\s^n(c'),\s^n(z)\oplus \s^{n-1}s(\s(c'))\oplus \s^{n-2}s(\s^2(c'))\oplus \ldots\oplus s(c')))\\
	     & = & (c',z\oplus \s^{-1}s(\s(c'))\oplus \s^{-2}s(\s^2(c'))\oplus \ldots\oplus \s(s(\s^{n-1}(c')))\oplus s(c'))\\
	     & = & (c',z),
\end{eqnarray*}
the last equality arising from the fact that $s+\s(s)+\ldots+\s^{n-1}(s)=0$ in $\Phi_Y$ and that $\s^i(s)(c')=\s^{i}(s(\s^{-i}(c')))$.
Now to show cohomologous $1\mhyphen$cocycles result in the same quotient surface, by Remark \ref{sect}, we need only show that if $s$ is a $1$-coboundary then $Y/G_s$ possesses a section. To this  end, we let $s\in\tmop{im}(1-\s)$, i.e. $s=t-\s(t)$, for some section $t\in\Phi$. Then $Y/G_s$ possesses a section if and only if there exists a section of $\Phi_Y$ which is $\s_s-$invariant (this is analogous to Remark \ref{fibgrp}). 
We now show that $t\colon C'\to Y$ is $\s_s-$invariant:

\begin{eqnarray*}
\s_s(c',t(c')) & = & \s_{t-\s(t)}(c',t(c'))\\
           & = & (\s(c'),\s(t(c'))\oplus(t-\s(t))(\s(c')))\\
           & = & (\s(c'),\s(t(c')) \oplus t(\s(c'))\ominus \s(t(\s^{-1}\s(c')))\\
           & = & (\s(c'),t(\s(c'))).
\end{eqnarray*}
Thus we see that the section $t\colon C'\to Y$ is $\s_s-$invariant and $Y/G_s\simeq Z$, as required.
\end{proof}
\end{lemma}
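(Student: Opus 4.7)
The plan is to split the claim into three sub-claims and verify each directly from the definitions. First, I would check that $\sigma_s$ has order $n$ by iterating the formula: a direct induction gives
\begin{eqnarray*}
\sigma_s^k(c',z) & = & (\sigma^k(c'),\, \sigma^k(z) \oplus \sigma^{k-1}s(\sigma(c')) \oplus \sigma^{k-2}s(\sigma^2(c')) \oplus \cdots \oplus s(\sigma^k(c'))),
\end{eqnarray*}
and setting $k=n$ reduces the twist term to $(s + \sigma(s) + \cdots + \sigma^{n-1}(s))(c')$ once one uses the fact that $\sigma^i$ acts on the sheaf of sections by $\sigma^i(s)(c') = \sigma^i(s(\sigma^{-i}(c')))$. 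The cocycle condition $s + \sigma(s) + \cdots + \sigma^{n-1}(s) = 0$ in $\Phi_Y$ then kills this term, yielding $\sigma_s^n = \mathrm{id}$. So $G_s = \langle \sigma_s \rangle$ is genuinely cyclic of order dividing $n$, and since $\sigma_s$ acts on the first coordinate exactly as $\sigma$ does, the order is precisely $n$.

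Next, fixed-point freeness is immediate: if $\sigma_s(c',z) = (c',z)$ then in particular $\sigma(c') = c'$, which is impossible because $\pi_C\colon C' \to C$ is étale (so $G$ acts freely on $C'$ by hypothesis). The same argument works for every nontrivial power $\sigma_s^k$, so $G_s$ acts freely.

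For the third sub-claim, I would reduce to showing that if $s$ is a $1$-coboundary then $Y/G_s$ carries a section, which forces $Y/G_s \cong Z$ as elliptic fibrations (by Remark \ref{sect}, the Jacobian is the unique elliptic fibration with a section in its $\Sha$-class; or equivalently the zero class in $\Sha_C(Z)$). The general case $s' - s \in \mathrm{im}(1-\sigma)$ then follows by the usual trick: after translating one twist into the other by the cohomologous section, the two $G_s$ and $G_{s'}$ actions on $Y$ become intertwined by a $C$-automorphism, so their quotients coincide. Concretely, if $s = t - \sigma(t)$ for some $t \in \Phi_Y$, then viewing $t\colon C' \to Y$ as a curve in $Y$, I would verify by direct computation (as sketched in the cocycle calculation above, but with $k=1$) that $\sigma_s(c',t(c')) = (\sigma(c'),\, t(\sigma(c')))$, so this curve descends to a section of $Y/G_s \to C$, as required.

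The main obstacle, such as it is, lies not in any single step but in keeping the two competing $G$-actions on $\Phi_Y$ straight: the natural action $\sigma^i(s)(c') = \sigma^i(s(\sigma^{-i}(c')))$ that makes $\Phi_Y$ a $G$-module, versus the fibrewise addition by $s$ used to perturb the action on $Y$. Once the sign and composition conventions are fixed consistently with these formulas, all three sub-claims reduce to telescoping sums in the abelian group $\Phi_Y$.
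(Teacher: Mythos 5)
Your proposal is correct and follows essentially the same route as the paper: the telescoping induction for $\s_s^n=\tmop{id}$ using the cocycle identity $\s^i(s)(c')=\s^i(s(\s^{-i}(c')))$, fixed-point freeness from the free $G$-action on the \'etale cover $C'$, and the reduction of the cohomologous case to showing a coboundary $s=t-\s(t)$ makes $t\colon C'\to Y$ a $\s_s$-invariant section, whence $Y/G_s\simeq Z$ by Remark \ref{sect}. Your added remarks (the explicit formula for $\s_s^k$ and the intertwining of $G_s$ and $G_{s'}$ for general cohomologous cocycles) are harmless elaborations of the same argument.
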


By construction, this element $Z_{C',s}$ of $\Sha_C(Z)$ lies in the kernel of ${\pi_C^*:\Sha_C(Z)\lrw\Sha_{C'}(Y).}$ We would like to know if all elements trivialised by $\pi_C^*$ can be constructed in this manner and the following tells us that they can.
\begin{proposition}\label{ker}
$H^1(G,\Phi_Y)\simeq\tmop{ker}(\Sha_C(Z)\lrw\Sha_{C'}(Y)).$
\begin{proof}
Let $\iota:\eta\hookrightarrow C,\iota':\eta'\hookrightarrow C'$ be the generic points of $C,C'$ respectively. Also recall that $Z_{\eta},Y_{\eta'}$ are group schemes over $K(C),K(C')$ respectively and, following \cite{dol94}, we shall identify $Z_{\eta},Y_{\eta'}$ with the sheaves in the \'etale topology which they respectively represent. Then from \cite[Section 1]{dol94}, we see that ${\Sha_C(Z)\simeq H^1_{\acute{et}}(C,\iota_*Z_{\eta})}$ and ${\Sha_{C'}(Y)=H^1_{\acute{et}}(C',\iota_*Y_{\eta'}).}$ 

We first show that $\iota'_*Y_{\eta'}$ is the pullback of $\iota_*Z_{\eta}$ under $\pi_C\colon C'\to C$.  We form the fibred diagram
\begin{eqnarray}
  \eta' & \longrightarrowlim^{\pi_{\eta}} & \eta \nonumber\\
  \iota' \downarrow & \square & \downarrow \iota \nonumber\\
  C' & \longrightarrowlim^{\pi_C} & C \nonumber
\end{eqnarray}
First note that $Y_{\eta'}=\pi_{\eta}^*Z_\eta$. Since $\iota, \iota'$ are inclusions of points, 
\begin{eqnarray*}
\pi_C^*\iota_*Z_\eta & \simeq & \iota'_*\pi^*_{\eta}Z_\eta\\
                     & \simeq & \iota'_*Y_{\eta}.
\end{eqnarray*}

We now use the Hochschild-Serre spectral sequence:
\begin{eqnarray*}
E^{pq}_2\assign H^p(G,H^q_{\acute{et}}(C',\iota'_*Y_{\eta'})) & \implies & H^{p+q}_{\acute{et}}(C,\iota_*Z_{\eta}).
\end{eqnarray*}
The relevant part of $E^{pq}_2$ is
$$
\begin{array}{lll}
H^0(G,\Sha_{C'}(Y)) & H^1(G,\Sha_{C'}(Y)) &\\
H^0(G,\Phi_Y) & H^1(G,\Phi_Y) & H^2(G,\Phi_Y)
\end{array}
$$
From this, the following is exact
\begin{eqnarray*}
0\lrw H^1(G,\Phi_Y)\lrw H^1_{\acute{et}}(C,\iota_*Z_{\eta})\lrw \Sha_{C'}(Y)^G\lrw H^2(G,\Phi_Y)\lrw\ldots
\end{eqnarray*}
Since $H^1_{\acute{et}}(C,\iota_*Z_{\eta})\simeq\Sha_C(Z)$ and $\Sha_{C'}(Y)^G$ is a subgroup of $\Sha_{C'}(Y),$
\begin{eqnarray*}
 0\lrw H^1(G,\Phi_Y)\lrw \Sha_C(Z)\lrw \Sha_{C'}(Y) 
\end{eqnarray*}
is exact and $H^1(G,\Phi_Y)\simeq\tmop{ker}(\Sha_C(Z)\lrw\Sha_{C'}(Y)).$
\end{proof}
\end{proposition}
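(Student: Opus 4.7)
The plan is to realise both Tate--Shafarevich groups as étale cohomology groups on $C$ and $C'$ respectively, and then compare them via the Hochschild--Serre spectral sequence associated to the Galois étale cover $\pi_C \colon C' \to C$. Following \cite{dol94}, I would first invoke the identifications $\Sha_C(Z) \simeq H^1_{\acute{e}t}(C,\iota_* Z_\eta)$ and $\Sha_{C'}(Y) \simeq H^1_{\acute{e}t}(C',\iota'_* Y_{\eta'})$, where $\iota,\iota'$ are the inclusions of the generic points and $Z_\eta, Y_{\eta'}$ are identified with the sheaves they represent in the étale topology. Once this is set up, the whole argument becomes a cohomological one on the base curves.

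The next step is the base change compatibility, namely that $\pi_C^*(\iota_* Z_\eta) \simeq \iota'_* Y_{\eta'}$. I would argue this by forming the cartesian diagram of $\iota,\iota',\pi_C$ and the induced map on generic points $\pi_\eta\colon \eta' \to \eta$; flatness of $\pi_C$ together with the fact that $\iota, \iota'$ are inclusions of points lets one commute pushforward and pullback, and $Y_{\eta'} = \pi_\eta^* Z_\eta$ by definition of $Y$ as a fibre product. With this compatibility in hand, the étale-local structure of $\pi_C$ lets us apply the Hochschild--Serre spectral sequence
\begin{eqnarray*}
E^{pq}_2 = H^p(G, H^q_{\acute{e}t}(C', \iota'_* Y_{\eta'})) \Longrightarrow H^{p+q}_{\acute{e}t}(C, \iota_* Z_\eta).
\end{eqnarray*}

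To finish, I would read off the five-term exact sequence in low degrees. The key input is that $H^0_{\acute{e}t}(C', \iota'_* Y_{\eta'})$ is precisely the group $\Phi_Y$ of sections of $\phi_Y$, since global sections of the sheaf associated to the generic fibre group scheme correspond to sections of the fibration itself. The five-term sequence then reads
\begin{eqnarray*}
0 \lrw H^1(G,\Phi_Y) \lrw \Sha_C(Z) \lrw \Sha_{C'}(Y)^G \lrw H^2(G,\Phi_Y),
\end{eqnarray*}
and since $\Sha_{C'}(Y)^G \hookrightarrow \Sha_{C'}(Y)$ the kernel of the pullback $\pi_C^* \colon \Sha_C(Z) \to \Sha_{C'}(Y)$ is exactly $H^1(G,\Phi_Y)$, as required.

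I expect the main technical obstacle to be the base change identification $\pi_C^*(\iota_* Z_\eta) \simeq \iota'_* Y_{\eta'}$: while intuitively clear, justifying it rigorously in the étale topology requires some care, since one must ensure the sheafification behaves well under both pushforward from a non-closed point and pullback along $\pi_C$. Everything after that is essentially a bookkeeping exercise with the Hochschild--Serre sequence.
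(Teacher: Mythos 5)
Your proposal is correct and follows essentially the same route as the paper's own proof: the Dolgachev identifications $\Sha_C(Z)\simeq H^1_{\acute{e}t}(C,\iota_*Z_\eta)$ and $\Sha_{C'}(Y)\simeq H^1_{\acute{e}t}(C',\iota'_*Y_{\eta'})$, the base-change compatibility $\pi_C^*(\iota_*Z_\eta)\simeq\iota'_*Y_{\eta'}$ via the cartesian diagram on generic points, and the low-degree exact sequence of the Hochschild--Serre spectral sequence with $H^0_{\acute{e}t}(C',\iota'_*Y_{\eta'})=\Phi_Y$. The only difference is cosmetic: you flag flatness of $\pi_C$ in justifying the base change, where the paper appeals directly to $\iota,\iota'$ being inclusions of points.
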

Here we provide a simple example of the construction:
\begin{example}\label{cyctriv}
Let $\phi:Z\lrw C$ be a trivial elliptic fibration, as in Example \ref{trivfib}. That is, $Z=E\times C,$ for some elliptic curve $E.$ Assume $g(C)\geq 1.$ Let $\pi_C:C'\lrw C$ be an \'etale $n:1$ cyclic cover such that there are no nonconstant morphisms $C'\to E$ and set ${Y\assign Z\times_CC'}$ as above. Then the following diagram is fibred
\begin{eqnarray}
  Y & \longrightarrowlim^{\pi} & Z \nonumber\\
  \phi_Y \downarrow & \square & \downarrow \phi \nonumber\\
  C' & \longrightarrowlim^{\pi_C} & C \nonumber
\end{eqnarray}
As we saw in Example \ref{trivfib}, the sections of $\phi_Y$ are all of the form $\{e\}\times C',$ for $e\in E$, and $\Phi_Y\simeq\tmop{Pic}^0E.$ Since  $\s(e,c'_1)=(e,\s(c_1')),$ any element of $\Phi_Y$ is fixed by $\s,$ implying  
\begin{eqnarray*}
H^1(G,\Phi_Y) & \simeq & (\Phi_Y)_n\\
             & \simeq &(\tmop{Pic}^0E)_n.
\end{eqnarray*}
 For any $n-$torsion $\varepsilon-e_0\in\tmop{Pic}^0E,$ we define the new $G-$action $G_{\varepsilon}$ by
\begin{eqnarray}
  \sigma_{\varepsilon} : Y & \longrightarrow & Y \nonumber\\
  ( c', e ) & \longmapsto & ( \sigma ( c' ), e \oplus \varepsilon ).
  \nonumber
\end{eqnarray}
Letting $Z':=Y/G_{\varepsilon},$ we see that $Z'$ is an elliptic surface fibred over $C$. As in Remark \ref{fibgrp}, $\Phi_Z\simeq E^G$. Here, however, $E^G$ is empty, implying $p\colon Z'\to C$ is elliptically fibred without a section. It does, however, possess an $n-$section which is the image of $\{e_0\}\times C$ under $\pi'\colon Y\lrw Z'.$
\end{example}

\section{Relationship with the noncommutative cyclic covering trick}

This section is devoted to exploring the relationship between the above construction and Chan's noncommutative cyclic covering trick. The questions we answer are the following: given an element of $H^1(G,\Phi_Y)$, how do we recover an element of $H^1(G,\tmop{Pic}Y)$ and a corresponding relation? Does this relation satisfy the overlap condition? What is the corresponding cyclic cover? We then provide examples of how the constructions and correlations work.

Let the setup be as in Construction \ref{cyctate}, that is, an \'etale cover $\pi_C$ and a fibred diagram
\begin{eqnarray}
  Y & \longrightarrowlim^{\pi} & Z \nonumber\\
  \phi_Y \downarrow & \square & \downarrow \phi \nonumber\\
  C' & \longrightarrowlim^{\pi_C} & C \nonumber
\end{eqnarray}
Recall that we use elements of $Rel_{io}$ to construct noncommutative cyclic covers, where $Rel_{io}$ is the subgroup of invertible relations satisfying the overlap condition.
\begin{lemma}
We have an isomorphism
\begin{eqnarray*}
Rel_{io}/E & \simeq & H^1(G,\Phi_Y).
\end{eqnarray*}
\begin{proof}
Since $\tmop{Br}(Z)\simeq\Sha_C(Z)$ and $\tmop{Br}(Y)\simeq\Sha_{C'}(Y),$
\begin{eqnarray*}
\tmop{Br}(Y/Z) & \simeq & \tmop{ker}(\Sha_C(Z)\lrw\Sha_{C'}(Y)).
\end{eqnarray*} 
However, $\tmop{Br}(Y/Z)$ is also isomorphic to $Rel_{io}/E$ by Proposition \ref{prop} and $\tmop{ker}(\Sha_C(Z)\lrw\Sha_{C'}(Y))$ is isomorphic to $H^1(G,\Phi)$ by Proposition \ref{ker}, proving the lemma.
%
\end{proof}

\end{lemma}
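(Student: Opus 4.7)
The plan is to assemble the lemma by chaining together four isomorphisms that have already been established or cited in the excerpt, so the main content is verifying compatibility of the connecting maps rather than proving any new identification from scratch.

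First I would apply Proposition \ref{prop} to identify $Rel_{io}/E$ with the relative Brauer group $\tmop{Br}(Y/Z)$. The hypothesis needed here is that $\pi\colon Y \to Z$ is étale and $Z$ is smooth; both hold, since $\pi$ is the pullback of the étale cyclic cover $\pi_C\colon C' \to C$, and $Z$, being the total space of a smooth elliptic fibration over a smooth curve, is smooth. This gives the first isomorphism $Rel_{io}/E \simeq \tmop{Br}(Y/Z) \assign \tmop{ker}(\tmop{Br}(Z)\to \tmop{Br}(Y))$.

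Next I would invoke the preceding theorem of the section twice, to get $\tmop{Br}(Z)\simeq\Sha_C(Z)$ and $\tmop{Br}(Y)\simeq\Sha_{C'}(Y)$. Here I need to know that both $C$ and $C'$ are smooth curves, which is immediate ($C'\to C$ is étale). Under these two isomorphisms the restriction map $\tmop{Br}(Z)\to\tmop{Br}(Y)$ becomes the pullback map $\pi_C^*\colon\Sha_C(Z)\to\Sha_{C'}(Y)$, so
\begin{eqnarray*}
\tmop{Br}(Y/Z) & \simeq & \tmop{ker}(\Sha_C(Z)\lrw\Sha_{C'}(Y)).
\end{eqnarray*}
Finally, Proposition \ref{ker} identifies this kernel with $H^1(G,\Phi_Y)$, completing the chain.

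The only nontrivial point, which I expect to be the main obstacle, is the naturality in the middle step: one must confirm that the isomorphism $\tmop{Br}(-)\simeq\Sha_{(-)}(-)$ of the preceding theorem is functorial with respect to the pullback along $\pi_C$, so that $\tmop{ker}(\tmop{Br}(Z)\to\tmop{Br}(Y))$ really does correspond to $\tmop{ker}(\pi_C^*)$ on the Tate--Shafarevich side. This follows from tracing through the definition, since the isomorphism $\tmop{Br}(Z)\simeq\Sha_C(Z)$ used in the proof comes from the cokernel sequence $\tmop{Br}(C)\to\tmop{Br}(Z)\to\Sha_C(Z)\to 0$ of \cite{dol94}, and this sequence is natural in base change along $\pi_C$. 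Granting this naturality, the four isomorphisms compose to give $Rel_{io}/E\simeq H^1(G,\Phi_Y)$, as desired.
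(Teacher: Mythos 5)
Your proposal is correct and follows essentially the same route as the paper's own proof: identify $Rel_{io}/E$ with $\tmop{Br}(Y/Z)$ via Proposition \ref{prop}, convert both Brauer groups to Tate--Shafarevich groups via the preceding theorem, and conclude with Proposition \ref{ker}. The naturality check you flag in the middle step is indeed the one point the paper leaves implicit, and your justification of it via the cokernel sequence from \cite{dol94} is the right one.
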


 Since our interest lies in constructing noncommutative cyclic algebras using elements of $Rel_{io}/E,$ we would like to see how this isomorphism occurs constructively, that is, given an element of $H^1(G,\Phi),$ how do we recover the corresponding relation satisfying the overlap condition? The following proposition tells us how to retrieve an element of $H^1(G,\tmop{Pic}Y)$ from one of $H^1(G,\Phi_Y).$

 \begin{proposition}\label{split1}
 If $S\in H^1(G,\Phi_Y),$ then $L_S\in H^1(G,\tmop{Pic}Y),$ where ${L_S=\os_Z(S-S_0-D_S)}$ is as defined in Remark \ref{section2}.
 \begin{proof}
 Recall that $\tmop{Pic}Y \simeq \tmop{Pic}C'\oplus\tmop{Hom}_{C'}(C',\mathbf{Pic}_{Y/C'})$ with $\psi$ the group homomorphism $\tmop{Hom}_{C'}(C',\mathbf{Pic}_{Y/C'})\lrw \tmop{Pic}Y.$ Since this splitting of $\tmop{Pic}Y$ is a splitting of $G-$modules,
%
 \begin{eqnarray*}
H^1(G,\tmop{Pic}Y) & \simeq & H^1(G,\tmop{Pic}C')\oplus H^1(G,\tmop{Hom}_{C'}(C',\mathbf{Pic}_{Y/C'})).
\end{eqnarray*}
By Lemma \ref{split2} below, $H^1(G,\tmop{Hom}_{C'}(C',\mathbf{Pic}_{Y/C'}))\simeq H^1(G,\Phi_Y)$ from which we see that 
\begin{eqnarray*}
H^1(G,\tmop{Pic}Y) & \simeq & H^1(G,\tmop{Pic}C')\oplus H^1(G,\Phi_Y)
\end{eqnarray*}
and we thus have an embedding $H^1(G,\Phi_Y)\longhookrightarrow H^1(G,\tmop{Pic}Y)$ which is given by $S\mapsto L_S$ as defined in Remark \ref{section2}.
\end{proof}
\end{proposition}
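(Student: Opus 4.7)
The plan is to exploit a $G$-equivariant refinement of the splitting of $\tmop{Pic} Y$ from Proposition \ref{fibadd}, push the statement $S \in H^1(G,\Phi_Y)$ through it, and then read off that the resulting $L_S$ satisfies the 1-cocycle condition in $\tmop{Pic} Y$.

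First I would choose a distinguished section of $\phi_Y\colon Y \to C'$: since the given Jacobian fibration $\phi\colon Z \to C$ possesses a section $s_0\colon C \to Z$, its pullback under $\pi_C\colon C' \to C$ is a section $s_0'\colon C' \to Y$ whose image $S_0' \subset Y$ is $G$-invariant by construction. Using $s_0'$ to split the exact sequence in the proof of Proposition \ref{fibadd} yields a decomposition
\begin{eqnarray*}
\tmop{Pic}Y & \simeq & \tmop{Pic}C' \oplus \tmop{Hom}_{C'}(C',\mathbf{Pic}_{Y/C'}),
\end{eqnarray*}
which I would check is an isomorphism of $G$-modules: both summands carry natural $G$-actions (the action on the second factor via $\sigma$ acting on both $C'$ and the fibres of $\mathbf{Pic}_{Y/C'}$), and the splitting map $s_0'^*$ is $G$-equivariant precisely because $s_0'$ is $G$-invariant.

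Next I would verify that the embedding $\alpha\colon \Phi_Y \hookrightarrow \tmop{Pic} Y$ of Lemma \ref{sectionf} is $G$-equivariant and lands in the second summand. By the description in Lemma \ref{sectionf} via $S \mapsto (c' \mapsto \os_{Y_{c'}}(S_{|Y_{c'}} - S_{0|Y_{c'}}'))$, $\alpha$ factors as $\Phi_Y \simeq \tmop{Hom}_{C'}(C', \mathbf{Pic}^0_{Y/C'}) \hookrightarrow \tmop{Hom}_{C'}(C',\mathbf{Pic}_{Y/C'})$ followed by $\psi$, all of which are plainly $G$-equivariant because $\sigma$ acts compatibly on $Y$, $C'$ and on the parametrising scheme $\mathbf{Pic}_{Y/C'}$ (and fixes $S_0'$). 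Thus $\alpha$ induces a homomorphism on group cohomology
\begin{eqnarray*}
\alpha_*\colon H^1(G,\Phi_Y) & \longrightarrow & H^1(G,\tmop{Pic}Y).
\end{eqnarray*}

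Finally, given a $1$-cocycle $S \in \Phi_Y$ (i.e.\ $S + \sigma(S) + \ldots + \sigma^{n-1}(S) = 0$ in $\Phi_Y$), the element $\alpha_*([S]) = [L_S] \in H^1(G,\tmop{Pic}Y)$ is represented by the invertible sheaf $L_S = \os_Y(S - S_0' - D_S)$; equivariance of $\alpha$ forces
\begin{eqnarray*}
L_S \otimes \sigma^*L_S \otimes \ldots \otimes (\sigma^{n-1})^*L_S & \simeq & \alpha\bigl(S + \sigma(S) + \ldots + \sigma^{n-1}(S)\bigr) \;\simeq\; \oy,
\end{eqnarray*}
which is exactly the 1-cocycle condition for $L_S$ in $\tmop{Pic}Y$. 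The main obstacle I anticipate is the bookkeeping needed to confirm that the splitting of Proposition \ref{fibadd} is genuinely $G$-equivariant --- in particular, that the correction term $D_S$ introduced in Lemma \ref{sectionf} (chosen so that $L_S$ restricts trivially to $S_0'$) behaves well under $\sigma$. This reduces to the $G$-invariance of $S_0'$, which is the whole reason for choosing the pulled-back section.
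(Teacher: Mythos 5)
Your proposal is correct and follows essentially the same route as the paper: both exploit the splitting $\tmop{Pic}Y \simeq \tmop{Pic}C' \oplus \tmop{Hom}_{C'}(C',\mathbf{Pic}_{Y/C'})$ of Proposition \ref{fibadd}, made $G$-equivariant via the $G$-invariant pulled-back zero section $S_0'$, together with the identification $\Phi_Y \simeq \tmop{Hom}_{C'}(C',\mathbf{Pic}^0_{Y/C'})$ (the content of Lemma \ref{split2}). Your explicit norm computation $L_S \otimes \sigma^*L_S \otimes \ldots \otimes (\sigma^{n-1})^*L_S \simeq \oy$ is just a hands-on rendering of the induced map on group cohomology that the paper obtains abstractly from the $G$-module decomposition, and your observation that the correction term $D_S$ behaves well under $\sigma$ because $\psi(f)$ is characterised intrinsically by its restrictions to fibres and to $S_0'$ is exactly the point underlying the paper's equivariance claim.
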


\begin{lemma}\label{split2}
There is an isomorphism $H^1(G,\tmop{Hom}_{C'}(C',\mathbf{Pic}_{Y/C'}))\simeq H^1(G,\Phi_Y).$
\begin{proof}
Since ${\Phi_Y\simeq\tmop{Hom}_{C'}(C',\mathbf{Pic}^0_{Y/C'}})$ (as given in the proof of Lemma \ref{sectionf}), we are required to show that $H^1(G,\tmop{Hom}_{C'}(C',\mathbf{Pic}_{Y/C'}))\simeq H^1(G,\tmop{Hom}_{C'}(C',\mathbf{Pic}^0_{Y/C'})).$
Given $\sum_ia_iS_i\in\tmop{Hom}_{C'}(C',\mathbf{Pic}_{Y/C'})$, $\sum_ia_i(S_i-S_0)\in\tmop{Hom}_{C'}(C',\mathbf{Pic}^0_{Y/C'})$. This means that
\begin{eqnarray}\label{okffool}
\tmop{Hom}_{C'}(C',\mathbf{Pic}_{Y/C'}) &\simeq  &\tmop{Hom}_{C'}(C',\mathbf{Pic}^0_{Y/C'})\oplus\mb{Z}S_0.
\end{eqnarray}
Moreover, since 
$S_0$ is the inverse image under $\pi$ of the zero section of $\phi\colon Z\to C$, $S_0$ is $G$-invariant. This, along with the fact that $\tmop{Hom}_{C'}(C',\mathbf{Pic}^0_{Y/C'})$ is $G$-invariant, implies that
(\ref{okffool}) is a splitting of $G$-modules. It follows that
\begin{eqnarray*}
H^1(G,\tmop{Hom}_{C'}(C',\mathbf{Pic}_{Y/C'})) & \simeq & H^1(G,\tmop{Hom}_{C'}(C',\mathbf{Pic}^0_{Y/C'}))\oplus H^1(G,\mb{Z}S_0) \\
                                                & \simeq  & H^1(G,\tmop{Hom}_{C'}(C',\mathbf{Pic}^0_{Y/C'}))
\end{eqnarray*}

%
%
%
 as required.
\end{proof}
\end{lemma}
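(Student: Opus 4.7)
The plan is to reduce the assertion to the identification $\Phi_Y \simeq \tmop{Hom}_{C'}(C',\mathbf{Pic}^0_{Y/C'})$ established in the proof of Lemma \ref{sectionf}, and then to show that passing from the degree-zero component $\mathbf{Pic}^0_{Y/C'}$ to the full $\mathbf{Pic}_{Y/C'}$ contributes nothing in first cohomology. This separates naturally into a decomposition step and a vanishing step.

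First I would invoke the decomposition $\mathbf{Pic}_{Y/C'} = \coprod_{i \in \mb{Z}} \mathbf{Pic}^i_{Y/C'}$ from Remark \ref{jac}. Since $C'$ is connected, any morphism $C' \to \mathbf{Pic}_{Y/C'}$ over $C'$ factors through a single component $\mathbf{Pic}^i_{Y/C'}$, and this component is identified with $\mathbf{Pic}^0_{Y/C'}$ by translation by $iS_0$, where $S_0 \subset Y$ is the zero section (pulled back along $\pi \colon Y \to Z$ from the section $s_0$ of $\phi$). This yields an isomorphism of abelian groups
\begin{eqnarray*}
\tmop{Hom}_{C'}(C',\mathbf{Pic}_{Y/C'}) & \simeq & \tmop{Hom}_{C'}(C',\mathbf{Pic}^0_{Y/C'}) \oplus \mb{Z}S_0.
\end{eqnarray*}

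Next I would verify that this is a splitting of $G$-modules. Since $Y = Z \times_C C'$ and $S_0$ is the pullback of the section $s_0(C) \subset Z$, it is $G$-invariant (the $G$-action on $Y$ covers the trivial action on $Z$). Moreover, the degree of a line bundle on a fibre is preserved by $\s$, so $\tmop{Hom}_{C'}(C',\mathbf{Pic}^0_{Y/C'})$ is a $G$-submodule. Taking $G$-cohomology therefore gives
\begin{eqnarray*}
H^1(G,\tmop{Hom}_{C'}(C',\mathbf{Pic}_{Y/C'})) & \simeq & H^1(G,\tmop{Hom}_{C'}(C',\mathbf{Pic}^0_{Y/C'})) \oplus H^1(G,\mb{Z}S_0).
\end{eqnarray*}

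Finally, I would observe that $G$ acts trivially on $\mb{Z}S_0 \simeq \mb{Z}$, and for a cyclic group $G$ of order $n$ acting trivially on $\mb{Z}$ the periodic resolution computes $H^1(G,\mb{Z}) = \ker(N)/\tmop{im}(1-\s) = \ker(n \cdot \mathrm{id})/0 = 0$ since $\mb{Z}$ is torsion free. Combined with $\Phi_Y \simeq \tmop{Hom}_{C'}(C',\mathbf{Pic}^0_{Y/C'})$, this yields the claimed isomorphism. The only subtlety, really a bookkeeping check rather than an obstacle, is confirming that the translation identification $\mathbf{Pic}^i_{Y/C'} \simeq \mathbf{Pic}^0_{Y/C'}$ respects $G$; this is immediate from the $G$-invariance of $S_0$.
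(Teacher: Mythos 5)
Your proof is correct and takes essentially the same route as the paper's: both split $\tmop{Hom}_{C'}(C',\mathbf{Pic}_{Y/C'})$ as the $G$-module direct sum $\tmop{Hom}_{C'}(C',\mathbf{Pic}^0_{Y/C'})\oplus\mb{Z}S_0$, using the $G$-invariance of the pulled-back zero section $S_0$, and then observe that the $\mb{Z}S_0$ summand contributes nothing to $H^1$. Your added justifications --- that a section of $\mathbf{Pic}_{Y/C'}$ factors through a single component $\mathbf{Pic}^i_{Y/C'}$ by connectedness of $C'$, and the explicit computation $H^1(G,\mb{Z})=\ker(n\cdot\mathrm{id})/0=0$ via the periodic resolution --- simply fill in details the paper leaves implicit.
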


%
%

Given $S\in H^1(G,\Phi_Y)$ and its corresponding $L_S\in H^1(G,\tmop{Pic}Y),$ we would like to figure out if the corresponding relation $\alpha:(L_S)_{\s}^n\simeq\oy$ satisfies the overlap condition and thus can be used to construct an Azumaya algebra on $Z.$

In order to solve the overlap question, we are accustomed to invoking Proposition \ref{ol} but since in our current case $\pi$ is \'etale, we are not able to do so. Since $H^1(G,\tmop{Pic}Y) \simeq H^1(G,\tmop{Pic}C')\oplus H^1(G,\Phi_Y)$, our first step is to identify $H^1(G,\tmop{Pic}C').$ We do so in the following lemmata and proposition:

\begin{lemma}\label{lem}
Let $g$ be the genus of $C, p_0\in C$ a distinguished point. Then the cyclic cover $\pi_C:C'\lrw C$ can be constructed using the cyclic covering trick, that is, using an $n-$torsion line bundle $M\in\tmop{Pic}{C}.$ Moreover, there exist $p_i\in C$ such that
\begin{eqnarray*}
M & \simeq & \os_C\left(\sum_{i=1}^{r}p_i-rp_0\right), 
\end{eqnarray*}
where $r=2g-1.$
\begin{proof}
Recall the Kummer sequence:
\begin{eqnarray*}
1\lrw\mu_n\lrw\mb{G}_m\longrightarrowlim^{\times n}\mb{G}_m\lrw 1.
\end{eqnarray*}
The associated long exact sequence in \'etale cohomology is
\begin{eqnarray*}
0\lrw H^1(C,\mu_n)\lrw\tmop{Pic}C\longrightarrowlim^{\times n}\tmop{Pic}C\lrw\ldots
\end{eqnarray*}
and we conclude that $H^1(C,\mu_n)\simeq(\tmop{Pic}C)_n,$ and since $H^1(C,\mu_n)$ classifies $n:1$ cyclic \'etale covers of $C,$ any such cover can be constructed from an $n-$torsion line bundle on $C.$ In particular, there exists an $n-$torsion line bundle $M\in\tmop{Pic}C$ such that $\pi_{C*}(\os_{C'})=\os_C\oplus M\oplus\ldots\oplus M^{n-1}.$  We now show that $M$ is of the form specified in the statement of the lemma: firstly, since $M$ is $n-$torsion, it is in $\tmop{Pic}^0C.$ Then from \cite[Ch. III, Sec. 1]{milne-abelian}, for all $s>2g-2,$ ${C^s\twoheadrightarrow\tmop{Pic}^s C}$, where $C^s=C\times\ldots\times C$($s$ copies) and there is a bijective correspondence $\tmop{Pic}^s C \longleftrightarrow \tmop{Pic}^0 C$ given by $D\mapsto D-sp_0.$ Thus there exist $p_i\in C$ such that 
\begin{eqnarray*}
M & \simeq & \os_C\left(\sum_{i=1}^{r}p_i-rp_0\right), p_i\in C,
\end{eqnarray*}
where $r=2g-1.$
\end{proof}
\end{lemma}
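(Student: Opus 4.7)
The plan is to prove the two assertions of the lemma in sequence: first, that $\pi_C$ arises from the cyclic covering trick applied to an $n$-torsion line bundle, and second, that any such $M$ admits the explicit form claimed.

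For the first assertion, I would invoke the Kummer sequence $1\to\mu_n\to\mb{G}_m\to\mb{G}_m\to 1$ in the \'etale topology. Since $k$ is algebraically closed of characteristic zero, a choice of primitive $n$-th root of unity identifies $\mu_n\simeq\mb{Z}/n\mb{Z}$. The resulting long exact sequence in \'etale cohomology begins
\begin{eqnarray*}
0\lrw H^1_{\acute{e}t}(C,\mu_n)\lrw\tmop{Pic}C\longrightarrowlim^{\times n}\tmop{Pic}C,
\end{eqnarray*}
so $H^1_{\acute{e}t}(C,\mu_n)\simeq(\tmop{Pic}C)_n$. Since $n$-fold cyclic \'etale covers of $C$ are $\mb{Z}/n\mb{Z}$-torsors and are therefore classified by $H^1(C,\mb{Z}/n\mb{Z})\simeq(\tmop{Pic}C)_n$, the cover $\pi_C$ corresponds to some $n$-torsion class $M\in\tmop{Pic}C$. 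Unwinding this correspondence by the standard construction of a cyclic cover as $\mathbf{Spec}_C\bigoplus_{i=0}^{n-1}M^{-i}$ with multiplication induced by the trivialisation $M^n\simeq\os_C$ recovers $\pi_{C\ast}\os_{C'}\simeq\bigoplus_{i=0}^{n-1}M^{\otimes i}$, exhibiting $\pi_C$ as a cyclic cover built from $M$.

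For the second assertion, I would use the surjectivity of the Abel-Jacobi map. Since $M$ is $n$-torsion it has degree zero, i.e.~$M\in\tmop{Pic}^0C$, and translation by $rp_0$ is a bijection $\tmop{Pic}^0C\to\tmop{Pic}^rC$ carrying $M\mapsto M\otimes\os_C(rp_0)$. By Riemann-Roch, any line bundle of degree $s>2g-2$ has nontrivial sections and is therefore effective; equivalently, the symmetric product map $C^{(s)}\twoheadrightarrow\tmop{Pic}^sC$ is surjective. Taking the minimal such value $r=2g-1$, the class $M\otimes\os_C(rp_0)$ is represented by an effective divisor $\sum_{i=1}^r p_i$, giving $M\simeq\os_C(\sum_{i=1}^r p_i - rp_0)$ as claimed.

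The argument is an assembly of two classical tools, so no single step is a serious obstacle. The main care needed is in bookkeeping: one must fix a primitive root of unity to identify $\mu_n$ with $\mb{Z}/n\mb{Z}$, and then verify via the naturality of the Kummer boundary that the classifying $n$-torsion line bundle does reproduce the specified cover $\pi_C$ rather than a different $\mb{Z}/n\mb{Z}$-cover. The exponent $r=2g-1$ is simply the smallest integer for which surjectivity of the Abel-Jacobi map is guaranteed by Riemann-Roch, which explains its appearance in the statement.
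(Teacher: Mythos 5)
Your proposal is correct and follows essentially the same route as the paper: the Kummer sequence identifying $H^1_{\acute{e}t}(C,\mu_n)\simeq(\tmop{Pic}C)_n$ to produce the classifying $n$-torsion bundle $M$, followed by surjectivity of the map onto $\tmop{Pic}^sC$ for $s>2g-2$ to write $M\simeq\os_C\left(\sum_{i=1}^{r}p_i-rp_0\right)$ with $r=2g-1$. The only difference is cosmetic: where the paper cites Milne for the surjectivity of $C^s\twoheadrightarrow\tmop{Pic}^sC$, you supply the one-line Riemann--Roch justification, and you spell out the relative-Spec unwinding of the torsor correspondence, both of which are sound.
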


\begin{lemma}\label{exact}
There is an exact sequence
\begin{eqnarray}
0\lrw Rel_{io}/E\lrw H^1(G,\tmop{Pic}C') \lrw H^3(G,\os(C')^*)
\end{eqnarray}
 and $H^3(G,\os(C')^*)\simeq\mb{Z}/n\mb{Z}.$
\begin{proof}
From Proposition \ref{prop} there is an exact sequence
\begin{eqnarray}\label{ok}
H^2(G,\os(C')^*)\lrw Rel_{io}/E\lrw H^1(G,\tmop{Pic}C') \lrw H^3(G,\os(C')^*)
\end{eqnarray}
and $G$ acts trivially on $\os(C')^*\simeq k^*.$ Letting $\s$ be a generator for $G$, for any $G$-module $M$, $H^1(G,M)\simeq\frac{\tmop{ker}(1+\s+\ldots\s^{n-1})}{\tmop{im}(1-\s)}$. Since $G$ acts trivially on $k^*$, $H^2(G,\os(C')^*)\simeq 0$ and $H^3(G,\os(C')^*)\simeq \mu_n$.
By (\ref{ok}),
\begin{eqnarray*}
 0\lrw Rel_{io}/E\lrw H^1(G,\tmop{Pic}C') \lrw H^3(G,\os(C')^*)
\end{eqnarray*}
is exact and $H^3(G,\os(C')^*)\simeq\mb{Z}/n\mb{Z}$ which completes the proof.
\end{proof}
\end{lemma}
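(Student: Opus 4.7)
The plan is to apply Proposition \ref{prop} directly to the \'etale cover $\pi_C : C' \to C$, which is legitimate since $C$ is smooth and $\pi_C$ is \'etale by hypothesis. This yields the exact sequence
$$H^2(G,\os(C')^*) \lrw Rel_{io}/E \lrw H^1(G,\tmop{Pic}C') \lrw H^3(G,\os(C')^*),$$
so the task reduces to computing the two outer group cohomologies. Showing $H^2(G,\os(C')^*) = 0$ will produce the desired injection $Rel_{io}/E \hookrightarrow H^1(G,\tmop{Pic}C')$, while identifying $H^3(G,\os(C')^*)$ as $\mb{Z}/n\mb{Z}$ completes the statement.

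Since $C'$ is a smooth projective integral curve over the algebraically closed field $k$, we have $\os(C')^* = k^*$, on which $G$ acts trivially. The cohomology $H^i(G,k^*)$ is then a standard cyclic group cohomology computation. Using the periodic resolution for a cyclic group recalled in the remark preceding Proposition \ref{prop}, with $N = 1+\s+\ldots+\s^{n-1}$ acting as multiplication by $n$ and $(1-\s)$ acting as zero, one finds $H^{2i}(G,k^*) = k^*/(k^*)^n$ and $H^{2i+1}(G,k^*) = \mu_n(k)$ for $i \geq 1$. Since $k$ is algebraically closed of characteristic zero, every element of $k^*$ is an $n$-th power, so $H^2(G,k^*) = 0$, while $\mu_n(k) \simeq \mb{Z}/n\mb{Z}$ gives $H^3(G,\os(C')^*) \simeq \mb{Z}/n\mb{Z}$.

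Substituting these values back into the exact sequence collapses it to the claimed
$$0 \lrw Rel_{io}/E \lrw H^1(G,\tmop{Pic}C') \lrw \mb{Z}/n\mb{Z}.$$
There is no deep obstacle here; the only thing that needs careful checking is that Proposition \ref{prop} genuinely applies when the cover is of curves rather than of surfaces, but inspection of its hypotheses reveals that only smoothness of the base and \'etaleness of the cover are required, both of which hold in our setting.
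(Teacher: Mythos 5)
Your proposal is correct and follows essentially the same route as the paper: both apply Proposition \ref{prop} directly to the \'etale cover $\pi_C\colon C'\to C$ and then compute the outer terms via the periodic cohomology of the cyclic group acting trivially on $\os(C')^*\simeq k^*$, using divisibility of $k^*$ (as $k$ is algebraically closed of characteristic zero) to kill $H^2$ and identifying $H^3$ with $\mu_n\simeq\mb{Z}/n\mb{Z}$. Your explicit remark that Proposition \ref{prop} only needs smoothness of the base and \'etaleness of the cover, rather than anything specific to surfaces, is a point the paper uses tacitly, so nothing further is needed.
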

Our goal is to compute $H^1(G,\tmop{Pic}C')$ and with the aid of the above lemmata we are now able to do so.
\begin{proposition}\label{curve}
 Using the notation of Lemma \ref{lem}, $H^1(G,\tmop{Pic}C')\simeq \mb{Z}/n\mb{Z}$ and is generated by 
\begin{eqnarray*}
N & := & \os_{C'}\left(\sum_{i=1}^rp_i'-rp_0'\right)
\end{eqnarray*}
where $p_i'\in\pi^{-1}(p_i)$.
\begin{proof}
From Lemma \ref{exact} the following is exact
\begin{eqnarray*}
0 \lrw Rel_{io}/E\lrw H^1(G,\tmop{Pic}C')\longrightarrowlim^{d_2} H^3(G,\os(C')^*)\lrw\ldots
\end{eqnarray*}
From Proposition \ref{prop}, we know that $Rel_{io}/E\simeq\tmop{Br}(C'/C),$ which is trivial since Brauer groups of curves are trivial by Tsen's theorem. Thus $d_2$ is injective. We now demonstrate that it is also surjective.
Since $\pi_C\colon C'\to C$ is given by $M\in\tmop{Pic}C$ with $M$ as given in Lemma \ref{lem}, we know that there exists an $f\in K(C)$ such that $K(C')=K(C)[d]/(d^n-f)$ and
\begin{eqnarray*}
(f) & = & n\left(\sum_{i=1}^rp_i-rp_0\right).
\end{eqnarray*}
Then
\begin{eqnarray*}
\pi_C^*((f)) & = & n\left(\sum_{i=1}^rp'_i+\s\sum_{i=1}^rp'_i+\ldots+\s^{n-1}\sum_{i=1}^rp'_i\right)-nr\left(p'_0+\s p'_0+\ldots\s^{n-1}p'_0\right).
\end{eqnarray*}
Thus
\begin{eqnarray*}
(d) & = & \left(\sum_{i=1}^rp'_i+\s\sum_{i=1}^rp'_i+\ldots+\s^{n-1}\sum_{i=1}^rp'_i\right)-r\left(p'_0+\s p'_0+\ldots\s^{n-1}p'_0\right),
\end{eqnarray*}
from which we conclude that $N=\os_{C'}\left(\sum_{i=1}^rp_i'-rp_0'\right)\in H^1(G,\tmop{Pic}C').$ However, for all $i\in\{1,\ldots,n-1\}$,
\begin{eqnarray*}
N^i(\s^*N^i)\ldots\s^{*(n-1)}N^i & = & d^i\os_{C'}
\end{eqnarray*}
and $d^i\not\in K(C)$. This then
implies that $d_2(N^i)\neq 0\in H^3(G,\os(C')^*)$, for all $i\in\{1,\ldots,n-1\}$. Moreover, from Lemma \ref{exact}, $H^3(G,\os(C')^*)\simeq\mb{Z}/n\mb{Z}$.
Thus $d_2:H^1(G,\tmop{Pic}C')\lrw H^3(G,\os(C')^*)$ is in fact surjective and $H^1(G,\tmop{Pic}C')\simeq\mb{Z}/n\mb{Z}$.
\end{proof}
\end{proposition}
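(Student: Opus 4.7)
The plan is to reduce everything to an explicit computation on the Kummer cover $\pi_C$. Lemma \ref{exact} provides the exact sequence
\begin{equation*}
0 \to Rel_{io}/E \to H^1(G,\tmop{Pic}C') \to H^3(G,\os(C')^*) \to \cdots
\end{equation*}
together with $H^3(G,\os(C')^*) \simeq \mb{Z}/n\mb{Z}$. The first step is to kill the left-hand term: by Proposition \ref{prop} we have $Rel_{io}/E \simeq \br(C'/C)$, which sits inside $\br(C)$, and $\br(C) = 0$ by Tsen's theorem since $C$ is a curve over an algebraically closed field of characteristic zero. Consequently $d_2\colon H^1(G,\tmop{Pic}C') \hookrightarrow \mb{Z}/n\mb{Z}$ is injective, so $H^1(G,\tmop{Pic}C')$ is cyclic of order dividing $n$, and it only remains to produce an element of order exactly $n$.

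Next I would exhibit the candidate generator using the Kummer description from Lemma \ref{lem}. Since $\pi_C$ arises from the $n$-torsion line bundle $M \simeq \os_C(\sum p_i - rp_0)$, there exists $f \in K(C)^*$ with $(f)_C = n(\sum p_i - rp_0)$ and $K(C') = K(C)[d]/(d^n - f)$. Because $\pi_C$ is \'etale, each $p_i$ has $n$ distinct preimages forming a single $G$-orbit $\{\s^k p_i'\}_{k=0}^{n-1}$. Pulling back $(f)$ to $C'$ and dividing by $n$ yields
\begin{equation*}
(d)_{C'} \;=\; \sum_{k=0}^{n-1} \s^k D, \qquad D \assign \sum_{i=1}^{r} p_i' - r p_0'.
\end{equation*}
Setting $N \assign \os_{C'}(D)$, we obtain $N \otimes \s^*N \otimes \cdots \otimes \s^{*(n-1)}N \simeq \os_{C'}((d)) \simeq \os_{C'}$, so $N$ lies in $\tmop{ker}(1+\s+\cdots+\s^{n-1})$ and hence defines a class in $H^1(G,\tmop{Pic}C')$.

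It remains to verify that $N$ has order exactly $n$, which I would do by unwinding $d_2$ on each power $N^i$. The differential sends a cocycle $\mathcal{L}$ to the obstruction, valued in $k^*$, that is encountered when one lifts a norm trivialisation to a trivialisation descending to the base; for $\mathcal{L} = N^i$ the natural norm trivialisation is witnessed by the rational function $d^i$, and $d_2(N^i)$ is nonzero precisely when $d^i$ fails to lie in $K(C)^*$ (up to an $n$-th power). Since $[K(C'):K(C)] = n$ with $d$ a primitive generator, $d^i \notin K(C)^*$ for $1 \le i < n$, so $d_2(N^i) \neq 0$, forcing $N$ to have order $n$. Combined with the injectivity of $d_2$ and $|H^3(G,\os(C')^*)| = n$, this yields $H^1(G,\tmop{Pic}C') \simeq \mb{Z}/n\mb{Z}$ generated by $N$. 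The main obstacle I anticipate is precisely this last step: making the differential $d_2$ explicit enough to verify the nonvanishing of $d_2(N^i)$ rigorously, rather than heuristically. The approach I would take is a direct Hochschild--Serre cocycle chase modeled on the derivation of the sequence in Proposition \ref{prop}.
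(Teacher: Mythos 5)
Your proposal is correct and takes essentially the same route as the paper's proof: the exact sequence of Lemma \ref{exact} plus Tsen's theorem gives injectivity of $d_2$, and the same Kummer-theoretic cocycle $N=\os_{C'}\left(\sum_{i=1}^r p_i'-rp_0'\right)$, with norm trivialised by $d$ via $(d)=\sum_{k}\s^k\left(\sum_i p_i'-rp_0'\right)$, is shown to generate because $d^i\not\in K(C)$ for $1\le i<n$. The final step you flag as the main obstacle is handled no more explicitly in the paper, which likewise reads off $d_2(N^i)\neq 0$ directly from $N^i(\s^*N^i)\cdots\s^{*(n-1)}N^i=d^i\os_{C'}$ together with $d^i\not\in K(C)$.
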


We are now able to see how to retrieve relations satisfying the overlap condition from elements of $H^1(G,\Phi).$
\begin{theorem}\label{seclap}
Let $S\in H^1(G,\Phi_Y).$ Then $L_S\in H^1(G,\tmop{Pic}Y)$ and the corresponding relation satisfies the overlap condition. Moreover $=A(Y;(L_S)_\s)$ is an Azumaya algebra on $Z$.
\begin{proof}
The proof of Lemma \ref{exact} holds for the morphism $\pi\colon Y\to Z$, implying that the following sequence is exact:
\begin{eqnarray}\label{done}
0 \lrw Rel_{io}/E\longrightarrowlim^f H^1(G,\tmop{Pic}Y)\longrightarrowlim^{d_2^Y}\mb{Z}/n\mb{Z}.
\end{eqnarray}
We also know that 
\begin{eqnarray*}
 H^1(G,\tmop{Pic}Y) & \simeq & H^1(G,\tmop{Pic}C')\oplus H^1(G,\Phi_Y)
\end{eqnarray*}
and ${H^1(G,\tmop{Pic}C')\simeq\mb{Z}/n\mb{Z}.}$ Then
\begin{eqnarray*}
0\lrw Rel_{io}/E\lrw \mb{Z}/n\mb{Z}\oplus H^1(G,\Phi_Y)\lrw\mb{Z}/n\mb{Z}
\end{eqnarray*}
is exact. Corresponding to $s_0:C'\lrw Y$ we have a commutative diagram:
\begin{eqnarray*}
H^1(G,\tmop{Pic}Y) & \longrightarrowlim^{s_0^*} & H^1(G,\tmop{Pic}C')\\
d_2^Y\downarrow      &      &  \downarrow d_2\\
H^3(G,\os(Y)^*)    & \longrightarrowlim^{\beta} & H^3(G,\os(C')^*)
\end{eqnarray*}
From the proof of Proposition \ref{curve}, $d_2$ is surjective. Since $s_0^*$ also surjects and the above diagram commutes, $\beta\circ d_2^Y$ surjects, from which we conclude $\beta:H^3(G,\os(Y)^*)\to H^3(G,\os(C')^*)$ does also. However, by Lemma \ref{exact}, both $H^3(G,\os(Y)^*)$ and $H^3(G,\os(C')^*)$ are isomorphic to $\mb{Z}/n\mb{Z}$ implying $\beta$ is an isomorphism.
Now $s_0^*(L_S)\simeq \os_{C'}$ and as such is trivial in $H^1(G,\tmop{Pic}C')$, implying that $d_2s_0^*(L_S)=0$ and thus $\beta d_2^Y(L_S)=0$. Since $\beta$ is an isomorphism, $L_S\in\tmop{ker}d_2^Y$ and from this, along with exactness of (\ref{done}), we deduce that $L_S$ is in the image of $f\colon Rel_{io}/E \to H^1(G,\tmop{Pic}Y)$ . Thus the corresponding relation satisfies the overlap condition and, by Theorem \ref{normet}, the corresponding cyclic algebra is Azumaya. Since $Rel_{io}/E\simeq\tmop{Br}(Y/Z)$ by Proposition \ref{prop}, $A$ is nontrivial.
\end{proof}
\end{theorem}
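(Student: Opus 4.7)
The plan is to combine the exact sequence of Proposition \ref{prop}, applied to the étale cover $\pi\colon Y\to Z$, with the functoriality induced by the section $s_0\colon C'\to Y$ to force $L_S$ into the image of $Rel_{io}/E\to H^1(G,\tmop{Pic}Y)$. Once the relation is known to satisfy the overlap condition, the Azumaya conclusion is immediate from Theorem \ref{normet} since $\pi$ is unramified.

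First I would observe that because $G$ acts trivially on $\os(Y)^*\simeq k^*$, the group $H^2(G,\os(Y)^*)$ vanishes (as in the proof of Lemma \ref{exact}), so Proposition \ref{prop} gives an exact sequence
\begin{eqnarray*}
0\lrw Rel_{io}/E\longrightarrowlim^{f} H^1(G,\tmop{Pic}Y)\longrightarrowlim^{d_2^Y} H^3(G,\os(Y)^*),
\end{eqnarray*}
with $H^3(G,\os(Y)^*)\simeq\mb{Z}/n\mb{Z}$. Thus $L_S$ lifts to $Rel_{io}/E$ (equivalently, its relation satisfies the overlap condition) precisely when $d_2^Y(L_S)=0$. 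Membership in $H^1(G,\tmop{Pic}Y)$ itself is handed to us by Proposition \ref{split1}.

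Next I would use the section $s_0\colon C'\to Y$ to build a commutative square whose horizontal arrows are the pullbacks $s_0^*$ on $H^1(G,\tmop{Pic}\cdot)$ and the induced map $\beta$ on $H^3(G,\os(\cdot)^*)$, and whose vertical arrows are the connecting homomorphisms $d_2^Y$ and the analogous $d_2$ for $C'$ from the Hochschild--Serre spectral sequence. By Proposition \ref{curve}, $d_2$ is surjective onto $H^3(G,\os(C')^*)\simeq\mb{Z}/n\mb{Z}$, and because $s_0^*$ admits a splitting of the exact sequence in Proposition \ref{fibadd} it is surjective as well. Commutativity then forces $\beta$ to be surjective, and since both source and target are $\mb{Z}/n\mb{Z}$ (by Lemma \ref{exact} and the computation above), $\beta$ is an isomorphism.

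Finally, from the formula $L_S=\os_Y(S-S_0-D_S)$ of Lemma \ref{sectionf} one reads off that the restriction $s_0^*L_S$ is trivial in $\tmop{Pic}C'$, so $d_2(s_0^*L_S)=0$; commutativity yields $\beta(d_2^Y L_S)=0$, and injectivity of $\beta$ then forces $d_2^Y L_S=0$. Hence $L_S$ lies in the image of $Rel_{io}/E$, i.e.\ the corresponding relation satisfies the overlap condition. For the final assertion, Theorem \ref{normet} shows that $A(Y;(L_S)_\s)$ is a normal order whose ramification index at each $C\in Z^1$ equals that of $\pi$ above $C$; since $\pi$ is étale, $A$ is unramified everywhere and so is Azumaya. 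The nontriviality follows from $Rel_{io}/E\simeq\tmop{Br}(Y/Z)$ (Proposition \ref{prop}) together with the embedding $H^1(G,\Phi_Y)\hookrightarrow H^1(G,\tmop{Pic}Y)$. The main obstacle I anticipate is verifying that $\beta$ is genuinely an isomorphism — this rests on a careful functoriality argument for Hochschild--Serre with respect to a section, which is the delicate geometric input of the proof.
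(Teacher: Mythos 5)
Your proposal is correct and follows essentially the same route as the paper's own proof: the same exact sequence from Proposition \ref{prop} with $H^2(G,\os(Y)^*)=0$, the same commutative square induced by the section $s_0$, the same argument that $\beta$ is an isomorphism (surjectivity of $d_2$ and $s_0^*$ plus both $H^3$ groups being $\mb{Z}/n\mb{Z}$), and the same final step using triviality of $s_0^*(L_S)$ to conclude $d_2^Y(L_S)=0$. Your slightly more explicit justifications — that $s_0^*$ surjects because it splits the sequence of Proposition \ref{fibadd}, and that Azumaya follows from Theorem \ref{normet} since $\pi$ is \'etale — are exactly what the paper leaves implicit.
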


\begin{remark}
 In the following examples, we identify $\Phi_Y$ with its image in $\mathbf{Pic}^0_{Z/C}$. That is, we think of the section $s\colon C\to Z$ as the  divisor $S-S_0$, where $S=\tmop{im}(s)$.
\end{remark}

\begin{example}{\bf (Constructing Azumaya algebras on trivial elliptic fibrations)}\label{fib}
 Let the setup be exactly the same as in Example \ref{cyctriv}. That is, $Z=E\times C,$ $E$ elliptic, $g(C)\geq 1, \pi_C:C'\lrw C$ \'etale and $Y$ given by the following fibred diagram:
\begin{eqnarray}
  Y & \longrightarrowlim^{\pi} & Z \nonumber\\
  \phi_Y \downarrow & \square & \downarrow \phi \nonumber\\
  C' & \longrightarrow & C \nonumber
\end{eqnarray}
Let $L\in H^1(G,\Phi_Y)$ be the $n-$torsion section $p_1^*(e_1-e_0).$ By Proposition \ref{split1}, the corresponding element of $H^1(G,\tmop{Pic}Y)$ is $L$ itself and the corresponding relation $\alpha:L_{\s}^{\otimes n}\simeq\oy$ satisfies overlap by Theorem \ref{seclap}. 
Thus ${A:=\oy\oplus\ls\oplus\ldots\oplus L_{\s}^{n-1}}$ is an Azumaya algebra over $Z.$ This is a classical example of Azumaya algebra, usually constructed in the fashion presented here now.
\end{example}

\begin{construction}
 Let $Z= E\times C$ be the fibred product of two irrational smooth curves, $L\in(\tmop{Pic}E)_n, M \in(\tmop{Pic}C)_n$ and $p_1\colon Z\to E, p_2\colon Z\to C$ the two projections. We define the box product $L^i\boxtimes M^j=p_1^*L^i\otimes p_2^*M^j$ and set
\begin{eqnarray*}
 A' & \assign & \bigoplus_{i,j=0}^{n-1}L^i\boxtimes M^j
\end{eqnarray*}
Given sections $l\in p_1^*L, m\in p_2^*M$ we set $(m\otimes 1)(1\otimes l) \assign \zeta(l\otimes m)$ for some primitive nth root of unity $\zeta$. 
\end{construction}

Let us now look more generally at constructing Azumaya algebras on Jacobian elliptic fibrations without degenerate fibres, that is, on fibre bundles.

\begin{example}
 Let $Z$ be a fibre bundle as in Example/Definition \ref{fibre} such that ${G=\langle\s|\s^2=1\rangle}$. That is $Z=(E\times C')/G$ where $G$ acts on $C'$ without fixed points and ${\s(e)=-e}$. Then $\phi\colon Z\to C\assign C'/G$ is an elliptic fibration with group of sections $\Phi_Z$ isomorphic to $E^G\simeq(\mb{Z}/2\mb{Z})^2$ by Remark \ref{fibgrp}.
 Now take a double cover $\pi_C:D\lrw C$ and once again form the fibred diagram 
\begin{eqnarray}
  Y & \longrightarrowlim^{\pi} & Z \nonumber\\
  \phi_Y \downarrow & \square & \downarrow \phi \nonumber\\
  D & \longrightarrowlim^{\pi_C} & C \nonumber
\end{eqnarray}
Here $G$ acts on $Y$ via its Galois action on $D$. We require that $D$ is not birational to a curve $D'$ in $Z$ such that $\pi_C=\phi_{|D'}$, that is, there are no curves $D'\subset Z$ such that $D\times_C D'$ splits into the union of sections of $\phi_Y$.
Then $\Phi_Y\simeq\Phi_Z\simeq(\mb{Z}/2\mb{Z})^2$ is fixed by $G,$ implying $H^1(G,\Phi)\simeq(\Phi_Y)_2\simeq(\mb{Z}/2\mb{Z})^2.$ The construction of noncommutative cyclic covers here is analogous to those in Example \ref{fib}.
\end{example}
The condition that there are no curves $D\subset Z$ such that $D\times_{C'}D'$ splits is integral to the constructions above. We demonstrate this by showing that if $D\simeq C'$, then $H^1(G,\Phi)$ is trivial.
\begin{example}
As above, let $Y=E\times C'$ and $\s\colon Y\to Y$ such that $\s(e,c')=(-e,\s(c'))$ and $C'\to C:=C'/G$ is Galois \'etale, where $G=\langle\s|\s^2=1\rangle$. Then letting  $Z:=Y/G$, $Z\to C$ is a fibre bundle. Now if $\Phi_Y\simeq\tmop{Pic}^0E,$ (that is, if all morphisms $C'\to E$ are constant), then
%
%
%
\begin{eqnarray*}
 \s\colon \Phi_Y & \lrw & \Phi_Y\\
    e-e_0& \mapsto & e_0-e
\end{eqnarray*}
and thus
\begin{equation}
\begin{aligned}
1+\s\colon  \Phi_Y & \lrw \Phi_Y, & 1-\s\colon  \Phi_Y & \lrw \Phi_Y\\
e  -e_0     & \mapsto 0 &  e  -e_0        & \mapsto 2(e-e_0).
\end{aligned}
\end{equation}
Since $[2]\colon E\to E$ is surjective, $\tmop{im}(1-\s)=\Phi_Y$ and $H^1(G,\Phi_Y)$ is trivial. In this case we cannot construct any nontrivial Azumaya algebras using the noncommutative cyclic covering trick.
\end{example}
The story is different, however, when there is a nonconstant morphism $\psi\colon C'\to E$, as we now see:

\begin{proposition}
Let $Y,Z$ be as above. Assume there exists a nonconstant morphism $\psi:C'\to E$ such that $\Phi_Y\simeq\tmop{Pic}^0E\oplus\mb{Z}(\Gamma_{\psi}-(\{e_0\}\times C'))$ and $\psi$ factors through $\pi_C:C'\to C$:
\begin{displaymath}
\xymatrix{
C' \ar[dr]_\psi \ar[r]^{\pi_C} & C \ar[d]\\
                               &  E}
\end{displaymath}
 Then $H^1(G,\Phi_Y)\simeq\mb{Z}/2\mb{Z},$ the only nontrivial class being $S=\Gamma_{\psi}-(\{e_0\}\times C')$.  Moreover,
\begin{eqnarray*}
 L_S & = & \oy(\Gamma_{\psi}-(\{e_0\}\times C'+\sum_{c_i\in\psi^{-1}(e_0)}E\times \{c_i\})).
\end{eqnarray*}
Then $A_S=\oy\oplus (L_S)_\s$ is a cyclic algebra satisfying the overlap condition and thus an Azumaya algebra.
\begin{proof}
We know that $\Phi_Y\simeq\tmop{Pic}^0E\oplus\mb{Z}S,$ where $S=\Gamma_\psi-(\{e_0\}\times C')$ and $\s$ acts on $\Phi_Y$ by sending $(\{e\}\times C')-(\{e_0\}\times C')\in\Phi_Y$ to $(\{e_0\}\times C')-(\{e\}\times C')$ and $\Gamma_{\psi}-(\{e_0\}\times C')$ to $\Gamma_{-\psi}-(\{e_0\}\times C'),$ where $-\psi=[-1]_E\circ\psi.$ The former is obvious while the latter is due the fact that $\psi(c')=\psi(\s(c')).$ Looking fibrewise, it is clear that $\Gamma_{\psi}-(\{e_0\}\times C')+\Gamma_{-\psi}-(\{e_0\}\times C')=0\in\Phi_Y.$ Thus
\begin{equation}
\begin{aligned}
1+\s\colon  \Phi_Y & \lrw \Phi_Y, & 1-\s\colon  \Phi_Y & \lrw \Phi_Y\\
(e-e_0,S)        & \mapsto (0,0)&  (e-e_0,S)          & \mapsto (2(e-e_0), 2S)
\end{aligned}
\end{equation}
Since the multiplication by $2$ map on $\tmop{Pic}^0E$ is surjective, $\mbox{im}(1-\s)=\tmop{Pic}^0E\oplus 2S$ and since $S\in\tmop{ker}(1+\s),$ $H^1(G,\tmop{Pic}Y)\simeq \mb{Z}/2\mb{Z}.$ By Lemma \ref{section2}, 
\begin{eqnarray*}
 L_S & = & \oy(\Gamma_{\psi}-(\{e_0\}\times C'+\sum_{c_i\in\psi^{-1}(e_0)}E\times \{c_i\})),
\end{eqnarray*}
and by Theorem \ref{seclap}, the corresponding relation satisfies the overlap condition.
\end{proof}
\end{proposition}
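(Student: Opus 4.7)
The plan is to compute the $G$-action on $\Phi_Y \simeq \tmop{Pic}^0 E \oplus \mathbb{Z} S$ summand by summand, determine $H^1(G,\Phi_Y)$ as the quotient $\tmop{ker}(1+\sigma)/\tmop{im}(1-\sigma)$, and then invoke Lemma \ref{sectionf} and Theorem \ref{seclap} to read off $L_S$ and the overlap condition.

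First I would describe $\sigma$ on each summand of $\Phi_Y$. On the $\tmop{Pic}^0 E$ summand, $\sigma$ acts on $E$ as $[-1]_E$, so it sends $[\{e\}\times C']-[\{e_0\}\times C']$ to its inverse in $\tmop{Pic}^0 E$. For the generator $S=\Gamma_\psi-\{e_0\}\times C'$, the factorisation hypothesis $\psi = \bar\psi\circ\pi_C$ yields $\psi(\sigma(c'))=\psi(c')$, so the image of $\Gamma_\psi$ under $\sigma$ is the graph $\Gamma_{-\psi}$ of $[-1]_E\circ\psi$. Examined fibrewise over $c'\in C'$, the points $\psi(c')$ and $-\psi(c')$ are additive inverses in $E$, hence $S+\sigma(S)=0$ in $\Phi_Y$. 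Thus $\sigma$ acts as $-1$ on all of $\Phi_Y$.

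With this, $1+\sigma$ annihilates $\Phi_Y$ entirely while $1-\sigma$ is multiplication by $2$. Since the doubling map $[2]\colon\tmop{Pic}^0 E\to\tmop{Pic}^0 E$ is surjective, $\tmop{im}(1-\sigma)=\tmop{Pic}^0 E\oplus 2\mathbb{Z} S$, and consequently
\[
H^1(G,\Phi_Y) \simeq \frac{\tmop{Pic}^0 E\oplus \mathbb{Z} S}{\tmop{Pic}^0 E\oplus 2\mathbb{Z} S} \simeq \mathbb{Z}/2\mathbb{Z},
\]
represented by $S$. To compute $L_S$, I would apply Lemma \ref{sectionf} directly: $L_S=\mathcal{O}_Y(\Gamma_\psi-S_0-D_S)$ where $S_0=\{e_0\}\times C'$ and $D_S=\sum_{c'}a_{c'}Y_{c'}$ records the intersection of $\Gamma_\psi$ with $S_0$. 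This intersection occurs precisely at the points $(e_0,c_i)$ with $c_i\in\psi^{-1}(e_0)$, so $D_S=\sum_{c_i\in\psi^{-1}(e_0)}E\times\{c_i\}$, yielding the asserted formula. Finally, since $\pi_C\colon C'\to C$ is \'etale and hence so is its base change $\pi\colon Y\to Z$, Theorem \ref{seclap} applies verbatim: the relation associated to $(L_S)_\sigma^{\otimes 2}\simeq\oy$ satisfies the overlap condition and $A_S$ is an Azumaya algebra on $Z$, nontrivial by the isomorphism $\tmop{Br}(Y/Z)\simeq H^1(G,\Phi_Y)$.

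The main obstacle is the identification $\sigma(S)=-S$ in $\Phi_Y$. Its verification depends on reading off the group law on each fibre $Y_{c'}\simeq E$ correctly, so that the pair of sections $\Gamma_\psi$ and $\Gamma_{-\psi}$ sum fibrewise to the zero section in $\mathbf{Pic}^0_{Y/C'}$ rather than to a nontrivial section; once this is pinned down, the remainder of the argument is essentially formal from Lemma \ref{sectionf} and Theorem \ref{seclap}.
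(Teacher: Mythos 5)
Your proposal is correct and takes essentially the same route as the paper's proof: both use $\psi\circ\s=\psi$ (from the factorisation through $\pi_C$) to get $\s(S)=\Gamma_{-\psi}-(\{e_0\}\times C')$, check fibrewise that $S+\s(S)=0$ so that $\s$ acts as $-1$ on $\Phi_Y$, and conclude $\tmop{ker}(1+\s)=\Phi_Y$, $\tmop{im}(1-\s)=\tmop{Pic}^0E\oplus 2\mb{Z}S$ by surjectivity of doubling, whence $H^1(G,\Phi_Y)\simeq\mb{Z}/2\mb{Z}$. Your computation of $L_S$ via Lemma \ref{sectionf} (with $D_S=\sum_{c_i\in\psi^{-1}(e_0)}E\times\{c_i\}$, exactly as in Corollary \ref{corcor}) and the appeal to Theorem \ref{seclap} for the overlap condition are likewise the paper's own steps.
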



\subsection{Constructing Azumaya algebras on non-Jacobian elliptic fibrations}
Ogg-Shafarevich theory provides a recipe for constructing Azumaya algebras on surfaces possessing Jacobian elliptic fibrations. We now construct some Azumaya algebras on surfaces which are elliptically fibred over curves, the fibrations of which have no sections. Recall the first examples of non-Jacobian elliptic fibrations presented in Example \ref{cyctriv}: $Y=E\times C'$, where $E$ is elliptic, $C'$ an \'etale $n\colon 1$ cyclic cover of a curve $C$ with Galois group $G=\langle\s|\s^n=1\rangle$. Letting $\varepsilon$ be an $n-$torsion point on $E$, we extend the action of $\s$ on $C'$ to $Y$ via
\begin{eqnarray*}
 \s\colon Y & \lrw & Y\\
    (e,c')  & \mapsto & (e\oplus\varepsilon,\s(c')).
\end{eqnarray*}
By setting $Z\assign Y/G$, we retrieve a non-Jacobian elliptic fibration $\phi\colon Z\to C$.
\begin{proposition}
 $H^1(G,\tmop{Pic}Y)\simeq(\mb{Z}/n\mb{Z})^2$, generated by bundles $L,M$ such that, for $i\in\{1,\ldots,n-1\}$,
\begin{eqnarray*}
A_i & \assign & \oy \oplus (L^i\otimes M^{-i})_\s\oplus \ldots \oplus (L^i\otimes M^{-i})_\s^{n-1}
\end{eqnarray*}
 are nontrivial non-Morita equivalent Azumaya algebras on $Z$. 
\begin{proof}
Once again, the splitting $\tmop{Pic}Y\simeq \tmop{Pic}E \oplus \tmop{Pic}C'$ is a splitting of $G$-modules, implying
 \begin{eqnarray*}
  \hh & \simeq & H^1(G,\tmop{Pic}E)\oplus H^1(G,\tmop{Pic}C')\\
      & \simeq & \mb{Z}/n\mb{Z} \oplus \mb{Z}/n\mb{Z}.
 \end{eqnarray*}
The second isomorphism is a result of Proposition \ref{curve}. Now recalling that $H^3(G,\os(Y)^*))\simeq\mb{Z}/n\mb{Z}$ and that 
\begin{eqnarray*}
 Rel_{io}/E & \simeq & \tmop{ker}(\hh \longrightarrowlim^{d_2} H^3(G,\os(Y)^*)),
\end{eqnarray*}
we wish to decipher which elements of $\hh$ lie in $\tmop{ker}d_2$. We do this now:
The cyclic cover $\pi_C\colon C'\to C$ corresponds to an $n$-torsion line bundle $L'\in\tmop{Pic}C$.
Now there exists an elliptic fibration $\varphi_1\colon Z \to C$ and a fibred diagram
$$
\xymatrix{
Y \ar[r]^\pi \ar[d] & Z \ar[d]^{\varphi_1}\\
C'        \ar[r]^{\pi_C}&   C
}
$$
This implies that $Y$ is the cyclic cover corresponding to $\varphi_1^*(L')$. 
By Lemma \ref{lem}, $L'\simeq\os_C\left(\sum_{i=1}^{r}p_i-rp_0\right), p_i\in C$, where $r=2g(C)-1$. Then  $\varphi_1^*(L')\simeq\os_Z\left(\sum_{i=1}^{r}Z_{p_i}-rZ_{p_0}\right)$.
As in the proof of Proposition \ref{curve}, letting $p_i'\in \pi_C^{-1}(p_0)$, 
\begin{eqnarray*}
 L & \assign & \oy\left(\sum_{i=1}^{r}Y_{p_i'}-rY_{p_0'}\right)\in\hh,
\end{eqnarray*}
generating the second copy of $\mb{Z}/n\mb{Z}$ in $\hh$. Moreover, just as in the proof of Proposition \ref{curve}, $d_2(L)$ generates $H^3(G,\os(Y)^*)$.
Similarly, there exist ${q_i'\in E}$ such that $N=\left(\sum_{i=1}^{r'}Y_{q_i'}-r'Y_{q_0'}\right)\in\hh$, where $r'=2g(E/G)-1$. Moreover, $N$ generates the first copy of $\mb{Z}/n\mb{Z}\in\hh$ and, just as with $L$, $d_2(N)$ generates $H^3(G,\os(Y)^*)$. Thus there exists some power $M$ of $N$ which also generates the first copy of $\mb{Z}/n\mb{Z}\in\hh$ such that
$d_2(L^i\otimes M^{-i})=0$, and the corresponding relations satisfy the overlap condition. The result follows immediately.
\end{proof}
\end{proposition}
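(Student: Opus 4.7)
The plan is first to decompose $\hh$. Since $Y=E\times C'$, the K\"unneth-type splitting $\tmop{Pic} Y\simeq \tmop{Pic} E\oplus \tmop{Pic} C'$ is a splitting of $G$-modules, as $\s$ acts factorwise (translation by $\varepsilon$ on $E$, Galois action on $C'$). Hence
\[
\hh\;\simeq\; H^1(G,\tmop{Pic} E)\oplus H^1(G,\tmop{Pic} C').
\]
Proposition \ref{curve} applied to the cyclic \'etale cover $C'\to C$ gives the second summand $\simeq \mb{Z}/n\mb{Z}$, with an explicit generator $L$. Since $\varepsilon$ is $n$-torsion, $E\to E/\langle\varepsilon\rangle$ is also a cyclic \'etale $n\colon 1$ cover of smooth curves, so the same proposition yields $H^1(G,\tmop{Pic} E)\simeq\mb{Z}/n\mb{Z}$, with a generator $N\in\tmop{Pic} E$. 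Pulling back to $Y$ I regard $L,N$ as classes in $\hh$.

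Next I identify which classes satisfy the overlap condition. Proposition \ref{prop} (applicable since $\pi\colon Y\to Z$ is \'etale) together with the vanishing $H^2(G,k^*)=0$ (here $\os(Y)^*=k^*$ with trivial $G$-action, as $Y$ is projective) gives an exact sequence
\[
0\lrw Rel_{io}/E\lrw \hh\longrightarrowlim^{d_2} H^3(G,\os(Y)^*)\simeq\mb{Z}/n\mb{Z}.
\]
Rerunning the proof of Proposition \ref{curve} verbatim on the $Y$-side, I show that $d_2(L)$ generates $H^3(G,\os(Y)^*)$: the generator of the cyclic extension $K(Y)/K(Z)$ coming from the $C'$-direction serves as the trivialising rational function in precisely the way used in that proof. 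An entirely analogous argument on the $E$-side yields that $d_2(N)$ also generates $H^3(G,\os(Y)^*)$. Since the target is cyclic of order $n$, I can replace $N$ by the unique power $M$ satisfying $d_2(M)=d_2(L)$, so that $L^i\otimes M^{-i}\in\tmop{ker}(d_2)=Rel_{io}/E$ for every $i\in\{1,\ldots,n-1\}$.

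Finally each such class lifts to an element of $Rel_{io}$, yielding a relation that satisfies the overlap condition and producing the cyclic algebra $A_i$ as in the statement. By Theorem \ref{normet}, $A_i$ is a normal order on $Z$; since $\pi$ is \'etale all ramification indices equal $1$, so $A_i$ is Azumaya. Non-triviality and pairwise non-Morita equivalence follow from the isomorphism $Rel_{io}/E\simeq\br(Y/Z)$ of Proposition \ref{prop} together with the fact that the $n-1$ classes $L^i\otimes M^{-i}$ are pairwise distinct in $\hh$. The main obstacle will be correctly executing the $E$-side analogue of the rational-function computation in Proposition \ref{curve} and matching phases so that $d_2(L)$ and $d_2(M)$ agree (rather than merely both being nonzero); once this is in place, the remainder of the argument is essentially formal.
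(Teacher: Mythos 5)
Your proposal is correct and follows essentially the same route as the paper's own proof: the $G$-module splitting $\tmop{Pic}Y\simeq\tmop{Pic}E\oplus\tmop{Pic}C'$, Proposition \ref{curve} applied to both cyclic \'etale factors $C'\to C$ and $E\to E/G$, the exact sequence $0\to Rel_{io}/E\to\hh\xrightarrow{d_2}\mb{Z}/n\mb{Z}$, and the replacement of $N$ by a power $M$ with $d_2(M)=d_2(L)$ so that $L^i\otimes M^{-i}\in\ker d_2$. Your explicit remark that one must match $d_2(L)$ and $d_2(M)$ exactly (not merely have both nonzero) is precisely the point the paper handles by choosing ``some power $M$ of $N$,'' so no gap remains.
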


\section{Ogg-Shafarevich theory for orders}\label{ogg}

As seen in the previous section, Ogg-Shafarevich theory gives us a way of constructing Azumaya algebras on elliptic surfaces using \'etale covers of the base curve $C.$ Our interests, however, lie in constructing the more general objects orders. For this reason, we delve into the case where $\pi_C:C'\lrw C$ is a ramified $n\colon1$ cyclic cover. The setup is the same as before with two alterations. Firstly, we no longer require that all fibres be smooth and thus from now on, by elliptic fibration we shall mean an elliptic fibration as defined in Definition \ref{el} (we are now able to drop this requirement since we do not need all the theory developed above for smooth elliptic fibrations). Secondly, now $\pi_C$ will be ramified at points $c_i'$ such that $Z_{\pi_C(c_i')}$ is irreducible and smooth:
\begin{eqnarray*}
  Y & \longrightarrowlim^{\pi} & Z \nonumber\\
  \phi_Y \downarrow & \square & \downarrow \phi \nonumber\\
  C' & \longrightarrowlim^{\pi_C} & C \nonumber
\end{eqnarray*}
Letting $\pi_C$ be ramified at $c_i'\in C'$ with ramification index $r_i$, then $\pi$ is ramified solely at the fibres $Y_{c'_i}$ with index $r_i.$ 
%
Using the noncommutative cyclic covering trick in this case will result in the orders being ramified on $Z_{\pi(c_i')}$ with ramification index $r_i$.
%
%
%
%
%
%
In the following, we provide a collection of examples:

\subsection{Ogg-Shafarevich for orders on rational elliptic fibrations}

We first describe the construction of a rational elliptic fibration:

\begin{example}\label{ellrat}\index{elliptic fibration!rational elliptic fibration} {\bf Rational elliptic fibrations}: Let $C_1, C_2$ be two elliptic curves on $\p2$ in general position, that is, intersecting in $9$ distinct points, $p_i,i\in\{1,\ldots,9\}.$ The linear system $\lambda C_1+\mu C_2$ defines a rational map $\p2\dashrightarrow\p1$ with the $p_i$'s as base points. Blowing up these base points results in a morphism $p:\tmop{Bl}_9\p2\lrw\p1$ with general fibre a genus 1 curve and thus $p$ is an elliptic fibration. We proceed to show that $p$ has sections, in fact that any exceptional curve (of which there are infinitely many!) on $Z\assign\tmop{Bl}_9\p2$ is a section of $p$. Now $\tmop{Pic}Z\simeq\mb{Z}H\oplus\bigoplus_{i=1}^9\mb{Z}E_i,$ where $H$ is given by the pullback of the hyperplane via $\s:Z\lrw\p2$ and the $E_i$ are the inverse images of the $p_i.$ We have $K_Z\sim-3H+\sum_{i=1}^9E_i.$ Let $c\in\p1$ be a closed point. Then $F_c:=p^{-1}(c)$ is linearly equivalent to $3H-\sum_{i=1}^9E_i\sim-K_Z.$ Letting $E$ be any exceptional curve (that is, a rational curve with self-intersection $-1$) on $Z$, the adjunction formula states that
\begin{eqnarray*}
2(g(E)-1) & = & E\cdot(E+K_Z)\\
          & = & E^2-E\cdot F_c.
\end{eqnarray*}
Rearranging, we see that $E\cdot F_c=E^2-2(g(E)-1)=1.$ Since this holds for all closed $c\in\p1,$ $E$ is a section. 
In this case, for $S_1,S_2\in\Phi$, 
\begin{eqnarray*}
 S_1\oplus S_2 & \sim & S_1+S_2-S_0+\alpha F
\end{eqnarray*}
where $\alpha=(S_1+S_2)\cdot S_0-S_1\cdot S_2+1$ (this is \cite[Lemma 16]{morrison1983group}).
\end{example}

We wish to construct orders on $Z$ ramified on $C_1\cup C_2,$ where we have abused notation by writing $C_i\subset Z$ for the strict transform of $C_i\subset\p2$. These orders are examples of minimal orders (see \cite[Example 3.4]{ncy}) on non-minimal surfaces. We first construct the double cover of $\pi:Y\lrw Z$ ramified on $C_1$ and $C_2$:
\begin{lemma}\label{y}
There exists a K3 surface $Y$ and a double cover $\pi:Y\lrw Z$ ramified on $C_1\cup C_2$ with ramification index $2.$ Moreover, there is a Jacobian elliptic fibration $\phi_Y\colon Y \to \p1$. 
\begin{proof}
Firstly, we note that for $i\in\{1,2\}$, $C_i$ is the fibre above a point $p_i\in C$. Since $\os_C(p_1+p_2)$ is $2$-divisible in $\tmop{Pic}C$, there is a double cover $\pi_C\colon C'\to C$ ramified solely at $p_1$ and $p_2$. By the Riemann-Hurwitz formula, $C'\simeq\p1$. We form the fibred product
\begin{eqnarray}
  Y & \longrightarrowlim^{\pi} & Z \nonumber\\
  \phi_Y \downarrow & \square & \downarrow \phi \nonumber\\
  C' & \longrightarrowlim^{\pi_C} & C \nonumber
\end{eqnarray}
and it follows that $\pi\colon Y\to Z$ is the double cover of $Z$ ramified solely on $C_1\cup C_2$. Now $Y$ is a resolution of a double sextic $Y'\to\p2$, the sextic possessing only simple singularities. By \cite[ Sect. 1, discussion preceding Proposition A]{doub}, $Y$ is a K3 surface. Moreover, by construction, $Y$ is elliptically fibred over $C'$. We now show that the pullback under $\pi$ of any section of $\phi$ is a section of $\phi_Y$, implying $\phi_Y$ is a Jacobian elliptic fibration. To see this, let $S$ be a section of $\phi$, $F$ a fibre. Then $\pi^*S\cdot\pi^*F=2$. However, recalling $C\simeq\p1$, we see that $\pi^*F\sim 2F'$, where $F'$ is a fibre of $\phi_Y$, implying $\pi^*S\cdot F'=1$.

\end{proof}
\end{lemma}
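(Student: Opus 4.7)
The plan is to reduce the construction on the surface to a simpler branched cover of the base curve, then pull it back via a fibred product and verify the K3 property directly. Since the pencil $|\lambda C_1 + \mu C_2|$ \emph{is} the fibration $\phi\colon Z \to \p1$, the two curves $C_1, C_2 \subset Z$ are smooth disjoint fibres, say $C_i = \phi^{-1}(p_i)$ for distinct $p_i \in \p1$. The divisor $p_1 + p_2$ has even degree on $\p1$, hence is $2$-divisible in $\tmop{Pic}\p1$, so the classical cyclic covering trick (Construction \ref{cyc1}) produces a double cover $\pi_C \colon C' \to \p1$ branched exactly over $\{p_1, p_2\}$. Riemann--Hurwitz gives $2g(C') - 2 = 2(-2) + 2$, so $C' \simeq \p1$. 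Forming the fibred product $Y \assign Z \times_{\p1} C'$ then yields a double cover $\pi \colon Y \to Z$ whose branch locus on $Z$ is exactly $C_1 \cup C_2$ with ramification index two, as required.

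Next I would verify that $Y$ is a K3 surface by a direct canonical bundle calculation. From the description of $\tmop{Pic}Z$ and $K_Z$ in Example \ref{ellrat}, each fibre of $\phi$ lies in $|-K_Z|$, so $C_1 + C_2 \in |-2K_Z|$. Writing $L$ for the line bundle defining the double cover, we then have $L^{\otimes 2} \simeq \os_Z(C_1 + C_2) \simeq \os_Z(-2K_Z)$, so $L \simeq \os_Z(-K_Z)$. The standard formula for cyclic covers gives $K_Y \simeq \pi^*(K_Z + L) \simeq \pi^*\os_Z = \oy$. For the irregularity, the projection formula yields $\pi_*\oy \simeq \os_Z \oplus L^{-1} = \os_Z \oplus \os_Z(K_Z)$, so $h^1(Y, \oy) = h^1(\os_Z) + h^1(\os_Z(K_Z))$; both summands vanish because $Z$ is rational and Serre duality gives $h^1(\os_Z(K_Z)) \simeq h^1(\os_Z)^{\vee} = 0$. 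Smoothness of $Y$ follows automatically from smoothness of the branch divisor $C_1 \sqcup C_2$, so $Y$ is K3.

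Finally, the Jacobian structure on $\phi_Y\colon Y \to C'$ is immediate from the fibred product. For any section $S$ of $\phi$ (for instance, one of the infinitely many exceptional curves identified in Example \ref{ellrat}), the pullback $\pi^{-1}(S) = S \times_{\p1} C'$ is isomorphic to $C'$ since $S \to \p1$ is an isomorphism; this curve maps isomorphically onto $C'$ and hence is a section of $\phi_Y$. The only step I expect to require any care is the $K_Y = 0$ computation, and that reduces to the single observation that a fibre of a rational elliptic surface is anti-canonical; once that is in hand the rest is formal.
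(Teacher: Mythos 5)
Your proposal is correct, and its skeleton is the same as the paper's: identify $C_1,C_2$ as fibres of $\phi$ over $p_1,p_2$, use $2$-divisibility of $p_1+p_2$ to build the branched double cover $\pi_C\colon C'\to\p1$, conclude $C'\simeq\p1$ by Riemann--Hurwitz, and take the fibred product $Y=Z\times_C C'$. Where you genuinely diverge is in the two verifications. For the K3 property the paper simply observes that $Y$ is a resolution of a double sextic with simple singularities and cites \cite{doub}; you instead give a self-contained computation: $C_1+C_2\in|-2K_Z|$ forces $L\simeq\os_Z(-K_Z)$ (uniqueness of the square root uses, implicitly but correctly, that $\tmop{Pic}Z$ is torsion-free for the rational surface $Z$), whence $K_Y\simeq\pi^*(K_Z+L)\simeq\oy$ and $h^1(\oy)=h^1(\os_Z)+h^1(\os_Z(K_Z))=0$ by rationality and Serre duality. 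This buys independence from the double-sextic literature and makes visible the key geometric fact (a fibre of a rational elliptic fibration is anti-canonical) that the paper's citation hides, at the cost of a slightly longer argument. For the Jacobian structure, the paper argues numerically ($\pi^*S\cdot\pi^*F=2$ and $\pi^*F\sim 2F'$, so $\pi^*S\cdot F'=1$), while you identify $\pi^{-1}(S)=S\times_{\p1}C'\simeq C'$ directly from the base change, which is cleaner and even shows $\pi^{-1}(S)$ is irreducible and maps isomorphically, rather than merely having the right intersection number with a fibre. Both routes are sound; no gaps.
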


In order to construct orders ramified on $C_1\cup C_2,$ we require the existence of nontrivial elements of $H^1(G,\tmop{Pic}Y).$ In general, however, it is extremely difficult to even compute $\tmop{Pic}Y.$ There are two special cases in which we can find nontrivial $1-$cocycles and thus construct nontrivial orders.

\subsubsection{When $\Phi$ has 2-torsion}

The first is when $\Phi$, the group of sections of $\phi\colon Z\to C$, has $2-$torsion. There are many examples of such rational elliptic fibrations and we refer the reader to \cite[Section ``Torsion groups of rational elliptic surfaces'']{rat} for explicit examples.  

\begin{proposition}
If $\Phi$ has a nontrivial $2-$torsion section $S$ such that $S_{|C_i}\nsim S_{0|C_i}$, $i\in\{1,2\}$, then there exists a nontrivial $L\in H^1(G,\tmop{Pic}Y)$ and the corresponding relation satisfies the overlap condition. The cyclic algebra $A=\oy\oplus\ls$ is a numerically Calabi-Yau order ramified on $C_1\cup C_2$.

\begin{proof}
Since $S$ is a $2$-torsion section, $2(S-S_0)\sim \alpha F,$ where $F$ is the class of a fibre. Let $S'=\pi^*S, S_0'=\pi^*S_0,$ and $F'$ be the class of a fibre of $\phi_Y\colon Y\to C$'. Then $\pi^*F=2F'$ and
\begin{eqnarray*}
\s^*: \tmop{Pic}Y & \lrw  & \tmop{Pic}Y\\
       S'         & \longmapsto &  S'\\
       S_0'          & \longmapsto &  S_0'\\
       F'          & \longmapsto & F'
\end{eqnarray*}
and 
\begin{eqnarray*}
S'-S_0'-\alpha F'+\s^*(S'-S_0'-\alpha F') & \sim & 2(S'-S_0')-2\alpha F'\\
                              & \sim & \pi^*(2(S-S_0)-\alpha F)\\
			      & \sim & 0.
\end{eqnarray*}
Thus $L=\oy(S'-S_0'-\alpha F')\in H^1(G,\tmop{Pic}Y).$ The corresponding relation satisfies the overlap condition due to Proposition \ref{ol}. Moreover, $L$ is nontrivial in $H^1(G,\tmop{Pic}Y)$ since the ramification of $A=\oy\oplus L_\s$ above $C_i$ is given by the $2$-torsion line bundle $\os_{C_i}(S'_{|Y_{c_i'}}-S'_{0|Y_{c_i'}})$, which is nontrivial by assumption. Thus each $\ti{C_i}$ is irreducible and $A$ is maximal by Lemma \ref{max}.
\end{proof}
\end{proposition}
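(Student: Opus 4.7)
The plan is to run the argument already foreshadowed in the proof sketch in the excerpt, filling in the nontriviality and maximality verifications carefully.

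First I would translate the $2$-torsion hypothesis into a linear equivalence downstairs. Since $S\oplus S=S_0$ in $\Phi$, the group-law formula recalled in Example \ref{ellrat} gives $2S\sim S_0+(2S\cdot S_0-S^2+1)F$, so that
\begin{eqnarray*}
2(S-S_0)&\sim&\alpha F
\end{eqnarray*}
for a well-defined $\alpha\in\mathbb{Z}$, where $F$ is the class of a general fibre of $\phi\colon Z\to\p1$. I would then pull back to $Y$. Set $S'\assign\pi^*S$, $S_0'\assign\pi^*S_0$ and let $F'$ denote the class of a fibre of $\phi_Y\colon Y\to C'$. Since $C'\simeq\p1$ and $\pi_C$ is a double cover, $\pi^*F\sim 2F'$, while $S'$, $S_0'$ are $\s$-invariant as pullbacks from $Z$. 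Define $L\assign\oy(S'-S_0'-\alpha F')$. A direct computation then yields
\begin{eqnarray*}
(1+\s^*)L&\sim& 2(S'-S_0')-2\alpha F'\;\sim\;\pi^*(2(S-S_0)-\alpha F)\;\sim\;0,
\end{eqnarray*}
so $L$ represents a class in $H^1(G,\tmop{Pic}Y)$.

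Next I would verify overlap and assemble the cyclic algebra. $Y$ is projective so $\os(Y)^*=k^*$, and all ramification indices of $\pi$ equal $n=2$; hence Proposition \ref{ol} guarantees that the relation associated to $L$ satisfies the overlap condition, and $A\assign\oy\oplus L_\s$ is a well-defined cyclic algebra on $Z$ which by construction is ramified precisely on the branch locus $C_1\cup C_2$ of $\pi$, with ramification index $2$ at each component.

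Finally, I would establish maximality (which also shows $L$ is nontrivial in $H^1(G,\tmop{Pic}Y)$) and the numerically Calabi--Yau property. Let $D_i\assign\pi^{-1}(C_i)_{\mathrm{red}}$, which is mapped isomorphically onto $C_i$ by $\pi$. By Theorem \ref{ram} the ramification of $A$ along $C_i$ is the double cover of $C_i$ determined by $L|_{D_i}$, which under the isomorphism $D_i\simeq C_i$ is
\begin{eqnarray*}
\os_{C_i}\!\left((S-S_0)|_{C_i}-\alpha F|_{C_i}\right).
\end{eqnarray*}
Here $F|_{C_i}$ is trivial, because $F\sim C_i$ and $\os_Z(C_i)=\phi^*\os_{\p1}(\phi(C_i))$ restricts to the pullback of a line bundle on a point; thus $L|_{D_i}$ is the nontrivial $2$-torsion line bundle $(S-S_0)|_{C_i}\in\tmop{Pic}C_i$ by hypothesis. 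Hence each $\ti{C_i}\to C_i$ is a connected \'etale double cover, so each $\ti{C_i}$ is irreducible and Lemma \ref{max} gives maximality. For the Calabi--Yau property, I would use $-K_Z\sim F$ (from Example \ref{ellrat}) together with $C_1,C_2\sim F$ to compute $K_A=K_Z+\tfrac12 C_1+\tfrac12 C_2\sim -F+\tfrac12 F+\tfrac12 F\sim 0$.

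The one place that needs genuine care, rather than bookkeeping, is the identification of $L|_{D_i}$ with $(S-S_0)|_{C_i}$ on $C_i$: one must keep track of the difference between $\pi^*F$ and $F'$, and exploit the triviality of the normal bundle of a smooth fibre in a rational elliptic fibration over $\p1$ to eliminate the $\alpha F'|_{D_i}$ term. Once this restriction computation is in hand, the remaining steps are routine applications of the results already established earlier in the chapter.
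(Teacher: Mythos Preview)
Your argument is correct and follows the same route as the paper: pull back the relation $2(S-S_0)\sim\alpha F$, use $\pi^*F\sim 2F'$ and $G$-invariance of $S',S_0',F'$ to exhibit $L=\oy(S'-S_0'-\alpha F')$ as a $1$-cocycle, invoke Proposition~\ref{ol} for overlap, and check maximality via Theorem~\ref{ram} and Lemma~\ref{max} by showing $L|_{D_i}$ is nontrivial. You supply more detail than the paper in two places --- the explicit vanishing of the $\alpha F'|_{D_i}$ contribution via triviality of the normal bundle of a fibre, and the numerical Calabi--Yau computation $K_A\sim 0$ --- both of which the paper leaves implicit.
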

\begin{remark}
Torsion sections of elliptic fibrations seem to provide a substantial number of examples here. We can perform the same trick for elliptically fibred K3 surfaces: such sections are abundant and have been studied extensively by Persson and Miranda \cite{k3tors}.
\end{remark}

\subsubsection{The second case in which we can find nontrivial 1-cocycles}

The second case in which we may construct such orders is more subtle and involved, requiring us to study the geometry of a general rational elliptic fibration $\phi:Z\to\p1$ further.

First notice that there is an involution $\tau$ of $Z$ which sends $z\mapsto-z$ on fibres (for details, see \cite[Introduction, p.3]{rat}). Setting $G=\langle\tau|\tau^2=1\rangle$, we let ${\psi:Z\to\ti{X}\assign Z/G}$ be the corresponding quotient morphism and note that $\ti{X}$ is smooth. Blowing down exceptional curves disjoint from $\psi(S_0)$ yields a birational morphism $\mu\colon\ti{X}\to X$ and $X$ is ruled over $C\simeq\p1$. This is because  the quotient of an elliptic curve by the $\mb{Z}_2$-action $z\mapsto -z$ is $\p1$.
\begin{displaymath}
\xymatrix{
	Z  \ar@/^2pc/[rr]^{\rho}\ar[drr]_\phi \ar[r]^\psi & \ti{X} \ar[dr] \ar[r]^\mu & X \ar[d]^p\\
		   &  & \p1}
\end{displaymath}
Letting $\rho\colon Z\to X$ denote the composite morphism $\mu\circ\psi$, $S_0^2=-1$ implies that $\rho(S_0)^2=-2$. Thus $X\simeq\mb{F}_2$ and $\psi$ is ramified on $\ti{C_0}\cup\ti{T}$, where $\ti{C_0}$ is the strict transform of the special section $C_0$ on $\mb{F}_2$ and $\ti{T}$ the strict transform of a trisection $T$ disjoint from $C_0$. Thus we see that classifying rational elliptic fibrations is equivalent to classifying trisections $T$ on $\mb{F}_2$ disjoint from $C_0$ with at most simple singularities and such that $\ti{C_0}+\ti{T}$ is $2$-divisible in $\tmop{Pic}(\ti{X})$. For any effective divisor $D$ on $X$, we let $\ti{D}$ denote its strict transform on $\ti{X}$ with respect to the birational morphism $\mu$.

\begin{theorem}\label{ncyrat}
If $\phi:Z\lrw\p1$ corresponds to a trisection $T\subset\mathbb{F}_2$ with 2 nodes, then we can construct a numerically Calabi-Yau order $A$ ramified on 2 fibres of $\phi.$ The explicit construction of $A$ is given in the proof of the theorem.
\end{theorem}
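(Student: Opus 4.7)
The plan is to apply an Ogg--Shafarevich-style construction in the spirit of Section~3.3. Pick two smooth irreducible fibres $F_1, F_2$ of $\phi$ over points $c_1, c_2 \in \p1$, and form a double cover $\pi_C\colon C'\to \p1$ ramified precisely at $c_1$ and $c_2$; by Riemann--Hurwitz, $C'\simeq\p1$. Setting $Y\assign Z\times_{\p1} C'$, the induced morphism $\pi\colon Y\to Z$ is a double cover with covering group $G=\langle\sigma\mid\sigma^2=1\rangle$ ramified exactly on $F_1+F_2$. I would next verify (as in Lemma~\ref{y}) that $Y$ is a K3 surface, using that $F_1+F_2\equiv -2K_Z$ makes the double cover ramified on an anticanonical-times-2 divisor on a rational surface, and compute $K_Y = \pi^*(K_Z+\tfrac12(F_1+F_2))\equiv 0$ together with $h^1(\mathcal{O}_Y)=0$.

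The heart of the argument is to produce a nontrivial class in $H^1(G,\tmop{Pic}Y)$, and this is exactly where the hypothesis that $T\subset\mathbb{F}_2$ has two nodes is used. Each node $n_i$ of $T$ lifts under the double cover $\psi\colon Z\to\tilde X$ to a distinguished $(-2)$-curve $R_i\subset Z$ (the exceptional divisor resolving the $A_1$-singularity in the na\"ive pullback), and each $R_i$ is a component of a singular fibre of $\phi$, hence $\tau$-fixed. The plan is then to write down an explicit integer combination
\[
D \;=\; a_1\,\pi^* R_1 + a_2\,\pi^* R_2 + (\text{corrections in section and fibre classes})
\]
inside $\tmop{Pic}Y$ satisfying $D+\sigma^*D\sim 0$ but $D\notin\mathrm{im}(1-\sigma)$, exhibiting a nonzero cocycle in $H^1(G,\tmop{Pic}Y)$.

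Granted such a cocycle $L\assign \mathcal{O}_Y(D)$, Proposition~\ref{ol} immediately yields the overlap condition (since $Y$ is smooth projective and the lcm of the ramification indices of $\pi$ is $2$), and Lemma~\ref{untot} identifies the ramification of $A\assign \mathcal{O}_Y\oplus L_\sigma$ as exactly $F_1\cup F_2$ with index $2$. Maximality then follows from Lemma~\ref{max} once one verifies that the restriction $L|_{\pi^{-1}(F_i)}$ is a nontrivial $2$-torsion bundle, which is a direct consequence of the choice of $D$ meeting each $F_i$ transversally through the node-induced summands. The numerically Calabi--Yau property is immediate from $K_Z\equiv -F$ on a rational elliptic surface and $F_i\equiv F$:
\[
K_A \;=\; K_Z + \tfrac12(F_1+F_2) \;\equiv\; -F + F \;\equiv\; 0.
\]

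The main obstacle is producing the cocycle $D$ in Step~2: Picard lattices of K3 surfaces are notoriously resistant to explicit computation, and one must genuinely use the special geometry of the double cover $Z\to\mathbb{F}_2$ together with the two nodes of $T$ to extract a class which is simultaneously in $\ker(1+\sigma)$, outside $\mathrm{im}(1-\sigma)$, and restricts nontrivially to each ramification fibre. I expect that the node structure will split each of the two preimages $\pi^{-1}(R_i)$ in a controlled way, and that a suitable asymmetric combination of these split components with a fibre class produces the required element; the rest of the proof should then be routine verification.
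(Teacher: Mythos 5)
Your Steps 1, 3 and 4 (the K3 double cover, the overlap condition via Proposition~\ref{ol}, maximality via Lemma~\ref{max}, and the computation $K_A=K_Z+\tfrac12(F_1+F_2)\equiv 0$) are fine, but Step 2 --- the production of the cocycle --- is not merely unfinished: as proposed, it cannot work. The classes you build from are the wrong kind. If $c_1,c_2$ are chosen so that the fibres containing the node-induced $(-2)$-curves $R_1,R_2$ are unramified for $\pi$, then $\pi^{-1}(R_i)$ splits as $R_i^{(1)}\sqcup R_i^{(2)}$ with $\s R_i^{(1)}=R_i^{(2)}$, so the differences $R_i^{(1)}-R_i^{(2)}=(1-\s)R_i^{(1)}$ are coboundaries; while any combination of $G$-invariant classes $D$ satisfies $(1+\s)D=2D$, so $D\in\ker(1+\s)$ forces $D$ to be $2$-torsion, hence trivial since $\tmop{Pic}Y$ of a K3 is torsion free. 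The intermediate options are also obstructed: a class of the form $R_i^{(1)}-M$ with $M$ invariant needs $\pi^*R_i\sim 2M$, giving $M^2=-1$, impossible in the even K3 lattice; and $R_1^{(1)}-R_2^{(2)}$ needs $R_1\sim R_2$ on $Z$, impossible for distinct irreducible curves with $R_i^2=-2$. A further structural problem is that you fix the branch points $c_1,c_2$ arbitrarily at the outset; by Ogg--Shafarevich (Proposition~\ref{ker}), whether the cover $Y\to Z$ splits a nontrivial Brauer class depends on the choice of cover, and for generic $c_1,c_2$ the relevant $H^1(G,\tmop{Pic}Y)$ may simply vanish.

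The paper's proof gets the cocycle from a splitting of a \emph{multisection}, not from fibre components, and this is where the two nodes actually enter. One takes a section $S\sim C_0+2F$ of $\mb{F}_2$ passing through both nodes of $T$ (possible since $h^0(\os_X(C_0+2F))=4$), so that $\ti{S}\cdot(\ti{T}+\ti{C_0})=2$; then $S'=\rho^{-1}(S)$ is an \emph{irreducible rational bisection} of $\phi$, and the base curve of the double cover is taken to be $C'=S'$ itself, with branch points the two points over which $S'\to\p1$ ramifies (not points you choose in advance). Over $Y=Z\times_C S'$ the pullback $\pi^{-1}(S')=S'\times_C S'$ splits into two sections $S_1,S_2$ interchanged by $\s$, and Lemma~\ref{lemlem} shows $L=\oy(S_1-S_0'-nF')$ satisfies $(1+\s)L\sim\pi^*(S'-2S_0-nF)\sim 0$, so $L\in H^1(G,\tmop{Pic}Y)$; the condition $S\cdot C_0=0$ makes the restriction of $S_1-S_0'$ to each branch fibre nontrivial $2$-torsion, giving nontrivial ramification and maximality. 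Thus the role of the two nodes is not to supply cocycle classes but to force $S'$ to be rational, so that $S'\to\p1$ has exactly two branch points and the order is ramified on only two fibres, which is what makes $A$ numerically Calabi--Yau.
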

Before we prove this theorem we require the following lemma.

 \begin{lemma}\label{lemlem}
Assume there exists a section $S$ of $p\colon X\to\p1$ such that $S\cdot C_0=0$ and ${S'=\psi^{-1}(\ti{S})}$ is irreducible. Then there exists an $L\in\tmop{Pic}Y$ (where $Y=Z\times_CS'$) such that $A=\oy\oplus L_\s$ is an order on $Z$ ramified on $Z_{c_i}$, where the $c_i$ are the ramification points of $\pi_C\colon S'\to C$.
\begin{proof}
We first note that since $S$ is a section of $p\colon X\to C$, $S'=\rho^{-1}(S)$ is a bisection of $\phi:Z\to C$ and there is a corresponding irreducible double cover $\pi_C:S'\lrw C$.
We form
\begin{eqnarray}
  Y & \longrightarrowlim^{\pi} & Z \nonumber\\
  \varphi' \downarrow & \square & \downarrow \varphi \nonumber\\
  S' & \longrightarrowlim^{\pi_C} & C \nonumber
\end{eqnarray}
Now $\pi^{-1}(S')=S'\times_CS'$ splits into $2$ copies of $S',$ say, $S_1$ and $S_2.$ Recalling that
\begin{eqnarray*}
 \tmop{Pic}X & \simeq \mb{Z}C_0\oplus\mb{Z}F,
\end{eqnarray*}
$S\sim C_0+nF\in\tmop{Pic}X$, implying 
\begin{eqnarray}\label{one}
S' & \assign & \psi^{-1}(\ti{S})\sim2S_0+nF',
\end{eqnarray}
where $F'$ is the class of a fibre of $\phi.$ Let $S_0'=\pi^{-1}S_0$, the reduced inverse image of $S_0$. Then $\s(S_0')=S_0'$ , $\s(S_1)=S_2$ and $\s(F')=F'$.
%
%
%
%
Then
\begin{eqnarray*}
 (1+\s)(S_1-S_0'-nF')& \sim & S_1+S_2-2S_0'-2nF'\\
                   &  \sim & \pi^*(S'-2S_0-nF)\\
                   &  \sim &   0.
\end{eqnarray*}
The last linear equivalence is a result of (\ref{one}). Thus $L\simeq\oy(S_1-S_0+nF') \in H^1(G,\tmop{Pic}Y)$. The corresponding relation satisfies the overlap condition due to Proposition \ref{ol}. By assumption $S\cdot C_0=0$ and this implies $S_1\cdot S_0'=0$. We conclude that ${(S_1-S_0)_{|Y_{c_i'}}\not\simeq\os_{Y_{c_i'}}}$. By theorem \ref{ram}, the ramification of $A_S=\oy\oplus L_{\s}$ along $Z_{\pi(c_i')}$ is the cyclic cover corresponding to $\os_{Y_{c_i'}}(S_{1|Y_{c_i'}}-S_{0|Z_{c_i'}})$, which is nontrivial. By Lemma \ref{max}, $A$ is maximal.
\end{proof}
\end{lemma}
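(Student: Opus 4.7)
The plan is to apply Lemma \ref{lemlem} to a carefully chosen section $S$ of $p\colon \mb{F}_2\to\p1$, with $S$ passing through both nodes of the trisection $T$. First I would identify the numerical class of $S$: since Lemma \ref{lemlem} demands $S\cdot C_0=0$ and sections of $p$ are of the form $C_0+nF$, we need $n=2$, so $S\sim C_0+2F$. Noting that $T\sim 3C_0+6F$ (forced by $T\cdot C_0=0$, since $T$ is disjoint from $C_0$), I compute $S\cdot T = (C_0+2F)\cdot(3C_0+6F)=6$.

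Next I would establish existence of a suitable $S$. The linear system $|C_0+2F|$ has $p_*\os_{\mb{F}_2}(C_0+2F)\simeq\os_{\p1}(2)\oplus\os_{\p1}$, so $\dim|C_0+2F|=3$. Imposing passage through the two nodes of $T$ cuts the system down to a pencil. The only reducible members of $|C_0+2F|$ are of the form $C_0+F_1+F_2$ with $F_i$ fibres; such a member can pass through both nodes only if $F_1,F_2$ are the (at most two) fibres containing the nodes. Removing these finitely many members and applying Bertini, a general element of the pencil is a smooth irreducible section disjoint from $C_0$ and passing transversally through each node of $T$.

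Now I would compute the relevant intersection numbers on $\ti X$. If $e_1,e_2$ denote the exceptional divisors of $\mu\colon\ti X\to X$, then $\ti S=\mu^*S-e_1-e_2$ (strict transform, $S$ smooth at the nodes) and $\ti T=\mu^*T-2e_1-2e_2$ (since $T$ has multiplicity two at each node). A direct computation gives $\ti S\cdot\ti T = S\cdot T-2(e_1^2+e_2^2)\cdot(\text{coefficients})=6-4=2$, and $\ti S\cdot\ti C_0 = S\cdot C_0=0$ since the nodes lie off $C_0$. Thus $\ti S$ meets the branch locus $\ti C_0+\ti T$ of $\psi$ in exactly two points, so by Riemann--Hurwitz the double cover $S'=\psi^{-1}(\ti S)\to\ti S\simeq\p1$ is branched at two points, yielding $S'\simeq\p1$, irreducible. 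Moreover the map $\pi_C\colon S'\to C$ factors as $S'\xrightarrow{\psi}\ti S\xrightarrow{p\mu}\p1$, the second map being an isomorphism, so $\pi_C$ is ramified at precisely two points $c_1,c_2\in C$ lying under the two intersections of $\ti S$ with $\ti T$.

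Finally, Lemma \ref{lemlem} produces an order $A=\oy\oplus L_\s$ on $Z$ ramified with index $2$ on the two fibres $Z_{c_1},Z_{c_2}$. To conclude $A$ is numerically Calabi--Yau I compute
\begin{equation*}
K_A \;=\; K_Z+\tfrac12 Z_{c_1}+\tfrac12 Z_{c_2} \;\sim\; -F+\tfrac12 F+\tfrac12 F \;=\;0,
\end{equation*}
using $K_Z\sim -F$ from Example \ref{ellrat}. The main obstacle is the mildly delicate verification of Step 2 and Step 3: one must confirm that the generic section in $|C_0+2F|$ through the two nodes really is smooth and irreducible (so that the strict-transform formulas used for $\ti S\cdot\ti T$ are valid), and then that the resulting $S'$ is irreducible. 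Everything else is a routine intersection-theoretic bookkeeping inside the framework already set up by Lemma \ref{lemlem}.
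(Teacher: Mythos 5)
Your proposal does not actually prove the statement in question: you have written a proof of Theorem \ref{ncyrat} that \emph{invokes} Lemma \ref{lemlem} as a black box, whereas the assigned statement \emph{is} Lemma \ref{lemlem}. Everything you do --- identifying $S\sim C_0+2F$, counting $h^0$, passing the pencil through the two nodes of $T$, computing $\ti{S}\cdot(\ti{T}+\ti{C_0})=2$ via strict transforms, concluding $S'\simeq\p1$, and then checking $K_A\equiv 0$ --- is the content of the paper's proof of Theorem \ref{ncyrat} (where indeed $S\cdot(T+C_0)=6$ and $\ti{S}\cdot(\ti{T}+\ti{C_0})=2$ appear almost verbatim), not of the lemma itself. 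The lemma's hypotheses already hand you a section $S$ with $S\cdot C_0=0$ and $S'$ irreducible; its conclusion is the existence of the line bundle $L$ and the ramification of the cyclic algebra, and none of that is addressed in your argument.

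Concretely, the missing work is the following, which is the entire body of the paper's proof: form the fibred product $Y=Z\times_C S'$, observe that $\pi^{-1}(S')=S'\times_C S'$ splits as $S_1\cup S_2$ with $\s(S_1)=S_2$; use $S\sim C_0+nF$ in $\tmop{Pic}X$ to get $S'\sim 2S_0+nF'$ on $Z$, whence
\begin{eqnarray*}
(1+\s)(S_1-S_0'-nF') & \sim & \pi^*(S'-2S_0-nF)\ \sim\ 0,
\end{eqnarray*}
so that $L\assign\oy(S_1-S_0'-nF')$ defines a class in $H^1(G,\tmop{Pic}Y)$; verify the overlap condition via Proposition \ref{ol} (using that $\pi$ is a ramified double cover); and finally show the ramification of $A=\oy\oplus\ls$ along $Z_{\pi(c_i')}$ is nontrivial --- here the hypothesis $S\cdot C_0=0$ is used to force $S_1\cdot S_0'=0$, hence $(S_1-S_0')_{|Y_{c_i'}}\not\simeq\os_{Y_{c_i'}}$, so that Theorem \ref{ram} gives nontrivial ramification and Lemma \ref{max} gives maximality. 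Without this cocycle construction and ramification check, the order $A$ whose existence the lemma asserts is never produced, so the proposal has a genuine gap with respect to the assigned statement (even though, read as a proof of Theorem \ref{ncyrat}, it is essentially the paper's own argument).
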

\begin{proof}(of theorem \ref{ncyrat})
To prove the theorem, first note that if $S'$ in Lemma \ref{lemlem} is rational, then $\pi_C:S'\lrw C$ is ramified above 2 points and the order $A_S$ is ramified on $2$ two fibres $Z_{c_i}$ and thus numerically Calabi-Yau (this is precisely the example Chan and Kulkarni deduce exists via the Artin-Mumford sequence \cite[Example 3.4]{ncy}). We are thus required to show that if $\phi:Z\lrw C$ corresponds to a trisection with 2 nodes, there exists a section $S$ of $p\colon X\to C$ such that $S'\assign\rho^{-1}(S)$ is an irreducible rational bisection of $\phi\colon Z\to C$.
This is equivalent to showing there exists a section $S$ such that $\ti{S}\cdot(\ti{T}+\ti{C_0})=2$ since then ${S'\lrw S}$ is ramified above 2 points, implying $S'$ rational.
Now since $T$ is a trisection of $\phi$ disjoint from $C_0$, the intersection theory on $\mb{F}_2$ implies that $T\sim3C_0+6F$. We assume $T$ is a $2-$nodal trisection with nodes at $q_1,q_2.$ Since $h^0(\os_X(aC_0+bF))=1-a^2+ab+b$,  $h^0(\os_X(C_0+2F))=4$ and there exists a divisor $S\sim C_0+2F$ through each $q_i$ with direction different to $T$. Now $S\cdot(T+C_0)=6$ implies that $\ti{S}\cdot(\ti{T}+\ti{C_0})=2$ and we are done.
\end{proof}

\begin{figure}[H]\label{tnode}
\begin{center}
\includegraphics[scale=0.7]{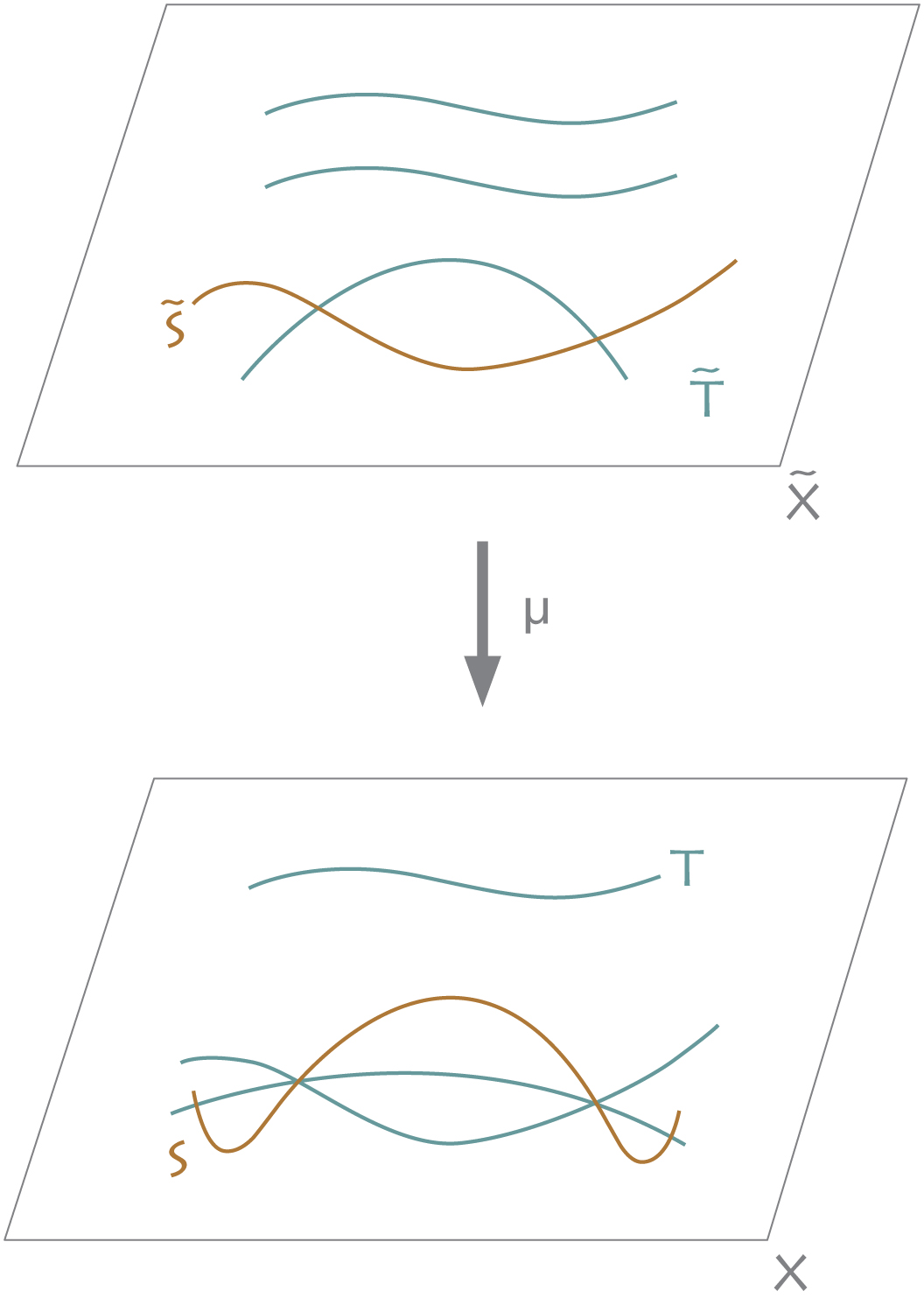}
\end{center}
\caption{The morphism $\mu\colon\ti{X}\to X$.}
\end{figure}

\section{Bielliptic orders}
\index{Bielliptic surface}
In this section we construct orders on $E\times\p1,$ (where $E$ is an elliptic curve), orders which are analogous to bielliptic surfaces, which we now define: 
\begin{definition}(Bielliptic surface)
A surface $X$ is bielliptic if $p_g(X)=0, q(X)=1$.
\end{definition}

Bielliptic surfaces play an important role in the classification of surfaces of Kodaira dimension $0$ and their constructions are realised in the following proposition.

\begin{proposition}\cite{serr}
Let $X$ be a bielliptic surface. Then there exists two elliptic curves $E,F$ and an abelian group $G$ acting on $E$ and $F$ such that:

(i) $E/G$ is an elliptic curve and $F/G\simeq \mathbb{P}^1$;

(ii) $X\simeq (E\times F)/G$.
\end{proposition}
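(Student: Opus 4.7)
The plan is to reconstruct $X$ as the quotient of a product of elliptic curves by exploiting the Albanese map together with an isotriviality result. First, since $q(X)=1$, the Albanese variety $B\assign\tmop{Alb}(X)$ is an elliptic curve and the Albanese morphism $\alpha\colon X\to B$ is surjective with connected fibres. The Enriques--Kodaira classification, combined with $p_g(X)=0$ and $q(X)=1$, forces $\kappa(X)=0$, and then the canonical bundle formula $12K_X\sim\alpha^*\Delta$ together with $12K_X\sim 0$ rules out multiple fibres and singular fibres of $\alpha$, so that $\alpha$ is a smooth elliptic fibration.

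Next I would establish isotriviality. The functional $j$-invariant defines a morphism $B\to\overline{\mathcal{M}_{1,1}}\simeq\p1$, and if this were nonconstant, pulling back $\os_{\p1}(1)$ and invoking the canonical bundle formula would force $\kappa(X)\geq 1$, contradicting $\kappa(X)=0$. Hence all fibres of $\alpha$ are isomorphic to a single elliptic curve $F$. The standard structure theorem for smooth isotrivial elliptic fibrations then produces a finite \'etale Galois cover $p\colon E\to B$ with Galois group $G$ over which $\alpha$ becomes trivial, that is, $X\times_B E\simeq F\times E$ as $E$-schemes. Since $p$ is \'etale and $B$ is elliptic, $E$ is itself an elliptic curve and $G$ acts freely on $E$ by translations, so in particular $G$ is abelian. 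Galois descent then identifies $X\simeq(F\times E)/G$ with $G$ acting diagonally via translations on $E$ and via some homomorphism $\rho\colon G\to\tmop{Aut}(F)$ on $F$.

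Since $E/G\simeq B$ is elliptic by construction, it only remains to show $F/\rho(G)\simeq\p1$. If $\rho(G)$ consisted entirely of translations, then $X\simeq(F\times E)/G$ would be a complex torus and would satisfy $p_g=1$, contradicting $p_g(X)=0$. Therefore $\rho(G)$ contains a non-translation automorphism; after absorbing any translational part of $\rho$ into the action on $E$, we may assume $\rho$ lands in $\tmop{Aut}(F,0_F)$, a cyclic group of order $2,3,4$ or $6$ every nontrivial element of which has fixed points on $F$. Hence $F/\rho(G)$ is a smooth rational curve, that is $\p1$, as required.

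The main obstacle will be establishing isotriviality and the trivialising \'etale cover: ruling out singular and multiple fibres of $\alpha$ via the canonical bundle formula, deducing constancy of $j$ from $\kappa(X)=0$, and then producing the Galois cover over which the smooth isotrivial fibration becomes a trivial fibre bundle all rest on nontrivial input from surface classification. Once this structural input is in hand, the diagonal quotient description and the analysis of $\rho$ needed to force $F/\rho(G)\simeq\p1$ are essentially formal.
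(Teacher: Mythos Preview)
The paper does not supply its own proof of this proposition; it is quoted from \cite{serr} and stated without argument, so there is no in-paper proof to compare against. Your outline is the classical route to this structure theorem and is essentially correct.

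One genuine slip: the sentence ``the Enriques--Kodaira classification, combined with $p_g(X)=0$ and $q(X)=1$, forces $\kappa(X)=0$'' is false as written, since a surface ruled over an elliptic curve also has $p_g=0$ and $q=1$. The paper's stated definition is incomplete in the same way; what is intended (and clear from the surrounding text, which places bielliptic surfaces among surfaces of Kodaira dimension~$0$) is that $X$ is a minimal surface with $\kappa(X)=0$, $p_g=0$, $q=1$. You should take $\kappa(X)=0$ as part of the hypothesis rather than claim to derive it. Once that is in place, the remaining steps --- smoothness of the Albanese fibration from the canonical bundle formula and $12K_X\sim 0$, constancy of the $j$-invariant, the trivialising \'etale Galois cover $E\to B$ (this is essentially the content of Proposition~\ref{bund} in the paper), abelianness of $G$ via its embedding in the translation group of $E$, and the argument that $\rho(G)$ cannot consist solely of translations lest $p_g(X)=1$ --- are all sound and standard.
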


\begin{proposition}\cite{serr}
There are seven types of bielliptic surfaces, described in the following table (in which $F\simeq \mb{C}/(\mb{Z}\oplus\omega\mb{Z}), \rho:=e^{2\pi i/3}$):

\begin{tabular}{|l|l|l|l|}
\hline
Type & $\omega$ & $G$ & Action of $G$ on $F\simeq\mathbb{C}/(\mathbb{Z}\omega\oplus\mathbb{Z})$\\
\hline
1 & any & $\mathbb{Z}_2$ &                   $x\mapsto-x$\\
2 & any & $\mathbb{Z}_2\times\mathbb{Z}_2$ & $x\mapsto-x, x\mapsto x+\epsilon \tmop{with} 2\epsilon=0$\\
3 & $i$ & $\mathbb{Z}_4$ &                   $x\mapsto i x$\\
4 & $i$ & $\mathbb{Z}_4\times\mathbb{Z}_2$ & $x\mapsto i x, x\mapsto x+(1+i)/2$\\
5 & $\rho$ & $\mathbb{Z}_3$ &                $x\mapsto\rho x$\\
6 & $\rho$ & $\mathbb{Z}_3\times\mathbb{Z}_3$ & $x\mapsto \rho x, x\mapsto x+(1-\rho)/3$\\
7 & $\rho$ & $\mathbb{Z}_6$ &                $x\mapsto-\rho x$\\
\hline
\end{tabular}

\begin{remark}\label{org}
 Bielliptic surfaces can be constructed in a manner analogous to Construction \ref{cyctate}. We provide examples in the cyclic case: for $G$ cyclic, we take the cover $\pi\colon E\times F\to E\times \p1$ with Galois group $G'=\langle\s|\s^n=1\rangle$. Letting $M\assign\os_E(\varepsilon-e_0)$, a line bundle of order $n$ in $\tmop{Pic}E$, we see that $p_1^*(M)\in H^1(G',\Phi)$, where $p_1\colon E\times F\to E$ is the first projection. We twist the $G'$-action using $p_1^*(M)$ in the following manner:
\begin{eqnarray*}
\s_{\varepsilon}: E\times F & \lrw & E\times F\\
   (e,f)     & \mapsto & (e\oplus\varepsilon,\s(f)).
\end{eqnarray*}
\end{remark}
\end{proposition}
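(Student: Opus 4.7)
The plan is to obtain the classification through the classical Bagnera--de Franchis argument. First, since $p_g(X)=0$ and $q(X)=1$, the surface lies in the Kodaira dimension $0$ part of the Enriques--Kodaira classification, so $K_X$ is numerically trivial and some multiple $mK_X$ is actually trivial. I would begin by studying the Albanese morphism $\alpha\colon X\to \tmop{Alb}(X)$, which because $q(X)=1$ maps onto an elliptic curve $E$. Standard arguments (using $K_X$ numerically trivial and the fact that $\alpha$ has connected fibres) show $\alpha$ is a smooth elliptic fibration with a single isomorphism class of fibre $F$.

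Next I would produce a second elliptic fibration $\beta\colon X\to \p1$. The point is that $h^0(X,mK_X)=1$ for some $m$ but $p_g=0$, and a careful count using Noether's formula and $\chi(\os_X)=0$ shows that the canonical algebra yields a pencil of elliptic curves whose generic member is isomorphic to a second elliptic curve $E'$; alternatively one can exhibit $\beta$ directly via a suitable complete linear system on $X$ and verify that its fibres are elliptic using adjunction and $K_X\equiv 0$. Now the key structural step: I would form the fibre product / universal cover at the level of the two fibrations and show that the map $E'\times F\to X$ coming from combining sections of $\alpha$ and $\beta$ is an étale Galois cover. The Galois group $G$ acts diagonally, by translations on $E'$ (otherwise $E'/G$ would not be elliptic, contradicting $\beta$ having elliptic generic fibre $E'$) and by a faithful action on $F$ whose quotient is $F/G\simeq\p1$.

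Having reduced to classifying pairs $(F,G)$ where $G$ acts on the elliptic curve $F$ with $F/G\simeq\p1$, I would conclude by invoking the classification of automorphism groups of elliptic curves fixing the origin, namely $\mb{Z}/2$, $\mb{Z}/4$ (only if $j(F)=1728$, i.e.\ $\omega=i$), and $\mb{Z}/6$ (only if $j(F)=0$, i.e.\ $\omega=\rho$). Since the quotient of $F$ by its full translation group is already an elliptic curve, $G$ must contain an element acting nontrivially on the origin, and the Riemann--Hurwitz formula for $F\to\p1=F/G$ forces the order of such an element to be $2,3,4$, or $6$. Allowing $G$ to be extended by a translation subgroup acting freely, one sees that the only possibilities are the seven listed in the table (the allowed translation extension in each row is dictated by the requirement that $G$ act freely on $E'$ and that $E'/G$ be an elliptic curve).

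The main obstacle, and the step where the genuine work lies, is the enumeration in this final paragraph: one must rule out spurious extensions of the cyclic automorphism group by translations. Concretely, given that $G$ already contains an automorphism of $F$ of order $n\in\{2,3,4,6\}$ fixing the origin, any translation $x\mapsto x+\varepsilon$ we adjoin must commute with this automorphism, which forces $\varepsilon$ to be an $n$-torsion point of a very specific type. Carrying out this commutation analysis for each of $n=2,3,4,6$ yields exactly the seven rows of the table, and it is precisely this bookkeeping that I expect to consume most of the proof.
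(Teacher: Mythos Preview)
The paper does not actually prove this proposition: it is quoted from the literature with the citation \cite{serr} and no proof environment follows. The only content attached to the statement is Remark~\ref{org}, which explains how the cyclic cases can be recast in the language of Construction~\ref{cyctate}; this remark is not a proof of the classification but a reinterpretation of it for later use.

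Your sketch is the standard Bagnera--de Franchis argument and is broadly correct as an outline of how the classification is obtained in the literature, but there is nothing in the thesis to compare it against. One small correction: in your second paragraph you want $G$ to act by translations on the factor mapping to the \emph{elliptic} quotient (your $E'$, the thesis's $E$), and faithfully with rational quotient on the other factor $F$; you have this right in substance but the parenthetical ``otherwise $E'/G$ would not be elliptic, contradicting $\beta$ having elliptic generic fibre $E'$'' is slightly garbled, since $\beta$ has base $\p1$, not elliptic generic fibre.
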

In the following Proposition, we set $Y=E\times F$ and $M$ as in Remark \ref{org} above. 
\begin{proposition}(Bielliptic orders)
For each bielliptic surface $X=(E\times F)/G$ such that $G$ is cyclic, $A_X=\oy\oplus\ls\oplus\ldots\oplus L_\s^{n-1}$ is a maximal order on $E\times\p1$ corresponding to $X$, where $n=|G|$.
\begin{proof}
Let $X=(E\times F)/G$ be a bielliptic surface, $G$ cyclic, $Y=E\times F.$ Let $n$ be the order of $G$, generated by $\s_{\varepsilon}$.
As seen in Remark \ref{org}, such a surface is given by a covering $\pi:Y\lrw E\times\p1$ and $L\assign p_1^*(\varepsilon-e_0)\in H^1(G',\Phi_Y),$ where $G'$ is the Galois group of $\pi$ and $p_1\colon Y\to E$ is the first projection.
 We see that  $L\assign p_1^*(\os_E(\varepsilon-e_0))\in H^1(G',\tmop{Pic}Y)$ and the corresponding relation satisfies the overlap condition by Lemma \ref{ol}. We form the noncommutative cyclic algebra ${A_X=\oy\oplus\ls\oplus\ldots\oplus L_\s^{n-1}}$. Implementations of Theorem \ref{ram}, Lemma \ref{untot} and Lemma \ref{max} verify that each $A_X$ is a maximal order on $Z=Y/G'=E\times\p1.$ 
\end{proof}
\end{proposition}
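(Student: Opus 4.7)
The plan is to apply the noncommutative cyclic covering trick directly. Recall from Remark \ref{org} that, writing $n = |G|$, the covering $\pi\colon Y = E \times F \to E \times \p{1}$ has Galois group $G' = \langle \s \mid \s^n = 1\rangle$ acting trivially on the $E$-factor and via $G$ on the $F$-factor, and that the twisted action $\s_\varepsilon$ on $Y$ recovers the bielliptic surface $X$. My strategy is to (i) verify that $L \assign p_1^*\os_E(\varepsilon - e_0)$ represents a class in $H^1(G',\tmop{Pic}Y)$, (ii) confirm the overlap condition via Proposition \ref{ol}, and (iii) deduce maximality of the cyclic algebra $A_X$ from Lemma \ref{max}.

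Steps (i) and (ii) are straightforward. Since $\s$ fixes $p_1^*\tmop{Pic}E$ pointwise, $L \otimes \s^*L \otimes \cdots \otimes \s^{*(n-1)}L \simeq L^{\otimes n} \simeq p_1^*\os_E(n(\varepsilon-e_0)) \simeq \oy$, using that $\varepsilon$ has order exactly $n$ in $\tmop{Pic}^0 E$; hence $L$ is a $1$-cocycle. For overlap, $Y$ is projective (so $\os(Y)^* = k^*$), and a quick case-by-case inspection of the table of $G$-actions on $F$ produces, in each of the four cyclic types 1, 3, 5, 7, at least one point of $F$ fixed by all of $G$; the corresponding ramification divisor of $\pi$ has index $n$, so the lcm of the ramification indices is $n$ and Proposition \ref{ol} applies.

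The bulk of the work lies in step (iii). By Lemma \ref{max}, maximality reduces to showing that each $\ti{D}_i$ is irreducible, where the $D'_i$ are the components of the discriminant divisor of $A_X$. These components are the fibres $E \times \{q_i\} \simeq E$ above the branch points $q_i \in \p{1}$ of $F \to \p{1}$; writing $m_i$ for the ramification index of $\pi$ above $D'_i$ and $d_i = n/m_i$, Lemma \ref{untot} identifies $\ti{D}_i \to D'_i$ with the cyclic cover of $E$ defined by $(L \otimes \s^*L \otimes \cdots \otimes \s^{*(d_i-1)}L)|_{D_i} \simeq \os_E(d_i(\varepsilon - e_0))$. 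Since $\varepsilon$ has exact order $n$, this class has order exactly $m_i$, so the cover is connected and $\ti{D}_i$ is irreducible, yielding maximality. The correspondence with $X$ is then automatic since $X$ and $A_X$ both arise from the twisting datum $\varepsilon \in (\tmop{Pic}^0 E)_n$. The main obstacle I anticipate is the clean identification of the line bundle in Lemma \ref{untot} with $\os_E(d_i(\varepsilon - e_0))$ --- ruling out any spurious twist from the $F$-factor --- and verifying the order statement uniformly in $i$ across all four cyclic types.
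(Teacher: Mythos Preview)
Your proposal is correct and follows the same route as the paper: the paper's proof simply cites Proposition \ref{ol} for overlap and then asserts that ``implementations of Theorem \ref{ram}, Lemma \ref{untot} and Lemma \ref{max}'' yield maximality, whereas you have carried out those implementations explicitly (in particular the computation $(L\otimes\s^*L\otimes\cdots\otimes\s^{*(d_i-1)}L)|_{D_i}\simeq \os_E(d_i(\varepsilon-e_0))$, which has exact order $m_i$). Your additional check that each cyclic type has a totally ramified fibre is exactly what is needed to invoke Proposition \ref{ol}, and the paper leaves this implicit.
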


\appendix

\appendix

\chapter{Azumaya algebras and Brauer groups}\label{br}

\section{Central simple algebras and Brauer groups}

Our initial objects of interest in noncommutative algebra are central simple algebras over a base field $K$.
\begin{definition}
 A $K$-algebra $A$ is called {\bf simple} if the only two-sided ideals of $A$ are $(0)$ and $A$ itself. A {\bf central simple} $K$-algebra is a simple finite $K$-algebra $A$ with centre $Z(A)=K$.\index{central simple agebra}
\end{definition}

\begin{example}
 The Quaternion algebra $\mb{H}=\mb{R}\oplus\mb{R}i\oplus\mb{R}j\oplus\mb{R}k$ is a central simple  $\mb{R}$-algebra.
\end{example}

\begin{theorem}[Artin-Wedderburn Theorem]\cite[(3.5)]{lam}\label{aw}
 A central simple $K-$algebra $A$ is isomorphic to $M_n(D)$, the ring of $n\times n$ matrices over a $K$-division ring $D$.
\end{theorem}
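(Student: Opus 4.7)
The plan is to exhibit the matrix ring structure by finding an appropriate minimal left ideal and applying a density argument. First, since $A$ is finite-dimensional over $K$, it is left Artinian, so the poset of left ideals has minimal elements; fix a minimal left ideal $I \subset A$. By Schur's lemma, $D := \mathrm{End}_A(I)^{\mathrm{op}}$ is a division ring, and $I$ acquires the structure of a (finite-dimensional) right $D$-module via this endomorphism action. We then consider the left multiplication map
\begin{equation*}
\rho \colon A \longrightarrow \mathrm{End}_D(I), \qquad a \longmapsto (x \mapsto ax).
\end{equation*}

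The key point is to show $\rho$ is an isomorphism. For injectivity, $\ker \rho$ is a two-sided ideal of $A$, and since $A$ is simple and $\rho(1) = \mathrm{id}_I \neq 0$, we must have $\ker \rho = 0$. Surjectivity is the heart of the matter and is where I expect the main obstacle; it amounts to the Jacobson density theorem applied to the simple $A$-module $I$. The abstract density statement says that the image of $A$ in $\mathrm{End}_D(I)$ is dense in the finite-topology sense, but because $I$ is finite-dimensional over $D$, density forces genuine surjectivity. Once surjectivity is in hand, choosing a $D$-basis $\{e_1, \dots, e_n\}$ of $I$ gives $\mathrm{End}_D(I) \simeq M_n(D)$ (using the opposite convention so that left $D$-action vs.\ right $D$-action is handled correctly), and composing with $\rho$ yields $A \simeq M_n(D)$.

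Finally, I would check that the centre condition is consistent: $Z(M_n(D)) \simeq Z(D)$, so $Z(D) = K$, which confirms $D$ is a $K$-division algebra as claimed. The main obstacle is really just invoking (or proving in miniature) the Jacobson density theorem; in the finite-dimensional case one can bypass the general statement and argue directly by induction on $\dim_D I$, showing that for any $D$-linearly independent $x_1,\dots,x_k \in I$ and any $y_1,\dots,y_k \in I$ there exists $a \in A$ with $a x_i = y_i$, using the minimality of $I$ to produce the required elements of $A$ at each inductive step.
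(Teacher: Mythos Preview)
The paper does not prove this theorem at all; it merely states it with a citation to Lam's textbook \cite[(3.5)]{lam} and uses it as a black box (for instance in Remark~\ref{remark}). So there is no in-paper proof to compare against.

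Your sketch is a correct outline of one of the standard proofs: pick a minimal left ideal $I$, set $D = \mathrm{End}_A(I)^{\mathrm{op}}$, and use Jacobson density (or its finite-dimensional shortcut) to show $A \to \mathrm{End}_D(I)$ is an isomorphism. The injectivity-from-simplicity and surjectivity-from-density steps are exactly right, and your remark that $Z(D) = Z(M_n(D)) = K$ correctly confirms $D$ is a $K$-central division ring. One small point worth tightening: to see that $I$ is finite-dimensional over $D$ (needed to upgrade density to surjectivity), you should note that $D$ itself is finite-dimensional over $K$ because $\mathrm{End}_A(I) \subset \mathrm{End}_K(I)$ and $I$ is finite-dimensional over $K$; then $\dim_D I$ is finite since $\dim_K I$ is. Otherwise the argument is sound and is essentially the proof you would find in the cited reference.
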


\begin{definition}
 We say two central simple algebras $A\simeq M_n(D)$,  $A'\simeq M_m(D')$ are {\bf Morita equivalent}\index{Morita equivalent}\index{$\sim_M$} if $D\simeq D'$ and we write $A\me A'$.
\end{definition}

\begin{remark}
 Equivalently, $A\me A'$ if and only if there is a categorical equivalence $f\colon\tmop{Mod}A\to \tmop{Mod} A'$.
\end{remark}

We denote by $\br(K)$\index{$\br(K)$} the set of Morita equivalence classes of central simple $K$-algebras. Then
\begin{enumerate}
 \item Tensoring over $K$ yields a well-defined operation on $\br(K)$, since 
\begin{align*}
 M_n(D)\otimes_K M_m(D') & \simeq M_{mn}(D\otimes_K D')\\
                         &  \me     D\otimes_K D'.
\end{align*}
\item Setting $A^\circ$\index{$A^\circ$} to be the opposite algebra of $A$, $A\otimes A^\circ\simeq M_n(K)$, where ${n=\tmop{deg}_K A}$.
\end{enumerate}

Thus $\br(K)$\index{Brauer group!of a field} is a group with identity $[K]=[M_n(K)]$ and inverse $[A]^{-1}=[A^\circ]$. We call $\br(K)$ the {\bf Brauer group} of $K$.

\begin{example} We provides some brief examples of Brauer groups of fields:
\begin{enumerate} 
 \item $\br(\mb{R})\simeq\mb{Z}/2\mb{Z}$, the only nontrivial class being $[\mb{H}]$.
 \item $\br(K)=0$ if $K$ is any one of the following: (i) algebraically closed, (ii) finite, or (iii) the function field of a curve over an algebraically closed field.
\end{enumerate}
\end{example}

\begin{remark}
 The assignment of a Brauer group to a field is functorial, that is, given a field homomorphism $f\colon K\to F$, there is a corresponding group homomorphism $\br(f)\colon\br(K)\to\br(F)$ defined by $\br(f)(A)\assign A\otimes_KF$. The {\bf relative Brauer group} is defined to be  $\br(F/K)\assign\tmop{ker}(\br(f)\colon\br(K)\to\br(F))$.
\end{remark}

\section{Azumaya algebras and Brauer groups}
%
%
%
%


We now define sheaves of Azumaya algebras for an integral, noetherian, normal scheme $X$. 

\begin{definition}\index{Azumaya algebra!over a scheme}
 Let $A$ be a coherent sheaf of $\os_X$-algebras. We call $A$ a {\bf sheaf of Azumaya algebras} if the canonical morphism $A\otimes_{\os_X}A^\circ\to {\mathcal End}_{\os_X}(A)$ is an isomorphism.
\end{definition}

\begin{remark}
There are equivalent formulations for the definition of a sheaf of Azumaya algebras over a scheme $X$ which may be found in \cite[Section 1]{dixexp}.
\end{remark}

We call Azumaya algebras $A,A'$ Morita equivalent if there exists an equivalence of categories $f\colon\tmop{Mod}A\to \tmop{Mod} A'$. Moreover, the operation $\otimes_{\os_X}$ endows $\br(X)$, the set of Morita equivalence classes $[A]$ of Azumaya algebras over $X$, with a group structure with identity $[\os_X]$ and inverse $[A]^{-1}=[A^\circ]$.\index{Brauer group!of a scheme} We call $\br(X)$\index{$\br(X)$} the {\bf Brauer group} of $X$. As in the case of fields, the assignment of a Brauer group to a scheme is functorial. For further details, see \cite{dixexp}.

\chapter{MATLAB Code}\label{code}

\section{The double cover of $\p2$}
In Theorem \ref{thawesome}, we define an involution $\phi$ on $\tmop{Pic}Y\oplus T_Y\subset \Ll\simeq H^2(Y,\mb{Z})$ and wish to show it extends to an involution on $\Ll$. As demonstrated in the proof of Theorem \ref{thawesome}, to show $\phi$ extends to $\Ll$ we need only show that $\phi\otimes_{\mb{Z}}\mb{Q}$ preserves the integral lattice. The following MATLAB code verifies this in the rank 18 case. The other cases are analogous. 

\begin{verbatim}
 L = [-2,0,0,1,0,0,0,0,0,0,0,0,0,0,0,0,0,0,0,0,0,0;
       0,-2,1,0,0,0,0,0,0,0,0,0,0,0,0,0,0,0,0,0,0,0;
       0,1,-2,1,0,0,0,0,0,0,0,0,0,0,0,0,0,0,0,0,0,0;
       1,0,1,-2,1,0,0,0,0,0,0,0,0,0,0,0,0,0,0,0,0,0;
       0,0,0,1,-2,1,0,0,0,0,0,0,0,0,0,0,0,0,0,0,0,0;
       0,0,0,0,1,-2,1,0,0,0,0,0,0,0,0,0,0,0,0,0,0,0;
       0,0,0,0,0,1,-2,1,0,0,0,0,0,0,0,0,0,0,0,0,0,0;
       0,0,0,0,0,0,1,-2,0,0,0,0,0,0,0,0,0,0,0,0,0,0;
       0,0,0,0,0,0,0,0,-2,0,0,1,0,0,0,0,0,0,0,0,0,0;
       0,0,0,0,0,0,0,0,0,-2,1,0,0,0,0,0,0,0,0,0,0,0;
       0,0,0,0,0,0,0,0,0,1,-2,1,0,0,0,0,0,0,0,0,0,0;
       0,0,0,0,0,0,0,0,1,0,1,-2,1,0,0,0,0,0,0,0,0,0;
       0,0,0,0,0,0,0,0,0,0,0,1,-2,1,0,0,0,0,0,0,0,0;
       0,0,0,0,0,0,0,0,0,0,0,0,1,-2,1,0,0,0,0,0,0,0;
       0,0,0,0,0,0,0,0,0,0,0,0,0,1,-2,1,0,0,0,0,0,0;
       0,0,0,0,0,0,0,0,0,0,0,0,0,0,1,-2,0,0,0,0,0,0;
       0,0,0,0,0,0,0,0,0,0,0,0,0,0,0,0,0,1,0,0,0,0;
       0,0,0,0,0,0,0,0,0,0,0,0,0,0,0,0,1,0,0,0,0,0;
       0,0,0,0,0,0,0,0,0,0,0,0,0,0,0,0,0,0,0,1,0,0;
       0,0,0,0,0,0,0,0,0,0,0,0,0,0,0,0,0,0,1,0,0,0;
       0,0,0,0,0,0,0,0,0,0,0,0,0,0,0,0,0,0,0,0,0,1;
       0,0,0,0,0,0,0,0,0,0,0,0,0,0,0,0,0,0,0,0,1,0];
%This is the K3 lattice \Lambda

Pic = [1,0,0,0,0,0,0,0,0,0,0,0,0,0,0,0,0,0;
       0,1,0,0,0,0,0,0,0,0,0,0,0,0,0,0,0,0;
       0,0,1,0,0,0,0,0,0,0,0,0,0,0,0,0,0,0;
       0,0,0,1,0,0,0,0,0,0,0,0,0,0,0,0,0,0;
       0,0,0,0,1,0,0,0,0,0,0,0,0,0,0,0,0,0;
       0,0,0,0,0,1,0,0,0,0,0,0,0,0,0,0,0,0;
       0,0,0,0,0,0,1,0,0,0,0,0,0,0,0,0,0,0;
       0,0,0,0,0,0,0,1,0,0,0,0,0,0,0,0,0,0;
       0,0,0,0,0,0,0,0,1,0,0,0,0,0,0,0,0,0;
       0,0,0,0,0,0,0,0,0,1,0,0,0,0,0,0,0,0;
       0,0,0,0,0,0,0,0,0,0,1,0,0,0,0,0,0,0;
       0,0,0,0,0,0,0,0,0,0,0,1,0,0,0,0,0,0;
       0,0,0,0,0,0,0,0,0,0,0,0,1,0,0,0,0,0;
       0,0,0,0,0,0,0,0,0,0,0,0,0,1,0,0,0,0;
       0,0,0,0,0,0,0,0,0,0,0,0,0,0,1,0,0,0;
       0,0,0,0,0,0,0,0,0,0,0,0,0,0,0,1,0,0;
       1,0,0,0,0,0,0,0,0,0,0,0,0,0,0,0,0,0;
       0,3,0,0,1,1,1,1,1,1,1,1,1,1,1,1,1,1;
       0,0,0,0,0,0,0,0,0,0,0,0,0,0,0,0,1,0;
       0,0,0,0,0,0,0,0,0,0,0,0,0,0,0,0,-1,0;
       0,0,0,0,0,0,0,0,0,0,0,0,0,0,0,0,0,1;
       0,0,0,0,0,0,0,0,0,0,0,0,0,0,0,0,0,-1];
%This matrix defines the embedding of Pic Y in the K3 
lattice

T = null(Pic'*L);
% This provides us with the embedding of the transcendental 
lattice of Y in in the lattice

A = horzcat(Pic,T);
% This matrix defines the embedding of Pic Y \oplus T_Y in 
\Lambda

i_Pic = 
[0,1,1,1,1,1,1,1,1,1,1,1,1,1,1,1,1,1;
 1,0,1,1,1,1,1,1,1,1,1,1,1,1,1,1,1,1;
 0,0,-1,0,0,0,0,0,0,0,0,0,0,0,0,0,0,0;
 0,0,0,-1,0,0,0,0,0,0,0,0,0,0,0,0,0;
 0,0,0,0,-1,0,0,0,0,0,0,0,0,0,0,0,0,0;
 0,0,0,0,0,-1,0,0,0,0,0,0,0,0,0,0,0,0;
 0,0,0,0,0,0,-1,0,0,0,0,0,0,0,0,0,0,0;
 0,0,0,0,0,0,0,-1,0,0,0,0,0,0,0,0,0,0;
 0,0,0,0,0,0,0,0,-1,0,0,0,0,0,0,0,0,0;
 0,0,0,0,0,0,0,0,0,-1,0,0,0,0,0,0,0,0;
 0,0,0,0,0,0,0,0,0,0,-1,0,0,0,0,0,0,0;
 0,0,0,0,0,0,0,0,0,0,0,-1,0,0,0,0,0,0;
 0,0,0,0,0,0,0,0,0,0,0,0,-1,0,0,0,0,0;
 0,0,0,0,0,0,0,0,0,0,0,0,0,-1,0,0,0,0;
 0,0,0,0,0,0,0,0,0,0,0,0,0,0,-1,0,0,0;
 0,0,0,0,0,0,0,0,0,0,0,0,0,0,0,-1,0,0;
 0,0,0,0,0,0,0,0,0,0,0,0,0,0,0,0,-1,0;
 0,0,0,0,0,0,0,0,0,0,0,0,0,0,0,0,0,-1];
% the involution phi as defined on Pic Y

i = blkdiag(i_Pic,-eye(4));
% the involution phi as defined on Pic Y \oplus T_Y

phi = A*i*inv(A) % the involution phi on \Lambda
\end{verbatim}
MATLAB then outputs the matrix $\phi$ which preserves the integral lattice as required:
\begin{verbatim}
phi = 
[-3,-2,1,1,0,0,0,0,0,0,0,0,0,0,0,0,3,1,0,0,0,0;
-2,-3,1,1,0,0,0,0,0,0,0,0,0,0,0,0,3,1,0,0,0,0;
0,0,-1,0,0,0,0,0,0,0,0,0,0,0,0,0,0,0,0,0,0,0;
0,0,0,-1,0,0,0,0,0,0,0,0,0,0,0,0,0,0,0,0,0,0;
0,0,0,0,-1,0,0,0,0,0,0,0,0,0,0,0,0,0,0,0,0,0;
0,0,0,0,0,-1,0,0,0,0,0,0,0,0,0,0,0,0,0,0,0,0;
0,0,0,0,0,0,-1,0,0,0,0,0,0,0,0,0,0,0,0,0,0,0;
0,0,0,0,0,0,0,-1,0,0,0,0,0,0,0,0,0,0,0,0,0,0;
0,0,0,0,0,0,0,0,-1,0,0,0,0,0,0,0,0,0,0,0,0,0;
0,0,0,0,0,0,0,0,0,-1,0,0,0,0,0,0,0,0,0,0,0,0;
0,0,0,0,0,0,0,0,0,0,-1,0,0,0,0,0,0,0,0,0,0,0;
0,0,0,0,0,0,0,0,0,0,0,-1,0,0,0,0,0,0,0,0,0,0;
0,0,0,0,0,0,0,0,0,0,0,0,-1,0,0,0,0,0,0,0,0,0;
0,0,0,0,0,0,0,0,0,0,0,0,0,-1,0,0,0,0,0,0,0,0;
0,0,0,0,0,0,0,0,0,0,0,0,0,0,-1,0,0,0,0,0,0,0;
0,0,0,0,0,0,0,0,0,0,0,0,0,0,0,-1,0,0,0,0,0,0;
-2,-2,1,1,0,0,0,0,0,0,0,0,0,0,0,0,2,1,0,0,0,0;
-6,-6,3,3,0,0,0,0,0,0,0,0,0,0,0,0,9,2,0,0,0,0;
0,0,0,0,0,0,0,0,0,0,0,0,0,0,0,0,0,0,-1,0,0,0;
0,0,0,0,0,0,0,0,0,0,0,0,0,0,0,0,0,0,0,-1,0,0;
0,0,0,0,0,0,0,0,0,0,0,0,0,0,0,0,0,0,0,0,-1,0;
0,0,0,0,0,0,0,0,0,0,0,0,0,0,0,0,0,0,0,0,0,-1]
\end{verbatim}

\section{The double cover of $\p1\times\p1$}\label{b2}
The code for this case is analogous to that above.
We include here the matrix defining the involution $\phi$ in Proposition \ref{p1}, which preserves the integral lattice, indicating $\phi$ is an isometry on $\Ll$.

\begin{verbatim}
 phi = 
[-1,0,0,0,0,0,0,0,0,0,0,0,0,0,0,0,0,1,0,1,0,0;
0,-1,0,0,0,0,0,0,0,0,0,0,0,0,0,0,0,0,0,0,0,0;
0,0,-1,0,0,0,0,0,0,0,0,0,0,0,0,0,0,0,0,0,0,0;
0,0,0,-1,0,0,0,0,0,0,0,0,0,0,0,0,0,1,0,1,0,0;
0,0,0,0,-1,0,0,0,0,0,0,0,0,0,0,0,0,0,0,0,0,0;
0,0,0,0,0,-1,0,0,0,0,0,0,0,0,0,0,0,0,0,0,0,0;
0,0,0,0,0,0,-1,0,0,0,0,0,0,0,0,0,0,0,0,0,0,0;
0,0,0,0,0,0,0,-1,0,0,0,0,0,0,0,0,0,0,0,0,0,0;
0,0,0,0,0,0,0,0,-1,0,0,0,0,0,0,0,0,0,0,0,0,0;
0,0,0,0,0,0,0,0,0,-1,0,0,0,0,0,0,0,0,0,0,0,0;
0,0,0,0,0,0,0,0,0,0,-1,0,0,0,0,0,0,0,0,0,0,0;
0,0,0,0,0,0,0,0,0,0,0,-1,0,0,0,0,0,0,0,0,0,0;
0,0,0,0,0,0,0,0,0,0,0,0,-1,0,0,0,0,0,0,0,0,0;
0,0,0,0,0,0,0,0,0,0,0,0,0,-1,0,0,0,0,0,0,0,0;
0,0,0,0,0,0,0,0,0,0,0,0,0,0,-1,0,0,0,0,0,0,0;
0,0,0,0,0,0,0,0,0,0,0,0,0,0,0,-1,0,0,0,0,0,0;
-1,0,1,-1,1,0,0,0,0,0,0,0,0,0,0,0,1,0,0,0,1,1;
0,0,0,0,0,0,0,0,0,0,0,0,0,0,0,0,0,1,0,2,0,0;
-1,0,1,-1,1,0,0,0,0,0,0,0,0,0,0,0,2,0,-1,0,1,1;
0,0,0,0,0,0,0,0,0,0,0,0,0,0,0,0,0,0,0,-1,0,0;
0,0,0,0,0,0,0,0,0,0,0,0,0,0,0,0,0,1,0,1,-1,0;
0,0,0,0,0,0,0,0,0,0,0,0,0,0,0,0,0,1,0,1,0,-1]
\end{verbatim}

\section{The double cover of $\mathbb{F}_2$}\label{b3}
The code for this case is analogous to that above.
We include here the matrix defining the involution $\phi$ in Proposition \ref{f2}, which preserves the integral lattice, indicating $\phi$ is an isometry on $\Ll$.

\begin{verbatim}
phi = [-4,-2,2,0,1,2,-4,2,0,0,0,0,0,0,0,0,2,5,0,0,0,0;
-2,-1,0,1,0,1,-2,1,0,0,0,0,0,0,0,0,1,2,0,0,0,0;
0,0,-1,0,0,0,0,0,0,0,0,0,0,0,0,0,0,0,0,0,0,0;
-2,0,0,0,0,1,-2,1,0,0,0,0,0,0,0,0,1,2,0,0,0,0;
0,0,0,0,-1,0,0,0,0,0,0,0,0,0,0,0,0,0,0,0,0,0;
0,0,0,0,0,-1,0,0,0,0,0,0,0,0,0,0,0,0,0,0,0,0;
-3,-2,2,0,1,2,-5,2,0,0,0,0,0,0,0,0,2,5,0,0,0,0;
0,0,0,0,0,0,0,-1,0,0,0,0,0,0,0,0,0,0,0,0,0,0;
0,0,0,0,0,0,0,0,-1,0,0,0,0,0,0,0,0,0,0,0,0,0;
0,0,0,0,0,0,0,0,0,-1,0,0,0,0,0,0,0,0,0,0,0,0;
0,0,0,0,0,0,0,0,0,0,-1,0,0,0,0,0,0,0,0,0,0,0;
0,0,0,0,0,0,0,0,0,0,0,-1,0,0,0,0,0,0,0,0,0,0;
0,0,0,0,0,0,0,0,0,0,0,0,-1,0,0,0,0,0,0,0,0,0;
0,0,0,0,0,0,0,0,0,0,0,0,0,-1,0,0,0,0,0,0,0,0;
0,0,0,0,0,0,0,0,0,0,0,0,0,0,-1,0,0,0,0,0,0,0;
0,0,0,0,0,0,0,0,0,0,0,0,0,0,0,-1,0,0,0,0,0,0;
-8,-4,4,1,2,5,-10,5,0,0,0,0,0,0,0,0,4,12,0,0,0,0;
-3,-2,2,0,1,2,-4,2,0,0,0,0,0,0,0,0,2,4,0,0,0,0;
0,0,0,0,0,0,0,0,0,0,0,0,0,0,0,0,0,0,-1,0,0,0;
0,0,0,0,0,0,0,0,0,0,0,0,0,0,0,0,0,0,0,-1,0,0;
0,0,0,0,0,0,0,0,0,0,0,0,0,0,0,0,0,0,0,0,-1,0;
0,0,0,0,0,0,0,0,0,0,0,0,0,0,0,0,0,0,0,0,0,-1;]
\end{verbatim}

\appendix

\addcontentsline{toc}{chapter}{Index}
\printindex

\markboth{Bibliography}{Bibliography}
\addcontentsline{toc}{chapter}{Bibliography}

\bibliography{thesishugo}

\end{document}